\renewcommand\subsubsection{\@startsection{subsubsection}{3}{\z@}%
                                     {-3.25ex\@plus -1ex \@minus -.2ex}%
                                     {-0.5em}
                                     {\normalfont\normalsize\bfseries}}
\theoremstyle{plain}
\newtheorem{Thm}{Theorem}
\newtheorem{Prop}[Thm]{Proposition}
\newtheorem{Lem}[Thm]{Lemma}
\newtheorem{Cor}[Thm]{Corollary}
\theoremstyle{definition}
\newtheorem{Conv}[Thm]{Convention}
\newtheorem{Nota}[Thm]{Notation}
\newtheorem{Rem}[Thm]{Remark}
\newtheorem{Defn}[Thm]{Definition}
\newtheorem{Eg}[Thm]{Example}
\newtheorem*{Pro}{\textit{Problem}}
\newtheoremstyle{named}{}{}{}{}{\bfseries}{.}{.5em}{\thmnote{#3}#1}
\theoremstyle{named}
\newtheorem*{namedtheorem}{}
\DeclareMathOperator{\ad}{ad}
\DeclareMathOperator{\Aut}{Aut}
\DeclareMathOperator{\ch}{char}
\DeclareMathOperator{\diag}{diag}
\DeclareMathOperator{\End}{End}
\DeclareMathOperator{\GL}{GL}
\DeclareMathOperator{\Hom}{Hom}
\DeclareMathOperator{\IC}{\textbf{IC}}
\DeclareMathOperator{\Id}{Id}
\DeclareMathOperator{\Ima}{Im}
\DeclareMathOperator{\Irr}{Irr}
\DeclareMathOperator{\isom}{\!\!\smash{\begin{array}{c}\sim\\[-1em]
\rightarrow\end{array}}\!\!}
\DeclareMathOperator{\isomLR}{\!\!\smash{\begin{array}{c}\sim\\[-1em]
\leftrightarrow\end{array}}\!\!}
\DeclareMathOperator{\lisomLR}{\!\!\smash{\begin{array}{c}\sim\\[-1em]
\longleftrightarrow\end{array}}\!\!}
\DeclareMathOperator{\Ind}{Ind}
\DeclareMathOperator{\ladic}{\bar{\mathbb{Q}}_{\ell}}
\DeclareMathOperator{\Lie}{Lie}
\DeclareMathOperator{\Ort}{O}
\DeclareMathOperator{\pr}{pr}
\DeclareMathOperator{\pt}{pt}
\DeclareMathOperator{\Res}{Res}
\DeclareMathOperator{\sgn}{sgn}
\DeclareMathOperator{\Sh}{Sh}
\DeclareMathOperator{\SL}{SL}
\DeclareMathOperator{\SO}{SO}
\DeclareMathOperator{\Sp}{Sp}
\DeclareMathOperator{\St}{St}
\DeclareMathOperator{\tC}{\mathbf{1}}
\DeclareMathOperator{\tr}{Tr}
\DeclareMathOperator{\lb}{\!<\!}
\DeclareMathOperator{\rb}{\!>\!}
\newcommand*\circled[1]{
  \tikz[baseline=(char.base)]\node[shape=circle,draw,inner sep=0.2pt,font=\tiny,minimum size=8pt] (char) {#1};}
\newcommand{\hooklongrightarrow}{\lhook\joinrel\longrightarrow}
\newcommand*{\rom}[1]{\expandafter\@slowromancap\romannumeral #1@}
\newcommand{\rn}[2]{
    \tikz[remember picture,baseline=(#1.base)]\node [inner sep=0] (#1) {$#2$};%
}
\title{The Character Table of $\mathbf{\GL_n(\mathbb{F}_q)\rtimes\lb\sigma\rb}$}
\author{Cheng Shu}
\colorlet{ivory}{Apricot!30!}
\colorlet{space}{black!85!}
\definecolor{bgc}{RGB}{29, 44, 46}
\definecolor{txt}{RGB}{223, 222, 189}
\definecolor{cmd}{RGB}{206, 151, 88}
\begin{document}
\let\bs\boldsymbol
\dedicatory{Dedicated to Jasmine}
\maketitle
\begin{abstract}
We compute the character table of $\GL_n(\mathbb{F}_q)\rtimes\lb\sigma\rb$, $\sigma$ being an order 2 exterior automorphism. We begin by giving the parametrisation of $\sigma$-stable irreducible characters of $\GL_n(q)$ and that of the conjugacy classes contained in $\GL_n(q)\sigma$. Our main theorem is a formula expressing the extension of a $\sigma$-stable character of $\GL_n(q)$ to $\GL_n(q)\sigma$ as a linear combination of induced cuspidal functions. This formula is built upon a key result of J-L. Waldspurger concerning character sheaves. Using the main theorem, the determination of the character table is then reduced to computations in various Weyl groups and the generalised Green functions of classical groups. Finally, we explicitly determine the table for $n=2$ and $3$. 
\end{abstract}

\tableofcontents
\addtocontents{toc}{\protect\setcounter{tocdepth}{-1}}
\section*{Introduction}
\subsection*{General Notations}\hfill

We fix an odd prime number $p$ and denote by $q$ a fixed power of $p$, then we fix a prime number $\ell$ prime to $p$. Let $k$ be an algebraic closure of the finite field $\mathbb{F}_q$ with $q$ elements. Sheaves on algebraic varieties over $k$ are understood to be sheaves of $\ladic$-vector spaces in the \'etale topology. Denote by $N_{\mathbb{F}_{q^r}/\mathbb{F}_{q^s}}:\mathbb{F}_{q^r}\rightarrow\mathbb{F}_{q^s}$ the norm map whenever $s$ divides $r$. If $G/k$ is any linear algebraic group defined over $\mathbb{F}_q$ with Frobenius endomorphism $F$, we denote by $G^F$ the set of fixed points of $F$, which is a finite group. We may also denote this group by $G(q)$ if $F$ is clear from the context. We only consider the $\ladic$-representations of the finite groups $G^F$, and notably $\GL_n(q)$ for some non negative integer $n$. We will always assume that $q>n$. The identity component of $G$ will be denoted by $G^{\circ}$. The centre of $G$ will be denoted by $Z_G$. For any closed subsets $X$, $H$, $H'\subset G$, we write $C_X(H)=\{x\in X\mid xh=hx\text{, $\forall$ $h\in H$}\}$, $N_X(H)=\{x\in X\mid xH=Hx\}$ and $N_X(H,H')=\{x\in X\mid xH=Hx,~xH'=H'x\}$. When $H=\{h\}$ consists of a single element, the centraliser $C_X(h)$ is also sometimes denoted by $X^h$. 

Throughout the article, we will denote by $\mathfrak{i}$ a fixed square root of $(-1)$ in $k$. For any matrix $A$, its transpose-inverse is denoted by $A^{-t}$. The multiplicative 2-element group will be denoted by $\bs\mu_2$, and its elements are written as $\pm 1$, or simply $\pm$. We will denote by $\eta$ the nontrivial irreducible character of $\bs\mu_2$. 

Since we will be primarily interested in a non-connected group $G$ with two connected components, the hypothesis on $p$ implies that the unipotent elements of $G$ are all contained in $G^{\circ}$. If $G^1$ is the connected component of $G$ other than $G^{\circ}$ with some fixed element $\sigma\in G^1$, we will often write the Jordan decomposition of an element of $G^1$ as $s\sigma u$, with $s\sigma$ being semi-simple and $u$ unipotent. Note that in this expression $s\sigma$ should be regarded as a whole and $s$ may well not be semi-simple.

\subsection*{Irreducible Characters of $\GL_n(q)$}\hfill

Let $G=\GL_n$ over $k$ and let $F$ be the Frobenius endomorphism associated with some $\mathbb{F}_q$-structure of $G$, then the finite group $G^F=G(q)$ is denoted by $\GL_n(q)$ or $\GL^-_n(q)$ according to whether the action of $F$ on the Dynkin diagram is trivial or not.  The character table of $\GL_n(q)$ has been well known  since the work of Green \cite{Gr}. Instead of the combinatorial point of view of Green, we present below a parametrisation of the irreducible characters due to Lusztig and Srinivasan, which is fit for the problem of extending characters to $\GL_n(q)\rtimes\lb\sigma\rb$ (see below).

For each $F$-stable Levi subgroup $L$, we denote by $\Irr_{reg}(L^F)$ the set of regular linear characters of $L^F$ (See \cite[\S 3.1]{LS}), and denote by $\Irr(W_L)^F$ the set of $F$-stable irreducible characters of the Weyl group $W_L=W_L(T)$, with $T\subset L$ being an $F$-stable maximal torus. We take $\theta\in\Irr_{reg}(L^F)$ and $\varphi\in\Irr(W_L)^F$. For each $\varphi$, we denote by $\tilde{\varphi}$ an extension of $\varphi$ to $W_L\rtimes\lb F\rb$. We put 
\begin{equation}\label{eqthm1}
R^{G}_{\varphi}\theta=\epsilon_G\epsilon_L|W_L|^{-1}\sum_{w\in W_L}\tilde{\varphi}(wF)R_{T_w}^{G}\theta,
\end{equation}
where for any linear algebraic group $H$, $\epsilon_H:=(-1)^{rk_H}$ and $rk_H$ is the $\mathbb{F}_q$-rank of $H$, and $R^G_T\theta$ is the Deligne-Lusztig induction (\cite{DL}) of $(T,\theta)$ .
\begin{Thm}(Lusztig, Srinivasan, \cite[Theorem 3.2]{LS})\label{LS}
Let $G=\GL_n^{\pm}(q)$. For some choice of $\tilde{\varphi}$, the virtual character $R^G_{\varphi}\theta$ is an irreducible character of  $G^F$. Moreover, all irreducible characters of $G^F$ are of the form $R^G_{\varphi}\theta$ for a triple $(L,\varphi,\theta)$. The characters associated to the triples $(L,\varphi,\theta)$ and $(L',\varphi',\theta')$ are distinct if and only if one of the following conditions is satisfied
\begin{itemize}
\item[-] $(L,\theta)$ and $(L',\theta')$ are not $G^F$-conjuguate;
\item[-] $(L,\theta)=(L',\theta')$ and $\varphi\ne\varphi'$.
\end{itemize}
\end{Thm}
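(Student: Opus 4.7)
The plan is to derive this from Deligne-Lusztig theory, exploiting three features specific to $G=\GL_n^{\pm}$: the self-duality $G\simeq G^*$; the fact that every semisimple element $s\in G^F$ has connected centraliser, which is automatically a Levi subgroup; and the fact that each $W_L$ is a product of symmetric groups on which $F$ permutes factors, so every $F$-stable $\varphi\in\Irr(W_L)$ admits a preferred extension $\tilde\varphi$ to $W_L\rtimes\langle F\rangle$ via Clifford theory for wreath products.

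\textbf{Step 1: the outer datum.} By the disjointness theorem, every $\chi\in\Irr(G^F)$ lies in a unique Lusztig series $\mathcal{E}(G^F,(s))$ indexed by a semisimple class $(s)$ in $G^{*F}$. Using self-duality and connectedness of $C_G(s)$, the pair $(s,L^*=C_G(s))$ is dual to a $G^F$-conjugacy class of pairs $(L,\theta)$ with $L$ an $F$-stable Levi and $\theta\in\Irr_{reg}(L^F)$. This supplies the outer datum $(L,\theta)$ appearing in the theorem.

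\textbf{Step 2: orthogonality and norm one.} I compute $\langle R^G_\varphi\theta,R^G_{\varphi'}\theta'\rangle$ by expanding \eqref{eqthm1} and applying the Mackey formula for Deligne-Lusztig induction. The regularity of $\theta,\theta'$ collapses the double-coset sum so that only $w\in W_L$ stabilising $\theta$ contribute, and this stabiliser is trivial. A direct manipulation then yields
\[
\langle R^G_\varphi\theta,\,R^G_{\varphi'}\theta'\rangle \;=\; \delta_{(L,\theta)\sim_{G^F}(L',\theta')}\;\langle\tilde\varphi,\tilde\varphi'\rangle_{W_L\rtimes\langle F\rangle},
\]
where the cross-series vanishing is the disjointness of Lusztig series. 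In particular, each $R^G_\varphi\theta$ has squared norm one, hence is $\pm$ an irreducible character. The prefactor $\epsilon_G\epsilon_L$ is calibrated (verifiable by evaluation at the identity, using the classical dimension formula for Deligne-Lusztig characters) so that the sign is $+1$.

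\textbf{Step 3: exhaustion and distinctness.} Summing $|\Irr(W_L)^F|$ over $G^F$-conjugacy representatives $(L,\theta)$ reproduces, via the Jordan decomposition of characters for $\GL_n$, the total $|\Irr(G^F)|$. Combined with Step 2 this shows that $(L,\varphi,\theta)\mapsto R^G_\varphi\theta$ is surjective onto $\Irr(G^F)$ and that the fibre relation is exactly the one stated: when the outer data are not $G^F$-conjugate the series differ, and when they coincide different $\varphi$'s are separated by orthogonality in $W_L\rtimes\langle F\rangle$.

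The hard part will be Step 2, specifically identifying a suitable extension $\tilde\varphi$ for which $\epsilon_G\epsilon_L R^G_\varphi\theta$ is a true character rather than its negative. For a general reductive group no canonical such extension exists; for $\GL_n^{\pm}$ the decomposition $W_L\cong\prod_i S_{n_i}\wr C_{d_i}$ with $F$ acting by cyclic rotation of each factor makes a preferred extension available, and verifying that $\epsilon_G\epsilon_L$ is precisely the normalisation that matches this preferred extension is the delicate technical point. Once this is settled, the remaining assertions are formal consequences of orthogonality and counting.
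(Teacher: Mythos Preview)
The paper does not prove this theorem: it is quoted from \cite[Theorem 3.2]{LS} as a known result and used as a starting point for the rest of the article. There is therefore no proof in the paper to compare your proposal against.

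That said, your outline is a faithful sketch of the Lusztig--Srinivasan argument: reduce to a Lusztig series via the dual pair $(L,\theta)\leftrightarrow(s)$ using connectedness of centralisers in $\GL_n$, compute $\langle R^G_\varphi\theta,R^G_{\varphi'}\theta'\rangle$ by the scalar-product formula for Deligne--Lusztig characters (trivial stabiliser of a regular $\theta$), and then count. Your identification of the delicate point---that the preferred extension $\tilde\varphi$ exists because $W_L$ is a product of symmetric groups cyclically permuted by $F$, and that the sign $\epsilon_G\epsilon_L$ matches this choice---is exactly right; this is where the original paper does the work. If you want to make Step~2 rigorous you should state precisely which extension you take (e.g.\ the one whose restriction to the $F$-fixed subgroup $W_L^F\cong\prod_i\mathfrak{S}_{n_i}$ is irreducible and whose value at $F$ is positive on the trivial representation) and then check the sign by evaluating at the identity.
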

Therefore, the calculation of the values of the irreducible characters of $\GL_n(q)$ is reduced to the calculation of the values of Deligne-Lusztig characters, i.e. virtual characters of the form $R_T^G\theta$.

\subsection*{Clifford Theory}\hfill

Let $\sigma$ be an automorphism of order 2 of $\GL_n$ that is compatible with the given Frobenius. It defines a semi-direct product $\GL_n(q)\rtimes\mathbb{Z}/2\mathbb{Z}$. This group will be denoted by $\GL_n(q)\rtimes\lb\sigma\rb$ (or simply $\GL_n(q)\lb\sigma\rb$) in order to specify the action of $1\in\mathbb{Z}/2\mathbb{Z}$. We will assume that $\sigma$ is an exterior automorphism. Regarded as an element of this non-connected group, $\sigma=(\Id,1)$ satisfies $\sigma^2=1$ and $\sigma g\sigma^{-1}=\sigma(g)$, for all $g\in\GL_n(q)$.

The representations of $\GL_n(q)\rtimes\lb\sigma\rb$ are related to the representations of $\GL_n(q)$ by the Clifford theory in the following way. Let $H$ be a finite group and let $N$ be a normal subgroup of $H$ such that $H/N\simeq \mathbb{Z}/r\mathbb{Z}$ with $r$ prime, and let let $\chi$ be an irreducible character of $H$. We denote by $\chi_N$ the restriction of $\chi$ to $N$. Then
\begin{itemize}
\item[-] Either $\chi_N$ is irreducible;
\item[-] Or $\chi_N=\bigoplus_i^r\theta_i$, where $\theta_i\in\Irr(N)$ are some distinct irreducible characters.
\end{itemize}
Moreover, the $\theta_i$'s form an orbit under the action of $H/N$ on $\Irr(N)$. Conversely, $\chi_N\in\Irr(N)$ extends to an irreducible character of $H$ if and only if it is invariant under the action of $H/N$ by conjugaction. If $\chi$ is such an extension, we obtain all other extensions by multiplying $\chi$ by a character of $H/N$.

Denote by $\Irr(\GL_n(q))^{\sigma}$ the set of $\sigma$-stable irreducible characters, i.e. those satisfying $\chi=\leftidx{^{\sigma}\!}{\chi}:=\chi\circ\sigma$. The irreducible characters of $\GL_n(q)\rtimes\lb\sigma\rb$ are either an extension of a character $\chi\in\Irr(\GL_n(q))^{\sigma}$ or an extension of $\chi\oplus\leftidx{^{\sigma}\!}{\chi}{}$ with $\chi$ a non $\sigma$-stable irreducible character of $\GL_n(q)$. Note that the extension of $\chi\oplus\leftidx{^{\sigma}\!}{\chi}{}$ for $\chi$ non $\sigma$-stable vanishes on the component $\GL_n(q)\sigma$, whereas two extensions of $\chi\in\Irr(\GL_n(q))^{\sigma}$ differ by a sign on $\GL_n(q)\sigma$. Once we fix an extension $\tilde{\chi}$ for each $\chi\in\Irr(\GL_n(q))^{\sigma}$, it remains for us to calculate the restriction of $\tilde{\chi}$ on $\GL_n(q)\sigma$. If no confusion arises, we will also say that $\tilde{\chi}|_{\GL_n(q)\sigma}$ is an extension of $\chi$ to $\GL_n(q)\sigma$.

The conjugacy classes of $\GL_n(q)\lb\sigma\rb$ consist of the conjugacy classes of $\GL_n(q)$ that are stable under $\sigma$, the unions of pairs of conjugacy classes of the form $(C,\sigma(C))$ with $C\subset\GL_n(q)$ non $\sigma$-stable, and the conjugacy classes contained in $\GL_n(q)\sigma$. From the equality
\begin{equation}
\begin{split}
&\#\{\text{$\sigma$-stable classes}\}+\frac{1}{2}\#\{\text{non $\sigma$-stable classes}\}+\#\{\text{classes in $\GL_n(q)\sigma$}\}\\
=&\#\{\text{classes of $\GL_n(q)\lb\sigma\rb$}\}\\
=&\#\{\text{irreducible characters of $\GL_n(q)\lb\sigma\rb$}\}\\
=&2\#\{\text{$\sigma$-stable characters}\}+\frac{1}{2}\#\{\text{non $\sigma$-stable characters}\}
\end{split}
\end{equation}
and from the fact that $\#\{\text{$\sigma$-stable classes}\}=\#\{\text{$\sigma$-stable characters}\}$, we deduce that
\begin{equation}
\#\{\text{classes contained in $\GL_n(q)\sigma$}\}=\#\{\text{$\sigma$-stable characters}\}.
\end{equation}
So the table that we are going to calculate is a square table, its lines and columns being indexed by the $\sigma$-stable irreducible characters of $\GL_n(q)$ and the conjugacy classes in $\GL_n(q)\sigma$ respectively. However, there is no natural bijection between the classes and the characters.

\subsection*{Deligne-Lusztig Induction for $\GL_n(q)\rtimes\lb\sigma\rb$}\hfill

Let $\sigma$ be as above and let $\rho:\GL_n(q)\rightarrow V$ be a $\sigma$-stable irreducible representation (meaning that $\rho$ and $\rho\circ\sigma$ are isomorphic). Defining an extension of $\rho$, say $\tilde{\rho}$, is to define an action of $\sigma$ on $V$ in such a way that $\tilde{\rho}(\sigma)^2=\Id$ and that $\tilde{\rho}(\sigma)\rho(g)\tilde{\rho}(\sigma)^{-1}=\rho(\sigma(g))$ for all $g\in\GL_n(q)$. Except in some particular cases, we do not know how to do this. However, when $\sigma$ is \textit{quasi-central}, we have a natural action of $\sigma$ on the Deligne-Lusztig varieties $X_w$ associated to $w\in W^{\sigma}$, the subgroup of $\sigma$-fixed elements of $W=W_G(T_0)$, with $T_0$ being a $\sigma$-stable and $F$-stable maximal torus of $G$. This allows us to define the extensions of the Deligne-Lusztig characters $R^G_{T_w}1$ to $\GL_n(q)\lb\sigma\rb$, where $T_w$ is an $F$-stable maximal torus corresponding to the $F$-conjugacy class of $w$. By expressing a unipotent character of $\GL_n(q)$ as a linear combination of these Deligne-Lusztig characters, we obtain an extension of this unipotent character. More concretely, if we take an $F$-stable and $\sigma$-stable Borel subgroup $B_0\subset G$, the variety $X_w$ consists of the Borel subgroups $B$ such that $(B,F(B))$ are conjugate to $(B_0,\dot{w}B_0\dot{w}^{-1})$ by $G$, where $\dot{w}$ is a representative of $w\in W^{\sigma}$ in $G$ which can be chosen to be $\sigma$-stable (one needs $\sigma$ to be quasi-central here). Then the  action of $\sigma$ on $X_w$ is just $B\mapsto \sigma(B)$, which induces an action on the cohomology. The character $R^G_{T_w}1$ thus extends to the function $$g\sigma\mapsto \tr(g\sigma|H^{\ast}_c(X_w,\ladic)),$$ denoted by $R^{G\sigma}_{T_w\sigma}1$. This is a particular case of the Deligne-Lusztig induction for non-connected reductive groups developed by Digne and Michel \cite{DM94}. More generally, given an $F$-stable and $\sigma$-stable Levi factor of a $\sigma$-stable parabolic subgroup, we have the maps $R^{G\sigma}_{L\sigma}$, that sends $L^F$-invariant functions on $L^F\sigma$ to $G^F$-invariant functions on $G^F\sigma$.

Each irreducible character $\chi$ of $\GL_n(q)$ is induced from an irreducible character $\chi_L$ of $L^F$, where $L$ is an $F$-stable Levi subgroup as in the setting of Theorem \ref{LS}. If $L$ is moreover an $\sigma$-stable factor of some $\sigma$-stable parabolic subgroup, and $\chi_L$ is a $\sigma$-stable character of $L^F$, then $\chi$ is also $\sigma$-stable. Suppose that we know how to calculate $\tilde{\chi}_L$, an extension of $\chi_L$ to $L^F\sigma$, then the character formula will allow us to calculate the values of $R^{G\sigma}_{L\sigma}\tilde{\chi}_L$, which coincides with the values of an extension of $\chi$ to $G^F\sigma$.

\subsection*{Quadratic-Unipotent Characters}\hfill

However, there exist some $\sigma$-stable characters of $G^F$ that can not be obtained by the above procedure. Let us look at some examples for $n=2$, $3$ and $4$. We take as $\sigma$ the automorphism $g\mapsto \mathscr{J}_ng^{-t}\mathscr{J}_n^{-1}$ with $g\in\GL_n(k)$, where

\begin{equation}
\mathscr{J}_2=
\left(
\begin{array}{cc}
& 1\\
-1& 
\end{array}
\right),~\mathscr{J}_3=
\left(
\begin{array}{ccc}
& & 1\\
&1& \\
1& &
\end{array}
\right).
\end{equation}
If $G=\GL_2(k)$, and $T$ is the maximal torus consisting of diagonal matrices, $1$ the trivial character of $\mathbb{F}_q^{\ast}$, $\eta$ the order 2 irreducible character of $\mathbb{F}_q^{\ast}$, and if we denote by $\theta$ the character $(1,\eta)$ of $T^F\cong \mathbb{F}_q^{\ast}\times\mathbb{F}_q^{\ast}$, then one can verify that $R^G_T\theta$ is an $\sigma$-stable irreducible character while $\theta\in\Irr(T^F)$ is not $\sigma$-stable.

Besides, a priori, the map $R^{G\sigma}_{L\sigma}$ is not defined for $L^F\lb\sigma\rb$, but for the normaliser $N_{G\lb\sigma\rb}(L,P)$ (the set of elements that simultaneously normalise $L$ and $P$, with $L$ being a Levi factor of $P$). If $L$ is a $\sigma$-stable Levi factor of a $\sigma$-stable parabolic subgroup $P$, then $N_{G\lb\sigma\rb}(L,P)=L\lb\sigma\rb$, otherwise, the two groups are not the same. In fact, what really matters is whether $N_{G\lb\sigma\rb}(L,P)$ meets the connected component $G\sigma$. Now we take $L= \GL_2(k)\times k^{\ast}$ and a character with semi-simple part $(\Id,\eta)$ with respect to this direct product. In this case, $L$ itself is not $\sigma$-stable. In fact, it is not conjugate to any $\sigma$-stable Levi subgroup. So $N_{G\lb\sigma\rb}(L,P)\subset G$, regardless of the choice of $P$.

The above examples are typical. Let $L=G_1\times G_2$ be a Levi subgroup of $G$, where $G_1\cong\GL_m(k)$ and $G_2\cong\GL_{n-m}(k)$. Let $\chi_1$(resp. $\chi_2$) be a unipotent irreducible character of $G_1^F$(resp. $G_2^F$). The character $\chi_L:=\chi_1\boxtimes\chi_2\eta$ (or $\chi_1\eta\boxtimes\chi_2$) always induces a $\sigma$-stable irreducible character of $G^F$, where we regard $\eta$ as a central character of $G_1^F$ or $G_2^F$. But $L$ does not fit into Deligne-Lusztig theory for non-connected groups: either $L$ is not conjugate to a $\sigma$-stable Levi factor of any $\sigma$-stable parabolic subgroup, or $\chi_L$ is not a $\sigma$-stable character of $L^F$. The irreducible characters of $\GL_n(q)$ that are induced from $\chi_L$ as above are called \textit{quadratic-unipotent}. They are parametrised by the 2-partitions of $n$. Their extensions to $\GL_n(q)\lb\sigma\rb$ have been computed by J.-L.Waldspurger by using character sheaves for non-connected groups. The result is as follows.

Let $(\mu_+,\mu_-)$ be a 2-partition of $n$, to which is associated the data $(\varphi_+,\varphi_-,h_1,h_2)$, where $\varphi_+$ (resp. $\varphi_-$) is an irreducible character, determined by the 2-quotient of $\mu_+$ (resp. $\mu_-$), of the Weyl group $\mathfrak{W}_+$ (resp. $\mathfrak{W}_-$) of type $\text{C}_{N_+}$ (resp. $\text{C}_{N_-}$), while $h_1$ and $h_2$ are two non negative integers related to the 2-cores of $\mu_+$ and $\mu_-$. We have $n=2N_++2N_-+h_1(h_1+1)+h_2^2$.

\begin{Thm}[Waldspurger]\label{Walds}
The extension of the quadratic-unipotent character of  $\GL_n(q)$ associated to $(\mu_+,\mu_-)$ is given up to a sign by
\begin{equation}
R^{G\sigma}_{\varphi}:=\frac{1}{|\mathfrak{W}_+|}\frac{1}{|\mathfrak{W}_-|}\sum_{\substack{w_+\in \mathfrak{W}_+\\ w_-\in \mathfrak{W}_-}}\varphi_+(w_+)\varphi_-(w_-)R^{G\sigma}_{L_{\mathbf{w}}\sigma}\phi_{\mathbf{w}}.
\end{equation}
\end{Thm}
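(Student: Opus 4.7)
The plan is to leverage the character sheaf formalism for non-connected reductive groups in the spirit of Lusztig's theory extended to $G\langle\sigma\rangle$, since the Levi subgroups $L = \GL_m \times \GL_{n-m}$ supporting the cuspidal data of a quadratic-unipotent character are typically not conjugate to a $\sigma$-stable Levi factor of any $\sigma$-stable parabolic. A direct Harish-Chandra or Deligne-Lusztig calculation on the disconnected group is therefore unavailable, and one must pass through the geometric side. I would begin by identifying the quadratic-unipotent characters of $\GL_n(q)$ with the Frobenius traces of certain character sheaves on the component $G\sigma$, and recast the problem of extending $\chi$ to $G^F\sigma$ as a problem of choosing a Frobenius structure on the corresponding character sheaf $\mathcal{A}$ on $G\sigma$.

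Next I would exploit the fact that the involution $\sigma$ has a fixed-point subgroup of classical type (orthogonal or symplectic according to the parity of $n$ and the form $\mathscr{J}_n$). The quadratic character $\eta$ entering $\chi_L = \chi_1 \boxtimes \chi_2\eta$ is precisely the shadow on $L^F$ of a non-trivial $\sigma$-equivariant structure, and the Lusztig-Shoji decomposition of such "$\eta$-twisted" character sheaves proceeds through cuspidal support on a Levi of $G\langle\sigma\rangle$ whose neutral component splits as a product of two classical groups whose semisimple ranks are governed by the integers $h_1$ and $h_2$ (attached to the 2-cores of $\mu_\pm$). The relative Weyl group of this cuspidal datum is $\mathfrak{W}_+ \times \mathfrak{W}_-$, of type $\mathrm{C}_{N_+} \times \mathrm{C}_{N_-}$, and its irreducible characters index the simple constituents of the induced perverse sheaf. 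The 2-quotient/2-core combinatorics of $(\mu_+, \mu_-)$ are then exactly what match a 2-partition of $n$ satisfying $n = 2N_+ + 2N_- + h_1(h_1+1) + h_2^2$ with a pair $(\varphi_+, \varphi_-) \in \Irr(\mathfrak{W}_+) \times \Irr(\mathfrak{W}_-)$.

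With this dictionary in place, the proof of the formula reduces to the trace-of-Frobenius computation on the cohomology of a parabolic induction diagram in the non-connected setting of Digne-Michel. For each $\mathbf{w} = (w_+, w_-) \in \mathfrak{W}_+ \times \mathfrak{W}_-$, one obtains an $F$-stable, $\sigma$-stable twisted Levi $L_{\mathbf{w}}$ and a cuspidal function $\phi_{\mathbf{w}}$ on $L_{\mathbf{w}}^F \sigma$ coming from the chosen cuspidal character sheaf; applying $R^{G\sigma}_{L_{\mathbf{w}}\sigma}$ and averaging against the orthogonality relations in $\mathfrak{W}_+ \times \mathfrak{W}_-$ against $\varphi_+(w_+)\varphi_-(w_-)$ extracts precisely the almost-character attached to $(\varphi_+, \varphi_-)$, which by the preceding combinatorial matching is the extension of $\chi$ up to a global sign. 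The sign is the choice of Frobenius isomorphism on $\mathcal{A}$, and since the statement is only asserted up to a sign, it can be absorbed into the normalisation of $\phi_{\mathbf{w}}$.

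The main obstacle is the identification in the second step: proving that the relevant character sheaves on $G\sigma$ really have cuspidal support on a Levi whose neutral component is a product of two classical groups with the numerics controlled by $(h_1, h_2)$, and that their Lusztig decomposition matches the 2-quotient of $(\mu_+, \mu_-)$. This is essentially a classical-group unipotent calculation (on the fixed-point side of $\sigma$) translated to the non-connected group via the equivariant Springer correspondence, and it is here that the constraint $q > n$ and the oddness of $p$ are used to ensure that unipotent supports behave as expected and that all relevant Frobenius structures can be rigidified.
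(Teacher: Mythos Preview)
Your outline is essentially correct and matches the approach that Waldspurger takes in \cite{W1}, which is what the paper cites rather than reproves. The paper does not give its own proof of this statement: it is stated in the introduction as a theorem of Waldspurger, and in \S 9.4 (Theorem~\ref{ExtL_0}) it is again attributed to \cite[\S 17]{W1}. What the paper contributes is the surrounding scaffolding: in \S 8 it recalls Lusztig's character-sheaf formalism for non-connected groups, and in Example~\ref{Eg-W_E} and Example~\ref{quad-uni-sheaf} it verifies that $\mathcal{W}_{\mathcal{E}}\cong\mathfrak{W}_{N_+}\times\mathfrak{W}_{N_-}$ and that the decomposition formula (\ref{dec-char-sheaf}) applied to this datum yields exactly the expression $R^{G\sigma}_{\varphi}$; in \S 9.4 it pins down the cuspidal functions $\phi(h_1,h_2)$ explicitly and compares its normalisation $\phi_{\mathbf{w}}$ with Waldspurger's $\phi'_{\mathbf{w}}$ (Propositions~\ref{Choice-h2} and~\ref{phi'weven}).

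So your plan---character sheaves on $G\sigma$, cuspidal support on a Levi whose $\sigma$-centraliser is $\Sp_{h_1(h_1+1)}\times\SO_{h_2^2}$, relative Weyl group $\mathfrak{W}_+\times\mathfrak{W}_-$, and the $2$-core/$2$-quotient dictionary---is exactly the route, and you have correctly located the hard step: showing that the characteristic function of the relevant simple summand $K_i$ coincides (up to sign) with an extension of the quadratic-unipotent character $\chi_{(\mu_+,\mu_-)}$. That identification is the substance of Waldspurger's argument in \cite{W1}, and the paper treats it as an imported black box rather than something it proves.
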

In the above expression, $L_{\mathbf{w}}$ is a $\sigma$-stable and $F$-stable Levi factor, isomorphic to $\GL_{h_1(h_1+1)+h_2^2}(k)\times T_{w_+}\times T_{w_-}$, of a $\sigma$-stable parabolic subgroup, and $\phi_{\mathbf{w}}$ is some kind of twisted tensor product of the characters $\phi(h_1,h_2)$, $\tilde{1}$ and $\tilde{\eta}$, where $\phi(h_1,h_2)$ is a cuspidal function on $\GL_{h_1(h_1+1)+h_2^2}(q)\sigma$, $\smash{\tilde{1}}$(resp. $\smash{\tilde{\eta}}$) is the extension of the linear character $1$(resp. $\eta$) of $T_{w_+}^F$(resp. $T_{w_-}^F$), the latter being defined via the product of norm maps. 

Suppose that $\mu_-$ is the empty partition and so we have a unipotent character. If $\mu_+$ has trivial 2-core or the 2-core $(1)$ according to the parity of $n$, then $L_{\mathbf{w}}$ becomes a $\sigma$-stable maximal torus and $\mathfrak{W}_+$ is isomorphic to $W^{\sigma}_G$, the $\sigma$-fixed subgroup of the Weyl group of $G$. The formula in this particular case resembles (\ref{eqthm1}) and was obtained by Digne and Michel \cite{DM94} without using character sheaves.

\subsection*{Parametrisation of $\sigma$-Stable Characters of $\GL_n(q)$}\hfill

A general $\sigma$-stable irreducible character is the product of a quadratic-unipotent component and a component that looks like  induced from an $\sigma$-stable Levi factor of a $\sigma$-stable parabolic subgroup. We refer to Proposition \ref{IrrLIstable} and Proposition \ref{IrrMstable} for the details. A brief summary is given below. 

Suppose that $\chi$ is a $\sigma$-stable irreducible character corresponding to $(M,\theta,\varphi)$ as in Theorem \ref{LS}. We write $\theta=(\alpha_i)_i$ with respect to the the decomposition of $M^F$ into a product of some $\GL_{n_i}(q^{r_i})$'s, where $\alpha_i$ are some characters of $\mathbb{F}_{q^{r_i}}^{\ast}$ and we have omitted the determinant map from the notation. Here we observe a restriction on the $\alpha_i$'s that is necessary for $\chi$ to be $\sigma$-stable. It is easy to see that the action of $\sigma$ sends $\chi$ to the character associated to $(\sigma(M),\sigma_{\ast}\theta,\sigma_{\ast}\varphi)$. According to the parametrisation of the irreducible characters of $\GL_n(q)$, there exists some $$g\in N_{G^F}(\sigma(M),M)=\{x\in G^F\mid x\sigma(M)x^{-1}=M\}$$ such that $\sigma_{\ast}\theta=\ad^{\ast}g\theta$. Note that the value of $\alpha_i$ only depends on the determinant of the corresponding factor. The action of $\sigma$ inverts the determinant while the conjugation by $g$ does not change the determinant. We can then conclude that for each $\alpha_i$, its inverse $\alpha_i^{-1}$ is also a factor of $\theta$. The factors satisfying $\alpha_i^{-1}=\alpha_i$ form the quadratic-unipotent part.

The parametrisation is as follows. Fix a $\sigma$-stable maximal torus $T$ contained in a $\sigma$-stable Borel subgroup $B\subset G$ and identify the simple roots with the vertices of the Dynkin diagram of $G$. If $I$ is a $\sigma$-stable subdiagram of the Dynkin diagram of $G$, then it defines a $\sigma$-stable Levi factor $L_I$ of some $\sigma$-stable standard  parabolic subgroup containing $B$. We fix an isomorphism between $L_I$ and a direct product of some smaller general linear groups. It has at most one $\sigma$-stable direct factor, denoted by $L_0$, and so $L_I\cong L_0\times L_1$ so that $\sigma$ non-trivially permutes the direct factors of $L_1$. All $\sigma$-stable standard Levi subgroup corresponds to such an $I$. We associate a quadratic-unipotent character $\chi_0$ to $L_0^F$, and a pair of characters, with semi-simple parts $\alpha_i$ and $\alpha_i^{-1}$ respectively, to each pair of direct factors of $L_1$ that are exchanged by $\sigma$. By defining the unipotent parts of the character in a way compatible with the action of $\sigma$, we obtain a $\sigma$-stable character of $L_1^F$, denoted by $\chi_1$. Then
$R^G_L(\chi_0\boxtimes\chi_1)$ is a $\sigma$-stable irreducible character of $G^F$ (for suitable $\alpha_i$'s), and all $\sigma$-stable irreducible characters of $\GL_n(q)$ are obtained this way. 

We will calculate the extension of $\chi_L=\chi_0\boxtimes\chi_1$ to $L^F\sigma$, and then apply the map $R^{G\sigma}_{L\sigma}$. Note that if we regard $\chi$ as induced from $(M,\theta,\varphi)$ following Theorem \ref{LS}, then $M$ is not necessarily $\sigma$-stable.

\subsection*{Extensions of $\sigma$-Stable Characters}\hfill

The extension of the quadratic-unipotent part given by Waldspurger's theorem, the problem can be reduced to the following.

\begin{Pro}
Put $L_1=G_0\times G_0$, $G_0=\GL_m(k)$, and let $\sigma_0$ be an automorphism of $G_0$ of order 2. Denote by $F_0$ the Frobenius of $\GL_m(k)$ that sends each entry to its q-th power. Define an automorphism $\sigma$ of $L_1$ by 
\begin{equation}
(g,h)\longmapsto (\sigma_0(h),\sigma_0(g)),
\end{equation}
and a Frobenius $F$ by
\begin{itemize}
\item Linear Case:
$(g,h)\longmapsto (F_0(g),F_0(h))$, 
\item Unitary Case:
$(g,h)\longmapsto (F_0(h),F_0(g))$.
\end{itemize}
The problem is to decompose the extension of a $\sigma$-stable irreducible character of $L_1^F$ to $L_1^F\lb\sigma\rb$ as a linear combination of Deligne-Lusztig characters.
\end{Pro}
Let us first look at the linear case. We have $L_1^F=G_0^{F_0}\times G_0^{F_0}$. Let $\chi$ be a unipotent character of $G_0^{F_0}$. Then $\chi\boxtimes\chi\in\Irr(L_1^F)$ is $\sigma$-stable. In order to calculate its extension, we separate $\sigma$ into two automorphisms, one sending $(g,h)$ to $(\sigma_0(g),\sigma_0(h))$, the other one, denoted by $\tau$,  sending $(g,h)$ to $(h,g)$. Denote by $\tilde{\chi}$ an extension of $\chi$ to $G_0^{F_0}\lb\sigma_0\rb$. Consider the $\tau$-stable character $\tilde{\chi}\boxtimes\tilde{\chi}$ of $G_0^{F_0}\lb\sigma_0\rb\times G_0^{F_0}\lb\sigma_0\rb$. Its extension to $(G_0^{F_0}\lb\sigma_0\rb\times G_0^{F_0}\lb\sigma_0\rb)\rtimes\lb\tau\rb$ restricts to an irreducible character $\bar{\chi}$ of $L_1^F\lb\sigma\rb$, regarded as a subgroup of $(G_0^{F_0}\lb\sigma_0\rb\times G_0^{F_0}\lb\sigma_0\rb)\rtimes\lb\tau\rb$. This gives an extension of $\chi\otimes\chi$. Some linear algebra calculation shows that $\bar{\chi}((g,h)\sigma)=\chi(g\sigma_0(h))$. The latter is the value of a character of $\GL_n(q)$, the decomposition of which into Deligne-Lusztig characters is known.

The unitary case is a little more complicated and relies on the result of the linear case. In this case, $L_1^F\cong G_0^{F_0^2}$, and the action of $\sigma$ on $G_0^{F_0^2}$ is given by $g\mapsto \sigma_0F_0(g)$, which can be thought of as another Frobenius endomorphism. That is where the Shintani descent intervenes, which relates the functions on $G_0^{F^2_0}\sigma_0F_0$ to the functions on $G_0^{\sigma_0F_0}.F_0^2$. Note that $(\sigma_0F_0)^2=F_0^2$ acts trivially on $G_0^{\sigma_0F_0}$. We know how to calculate the characters of $G_0^{\sigma_0F_0}\cong\GL^{-}_m(q)$, which extends trivially to $G_0^{\sigma_0F_0}.F_0^2$. Thus, we obtain the extension of a character to $L_1^F\sigma$.  The result is as follows.

Let $\theta_1$ be a $\sigma$-stable linear character of $L_1^F$ and let $\varphi\in\Irr(W^{\sigma}_{L_1})^F$. Note that a $\sigma$-stable linear character extends trivially to $L_1^F\sigma$, and that $W^{\sigma}_{L_1}$ is in fact a product of symmetric groups.

\begin{Thm}
Let $\chi_{L_1}$ be a $\sigma$-stable irreducible character of $L_1^F$ defined by $(\theta,\varphi)$. Then, for some choice of $\tilde{\varphi}$, the extension of $\chi_{L_1}$ to $L_1^F\sigma$ is given up to a sign by
\begin{equation}
R^{L_1\sigma}_{\varphi}\tilde{\theta}_1=|W_{L_1}^{\sigma}|^{-1}\sum_{w\in W_{L_1}^{\sigma}}\tilde{\varphi}(wF)R_{T_w\sigma}^{L_1\sigma}\tilde{\theta}_1.
\end{equation}
\end{Thm}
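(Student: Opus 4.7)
\noindent\emph{Proof proposal.} The plan is to handle the two Frobenius cases separately, each time reducing to the Lusztig--Srinivasan formula \eqref{eqthm1} applied to an actual general linear group.

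For the linear case, I follow the two-step Clifford procedure sketched in the introduction: factor $\sigma=\sigma'\tau$ with $\sigma'(g,h)=(\sigma_0(g),\sigma_0(h))$ and $\tau(g,h)=(h,g)$; choose an extension $\tilde\chi$ of each component $\chi\in\Irr(G_0^{F_0})^{\sigma_0}$ to $G_0^{F_0}\lb\sigma_0\rb$; extend the $\tau$-invariant character $\tilde\chi\boxtimes\tilde\chi$ to $(G_0^{F_0}\lb\sigma_0\rb\times G_0^{F_0}\lb\sigma_0\rb)\rtimes\lb\tau\rb$; and restrict back to $L_1^F\lb\sigma\rb$, sitting inside the latter group. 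The linear-algebra calculation noted in the introduction yields
\[
\bar\chi\bigl((g,h)\sigma\bigr)=\chi\bigl(g\sigma_0(h)\bigr).
\]
Define $\pi\colon L_1^F\sigma\to G_0^{F_0}$ by $(g,h)\sigma\mapsto g\sigma_0(h)$; one checks directly that $\pi$ intertwines $L_1^F$-conjugation with $G_0^{F_0}$-conjugation, that each $\sigma$-stable maximal torus $T_w\subset L_1$ with $w\in W^\sigma_{L_1}\cong W_{G_0}$ pushes forward under $\pi$ to a maximal torus of $G_0$ of type $w$, and that $\tilde\theta_1$ pushes forward to the corresponding linear character $\theta$. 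Applying the character formula for Deligne--Lusztig induction on non-connected reductive groups (Digne--Michel \cite{DM94}) then identifies $R^{L_1\sigma}_{T_w\sigma}\tilde\theta_1$ with $R^{G_0}_{T_w}\theta\circ\pi$, so the asserted identity reduces to \eqref{eqthm1} for $\chi$ on $G_0^{F_0}=\GL_m(q)$.

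For the unitary case, identify $L_1^F\cong G_0^{F_0^2}$ via $(g,h)\mapsto h$; the coset $L_1^F\sigma$ then becomes $G_0^{F_0^2}\cdot(\sigma_0F_0)$. Shintani descent provides a norm-preserving bijection between the conjugacy classes of $G_0^{F_0^2}\cdot\sigma_0F_0$ and those of $G_0^{\sigma_0F_0}\cdot F_0^2$, and since $(\sigma_0F_0)^2=F_0^2$ acts trivially on $G_0^{\sigma_0F_0}\cong\GL_m^-(q)$, every character of $\GL_m^-(q)$ extends canonically to the latter coset. Using the linear-case result to transport the ingredients $(\tilde\theta_1,\tilde\varphi)$ through the descent, $\tilde\chi_{L_1}$ is seen to correspond via Shintani to the honest character $R^{G_0^{\sigma_0F_0}}_{\varphi}\theta$ of $\GL_m^-(q)$. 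Matching maximal tori and linear characters across the descent is routine (the Shintani correspondent of $T_w\subset L_1$ is the $\sigma_0F_0$-stable torus of type $w\in W_{G_0}$), and Theorem \ref{LS} for $\GL_m^-(q)$ then delivers the claimed formula.

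The main obstacle is sign accounting. Clifford extensions are defined up to sign, the Shintani intertwiner is normalised up to a root of unity, and the extension $\tilde\varphi$ of $\varphi$ to $W^\sigma_{L_1}\rtimes\lb F\rb$ is itself ambiguous by a sign; these independent ambiguities must be organised into one compatible set of choices realising the formula exactly. Since the statement only claims equality up to a single global sign, this is manageable, and I would pin down the compatibility by evaluating both sides on one well-chosen regular semisimple element of $L_1^F\sigma$---for instance of Coxeter type---where the right-hand side is a single Deligne--Lusztig character and the left-hand side is computable directly from $\bar\chi((g,h)\sigma)=\chi(g\sigma_0(h))$ (or its unitary analogue) via the Digne--Michel character formula.
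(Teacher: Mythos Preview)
Your plan matches the paper's proof. The linear-case identification $R^{L_1\sigma}_{T_w\sigma}\tilde\theta_1=(R^{G_0}_{T_w}\theta)\circ\pi$ is exactly the content of the paper's Lemma~\ref{lemcaslin}, proved by a term-by-term comparison of the two character formulae: one matches the Jordan decomposition of $(g,h)\sigma$ with that of $g\sigma_0(h)$ (the semisimple part corresponds to $s\sigma_0(t)$ and the unipotent part to $u^2$), identifies $C_M((s,t)\sigma)\cong C_{G_0}(s\sigma_0(t))$, sets up a bijection between the indexing sets $\{x\in M^F:x\zeta x^{-1}\in J\sigma\}$ and $\{y\in G_0^{F_0}:y\bar s y^{-1}\in I\}\times I^{F_0}$, and checks that the Green functions agree because squaring a unipotent element preserves its Jordan type. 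You correctly locate this step as the crux, but ``applying the character formula then identifies\ldots'' is where all the work lies, not a consequence of general functoriality.

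In the unitary case your ``routine'' hides three specific inputs. First, the paper invokes the precise Shintani identity for principal series (Theorem~\ref{5.6} with $m=2$, $\sigma_1=1$, $\sigma_2=\sigma_0$), which introduces the operator $\Omega^2_{\sigma_0F_0}$; the global sign emerges here because the Frobenius eigenvalues on $H^*_c(X_w)$ for type $^2\!A_n$ are powers of $-q$, forcing $\Omega^2=\pm1$. Second, carrying the individual torus terms across the descent uses the commutation of Shintani descent with Deligne--Lusztig induction (Proposition~\ref{Di1.1}), not just a matching of tori. Third---and this is the point most likely to be missed---the reduction back to the linear case is applied to $M$ equipped with the \emph{twisted} Frobenius $\sigma F$ (which acts diagonally on the two factors), not to $(M,F)$: one shows $\Sh_{\sigma F/F}R^{M^{\sigma F}\sigma}_{T_{w_M}\sigma}1=R^{M^F\sigma}_{T_{w_M}\sigma}1$ and recognises the left side via the linear formula. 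Your Coxeter-element check is then unnecessary; the sign has already been pinned down structurally by $\Omega^2$.
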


Combined with the theorem of Waldspurger, it gives the main theorem below.

According to the parametrisation of $\sigma$-stable characters, each  $\chi\in\Irr(\GL_n(q))^{\sigma}$ is of the form $R^G_L(\chi_0\boxtimes\chi_1)$, where $L\cong L_0\times L_1$ is a $\sigma$-stable and $F$-stable Levi factor of some $\sigma$-stable parabolic subgroup, $\chi_0$ is a quadratic-unipotent character of $L_0^F$ and $\chi_1$ is a $\sigma$-stable irreducible character of $L_1^F$ whose semi-simple part and  unipotent part are defined by $\theta_1\in\Irr_{reg}(L_1^F)$ and $\varphi\in\Irr(W_{L_1}^{\sigma})$ respectively. The notations of Theorem \ref{Walds} are used in the following theorem.

\begin{namedtheorem}[Main Theorem]
For some choice of $\tilde{\varphi}$, the extension of $\chi$ is up to a sign given by
\begin{equation}
\tilde{\chi}|_{G^F\sigma}=|W^{\sigma}_{L_1}\times\mathfrak{W}_+\times\mathfrak{W}_-|^{-1}\!\!\!\!\!\!\!\!\!\!\!\!\!\!\!\!\!\!\!\!\sum_{(w,w_+,w_-)\in W^{\sigma}_{L_1}\times\mathfrak{W}_+\times\mathfrak{W}_-}\!\!\!\!\!\!\!\!\!\!\!\!\!\!\!\!\tilde{\varphi}(wF)\varphi_+(w_+)\varphi_-(w_-)R^{G\sigma}_{(T_w\times L_{\mathbf{w}})\sigma}(\tilde{\theta}_1\widetilde{\boxtimes}\phi_{\mathbf{w}}).
\end{equation}
\end{namedtheorem}
The symbol $\widetilde{\boxtimes}$, which we define in \S 9.1, is some kind of tensor product on the component $(T_w\times L_{\mathbf{w}})\sigma$.

\subsection*{Computation of the Character Table}\hfill

We notice that in $R^{L_0\sigma}_{L_{\mathbf{w}}\sigma}\phi_{\mathbf{w}}$ appears the generalised Green functions defined on the centraliser of a semi-simple element in $L_0^F\sigma$, which is in general a product of $\GL^{\pm}_m(q)$, $\Sp_{2m}(q)$, $\SO_{2m+1}(q)$ and $\SO^{\pm}_{2m}(q)$, where the negative sign means that the Frobenius is twisted by a graph automorphism of order 2. The Green functions of finite classical groups were first tackled in \cite{S83}. In \cite{L85V}, Lusztig gives an algorithm of calculating the generalised Green functions of classical groups. The values of the (generalised) Green functions have since been computed by various people, and we will not make explicit theirs values except in the examples.

In the implementation of the computation using the above formula, there is one more technical issue: determination of the set
\begin{equation*}
\begin{split}
A(s\sigma,\tau,h_1,h_2)^F=&\{h\in G^F\mid hs\sigma h^{-1}\in L_{\mathbf{w}}\sigma \text{ is isolated with $C_{L_{\mathbf{w}}}(hs\sigma h^{-1})$ isomorphic }\\&\text{ to the product of $\Sp_{h_1(h_1+1)}(k)\times\Ort_{h^2_2}(k)$ and a torus}\}
\end{split}
\end{equation*}
which is involved in the character formula for $R^{L_0\sigma}_{L_{\mathbf{w}}\sigma}\phi_{\mathbf{w}}$. In Section \ref{ComputeA}, we reduce the problem to some computations in various Weyl groups, which should be finished case by case. 

\subsection*{Organisation of the Article}\hfill

Due to the potential interest to a general audience, also due to the fact that collecting the necessary information from the literature for implementing the computation is already an effort, we include substantial review of established results in this article. 

In Section 1, we recall many results on finite classical groups that will be used in the computation of the character table. In Section 2 and 3, we recall some results on non-connected linear algebraic groups and the theory of Deligne-Lusztig induction for the finite groups that arise from non-connected groups. These results will be used throughout the article. In Section 4, we specialise to the group $\GL_n(q)\rtimes\lb\sigma\rb$ and give some explicit information on this group. In Section 5 and 6, we give the parametrisation of $\sigma$-stable irreducible characters of $\GL_n(q)$ and the conjugacy classes contained in $\GL_n(q)\sigma$. The parametrisations are give in terms of some very explicit combinatorial data. In section 7, we collect some information on the eigenvalues of the Frobenius endomorphism on intersection cohomology groups of Deligne-Lusztig varieties and recall some results on Shintani descent. In Section 8 we recall some general results on character sheaves and give some examples in the case that we are interested in. The contents of Section 7 and 8 will only be used in Section 9, where we essentially prove the main theorem, which is stated in Section 11. In Section 10, we tackle a specific problem in the computation of Deligne-Lusztig inductions, namely the determination of the set $\{h\in \GL_n(q)\mid hs\sigma h^{-1}\in M\sigma\}$ for some semi-simple element $s\sigma$ and some Levi subgroup $M$. We give a procedure to reduce the problem to some computations in Weyl groups. This allows us to write the formula in the main theorem in terms of purely combinatorial data. In the last section, we explicitly determine the complete character table of $\GL_n(q)\rtimes\lb\sigma\rb$ for $n=2$ and $3$.
 
\subsection*{Acknowledgement.}
This article is part of my thesis prepared at Universit\'e de Paris. I thank Olivier Brunat for bringing to my attention Shintani descent. I thank Fran\c cois Digne for carefully reading and giving numerous comments on an earlier version of the article. I thank my advisor Emmanuel Letellier for carefully reading the first four sections and giving many comments. I thank Jean Michel for answering some questions and reading a draft version of this article. I thank the anonymous referee for pointing out some mistakes and many typos.

\setcounter{tocdepth}{1}
\numberwithin{Thm}{subsection}
\numberwithin{equation}{subsubsection}
\addtocontents{toc}{\protect\setcounter{tocdepth}{1}}

\section{Finite Classical Groups}

\subsection{Partitions}
\subsubsection{}
We denote by $\mathcal{P}_n$ the set of all partitions of the integer $n\ge 0$ and by $\mathcal{P}$ the union $\sqcup_n\mathcal{P}_n$. A partition is written as $\lambda=(\lambda_1\ge\lambda_2\ge\cdots)$, a decreasing sequence of positive integers, or as $\lambda=(1^{m_1},2^{m_2},\ldots)$ where $m_i$ is the multiplicity of $i$ that appears in $\lambda$. Each $\lambda_i$ is called a \textit{part} of $\lambda$. We denote by $|\lambda|$ the size of $\lambda$ and $l(\lambda)$ the length of $\lambda$. Denote by $\mathcal{P}_n(2)$ the set of all $2$-partitions of $n$, i.e. the couples of partitions $(\lambda^{(0)},\lambda^{(1)})\in\mathcal{P}\times\mathcal{P}$ satisfying $|\lambda^{(0)}|+|\lambda^{(1)}|=n$.

\subsubsection{}\label{kappa}
A partition $\lambda=(1^{m_1},2^{m_2},\ldots)$ is called \textit{symplectic} if $m_i$ is even for every odd $i$. To each symplectic partition $\lambda$ we associate a finite set $\bs{\kappa}(\lambda):=\{i\text{ even}|m_i>0\}$ and put $\kappa(\lambda)=|\bs{\kappa}(\lambda)|$. We denote by $\mathcal{P}^{sp}_n\subset\mathcal{P}_n$ the subset of symplectic partitions. A partition $\lambda=(1^{m_1},2^{m_2},\ldots)$  is called \textit{orthogonal} if $m_i$ is even for every even $i$. To each orthogonal partition $\lambda$ we associate a finite set $\bs{\kappa}(\lambda):=\{i\text{ odd}|m_i>0\}$ and put $\kappa(\lambda)=|\bs{\kappa}(\lambda)|$. We denote by $\mathcal{P}^{ort}_n\subset\mathcal{P}_n$ the subset of orthogonal partitions. We may omit $\lambda$ from the notations $\bs{\kappa}(\lambda)$ and $\kappa(\lambda)$ if no confusion arises. The orthogonal partitions with $\kappa=0$ are called \textit{degenerate}. The subset of non degenerate partitions of $n$ is denoted by $\mathcal{P}^{ort,nd}_{n}$ and that of degenerate partitions is denoted by $\mathcal{P}^{ort,d}_{n}$.

\subsubsection{}\label{quotcore}
Given a partition $\lambda$ of size $n$, we take $r\ge l(\lambda)$, and we put $\delta_r=(r-1,r-2,\ldots,1,0)$. Let $(2y_1>\cdots>2y_{l_0})$ and $(2y_1'+1>\cdots>2y'_{l_1}+1)$ be the even parts and the odd parts of $\lambda+\delta_r$, where the sum is taken term by term and $\lambda$ is regarded as an decreasing sequence of integers $(\lambda_i)_i$ with $\lambda_i=0$ for $i>l(\lambda)$. Denote by $\smash{\lambda^{(0)}}$ the partition defined by $\smash{\lambda^{(0)}_k}=y_k-l_0+k$ and denote by $\smash{\lambda^{(1)}}$ the partition defined by $\smash{\lambda^{(1)}_k}=y'_k-l_1+k$. Then $(\lambda^{(0)},\lambda^{(1)})_r$ is a 2-partition that depends on $r$. Changing the value of $r$ will permutes $\lambda^{(0)}$ and $\lambda^{(1)}$. The \textit{2-quotient}  of $\lambda$ is then the unordered pair of partitions $(\lambda^{(0)},\lambda^{(1)})$. 

Denote by $\lambda'$ the partition that has as its parts the numbers $2s+t$, $0\le s\le l_t-1$, $t=0,1$. We have $l(\lambda')=l(\lambda)$. The \textit{2-core} of $\lambda$ is the partition defined by $(\lambda'_k-l(\lambda)+k)_{1\le k\le l(\lambda)}$. It is independent of $r$ and, if non trivial, is necessarily of the form $(d,d-1,\ldots,2,1)$, for some $d\in\mathbb{Z}_{>0}$. Fixing $r$, the above constructions give a bijection $$\left\{\begin{array}{c}\text{The partitions of $n$}\\\text{with 2-core $(d,d-1,\ldots,2,1)$}\end{array}\right\}\longleftrightarrow\left\{\text{The 2-partitions of $\frac{1}{2}(n-d(d+1)/2)$}\right\}.$$

\subsection{Weyl Groups}
Some basic facts about Weyl groups of type $\text{B}_m$, $\text{C}_m$ and $\text{D}_m$.
\subsubsection{}\label{couronne}
Fix a positive integer $m$. Denote by $w_0$ the permutation $(1,-1)(2,-2)\cdots(m,-m)$ of the set $$I=\{1,\ldots,m,-m,\ldots,-1\}.$$ The set of permutations of $I$ that commute with $w_0$ is identified with $(\mathbb{Z}/2\mathbb{Z})^m\rtimes\mathfrak{S}_m$. This is the Weyl group of type $B_m$ and $C_m$, which will be denoted by $\mathfrak{W}_m$. Here we identify $\mathbb{Z}/2\mathbb{Z}$ with $\bs\mu_2$. An element of $\mathfrak{W}_m$ can be written as
\begin{equation}
\begin{split}
w&=((\epsilon_1,\ldots,\epsilon_m),\tau)\in(\mathbb{Z}/2\mathbb{Z})^m\rtimes\mathfrak{S}_m,\\
\end{split}
\end{equation}
with $((1,\ldots,1,\epsilon_i,1,\ldots,1),1)$, $\epsilon_i=-1$ being the permutation $(i,-i)$, and $((1,\ldots,1),\tau)$ being the permutation $$i\mapsto \tau(i),\quad-i\mapsto\tau(-i)=-\tau(i).$$

\subsubsection{}\label{wreath}
The permutation $\tau$ is decomposed into cycles $\tau=c_{I_1}\cdots c_{I_l}$, where the disjoint subsets $I_r\subset\{1,\ldots,m\}$ form a partition of $\{1,\ldots,m\}$ and $c_{I_r}$ is a circular permutation of the indices in $I_r$. The permutation $\tau$ determines a partition $(\tau_1,\ldots,\tau_l)$ of $m$, also denoted by $\tau$, with $\tau_r=|I_r|$ for any $1\le r\le l$.

For all $1\le r\le l$, put $\bar{\epsilon}_r=\prod_{k\in I_r}\epsilon_k$ and $\underline{\epsilon}_r=(\epsilon_k)_{k\in I_r}$. Define the permutations
\begin{equation}
\tau^{(0)}=\prod_{\bar{\epsilon}_r=1}c_{I_r},\quad
\tau^{(1)}=\prod_{\bar{\epsilon}_r=-1}c_{I_r},
\end{equation}
so that $\tau=\tau^{(0)}\tau^{(1)}$. Also denote by $\tau^{(0)}=\smash{(\tau^{(0)}_r)}$ and $\tau^{(1)}=\smash{(\tau^{(1)}_r)}$ the associated partitions. We then have a 2-partition $(\tau^{(0)},\tau^{(1)})$, which determines the conjugacy class of $w$. We sometimes call it a signed partition of $m$. The conjugacy classes and irreducible characters of $\mathfrak{W}_m$ are both parametrised by the 2-partitions of size 
$m$.  

\subsubsection{}\label{W^D}
The Weyl group of type $D_m$, denoted by $\mathfrak{W}^D_m$, is the subgroup of $\mathfrak{W}_m$ consisting of the elements $((\epsilon_1,\ldots,\epsilon_m),\tau)$ such that $\prod\epsilon_i=1$. For any $a$, let $$\sgn:\mathfrak{W}_a\rightarrow \{\pm 1\}$$ be the map whose kernel is $\mathfrak{W}_a^D$. The parametrisation of the conjugacy classes of $\mathfrak{W}^D_m$ is given as follows. (See \cite[Proposition 25]{Ca}) Let $\tau$ be a signed partition. If each part of $\tau$ is even and the $\bar{\epsilon}_r$'s are all equal to $1$, then the conjugacy class of $\mathfrak{W}_m$ corresponding to $\tau$ splits into two classes of $\mathfrak{W}^D_m$. Otherwise, this conjugacy class restricts to one single class of $\mathfrak{W}^D_m$ if the restriction is non-empty.

\subsubsection{}
Let $m_0$ and $m$ be some non negative integers and put $n=2m+m_0$. Let $L$ be a Levi subgroup of $G=\Sp_n(k)$ (resp. $\SO_n(k)$) which is isomorphic to $\Sp_{m_0}(k)\times(k^{\ast})^m$ (resp. $\SO_{m_0}(k)\times(k^{\ast})^m$), then $N_G(L)/L$ is isomorphic to $\mathfrak{W}_m$ unless $m_0=0$ and $G$ is the orthogonal group, in which case $N_G(L)/L\cong \mathfrak{W}_m^D$.

\subsection{Finite Classical Groups}
Details concerning the followings facts can be found in \cite[\S 3.3, \S 3.4, \S 3.5, \S 3.6, \S 3.7]{Wi} and \cite[\S 2.6]{Wall}.

\subsubsection{}
Let $V$ be an $n$ dimensional vector space over $\mathbb{F}_q$.  Here we recall the classification of non degenerate symmetric bilinear forms over $V$ when $\mathbb{F}_q$ has odd characteristic. The non degenerate Hermitian forms and non degenerate skew symmetric bilinear forms are unique up to equivalence.  

Fix a non-square $\lambda$ in $\mathbb{F}_q$. Each non degenerate symmetric bilinear form $Q$ on $V$ is either equivalent to the diagonal form $\diag(1,\ldots,1)$ or the diagonal form $\diag(1,\ldots,1,\lambda)$, so there are exactly two equivalence classes and they are distinguished by the determinant. If $n=2N$ is even, we say that the form is split if there is a totally isotropic subspace of dimension $N$, and non split otherwise. In the case of even dimension, the diagonal form $\diag(1,\ldots,1)$ is non split exactly when $q\equiv 3\mod 4$ and $N$ is odd. We regard $\eta$ as a character of $\mathbb{F}_q^{\ast}\setminus(\mathbb{F}_q^{\ast})^2$. Put $$\eta(Q):=\eta\left((-1)^{[n/2]}\det(Q)\right).$$ Then in the even dimensional case $\eta(Q)=+1$ if and only if $Q$ is split. In the odd dimensional case, we can also use $\eta(Q)$ to distinguish the two forms.

A symmetric bilinear form $Q$ defines an orthogonal group as a subgroup of $\GL(V)=\GL_n(q)$. If $n$ is even, we say that the group is split if $Q$ is split, in which case it is denoted by $\Ort_n^+(q)$, and that the group is non split otherwise, in which case it is denoted by $\Ort_n^-(q)$. If $n$ is odd, we use the same notations, with the sign determined by the value of $\eta(Q)$. The special orthogonal groups are then denoted by $\SO^+_n(q)$ and $\SO^-_n(q)$ accordingly. If $n$ is odd, then the two non equivalent forms define isomorphic orthogonal groups. In general, the '$+$' for the split groups will be omitted from the notation. We will however keep '$\pm$' in the notation of odd orthogonal groups whenever it is necessary to distinguish the defining forms. The subgroup of $\GL_n(q^2)$ defined by a non degenerate Hermitian form will be denoted by $\GL_n^-(q)$.

The notations and terminology introduced above come from the language of algebraic groups. Suppose that $G$ is one of the groups $\GL_n(k)$, $\Sp_n(k)$, $\SO_n(k)$ and $\Ort_n(k)$, and $F$ denotes the Frobenius endomorphism. If the Frobenius $F$ is split, then the associated finite group $G^F$ is one of the split groups $\GL_n(q)$, $\Sp_n(q)$, $\SO_n(q)$ and $\Ort_n(q)$, and if $F$ induces an involution of the Dynkin diagram, then the associated finite group is one of the non split groups $\GL^{-}_n(q)$, $\SO^{-}_n(q)$ and $\Ort^{-}_n(q)$ (only when $n$ is even for the orthogonal groups). We will most often work with algebraic groups instead of linear algebra, except when the parametrisation of unipotent classes is in question.

\subsubsection{}
The cardinality of finite classical groups is as follows. 
\begingroup
\allowdisplaybreaks
\begin{align}
|\GL_n(q)|&=q^{n(n-1)/2}\prod_{i=1}^n(q^i-1),\\
|\GL_n^{-}(q)|&=q^{n(n-1)/2}\prod_{i=1}^n(q^i-(-1)^i),\\
|\Sp_{2m}(q)|&=q^{m^2}\prod_{i=1}^m(q^{2i}-1),\\
|\SO_{2m+1}(q)|&=q^{m^2}\prod_{i=1}^m(q^{2i}-1),\\
|\SO_{2m}(q)|&=q^{m(m-1)}(q^m-1)\prod_{i=1}^{m-1}(q^{2i}-1),\\
|\SO^{-}_{2m}(q)|&=q^{m(m-1)}(q^m+1)\prod_{i=1}^{m-1}(q^{2i}-1).
\end{align}
\endgroup

\subsection{Centralisers of Unipotent Elements}\label{uni-centraliser}
\subsubsection{}
Suppose that $G^F$ is isomorphic to one of the groups: $\Sp_n(q)$, $\Ort_n(q)$, $\Ort_n^{-}(q)$. We regard $G^F$ as a subgroup of $\GL(V)$ defined by some bilinear form $Q(-,-)$ on the $n$-dimensional vector space $V$ over $\mathbb{F}_q$. Let $\epsilon\in\{\pm\}$ be such that $\epsilon=+$ if the form is symmetric, and $\epsilon=-$ if it is skew symmetric. According to \cite{SS}, each nilpotent element $x\in \mathfrak{g}^F:=\Lie(G)^{F}$ has a normal form described as follows.
\begin{Thm}(\cite[\S 2.19]{SS})\label{SS2.19}
Let $x\in \mathfrak{g}^F$ be any nilpotent element. There exists a basis of $V$ $$\{e_k^{(a)},f_l^{(b)},g_l^{(b)}\mid 1\le k\le s,~1\le l\le t,~a\in\mathbb{Z}/d_k\mathbb{Z},~b\in\mathbb{Z}/\delta_l\mathbb{Z}\},$$ where $s$, $t$, $d_k$ and $\delta_l$ are some integers, such that 
\begin{itemize}
\item[(i)] The action of $x$ on $V$ is given by
\begin{equation*}
e_k^{(a)}\longmapsto e_k^{(a+1)},\quad
f_l^{(b)}\longmapsto f_l^{(b+1)},\quad
g_l^{(b)}\longmapsto g_l^{(b+1)},
\end{equation*}
for any $k$, $l$, $a$ and $b$;
\item[(ii)] With respect to this basis, the bilinear form $Q$ is given by 
\begin{equation*}
\begin{split}
Q(e_k^{(a)},e_k^{(d_k-a-1)})&=(-1)^a\lambda_k,\text{ for some $\lambda_k\in\mathbb{F}_q^{\ast}$},\\
Q(f_l^{(b)},g_l^{(\delta_l-b-1)})&=\epsilon Q(g_l^{(\delta_l-b-1)},f_l^{(b)})=(-1)^b,
\end{split}
\end{equation*}
with the values on all other pairs of the basis vectors being zero.
\end{itemize}
\end{Thm}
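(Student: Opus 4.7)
The plan is to prove the statement by induction on $\dim V$, at each step isolating a non-degenerate $x$-invariant subspace $W$ that realizes exactly one of the two standard shapes ($e$-type or $(f,g)$-type), then applying the induction hypothesis to $W^\perp$. The workhorse identity is
\begin{equation*}
Q(x^a u, x^b u') = (-1)^a Q(u, x^{a+b} u'),
\end{equation*}
obtained by iterating the condition $Q(xu, u') + Q(u, xu') = 0$ that defines $x \in \Lie(G)$. This identity will account both for the $(-1)^a \lambda_k$ pattern in the self-dual block and for the relation $Q(f_l, g_l) = \epsilon Q(g_l, f_l)$ in the paired block.

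At each stage, let $d$ be the largest Jordan block size of $x$ on the remaining $V$, and ask whether there exists $v \in V^F$ with $x^{d-1} v \neq 0$ and $Q(v, x^{d-1} v) \neq 0$. If yes (the self-dual case), then $W := \mathbb{F}_q[x] v$ is $d$-dimensional, $x$-stable, and $Q|_W$ is non-degenerate: the identity above makes its Gram matrix in the basis $\{x^a v\}_{a=0}^{d-1}$ anti-diagonal with entries $(-1)^a \lambda_k$ where $\lambda_k := Q(v, x^{d-1} v) \in \mathbb{F}_q^{*}$, and setting $e_k^{(a)} := x^a v$ yields the $e$-block. If no (the paired case), then $Q(v, x^{d-1}v) = 0$ for every such $v$; pick one and, using non-degeneracy of $Q$ on $V$, find a second vector $w \in V^F$ with $x^{d-1}w \neq 0$ and $Q(v, x^{d-1}w) \neq 0$. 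After rescaling so that $Q(v, x^{d-1}w) = 1$, and then making successive corrections $v \leftarrow v + h_1(x) w$ and $w \leftarrow w + h_2(x) v$ with $h_i \in \mathbb{F}_q[X]$ of degree $< d$ to kill the residual self-pairings $Q(v, x^i v)$ and $Q(w, x^i w)$, one obtains $W := \mathbb{F}_q[x]v \oplus \mathbb{F}_q[x]w$ of dimension $2d$ on which $Q$ has precisely the paired normal form. Setting $f_l^{(b)} := x^b v$ and $g_l^{(b)} := x^b w$ completes this case, and the identity above combined with $Q(u,u') = \epsilon Q(u', u)$ produces the stated $\epsilon$-symmetry between the two pairings.

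The induction closes because $W^\perp$ is automatically $x$-stable (for $u \in W^\perp$ and $w' \in W$ one has $Q(xu, w') = -Q(u, xw') = 0$ since $xw' \in W$) and $Q|_{W^\perp}$ remains non-degenerate. The main obstacle will be the paired case: one must verify that the linear systems determining the correction polynomials $h_1, h_2$ are solvable over $\mathbb{F}_q$, which is exactly where the odd characteristic hypothesis enters (division by $2$ is needed to solve the equations coming from the symmetric/skew-symmetric constraints), and one must simultaneously track $F$-rationality so that all chosen vectors remain in $V^F$. The appearance of the invariants $\lambda_k \in \mathbb{F}_q^{*}$ (rather than a normalized $\pm 1$) in the self-dual block reflects the fact that rescaling $v \mapsto \mu v$ with $\mu \in \mathbb{F}_q^{*}$ sends $\lambda_k$ to $\mu^2 \lambda_k$, so $\lambda_k$ is canonical only modulo squares, and different classes correspond to distinct $G^F$-orbits inside the same geometric orbit.
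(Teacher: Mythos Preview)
The paper does not supply its own proof of this theorem: it is quoted verbatim as a result of Springer--Steinberg, with citation \cite[\S 2.19]{SS}, and no argument is given in the text. So there is nothing in the paper to compare your proposal against.

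That said, your outline is the standard approach and matches the one in the original reference: induct on $\dim V$, peel off a maximal $x$-cyclic subspace $W$ (self-dual or paired according to whether the top pairing $Q(v,x^{d-1}v)$ can be made nonzero), check $W$ is non-degenerate and $x$-stable, and recurse on $W^\perp$. The identity $Q(x^a u,x^b u') = (-1)^a Q(u,x^{a+b}u')$ is indeed the engine. One small point worth tightening in the paired case: the existence of $w$ with $x^{d-1}w\neq 0$ and $Q(v,x^{d-1}w)\neq 0$ follows because non-degeneracy gives $u$ with $Q(x^{d-1}v,u)\neq 0$, and then the identity forces $Q(v,x^{d-1}u)\neq 0$, so $x^{d-1}u\neq 0$ and $w:=u$ works. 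Also note that half of the self-pairings $Q(v,x^i v)$ already vanish for parity reasons (from $Q(v,x^i v)=\epsilon(-1)^i Q(v,x^i v)$), which simplifies the system for the correction polynomials. With these details filled in, your argument is correct.
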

\begin{Cor}(\cite[Corollary 2.20]{SS})\label{SS2.20}
We have,
\begin{equation}
(-1)^{d_k}=(-1)^{\delta_l-1}=-\epsilon.
\end{equation}
\end{Cor}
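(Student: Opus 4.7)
My plan is to verify the two parity identities $(-1)^{d_k}=-\epsilon$ and $(-1)^{\delta_l-1}=-\epsilon$ separately, using the $\epsilon$-(skew-)symmetry of $Q$ (i.e.\ $Q(v,u)=\epsilon Q(u,v)$) together with the explicit formulas of part (ii) of Theorem \ref{SS2.19}.

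For $(-1)^{d_k}=-\epsilon$ I would fix $a$, set $u=e_k^{(a)}$ and $v=e_k^{(d_k-a-1)}$, and compute $Q(v,u)$ in two ways. On the one hand, $\epsilon$-symmetry gives $Q(v,u)=\epsilon Q(u,v)=\epsilon(-1)^a\lambda_k$. On the other hand, applying the formula in (ii) with $a$ replaced by $a':=d_k-a-1$ yields $Q(v,u)=Q(e_k^{(a')},e_k^{(d_k-a'-1)})=(-1)^{d_k-a-1}\lambda_k$. Since $\lambda_k\ne 0$, equating the two expressions gives $\epsilon=(-1)^{d_k-1}$, i.e.\ $(-1)^{d_k}=-\epsilon$. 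This is a direct syntactic check.

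For $(-1)^{\delta_l-1}=-\epsilon$ the analogous manipulation on the paired block only recovers the tautology $\epsilon\cdot\epsilon=1$, because the formula in (ii) already prescribes \emph{both} $Q(f_l,g_l)$ and $\epsilon Q(g_l,f_l)$ and is therefore compatible with $\epsilon$-symmetry for any parity of $\delta_l$. The parity constraint must instead be extracted from the \emph{indecomposability} of the paired block in the normal form. For each $\alpha\in k^{\ast}$ set $e^{(b)}:=f_l^{(b)}+\alpha g_l^{(b)}$; this is still a Jordan chain of length $\delta_l$ for $x$, and a routine expansion gives
\begin{equation*}
Q\bigl(e^{(a)},e^{(\delta_l-a-1)}\bigr)=\alpha(-1)^a\bigl(1+\epsilon(-1)^{\delta_l-1}\bigr).
\end{equation*}
If one had $(-1)^{\delta_l-1}=\epsilon$, then the factor $1+\epsilon(-1)^{\delta_l-1}=2$ is nonzero (odd characteristic), and an analogous expansion for two distinct scalars shows that the chains built from $\alpha$ and $-\alpha$ are mutually $Q$-orthogonal. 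The paired block would then split as a direct sum of two single $(e_k)$-chains of length $\delta_l$, contradicting the fact that the paired blocks in Theorem \ref{SS2.19} are introduced precisely as the indecomposable building blocks not of single-chain type. Hence $(-1)^{\delta_l-1}=-\epsilon$.

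The main obstacle is the second step: whereas the first identity is forced syntactically by (ii), the parity of $\delta_l$ requires a minimality argument for the normal form, resting on the fact (used as a black box from \cite{SS}) that the decomposition of Theorem \ref{SS2.19} is into indecomposable $\epsilon$-isometric $k[x]$-submodules. Everything else is routine bookkeeping with the signs $(-1)^{\bullet}$.
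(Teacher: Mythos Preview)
The paper does not give its own proof of this corollary; it is quoted verbatim from \cite[Corollary 2.20]{SS} and used as a black box. So there is no in-paper argument to compare against, and your proposal should be judged on its own merits.

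Your argument for $(-1)^{d_k}=-\epsilon$ is correct and is exactly the standard one-line check: the single-chain formula in (ii) together with $\epsilon$-symmetry forces the parity.

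Your analysis of the paired block is also correct, including the important observation that the formulas in (ii) for the $f,g$-block are compatible with $\epsilon$-symmetry for \emph{either} parity of $\delta_l$, so the constraint cannot be read off syntactically. Your splitting computation is right: if $(-1)^{\delta_l-1}=\epsilon$ then $e_{\pm\alpha}^{(b)}:=f_l^{(b)}\pm\alpha g_l^{(b)}$ give two mutually orthogonal single-chain blocks, each nondegenerate in odd characteristic. Note that these new single-chain blocks would have size $d=\delta_l$ with $(-1)^d=-\epsilon$, so they are consistent with part one; the contradiction therefore really does rest on indecomposability and not on a parity clash. You are right to flag this as the one place where you need input from \cite{SS} beyond the bare existence statement of Theorem~\ref{SS2.19} as reproduced in the paper: in \cite{SS} the normal form is built as an orthogonal decomposition into indecomposable $\epsilon$-isometric $k[x]$-submodules, and the paired blocks arise precisely for those Jordan sizes where the induced form on the multiplicity space vanishes, which is the condition $(-1)^{\delta_l-1}=-\epsilon$. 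Your splitting argument is the contrapositive of that construction.
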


These results are also valid over $\bar{\mathbb{F}}_q$.

\subsubsection{}\label{cent-uni}
The structure of the centraliser of $x$ follows from the above theorem. For any $1\le k\le s$ and any $1\le l\le t$, let $W_k\subset V$ be the subspace spanned by the basis vectors $e_k^{(a)}$, $a\in\mathbb{Z}/d_k\mathbb{Z}$, let $W'_l$ be the subspace spanned by $f_l^{(b)}$, $b\in\mathbb{Z}/\delta_l\mathbb{Z}$, and let $W''_l$ the subspace spanned by $g_l^{(b)}$, $b\in\mathbb{Z}/\delta_l\mathbb{Z}$. Then $V=\oplus_kW_k\bigoplus\oplus_l W'_l\bigoplus\oplus_lW''_l$. For any $d\in\mathbb{Z}_{>0}$, let $m_d$ be the number of $d_k$ that are equal to $d$, and for $\delta\in\mathbb{Z}_{>0}$, $m_{\delta}$ is similarly defined. Let $W_d$ be a copy of one of the $W_k$ such that $d_k=d$ and let $W'_{\delta}$ be a copy of one of the $W'_l$ such that $\delta_l=\delta$. Then there are some $m_d$ (resp. $2m_{\delta}$) dimensional vector spaces $U_d$ (resp. $U'_{\delta}$) such that $$V\cong (\oplus_dU_d\otimes W_d)\bigoplus(\oplus_{\delta}U'_{\delta}\otimes W'_{\delta}),$$ and $x$ acts as the maximal Jordan block on each factors $W_d$ and $W'_{\delta}$.

\textit{Symplectic case, $\epsilon=-$}. The restriction of the symplectic form to $U_d\otimes W_d$ is the product of forms on the two factors. On $W_d$, it is the skew symmetric (note that $W_d$ has even dimension) skew diagonal form with alternating entries $(-1)^a$, and on $U_d$, it is the diagonal form with entries $\lambda_k$, where $k$ is such that $d_k=d$. Similarly, we have a product form on each $U'_{\delta}\otimes W'_{\delta}$. On $W'_{\delta}$, it is the symmetric (note that $W'_{\delta}$ has odd dimension) skew diagonal form with alternating entries $(-1)^b$, and on $U'_{\delta}$ it is a skew symmetric form. Now, for any $i\in\mathbb{Z}_{>0}$, let $m_i=m_d$ if $i=d$ and $m_i=2m_{\delta}$ if $i=\delta$. Then $x$ commutes with the subgroup $$R\cong\prod_{\text{$i$ even}}\Ort(U_i)\times\prod_{\text{$i$ odd}}\Sp(U'_i).$$ 

\textit{Orthogonal case, $\epsilon=+$}. By similar arguments, we see that $x$  commutes with the subgroup $$R\cong\prod_{\text{$i$ odd}}\Ort(U_i)\times\prod_{\text{$i$ even}}\Sp(U'_i).$$

In either case, the factor $\Ort(U_i)$ is defined by the diagonal form with entries $\lambda_k$ such that $d_k=i$. We will denote by $Q_i$ this bilinear form. 

\begin{Thm}(\cite[\S 2.23, \S 2.25]{SS})\label{C=VR}
Let $u\in G^F$ be the unipotent element corresponding to $x$. The centraliser of $u$ is the semi-direct product of its unipotent radical and $R$ given above.
\end{Thm}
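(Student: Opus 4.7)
The plan is to exploit the explicit tensor-product decomposition provided by Theorem~\ref{SS2.19}. Write $V \cong \bigoplus_d (U_d \otimes W_d) \oplus \bigoplus_\delta (U'_\delta \otimes W'_\delta)$, where $x$ acts as the maximal Jordan block on each $W_d$ and $W'_\delta$ and trivially on the multiplicity spaces. On each summand the bilinear form $Q$ restricts to a tensor product $Q_i^U \otimes Q_i^W$, with $Q_i^W$ the standard alternating-sign skew-diagonal form from Theorem~\ref{SS2.19}(ii). Corollary~\ref{SS2.20} forces $(-1)^{d_k}=(-1)^{\delta_l-1}=-\epsilon$, which determines the symmetry type of each $Q_i^W$ and compels $Q_i^U$ to carry the opposite symmetry. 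This immediately yields the factors $\Ort(U_i)$ and $\Sp(U'_i)$ in $R$. Moreover, since elements of $R$ act only on the multiplicity spaces, they commute with $u$ and preserve $Q$, so that $R \subseteq C_G(u)$.

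Next I would compute $C_{\GL(V)}(u)$. By the standard theory of centralisers of unipotents in $\GL(V)$, this group has a Levi decomposition $C_{\GL(V)}(u) = M \ltimes N_0$, where $M \cong \prod_d \GL(U_d) \times \prod_\delta \GL(U'_\delta)$ acts on the multiplicity spaces, and $N_0$ is the unipotent subgroup of automorphisms congruent to the identity modulo the filtration by powers of $x$. A key observation, coming again from Corollary~\ref{SS2.20}, is that the integers $d$ and $\delta$ occurring in the decomposition have opposite parities, so the two collections of Jordan blocks are disjoint and each multiplicity group appears as a single $\GL$-factor of $M$ (no blocks are merged across the two families).

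Then I would intersect with $G$. An element $m = (g_i) \in M$ lies in $G$ if and only if it preserves $Q$ summand by summand, which (the $W$-factor being acted on trivially) reduces to the condition that each $g_i$ preserves $Q_i^U$ on its multiplicity space. Hence $M \cap G = R$. Setting $N := N_0 \cap G$, the splitting $C_{\GL(V)}(u) = M \ltimes N_0$ restricts to a splitting $C_G(u) = R \ltimes N$, since $R \subseteq M$ lifts every element of the reductive quotient. It remains to identify $N$ with the unipotent radical of $C_G(u)$.

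The main obstacle is precisely this last point: verifying cleanly that $N$ is the full unipotent radical of $C_G(u)$, i.e.\ that no further unipotent piece is lost on intersecting with $G$, and that the reductive quotient is exactly $R$. This is where the hypothesis of odd characteristic becomes essential: in that case the restriction of $Q$ to each graded piece $\ker x^{j+1}/\ker x^j$ splits consistently, and one can verify the Levi decomposition by induction on the number of distinct Jordan block sizes, peeling off one block type at a time and applying the rank-one form classification to the resulting irreducible summands. This is essentially the argument carried out in \cite[\S 2.23, \S 2.25]{SS}.
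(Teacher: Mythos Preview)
The paper does not supply its own proof of this theorem; it simply records the result and cites \cite[\S 2.23, \S 2.25]{SS}. So there is nothing in the paper to compare your argument against, and the relevant question is whether your sketch stands on its own.

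Your outline is the right one and matches the Springer--Steinberg approach, but there is a genuine gap in the third paragraph. From $M\cap G=R$ and $N:=N_0\cap G$ you conclude that the semi-direct product $C_{\GL(V)}(u)=M\ltimes N_0$ restricts to $C_G(u)=R\ltimes N$. This does not follow formally: for an arbitrary subgroup $K$ of $M\ltimes N_0$, one need not have $K=(K\cap M)\ltimes(K\cap N_0)$. What you actually need is that the projection $C_G(u)\to M=C_{\GL(V)}(u)/N_0$ has image contained in $R$, i.e.\ that for $g\in C_G(u)$ written as $g=mn$ with $m\in M$, $n\in N_0$, the factor $m$ already preserves $Q$. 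Your justification (``since $R\subseteq M$ lifts every element of the reductive quotient'') is circular: the reductive quotient of $C_G(u)$ is exactly what you are trying to identify.

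The missing ingredient is that the multiplicity spaces $U_d$, $U'_\delta$ together with their forms $Q^U_i$ are \emph{intrinsically} determined by the pair $(x,Q)$: each $U_i$ is naturally isomorphic to a subquotient of $V$ cut out by powers of $x$ (e.g.\ $\ker x\cap\im x^{i-1}/\ker x\cap\im x^{i}$), and $Q$ descends to give exactly $Q^U_i$ on that subquotient. Any $g\in C_G(u)$ preserves both the $x$-filtration and $Q$, hence acts on each such subquotient preserving the induced form; this forces the $M$-component of $g$ to lie in $R$. Once you insert this step, the rest of your argument goes through, and the final paragraph (identifying $N$ with the unipotent radical) is then straightforward since $N\subseteq N_0$ is unipotent and $R$ is reductive.
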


The integers $m_i$, $i\in\mathbb{Z}_{>0}$, define a partition $\lambda=(1^{m_1},2^{m_2},\ldots)$, which is either symplectic or orthogonal. Let $\kappa$ and $\bs\kappa$ be the $\kappa(\lambda)$ and $\bs\kappa(\lambda)$ defined in \S \ref{kappa}. For any $i\in\bs\kappa$, let $e_i=\eta(Q_i)$. Then $R$ may be alternatively written as $$R\cong\prod_{\text{$i$ odd}}\Sp_{m_i}(q)\times\prod_{\text{$i$ even}}\Ort_{m_i}^{e_i}(q), \text{ if $G^F=\Sp_n(q)$,}$$ $$R\cong\prod_{\text{$i$ odd}}\Ort_{m_i}^{e_i}(q)\times\prod_{\text{$i$ even}}\Sp_{m_i}(q), \text{ if $G^F=\Ort^{\pm}_n(q)$.}$$

Passing to the the algebraic closure, we see that $C_G(u)/C_G(u)^{\circ}\cong(\mathbb{Z}/2\mathbb{Z})^{\kappa}$ in both cases. 
\begin{Rem}(\cite[\S 2.19, \S 2.23, \S 2.25]{SS})
In the case $G^F\cong\GL_n^{-}(q)$, let $u\in G^F$ be a unipotent element whose Jordan blocks are given by a partition $\lambda=(1^{m_1},2^{m_2},\ldots)$. Put $$R=\prod_i\GL_{m_i}^{-1}(q).$$ Then $R$ is the reductive quotient of the centraliser of $u$. Passing to the algebraic closure, this implies that the centraliser of $u$ is connected.
\end{Rem}

\subsection{Parametrisation of Unipotent Classes}
The parametrisation of unipotent conjugacy classes of finite classical groups is well known. We refer to (\cite[Chapter 3, Chapter 7]{LiSe}) for a more complete survey.

\subsubsection{}
Corollary \ref{SS2.20} means, in the case of symplectic groups, the $d_k$'s are even and the $\delta_l$'s are odd, while in the case of orthogonal groups, the $d_k$'s are odd and the $\delta_l$'s are even. This imposes a restriction on the Jordan types of $u$ that can appear in each case. In fact, this is the only restriction, and we have the following results.

The unipotent classes of $\GL_n(k)$ are parametrised by $\mathcal{P}_n$, with the sizes of Jordan blocks given by the corresponding partition. The unipotent classes of $\Sp_n(k)$ are parametrised by $\mathcal{P}^{sp}_n$. These are represented by the Jordan matrices in $\GL_n(k)$ that belong to $\Sp_n(k)$. The unipotent classes of $\textnormal{O}_n(k)$ are parametrised by $\mathcal{P}^{ort}_n$. These are represented by the Jordan matrices in $\GL_n(k)$ that belong to $\Ort_n(k)$. A unipotent class of $\Ort_n(k)$ is called \textit{degenerate} if the corresponding partition is degenerate. A unipotent class of $\Ort_n(k)$ splits into two $\SO_n(k)$-classes if and only if it is degenerate, and restricts to one single $\SO_n(k)$-class otherwise.

\subsubsection{}
To each unipotent element $u\in G^F$ we have associated a partition $\lambda$ encoding the sizes of the Jordan blocks, and a $\kappa$-tuple of signs $(e_i)_{i\in\bs\kappa}$ encoding the equivalence classes of the forms defining the reductive quotient of the centraliser of $u$. These are invariants of, and uniquely determine, the $G^F$-conjugacy class of $u$. With the parametrisation of $G$-conjugacy classes described in the previous paragraph, the parametrisation of $G^F$-classes is reduced to determining which $\kappa$-tuple can occur for a given partition $\lambda$. By Lang-Steinberg theorem, the $G^{\circ F}$-conjugacy classes contained in the $G$-conjugacy class of $u$ are in bijection with the $F$-conjugacy classes in $C_{G^{\circ}}(u)/C_{G^{\circ}}(u)^{\circ}$. This group is isomorphic to $(\mathbb{Z}/2\mathbb{Z})^{\kappa}$ if $G=\Sp_n$, $(\mathbb{Z}/2\mathbb{Z})^{\kappa-1}$ if $G=\SO_n$ and $\kappa>0$, and is trivial if $G=\GL_n$.

\begin{Thm}
Let $u\in G^F$ be a unipotent element, let $C$ be its $G$-conjugacy class, which corresponds to a partition $\lambda$, and let $\kappa=\kappa(\lambda)$. 
\begin{itemize}
\item[(i)] If $G^F=\Sp_n(q)$, then there are $2^{\kappa}$ $G^F$-conjugacy classes contained in $C$;
\item[(ii)] If $G^F=\Ort^{\pm}_n(q)$, and $\kappa>0$, then there $2^{\kappa-1}$ $G^F$-conjugacy classes contained in $C$, and each such conjugacy class is an $\SO_n^{\pm}(q)$-class;
\item[(iii)] If $G^F=\Ort^{+}_n(q)$, and $\kappa=0$, then there is one $G^F$-conjugacy class contained in $C$, which splits into two $\SO^+_n(q)$-conjugacy classes; If $G^F=\Ort^{-}_n(q)$, and $\kappa=0$, then there is no $G^F$-conjugacy class contained in $C$.
\end{itemize}
\end{Thm}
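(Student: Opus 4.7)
The plan is to apply the Lang--Steinberg theorem to the identity component $G^{\circ}$ and then perform a short bookkeeping argument on the component group to pass to $G^F$.

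First, I would choose $u \in G^F$ in the rational normal form of Theorem \ref{SS2.19}, with every basis vector and every $\lambda_k$ rational over $\mathbb{F}_q$. This makes the decomposition $R = \prod_i \Ort(U_i) \times \prod_i \Sp(U'_i)$ of the reductive quotient $F$-stable factor-by-factor, so $F$ acts trivially on $A(u) := C_{G^{\circ}}(u)/C_{G^{\circ}}(u)^{\circ}$. Lang--Steinberg then identifies the $G^{\circ F}$-classes inside the $G^{\circ}$-orbit of $u$ with the elements of the abelian group $A(u)$.

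For (i), with $G = G^{\circ} = \Sp_n$ connected and $A(u) \cong (\mathbb{Z}/2\mathbb{Z})^{\kappa}$ by Section \ref{cent-uni}, the count $2^{\kappa}$ is immediate. For (ii), $G = \Ort_n$ and $\kappa > 0$: the preceding paragraph shows that $C$ is a single $\SO_n(k)$-class, so Lang--Steinberg applied to $\SO_n$ gives $|A_{\SO_n}(u)| = 2^{\kappa - 1}$ $\SO_n^F$-classes in $C$. To see that each of these is already an $\Ort_n^F$-class, I would pick a rational reflection $\tau \in \Ort(U_i)$ for some $i \in \bs\kappa$; such a $\tau$ has $\det \tau = -1$ and lies in $C_{\Ort_n}(u)$, so its conjugation action on $A_{\SO_n}(u)$ is through the abelian group $A_{\Ort_n}(u) \cong (\mathbb{Z}/2\mathbb{Z})^{\kappa}$ and therefore trivial, so the action of $\Ort_n^F / \SO_n^F$ on the set of $\SO_n^F$-classes in $C$ is trivial and no fusion occurs.

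For (iii), the degenerate case $\kappa = 0$: the reductive quotient $R$ is a product of symplectic groups, hence connected, so $C_{\Ort_n}(u) = C_{\SO_n}(u)$ sits inside $\SO_n$; consequently the $\Ort_n(k)$-class $C$ breaks as a disjoint union $C_1 \sqcup C_2$ of two $\SO_n(k)$-classes freely permuted by any element of $\Ort_n \setminus \SO_n$. The description of Theorem \ref{SS2.19} shows the underlying form $Q$ is a direct sum of hyperbolic pairs $W'_l \oplus W''_l$, so the ambient orthogonal group must be the \emph{split} form, which immediately rules out any $\Ort_n^-(q)$-class. In the split case it remains to show $F$ fixes each $C_i$; I would establish this from the $F$-rationality of the normal form of Theorem \ref{SS2.19}, after which any rational reflection of determinant $-1$ fuses the two $\SO_n^+(q)$-classes $C_1^F, C_2^F$ into the unique $\Ort_n^+(q)$-class in $C$.

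The main obstacle is the form-theoretic bookkeeping in (iii): parts (i) and (ii) follow mechanically from Lang--Steinberg plus abelianness of $A_{\Ort_n}(u)$, but in the degenerate case one must verify, via an explicit computation with $Q$, both that a degenerate partition is incompatible with a non-split ambient form and that the two $\SO_n(k)$-orbits are individually $F$-stable (rather than Galois-conjugate) inside the split orthogonal group.
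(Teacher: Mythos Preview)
Your proposal is correct and follows exactly the approach the paper sketches in the paragraph preceding the theorem: Lang--Steinberg applied to $G^{\circ}$ together with the explicit description of the component group $C_{G^{\circ}}(u)/C_{G^{\circ}}(u)^{\circ}$ from \S\ref{cent-uni}. The paper states the theorem without further proof, so you have simply supplied the bookkeeping details (triviality of the $F$-action on $A(u)$, the no-fusion argument via abelianness of $A_{\Ort_n}(u)$ in (ii), and the hyperbolic-form observation for (iii)) that the paper leaves implicit.
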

In particular, if $G^F=\Sp_n(q)$, then all $\kappa$-tuples $(e_i)_{i\in\bs\kappa}$ for a given partition $\lambda$ must occur, if $G^F=\Ort_n^{\pm}(q)$ and $\kappa>0$, then the $\kappa$-tuples that occur are subject to a particular constraint. Indeed, in the basis given by Theorem \ref{SS2.19}, one checks easily that $$\det(Q)=\prod_i\det(-Q_i)^i,$$ which implies 
\begin{equation}\label{eta-constraint}
\eta(-1)^{[\kappa'/2]}\eta(Q)=\prod_i\eta(Q_i),
\end{equation} where $\kappa'=|\{i\in\bs\kappa\mid m_i\text{ is odd}\}|$ and $Q$ is the defining form of $\Ort_n^{\pm}(q)$.

\subsubsection{}\label{unirat}
Summarizing the above discussions, the unipotent conjugacy classes in $G^F$ are in bijection with:
$$
\bs{\Psi}^{sp}_n:=\bigsqcup_{\lambda\in\mathcal{P}^{sp}_{n}}(\mathbb{Z}/2\mathbb{Z})^{\kappa(\lambda)},\text{ if $G^F=\Sp_n(q)$;}
$$
$$
\bs{\Psi}^{ort,+}_n:=\bigsqcup_{\lambda\in\mathcal{P}^{ort,nd}_{n}}(\mathbb{Z}/2\mathbb{Z})^{\kappa(\lambda)-1}\sqcup\bigsqcup_{\lambda\in\mathcal{P}^{ort,d}_{n}}(\{\pt\}\sqcup\{\pt\}),\text{ if $G^F=\SO^+_n(q)$;}
$$ 

$$
\tilde{\bs{\Psi}}{}^{ort,+}_n:=\bigsqcup_{\lambda\in\mathcal{P}^{ort,nd}_{n}}(\mathbb{Z}/2\mathbb{Z})^{\kappa(\lambda)-1}\sqcup\bigsqcup_{\lambda\in\mathcal{P}^{ort,d}_{n}}\{\pt\},\text{ if $G^F=\Ort^+_n(q)$;}
$$ 
$$
\bs{\Psi}^{ort,-}_n=\tilde{\bs{\Psi}}{}^{ort,-}_n:=\bigsqcup_{\lambda\in\mathcal{P}^{ort,nd}_{n}}(\mathbb{Z}/2\mathbb{Z})^{\kappa(\lambda)-1},\text{ if $G^F=\SO^-_n(q)$ or $\Ort^-_n(q)$.}
$$
Write $\bs\Psi^{sp}=\bigsqcup_{n\in\mathbb{Z}_{\ge 0},\text{ $n$ even}}\bs\Psi^{sp}_n$, $\bs\Psi^{ort,\eta}=\sqcup_{n\in\mathbb{Z}_{\ge 0}}\bs\Psi^{ort,\eta}_n$ and $\tilde{\bs\Psi}{}^{ort,\eta}=\sqcup_{n\in\mathbb{Z}_{\ge 0}}\tilde{\bs\Psi}{}^{ort,\eta}_n$, where $n$ must be even if $\eta=-$.

\subsubsection{}\label{formules}
Let $u$ and $\lambda$ be as above, then the last piece of information we need about the centraliser of $u$ is some dimension formulae:
\begingroup
\allowdisplaybreaks
\begin{align}
\dim C_{\GL}(u)&=\sum_i im_i^2+2\sum_{i<j}im_im_j,\\
\dim C_{\Sp}(u)&=\frac{1}{2}\sum_iim_i^2+\sum_{i<j}im_im_j+\frac{1}{2}\sum_{\text{$i$ odd}}m_i,\\
\dim C_{\SO}(u)&=\frac{1}{2}\sum_iim_i^2+\sum_{i<j}im_im_j-\frac{1}{2}\sum_{\text{$i$ odd}}m_i.
\end{align}
\endgroup

Denote by $V(u)$ the corresponding unipotent radicals of $C_G(u)$, we have:
\begingroup
\allowdisplaybreaks
\begin{align}
\dim V(u)&=\sum_i (i-1)m_i^2+2\sum_{i<j}im_im_j,\text{ if $G=\GL_n(k)$,}\\
\dim V(u)&=\frac{1}{2}\sum_i(i-1)m_i^2+\sum_{i<j}im_im_j+\frac{1}{2}\sum_{\text{$i$ even}}m_i,\text{ if $G=\Sp_n(k)$,}\\
\dim V(u)&=\frac{1}{2}\sum_i(i-1)m_i^2+\sum_{i<j}im_im_j+\frac{1}{2}\sum_{\text{$i$ even}}m_i,\text{ if $G=\SO_n(k)$.}
\end{align}
\endgroup

\subsection{Cuspidal Local Systems}
\subsubsection{}
Let $G$ be a connected reductive group. We refer to \cite[Definition 2.4]{L84a} for the definition of cuspidal pair, which consists of a unipotent $G$-conjugacy class $C$ and an irreducible $G$-equivariant local system $\mathcal{E}$ on $C$. If $(C,\mathcal{E})$ is a cuspidal pair, then $\mathcal{E}$ is called a cuspidal local system. 

\begin{Thm}\label{cusploc}(\cite[Corollary 12.4]{L84a}, \cite[Corollary 13.4]{L84a})
\begin{itemize}
\item[(i)] Let $G=\Sp_{2n}(k)$. There exists a cuspidal pair on $G$ if and only if $2n=d(d+1)$ for some non negative integer $d$. If this is the case, then the cuspidal pair is unique. The underlying conjugacy class of the cuspidal pair corresponds to the symplectic partition $(2d,2d-2,\ldots,4,2)$. 

\item[(ii)] Let $G=\SO_N(k)$. There exists a cuspidal pair on $G$ if and only if $N=d^2$ for some non negative integer $d$. If this is the case, then the cuspidal pair is unique. The underlying conjugacy class of the cuspidal pair corresponds to the orthogonal partition $(2d-1,2d-3,\ldots,3,1)$.
\end{itemize}
\end{Thm}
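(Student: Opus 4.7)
The plan is to apply the general theory of cuspidal pairs developed by Lusztig in \cite{L84a}. By definition, an irreducible $G$-equivariant local system $\mathcal{E}$ on a unipotent class $C\subset G$ is cuspidal precisely when $(C,\mathcal{E})$ does not arise by generalised Springer induction from any proper Levi subgroup $L\subsetneq G$. Since $G$ is a classical group, irreducible $G$-equivariant local systems on $C$ are in bijection with irreducible characters of the component group $A(u)=C_G(u)/C_G(u)^{\circ}$, and by the structure results recalled in \S\ref{uni-centraliser}, $A(u)\cong(\mathbb{Z}/2\mathbb{Z})^{\kappa(\lambda)}$ for $G=\Sp_{2n}$, while $A(u)\cong(\mathbb{Z}/2\mathbb{Z})^{\kappa(\lambda)-1}$ for $G=\SO_N$ with $\lambda$ non-degenerate, where $\lambda$ is the Jordan type of $u$.

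First I would establish the necessary shape of the partition. Suppose $(C,\mathcal{E})$ is cuspidal and $C$ corresponds to $\lambda=(1^{m_1},2^{m_2},\ldots)$. If some multiplicity $m_i\geq 2$, then by Theorem \ref{C=VR} the reductive quotient of $C_G(u)$ contains a factor $\Ort(U_i)$ or $\Sp(U'_i)$ of strictly positive rank, and one can realise a representative of $C$ inside a proper $F$-stable Levi of the form $L\cong\GL_i(k)\times G'$, with $G'$ of the same classical type as $G$ but smaller rank; a direct computation shows that $(C,\mathcal{E})$ is then in the image of the induction functor from $L$, contradicting cuspidality. Hence every $m_i$ equals $0$ or $1$, so that the parts of $\lambda$ are distinct even (resp. odd) positive integers and $\kappa(\lambda)$ equals the number of parts.

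Next I would force the parts to be the consecutive integers starting from the bottom. If a ``gap'' is present --- i.e. the parts are not $\{2,4,\dots,2d\}$ in the symplectic case, nor $\{1,3,\dots,2d-1\}$ in the orthogonal case --- then one can peel off the largest part together with a suitably chosen partner to embed $u$ into a Levi $\GL_{k}(k)\times G''$ from which $(C,\mathcal{E})$ is again obtained by induction. This rules out every partition except the one claimed; plugging in gives $2n=2+4+\cdots+2d=d(d+1)$ for $\Sp_{2n}$ and $N=1+3+\cdots+(2d-1)=d^2$ for $\SO_N$, and yields $A(u)\cong(\mathbb{Z}/2\mathbb{Z})^{d}$ and $(\mathbb{Z}/2\mathbb{Z})^{d-1}$ respectively.

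Finally I would prove existence and uniqueness of the local system. For the distinguished $\lambda$, exactly one character of $A(u)$ --- a ``sign'' character whose value on each of the generators attached to the even (resp. odd) part sizes is $-1$ --- is not in the image of induction from any proper Levi, while every other character of $A(u)$ is induced from a pair on a Levi of the form $\GL_k(k)\times G''$. The hard step is precisely this last one: verifying the cohomological vanishing that defines cuspidality for the distinguished character, and for every other $\chi$ exhibiting an explicit Levi together with a pair whose generalised Springer induction produces $(C,\mathcal{L}_\chi)$. This is the content of Lusztig's inductive analysis of the generalised Green functions of classical groups in \cite{L84a}, the uniqueness being forced by a sign constraint of the form \eqref{eta-constraint} on the discriminants of the orthogonal factors of $C_G(u)^{\circ}$.
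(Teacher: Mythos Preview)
The paper does not prove this theorem at all: it is stated as a quotation of \cite[Corollary 12.4, Corollary 13.4]{L84a} and no argument is given. There is therefore nothing in the paper to compare your sketch against; the result is used as a black box.

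That said, your outline is broadly in the spirit of Lusztig's actual classification in \cite{L84a}, but several steps are imprecise as written. First, the passage ``if $m_i\ge 2$ then a representative of $C$ lies in a Levi $\GL_i\times G'$, hence $(C,\mathcal{E})$ is induced'' conflates two different things: the unipotent element may well lie in a proper Levi without the \emph{pair} $(C,\mathcal{E})$ being non-cuspidal --- cuspidality is a condition on the local system, not just on the support, and what one must check is that every irreducible character of $A(u)$ actually occurs in the appropriate induction. Lusztig's argument goes through the generalised Springer correspondence and a careful count of how many pairs are accounted for by induction from each smaller Levi, rather than the element-by-element embedding you describe. Second, your description of the cuspidal character of $A(u)$ as ``value $-1$ on each generator'' is not quite right; compare the explicit formulae for $\phi_1$, $\phi_2$ recorded later in \S\ref{cuspSpSO} of the paper, where the cuspidal characteristic function involves only an alternating subset of the generators. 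Finally, your invocation of \eqref{eta-constraint} for uniqueness is misplaced: that constraint governs which tuples $(e_i)$ parametrise \emph{rational} unipotent classes in a given finite orthogonal group, and has no bearing on which $G$-equivariant local system over $k$ is cuspidal.
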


\section{Non-Connected Algebraic Groups}
\subsection{Quasi-Semi-Simple Elements}\label{QSS}
We say that a not necessarily connected algebraic group $G$ is reductive if $G^{\circ}$ is reductive. In this section we denote by $G$ such a group and denote by $F$ the Frobenius endomorphism.

\subsubsection{}\label{barT}
An automorphism of $G^{\circ}$ is \textit{quasi-semi-simple} if it leaves stable a pair consisting of a maximal torus and a Borel subgroup containing it. An element of $G$ is \textit{quasi-semi-simple} if it induces by conjugation a quasi-semi-simple automorphism of $G^{\circ}$. Let ($T^{\circ}$, $B^{\circ}$) be a pair consisting of a maximal torus and a Borel subgroup containing it. Put $B=N_{G}(B^{\circ})$ and $T=N_G(B^{\circ},T^{\circ})$ to be the normalisers. By definition, an element of $G$ is quasi-semi-simple if and only if it belongs to $T$ for some $B^{\circ}$ and $T^{\circ}$.

Every semi-simple element is quasi-semi-simple (\cite[Theorem 7.5]{St}). Every element of $G$ normalises some Borel subgroup of $G^{\circ}$. Let $s\in G$ be a quasi-semi-simple element, every $s$-stable (for the conjugation) Borel subgroup contains some $s$-stable maximal torus. Every $s$-stable parabolic subgroup of $G^{\circ}$ contains some $s$-stable Levi factor (\cite[Proposition 1.11]{DM94} ). However, an $s$-stable Levi subgroup of $G^{\circ}$ is not necessarily an Levi factor of some $s$-stable parabolic subgroup.

Let $G^1\ne G^{\circ}$ be a connected component of $G$, and let $s\in G^1$ be a quasi-semi-simple element. Fix an $s$-stable maximal torus $T^{\circ}$ contained in some $s$-stable Borel subgroup of $G^{\circ}$. The quasi-semi-simple $G^{\circ}$-conjugacy classes in $G^1$ are then described as follows.
\begin{Prop}(\cite{DM18}[Proposition 1.16])\label{DM1.16}
Every quasi-semi-simple $G^{\circ}$-conjugacy class in $G^1$ has a representative in $C_{T^{\circ}}(s)^{\circ}.s$. Two elements $ts$ and $t's$ with $t$, $t'\in C_{T^{\circ}}(s)^{\circ}$, represent the same class if and only if $t$ and $t'$, when passing to the quotient $T^{\circ}/(T^{\circ},s)$, belong to the same $W^s$-orbit, where $(T^{\circ},s)$ is the commutator, which is preserved by $W^s:=\{w\in W_{G^{\circ}}(T^{\circ})\mid sws^{-1}=w\}$.
\end{Prop}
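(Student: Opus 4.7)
The plan is to first show that every quasi-semi-simple $G^{\circ}$-conjugacy class in $G^{1}$ has a representative in $C_{T^{\circ}}(s)^{\circ}\cdot s$, and then to analyse when two such representatives belong to the same class.

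For existence I would proceed in two reductions. By definition, a quasi-semi-simple element of $G^{1}$ stabilises some pair consisting of a maximal torus of $G^{\circ}$ and a Borel subgroup of $G^{\circ}$ containing it, and all such pairs are $G^{\circ}$-conjugate, so after a $G^{\circ}$-conjugation I may assume the stabilised pair is $(B^{\circ},T^{\circ})$. The element then lies in $N_{G}(B^{\circ},T^{\circ})\cap G^{1}$; since $N_{G}(B^{\circ},T^{\circ})\cap G^{\circ}=T^{\circ}$ and $s\in N_{G}(B^{\circ},T^{\circ})\cap G^{1}$, it is of the form $ts$ with $t\in T^{\circ}$. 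To push further into $C_{T^{\circ}}(s)^{\circ}\cdot s$, I introduce the map $\sigma_{s}\colon T^{\circ}\to T^{\circ}$, $y\mapsto y\cdot{}^{s}y^{-1}$, which is a homomorphism because $T^{\circ}$ is abelian, with kernel $C_{T^{\circ}}(s)$ and connected image $(T^{\circ},s)$. The connected closed subgroups $C_{T^{\circ}}(s)^{\circ}$ and $(T^{\circ},s)$ of $T^{\circ}$ have dimensions summing to $\dim T^{\circ}$, hence their product is all of $T^{\circ}$. Since conjugation by $y\in T^{\circ}$ sends $ts$ to $\bigl(t\cdot\sigma_{s}(y)\bigr)s$, absorbing the $(T^{\circ},s)$-component of $t$ brings the representative into $C_{T^{\circ}}(s)^{\circ}\cdot s$.

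For the equivalence, suppose $g\in G^{\circ}$ satisfies $g(ts)g^{-1}=t's$ with $t,t'\in C_{T^{\circ}}(s)^{\circ}$. Both $(B^{\circ},T^{\circ})$ and $g(B^{\circ},T^{\circ})g^{-1}$ are $(t's)$-stable such pairs, so by the $C_{G^{\circ}}(t's)^{\circ}$-transitivity on $(t's)$-stable Borel--torus pairs recalled in the paragraph preceding the proposition, I may replace $g$ by $c\cdot g$ for some $c\in C_{G^{\circ}}(t's)^{\circ}$ and thereby assume $g\in N_{G^{\circ}}(T^{\circ})$ with image $w\in W$. A direct computation yields
\[
g(ts)g^{-1}=({}^{w}t)\cdot[g,s]\cdot s,
\]
where $[g,s]:=gsg^{-1}s^{-1}$. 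The condition $g(ts)g^{-1}=t's\in T^{\circ}\cdot s$ forces $[g,s]\in T^{\circ}$; projecting to $W$ this reads $w\cdot{}^{s}w^{-1}=1$, so $w\in W^{s}$. Moreover, every $w\in W^{s}$ admits an $s$-fixed lift in $N_{G^{\circ}}(T^{\circ})$: since $k$ is algebraically closed and $s$ has finite order invertible in $k$, the group $T^{\circ}(k)$ is a divisible $\lb s\rb$-module, so $H^{1}(\lb s\rb,T^{\circ})=0$ and $N_{G^{\circ}}(T^{\circ})^{s}\twoheadrightarrow W^{s}$. Adjusting $g$ by an element of $T^{\circ}$ alters $[g,s]$ only by an element of $(T^{\circ},s)$ (using the $W^{s}$-stability of $(T^{\circ},s)$), so once $g$ is chosen $s$-fixed one has $[g,s]=1$. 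The identity $t'={}^{w}t\cdot[g,s]$ then yields $[t']={}^{w}[t]$ in $T^{\circ}/(T^{\circ},s)$, placing $[t]$ and $[t']$ in the same $W^{s}$-orbit.

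The converse is essentially the existence proof in reverse: given $w\in W^{s}$ with an $s$-fixed lift $\dot w$, one has $\dot w(ts)\dot w^{-1}=({}^{w}t)s\in C_{T^{\circ}}(s)^{\circ}\cdot s$, which realises the $w$-action; any $t'\in C_{T^{\circ}}(s)^{\circ}$ with $[t']={}^{w}[t]$ in $T^{\circ}/(T^{\circ},s)$ differs from ${}^{w}t$ by an element of $(T^{\circ},s)$, which is absorbed by a further $T^{\circ}$-conjugation exactly as in the existence step. I expect the main technical step to be the $C_{G^{\circ}}(t's)^{\circ}$-transitivity on stable pairs, which is the geometric ingredient allowing me to normalise an arbitrary conjugating element into $N_{G^{\circ}}(T^{\circ})$; the cohomological vanishing $H^{1}(\lb s\rb,T^{\circ})=0$, yielding $s$-fixed lifts of $W^{s}$, is the other structural input that makes the induced $W^{s}$-action on $T^{\circ}/(T^{\circ},s)$ the natural (untwisted) one.
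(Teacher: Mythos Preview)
The paper does not supply its own proof of this proposition; it is quoted verbatim from \cite{DM18} and used as a black box. So there is no ``paper's proof'' to compare against, and your argument has to stand on its own.

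Your overall strategy is the natural one and the existence half is fine. The reduction of the conjugating element $g$ into $N_{G^{\circ}}(T^{\circ})$ via transitivity of $C_{G^{\circ}}(t's)^{\circ}$ on $(t's)$-stable maximal tori is also correct, although what the preceding paragraph of the paper actually records is the correspondence $T\mapsto C_T(s)^{\circ}$ between $s$-stable maximal tori in $s$-stable Borels and maximal tori of $C_{G^{\circ}}(s)^{\circ}$; the transitivity you want follows from that.

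There is, however, a genuine gap in the step where you produce $s$-fixed lifts of elements of $W^{s}$. The assertion ``$T^{\circ}(k)$ is a divisible $\langle s\rangle$-module, so $H^{1}(\langle s\rangle,T^{\circ})=0$'' is false: divisibility of an abelian group $A$ does not force $H^{1}(C,A)=0$ for a finite cyclic $C$. For a concrete failure in the present setting, take $G=\SL_{2}(k)\times\mathbb{Z}/2\mathbb{Z}$ with the $\mathbb{Z}/2\mathbb{Z}$ factor acting trivially, $T^{\circ}$ the diagonal torus, and $s=(\diag(\mathfrak{i},-\mathfrak{i}),\bar 1)\in G^{1}$. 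Then $s$ centralises $T^{\circ}$, so $(T^{\circ},s)=1$ and $W^{s}=W$, yet for the standard lift $n=\bigl(\begin{smallmatrix}0&1\\-1&0\end{smallmatrix}\bigr)$ one computes $sns^{-1}=-n$, and no modification $tn$ with $t\in T^{\circ}$ is $s$-fixed. In fact $[n,s]=-I\notin(T^{\circ},s)=\{1\}$, so in this example the relation you derive reads $[t']={}^{w}[t]\cdot[-I]$ rather than $[t']={}^{w}[t]$, and the two actions on $T^{\circ}/(T^{\circ},s)$ genuinely differ.

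This is not a cosmetic issue: the property that every $w\in W^{s}$ admits a lift in $C_{G}(s)^{\circ}$ is precisely what characterises \emph{quasi-central} elements (see \cite[Th\'eor\`eme~1.15]{DM94}, recalled just after the proposition in the paper). For merely quasi-semi-simple $s$ you cannot invoke it, and the proof in \cite{DM18} has to handle the twist $[n,s]$ directly rather than assume it away. In the paper's applications (e.g.\ Proposition~\ref{6.11}) the element $\sigma$ is quasi-central, so your shortcut would in fact suffice there; but as a proof of the proposition in the stated generality, the lifting step needs a different argument.
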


\subsubsection{}
Let $L^{\circ}$ be a Levi factor of some parabolic subgroup $P^{\circ}\subset G^{\circ}$. Put $P=N_{G}(P^{\circ})$ and $L=N_{G}(P^{\circ},L^{\circ})$ to be the normalisers. According to  \cite[Lemma 6.2.4]{Spr}, $P$ is a parabolic subgroup of $G$, in the sense that $G/P$ is proper. Suppose that the Levi decomposition of $P^{\circ}$ is given by $P^{\circ}=U\rtimes L^{\circ}$, where $U$ is the unipotent radical of $P^{\circ}$, then $P=U\rtimes L$. (See \cite[Proposition 1.5]{DM94}) In particular, $L$ is a Levi factor of $P$. (For an arbitrary linear algebraic group $G$, a Levi factor $H$ of $G$ is a closed subgroup such that $G=R_u(G)\rtimes H$.)

\subsubsection{}
If $s\in G$ is a quasi-semi-simple element, then the connected centraliser $H=C_G(s)^{\circ}$ is reductive. (\cite[\S 1.17]{Spa}) If the pair ($T^{\circ}$, $B^{\circ}$) consists of an $s$-stable maximal torus and an $s$-stable Borel subgroup of $G^{\circ}$ containing it, then $C_{B^{\circ}}(s)^{\circ}$ is a Borel subgroup of $H$ containing the maximal torus $C_{T^{\circ}}(s)^{\circ}$ (\cite[Th\'eor\`eme 1.8(iii)]{DM94}). More generally, we have
\begin{Prop}\label{PLsigma}
For $s$ and $H$ as above, we have
\begin{itemize}
\item[(i)]
If ($L^{\circ}$, $P^{\circ}$) is a pair consisting of an $s$-stable Levi subgroup and an $s$-stable parabolic subgroup containing it as a Levi factor, then $C_{P^{\circ}}(s)^{\circ}$ is a parabolic subgroup of $H$, with $C_{L^{\circ}}(s)^{\circ}$ as a Levi factor.
\item[(ii)]
If $L'$ is the Levi factor of a parabolic subgroup $P'\subset H$, then there exists an $s$-stable parabolic subgroup $P\subset G^{\circ}$ such that $C_{P}(s)^{\circ}=P'$, and an $s$-stable Levi factor $L$ of $P$ such that $C_{L}(s)^{\circ}=L'$.
\end{itemize}
\end{Prop}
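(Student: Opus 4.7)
The plan is to reduce both statements to the Borel-torus version, \cite[Théorème 1.8(iii)]{DM94}, which asserts that for an $s$-stable pair $(B^\circ, T^\circ)$ the connected centralizers form a Borel-torus pair of $H$. For (ii) I will combine this with the dynamic (cocharacter) construction of parabolic subgroups.

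For part (i), first I would exhibit an $s$-stable pair $(B^\circ, T^\circ)$ of $G^\circ$ with $T^\circ \subset L^\circ$ and $B^\circ \subset P^\circ$. Since $L^\circ$ is reductive and $s$-stable, the existence results for $s$-stable pairs applied inside $L^\circ$ supply an $s$-stable Borel $B_L^\circ \subset L^\circ$ containing an $s$-stable maximal torus $T^\circ$; setting $B^\circ = R_u(P^\circ) \cdot B_L^\circ$ gives an $s$-stable Borel of $G^\circ$ contained in $P^\circ$. By the cited theorem, $C_{B^\circ}(s)^\circ$ is a Borel of $H$ containing the maximal torus $C_{T^\circ}(s)^\circ$. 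Since $C_{P^\circ}(s)^\circ$ is closed, connected, and contains this Borel, it is parabolic in $H$. To extract the Levi decomposition, write $P^\circ = U \rtimes L^\circ$ with $U = R_u(P^\circ)$, necessarily $s$-stable. Uniqueness of the Levi decomposition of any element of $P^\circ$ yields $C_{P^\circ}(s) = C_U(s) \rtimes C_{L^\circ}(s)$, and since $P^\circ \cong U \times L^\circ$ as a variety, identity components factor through the product: $C_{P^\circ}(s)^\circ = C_U(s)^\circ \rtimes C_{L^\circ}(s)^\circ$. The first factor is connected unipotent and normal; the second is reductive because $\mathrm{Int}(s)$ restricts to a quasi-semi-simple automorphism of $L^\circ$ (it stabilizes the pair $(B_L^\circ, T^\circ)$). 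Hence $C_{L^\circ}(s)^\circ$ is the Levi factor of $C_{P^\circ}(s)^\circ$.

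For part (ii), I would lift $P'$ and $L'$ from $H$ to $G^\circ$ using a cocharacter. Choose a maximal torus $T' \subset L'$; since $L'$ is a Levi factor of $P' \subset H$, $T'$ is also a maximal torus of $H$, and there exists a cocharacter $\lambda: \mathbb{G}_m \to T'$ such that $P' = P_H(\lambda) = \{g \in H \mid \lim_{t \to 0} \lambda(t) g \lambda(t)^{-1} \text{ exists}\}$ and $L' = Z_H(\lambda)$. As $\lambda$ factors through $H = C_{G^\circ}(s)^\circ$, its image commutes pointwise with $s$, so the dynamic subgroups $P := P_{G^\circ}(\lambda)$ and $L := Z_{G^\circ}(\lambda)$ are both $s$-stable, with $L$ a Levi factor of $P$ in $G^\circ$. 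The defining limit conditions directly give $P \cap H = P'$ and $L \cap H = L'$. For $C_L(s)^\circ = L'$: the inclusion $L' \subset C_L(s)$ is tautological, and connectedness of $L'$ promotes it to $L' \subset C_L(s)^\circ$; conversely $C_L(s)^\circ$ is connected and contained in $C_L(s) \subset H$, hence in $L \cap H = L'$. The identical argument yields $C_P(s)^\circ = P'$.

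The main technical point is verifying in part (i) that $\mathrm{Int}(s)$ acts quasi-semi-simply on $L^\circ$, so that $C_{L^\circ}(s)^\circ$ is reductive and genuinely supplies the Levi complement of the parabolic $C_{P^\circ}(s)^\circ$. The remainder is systematic bookkeeping with identity components of fixed-point subgroups and with the cocharacter description of parabolic subgroups.
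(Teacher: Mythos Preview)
Your proposal is correct and matches the paper's approach. For part (ii) your cocharacter argument is exactly what the paper does; for part (i) the paper simply cites \cite[Proposition 1.11]{DM94} rather than giving an argument, so your direct proof via an $s$-stable Borel--torus pair inside $(P^\circ,L^\circ)$ and the Levi decomposition of $C_{P^\circ}(s)^\circ$ is a faithful unpacking of that reference rather than a genuinely different route.
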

\begin{Rem}
The groups $P$ and $L$ in this proposition are not necessarily unique in general. See however Proposition \ref{Corrparalevi}.
\end{Rem}

\begin{proof}
The first part is \cite[Proposition 1.11]{DM94}. Given $L'$ and $P'$, there exists a cocharacter of $H$, say $\lambda$, such that $L'=L'_{\lambda}$ and $P'=P'_{\lambda}$, where $L'_{\lambda}$ and $P'_{\lambda}$ are the Levi subgroup and parabolic subgroup associated to $\lambda$. Regarded as a cocharacter of $G^{\circ}$, it defines a Levi subgroup and a parabolic subgroup $L_{\lambda}\subset P_{\lambda}$ of $G^{\circ}$. They are $s$-stable since the image of $\lambda$ commutes with $s$. It is clear that $L'_{\lambda}=L_{\lambda}\cap H$ and $P'_{\lambda}=P_{\lambda}\cap H$. 
\end{proof}

\begin{Prop}\label{K'etK}
Given $L'$ as in the preceding proposition, we write $L=C_{G^{\circ}}(Z^{\circ}_{L'})$. It is an $s$-stable Levi factor of some $s$-stable parabolic subgroup of $G^{\circ}$, such that $L'=C_L(s)^{\circ}$. If $M\subset G^{\circ}$ is an $s$-stable Levi factor of some $s$-stable parabolic subgroup, such that $L'\subset M$, then $L\subset M$. 
\end{Prop}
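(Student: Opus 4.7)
The plan is to handle the three assertions in turn. Since $Z^{\circ}_{L'} \subset L' \subset H$ is pointwise $s$-fixed, $L = C_{G^{\circ}}(Z^{\circ}_{L'})$ is $s$-stable. To realize $L$ as the Levi factor of an $s$-stable parabolic, I would pick a cocharacter $\lambda \colon \mathbb{G}_m \to Z^{\circ}_{L'}$ in general position so that $L_{\lambda} := C_{G^{\circ}}(\lambda(\mathbb{G}_m))$ equals $C_{G^{\circ}}(Z^{\circ}_{L'}) = L$; since $\lambda$ factors through $H$, its image commutes with $s$, making $L_{\lambda}$ and the associated parabolic $P_{\lambda}$ both $s$-stable. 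For the identity $L' = C_L(s)^{\circ}$, the inclusion $L' \subset C_L(s)^{\circ}$ holds because $L' \subset L$ (as $L'$ centralizes $Z^{\circ}_{L'}$), $L' \subset H$, and $L'$ is connected. Conversely $C_L(s)^{\circ}$ is connected and contained in $C_G(s)^{\circ} = H$, and centralizes $Z^{\circ}_{L'}$; since $L'$ is a Levi of the connected reductive group $H$, we have $C_H(Z^{\circ}_{L'}) = L'$ (any element of $H$ centralizing $Z^{\circ}_{L'}$ lies in a maximal torus of $L'$, hence in $L'$), whence $C_L(s)^{\circ} \subset L'$.

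For the minimality statement $L \subset M$, the strategy is to produce an $s$-fixed cocharacter $\mu$ of $G^{\circ}$ realizing $M = L_{\mu}$. Choose any $\lambda'$ with $(L_{\lambda'}, P_{\lambda'}) = (M, P_M)$, where $P_M$ is the given $s$-stable parabolic of Levi factor $M$. By $s$-stability of $(M, P_M)$, the cocharacter $s\lambda'$ also satisfies $(L_{s\lambda'}, P_{s\lambda'}) = (M, P_M)$; both $\lambda'$ and $s\lambda'$ thus lie in the open convex face $F$ of the cocharacter space cut out by the sign conditions defining $(M, P_M)$. By convexity $\mu := \lambda' + s\lambda'$ lies in $F$, so $L_{\mu} = M$, and $\mu$ is manifestly $s$-fixed. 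The degenerate case $\mu = 0$ forces $s\lambda' = -\lambda'$ and so $P_M = P_M^{\mathrm{op}}$, i.e., $M = G^{\circ}$, where $L \subset M$ is immediate. Otherwise, $\mu$ factors through $H$, and the hypothesis $L' \subset M = C_{G^{\circ}}(\mu(\mathbb{G}_m))$ gives $\mu(\mathbb{G}_m) \subset C_H(L') = Z_{L'}$, whence by connectedness $\mu(\mathbb{G}_m) \subset Z^{\circ}_{L'}$. Therefore $L = C_{G^{\circ}}(Z^{\circ}_{L'}) \subset C_{G^{\circ}}(\mu(\mathbb{G}_m)) = M$.

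The main obstacle is the convexity-averaging step producing an $s$-fixed cocharacter inside the face $F$ defining $(M, P_M)$. It is precisely here that the hypothesis ``$M$ is a Levi factor of an $s$-stable parabolic'' is used, as opposed to the weaker assumption that $M$ is merely $s$-stable (under which an $s$-fixed cocharacter realizing $M$ need not exist --- e.g.~a maximal torus of $\mathrm{GL}_2$ under swap conjugation). Once $\mu$ is in hand, the rest of the proof reduces to the classical identity $C_H(L') = Z_{L'}$ for a Levi of a connected reductive group together with the order-reversing behaviour of centralizers of subtori.
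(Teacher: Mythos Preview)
Your argument for the first two assertions (that $L$ is an $s$-stable Levi factor of an $s$-stable parabolic, and that $C_L(s)^{\circ}=L'$) matches the paper's: pick a generic cocharacter into $Z_{L'}^{\circ}$ and use $L'=C_H(Z_{L'}^{\circ})$.

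For the minimality statement $L\subset M$, your route is different and contains a gap. You assert that $\mu=\lambda'+s\lambda'$ is ``manifestly $s$-fixed'', but $s\mu=s\lambda'+s^{2}\lambda'$, which equals $\mu$ only when $s^{2}\lambda'=\lambda'$. The proposition is stated for an arbitrary quasi-semi-simple element, with no bound on the order of the action of $s$ on cocharacters of $Z_M^{\circ}$. The fix is easy: since $N_{G^{\circ}}(M,P_M)=M$, the quotient $N_G(M,P_M)/M$ injects into $G/G^{\circ}$, so $s$ acts on the cocharacter lattice of $Z_M^{\circ}$ with some finite order $d$; take instead $\mu=\sum_{i=0}^{d-1}s^{i}\lambda'$. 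When $M\neq G^{\circ}$ there is at least one root $\alpha$ of the unipotent radical of $P_M$, and each $s^{i}\lambda'$ pairs strictly positively with it, so $\mu\neq 0$ and $L_{\mu}=M$. (In the paper's main application $G/G^{\circ}\cong\mathbb{Z}/2\mathbb{Z}$, so your two-term average already suffices.)

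With this repair your argument is correct, and genuinely different from the paper's. The paper works with connected centres rather than a single cocharacter: it observes that $M':=C_M(s)^{\circ}$ is a Levi subgroup of $H$ containing $L'$ (by the preceding proposition), hence $(Z_M^{\circ}\cap H)^{\circ}\subset Z_{M'}\subset Z_{L'}^{\circ}$, and then invokes \cite[\S 1.10]{L03I} for the identity $C_{G}((Z_M^{\circ}\cap H)^{\circ})^{\circ}=M$. Your averaging step effectively re-proves this last identity by exhibiting a one-parameter subgroup of $(Z_M^{\circ}\cap H)^{\circ}$ whose centraliser is already $M$; the payoff is a self-contained argument that does not appeal to \cite{L03I}, at the cost of the convexity bookkeeping.
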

The proof is completely analogous to \cite[\S 2.1]{L03I} where the assertion is proved for $s$ semi-simple.
\begin{proof}
We can find a cocharacter $\chi:k^{\ast}\rightarrow Z^{\circ}_{L'}$ such that $L=C_{G^{\circ}}(\chi(k^{\ast}))$. As in the preceding proposition, we see that $L$ is an $s$-stable Levi factor of some $s$-stable parabolic subgroup. Since $L'=C_H(Z^{\circ}_{L'})$, we have $C_L(s)^{\circ}=L'$. 

Note that $M':=(M\cap H)^{\circ}=C_M(s)^{\circ}$ is a Levi subgroup of $H$, and that $L'$ is a Levi subgroup of $H$ contained in $M'$. Since $(Z_M^{\circ}\cap H)^{\circ}\subset Z_{M'}$, whence $(Z_M^{\circ}\cap H)^{\circ}\subset Z_{L'}^{\circ}$, whence $C_G(Z_{L'}^{\circ})\subset C_G((Z_M^{\circ}\cap H)^{\circ})$. According to \cite[\S 1.10]{L03I}, $C_G((Z_M^{\circ}\cap H)^{\circ})^{\circ}=M$, so $L\subset M$. 
\end{proof}

\begin{Rem}
In particular, if $T'\subset C_G(s)^{\circ}$ is a maximal torus, then $T:=C_{G^{\circ}}(T')$ is the unique maximal torus of $G^{\circ}$ containing $T'$. It is $s$-stable and contained in an $s$-stable Borel subgroup, and we have $C_T(s)^{\circ}=T'$. 
\end{Rem}
\begin{Rem}\label{K'etKisole}
Let $M$ be an $s$-stable Levi factor of some $s$-stable parabolic subgroup $Q\subset G^{\circ}$ such that $C_M(s)^{\circ}=L'$. Suppose in the proof of the above proposition that the equality $(Z_M^{\circ}\cap H)^{\circ}=Z_{L'}^{\circ}$ holds, i.e. the $s$-fixed part of the centre of $M$ coincides with the centre of the $s$-fixed part of $M$. Then we have $M=L$ by \cite[\S 1.10]{L03I}. We will see in Proposition \ref{L3S2.2} that this equality can be satisfied only if $s$ is an \textit{isolated} element of $N_G(Q)\cap N_G(M)$.
\end{Rem}
\begin{Rem}\label{L'Lnorm}
It follows from the definition of $L$ that if an element of $G$ normalises $L'$, then it normalises $L$.
\end{Rem}

\subsubsection{}
A quasi-semi-simple automorphism $\sigma$ of $G^{\circ}$ is \textit{quasi-central} if it satisfies the following condition. 
\begin{quote}
There exists no quasi-semi-simple automorphism of the form $\sigma'=\sigma\circ\ad g$ with $g\in G^{\circ}$ such that $\dim C_G(\sigma)^{\circ}<\dim C_G(\sigma')^{\circ}$. 
\end{quote}
A quasi-semi-simple element of $G$ is \textit{quasi-central} if it induces by conjugation a quasi-central automorphism of $G^{\circ}$.

A quasi-semi-simple element $\sigma\in G$ is quasi-central if and only if there exists a $\sigma$-stable maximal torus $T$ contained in a $\sigma$-stable Borel subgroup of $G^{\circ}$ such that every $\sigma$-stable element of $N_{G^{\circ}}(T)/T$ has a representative in $C_G(\sigma)^{\circ}$(\cite[Th\'eor\`eme 1.15]{DM94}). Considering the natural map $N_{C_G(\sigma)^{\circ}}(C_T(\sigma)^{\circ})\rightarrow N_{G^{\circ}}(T)$, this simply means that $W_{G^{\circ}}(T)^{\sigma}\cong W_{C_G(\sigma)^{\circ}}(C_T(\sigma)^{\circ})$. 

If $\sigma$ is quasi-central, we will often denote $C_H(\sigma)$ by $H^{\sigma}$.

\subsubsection{}\label{qisole}
Let $g=g_sg_u$ be the Jordan decomposition of an element of $G$. Write $L'(g)=C_G(g_s)^{\circ}$ and $L(g)=C_{G^{\circ}}(Z^{\circ}_{L'})$. We say that $g$ is isolated in $G$ if $L(g)=G^{\circ}$. The conjugacy class of an isolated element will be called isolated. The isolated elements can be characterised as follows.
\begin{Prop}\cite[\S 2.2]{L03I}\label{L3S2.2}
Let $g\in G$, and put $L'=L'(g)$ and $L=L(g)$. Then the following assertions are equivalent.
\begin{itemize}
\item[(i)] $L=G^{\circ}$;
\item[(ii)] $Z^{\circ}_{L'}=C_{Z_{G^{\circ}}}(g)^{\circ}$;
\item[(iii)] There is no $g_s$-stable proper parabolic subgroup $Q\subset G^{\circ}$ with $g_s$-stable Levi factor $M$ such that $L'\subset M$.
\end{itemize}
\end{Prop}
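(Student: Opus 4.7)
The plan is to establish the three implications in a cycle, or equivalently to prove (i)$\Leftrightarrow$(ii) and (i)$\Leftrightarrow$(iii) separately, in each case by direct manipulation of the definitions together with the preceding Proposition \ref{K'etK}.

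For the equivalence (i)$\Leftrightarrow$(ii), I would first show the inclusion $C_{Z_{G^{\circ}}}(g)^{\circ}\subset Z^{\circ}_{L'}$ holds unconditionally. Indeed, any element commuting with $g$ commutes with $g_s$ by uniqueness of Jordan decomposition, so $C_{Z_{G^{\circ}}}(g)^{\circ}\subset L'$; being also central in $G^{\circ}$, it is in particular central in $L'$, and being connected it lands in $Z^{\circ}_{L'}$. For the reverse inclusion, note that $Z^{\circ}_{L'}$ always commutes with $g_s$ and, since $g_u$ lies in the identity component $L'$ of its centralizer, $Z^{\circ}_{L'}\subset Z(L')$ also commutes with $g_u$; hence $Z^{\circ}_{L'}\subset C_G(g)$. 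Therefore $Z^{\circ}_{L'}=C_{Z_{G^{\circ}}}(g)^{\circ}$ if and only if $Z^{\circ}_{L'}\subset Z_{G^{\circ}}$, which in turn is equivalent to $L=C_{G^{\circ}}(Z^{\circ}_{L'})=G^{\circ}$, that is, to (i).

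For (i)$\Leftrightarrow$(iii), assume first that (iii) fails, so there is a $g_s$-stable proper parabolic $Q\subset G^{\circ}$ with $g_s$-stable Levi factor $M$ containing $L'$. Applying Proposition \ref{K'etK} to the pair $(L',M)$ yields $L\subset M\subsetneq G^{\circ}$, contradicting (i). Conversely, assume (i) fails, that is $L\neq G^{\circ}$. Choose a cocharacter $\chi:k^{\ast}\rightarrow Z^{\circ}_{L'}$ such that $L=C_{G^{\circ}}(\chi(k^{\ast}))$, and let $Q=P_{\chi}$ be the associated parabolic; since $g_s$ centralizes $Z^{\circ}_{L'}\subset L'$, both $Q$ and its Levi factor $L$ are $g_s$-stable. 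Moreover $L'\subset L$ because $Z^{\circ}_{L'}\subset Z(L')$ forces $L'\subset C_{G^{\circ}}(Z^{\circ}_{L'})=L$. Thus $M:=L$ and $Q$ witness the failure of (iii).

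The only technical point that needs care is the assertion $g_u\in L'$ used in the first equivalence: this rests on the general fact that a unipotent element of an algebraic group lies in the identity component of its centralizer, applied to $g_u\in C_G(g_s)$. Everything else is essentially a bookkeeping exercise involving the definition $L=C_{G^{\circ}}(Z^{\circ}_{L'})$ and the correspondence, recalled in Proposition \ref{PLsigma} and Proposition \ref{K'etK}, between $g_s$-stable Levi factors of $G^{\circ}$ and Levi subgroups of $H=C_G(g_s)^{\circ}$. The main obstacle, if any, is purely notational: one must keep straight that $L'$ is a connected subgroup of $G^{\circ}$ (being the identity component of $C_G(g_s)$) so that $Z^{\circ}_{L'}$ is well-behaved, and that the parabolic supplied by the cocharacter $\chi$ is automatically $g_s$-stable.
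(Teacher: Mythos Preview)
Your argument is correct. Note, however, that the paper does not supply its own proof of this proposition: it is stated with a citation to \cite[\S 2.2]{L03I} and used as a black box. What you have written is essentially a clean reconstruction of Lusztig's original argument, and it meshes well with the surrounding material (in particular you correctly invoke Proposition~\ref{K'etK} for the implication (iii) fails $\Rightarrow$ (i) fails).

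One small phrasing issue: when you write ``a unipotent element of an algebraic group lies in the identity component of its centralizer,'' what you actually need (and use) is the simpler fact that a unipotent element of a linear algebraic group lies in the identity component of that group, applied to $g_u$ inside the group $C_G(g_s)$. Your conclusion $g_u\in L'=C_G(g_s)^{\circ}$ is correct; only the wording is slightly off.
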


This definition of isolated element agrees with the definitions in the literatures, due to the following result, which is not obvious.
\begin{Prop}(\cite[Proposition 18.2]{L03IV})\label{IV18.2}
Let $s\in G$ be a semi-simple element and $u\in G$ a unipotent such that $su=us$. Then $su$ is isolated in $G$ (for the definition in \cite[\S 2]{L03I}) if and only if $s$ is isolated in $G$.
\end{Prop}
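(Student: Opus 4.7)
The plan is to reduce this to a statement purely about the semisimple part $s$, using the paper's own definition. Observe that the definition of "isolated" given just before the proposition only depends on $g_s$: indeed, $L'(g) = C_G(g_s)^\circ$ is determined by $g_s$, and so is $L(g) = C_{G^\circ}(Z^\circ_{L'})$. Consequently, for $g = su = us$ with $s$ semisimple and $u$ unipotent, $g$ is isolated in the paper's sense iff $s$ is. Therefore the real content of Proposition \ref{IV18.2} is the compatibility between the paper's definition and the one in \cite[\S 2]{L03I}, where a priori the latter might involve $u$ nontrivially. I will use characterisation (iii) of Proposition \ref{L3S2.2} throughout: $s$ (equivalently $su$, in the paper's sense) is non-isolated iff there exists an $s$-stable proper parabolic subgroup $Q \subset G^\circ$ with an $s$-stable Levi factor $M$ such that $L' = C_G(s)^\circ \subset M$.

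For the easy direction, assume $s$ is non-isolated in the paper's sense, and fix such a pair $(Q,M)$. Since $u$ commutes with $s$, we have $u \in C_G(s) \subset C_G(s)^\circ \cdot C_G(s)/C_G(s)^\circ$; but $u$ is unipotent in characteristic $\ne 2$, so $u \in C_G(s)^\circ \subset M$. Hence $su \in M$, and $(Q,M)$ is likewise an $su$-stable proper parabolic/Levi pair witnessing non-isolation of $su$ in Lusztig's sense in \cite[\S 2]{L03I}.

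The reverse direction is the substantive step. Assume $su$ is non-isolated in the sense of \cite[\S 2]{L03I}; one obtains a proper parabolic $Q$ of $G^\circ$ with Levi factor $M$, both stable under conjugation by $su$, such that the relevant centraliser condition holds. Since $Q$ and $M$ are normalised by $su$ and $u$ is unipotent with $s u = u s$, one shows they are normalised by $s$ and $u$ separately (using that $u$ lies in the connected normaliser once $s$ already normalises, via the bijection between unipotent elements and their powers). It then remains to promote $M$ to a Levi containing $L'$. Here I would apply Proposition \ref{K'etK}: the subgroup $L(s) = C_{G^\circ}(Z^\circ_{L'(s)})$ is the smallest $s$-stable Levi of an $s$-stable parabolic containing $L'$, so $L(s) \subset M \subsetneq G^\circ$, contradicting $L(s) = G^\circ$.

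The main obstacle is the reverse direction, and specifically showing that an $s$-stable proper parabolic/Levi structure can be extracted from whatever witness Lusztig's definition provides for the pair $(s,u)$. The subtle point is that a priori Lusztig's definition may only see the element $su$ as a whole, so one must exploit the Jordan decomposition: $Q$ and $M$ being stable under $su = us$ together with $u$ being unipotent forces them to be $s$-stable individually (as $s$ and $u$ are the semisimple and unipotent parts of an inner automorphism of $G^\circ$, whose Jordan decomposition is preserved by the automorphism of $G^\circ$-stable subgroups). Once this $s$-stability is established, Proposition \ref{K'etK} together with Proposition \ref{L3S2.2} closes the argument.
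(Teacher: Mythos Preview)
The paper does not actually prove this proposition: it is stated with a citation to \cite[Proposition 18.2]{L03IV} and introduced with the remark ``which is not obvious,'' meaning the author is importing the result from Lusztig without reproducing the argument. So there is no proof in the paper to compare your attempt against.

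On the substance of your attempt: your opening observation is correct and is really the whole point. The paper's own definition of isolated---$L(g)=G^\circ$ with $L'(g)=C_G(g_s)^\circ$---is phrased in terms of $g_s$ only, so under \emph{this} definition the equivalence $su$ isolated $\Leftrightarrow$ $s$ isolated is tautological. The non-obvious content, which the paper is citing rather than proving, is that this reformulation agrees with Lusztig's original definition in \cite[\S 2]{L03I}; the latter is stated for a general element and does not manifestly reduce to its semisimple part. Your sketch of the reverse direction uses the right idea (if $su$ normalises a closed subgroup $H$ then $s,u\in N_G(H)$ since Jordan decomposition is inherited by closed subgroups), but the argument remains incomplete because you never state precisely what Lusztig's definition in \cite[\S 2]{L03I} is, so it is unclear what ``witness'' of non-isolation you are starting from and whether the containment $L'\subset M$ that characterisation (iii) of Proposition~\ref{L3S2.2} requires is actually produced. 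To turn this into a genuine proof you would need to write down Lusztig's original definition and carry the comparison through explicitly.
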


Therefore, the definition of isolated semi-simple elements coincides with \cite[Definition 3.1]{DM18}, where one fixes a maximal torus $T$, a Borel subgroup $B\subset G^{\circ}$ containing $T$ and a quasi-central element $\sigma\in N_G(T,B)$, and says that $t\sigma\in T\sigma$ is isolated if $C_G(t\sigma)^{\circ}$  is not contained in a $\sigma$-stable Levi factor $M$ of a $\sigma$-stable proper parabolic subgroup $Q$ of $G^{\circ}$. Note that in this definition, $M$ necessarily contains $T$ because $C_M(t\sigma)^{\circ}$ contains $C_T(t\sigma)^{\circ}$. Let us also recall that (\textit{cf.} \cite[Definition 3.12]{DM18}), an element $t\sigma$ is \textit{quasi-isolated} if $C_G(t\sigma)$  is not contained in a $\sigma$-stable Levi factor of a $\sigma$-stable proper parabolic subgroup of $G^{\circ}$.

\subsection{Parabolic Subgroups and Levi Subgroups}
\subsubsection{}\label{Corrparalevi}
Recall that in the setting of Proposition \ref{PLsigma}, one does not have a bijection in general.
\begin{Prop}(\cite[Corollaire 1.25]{DM94})
Let $\sigma$ be a quasi-central automorphism of $G^{\circ}$.
\begin{itemize}
\item[(1)] The map $P\mapsto(P^{\sigma})^{\circ}$  defines a bijection between the $\sigma$-stable parabolic subgroups of $G^{\circ}$ and the parabolic subgroups of $(G^{\sigma})^{\circ}$.
\item[(2)] Then map $L\mapsto(L^{\sigma})^{\circ}$  defines a bijection between the $\sigma$-stable Levi factors of $\sigma$-stable parabolic subgroups of $G^{\circ}$ and the Levi subgroups of $(G^{\sigma})^{\circ}$.
\end{itemize}
\end{Prop}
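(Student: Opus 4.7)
The plan is to concentrate on injectivity of both maps, since well-definedness and surjectivity are already furnished by Proposition \ref{PLsigma}(i) and (ii) respectively. The whole argument will hinge on the quasi-centrality hypothesis, which supplies the isomorphism $W_{G^{\circ}}(T)^{\sigma}\cong W_{H}(T')$ for $H=(G^{\sigma})^{\circ}$, any $\sigma$-stable maximal torus $T\subset G^{\circ}$, and $T'=(T^{\sigma})^{\circ}$.

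For part (2), I would take Proposition \ref{K'etK} as the main tool. Starting from a $\sigma$-stable Levi factor $L$ of a $\sigma$-stable parabolic and setting $L':=(L^{\sigma})^{\circ}$, define $\tilde L:=C_{G^{\circ}}(Z_{L'}^{\circ})$. Proposition \ref{K'etK} already yields $\tilde L\subset L$ together with $(\tilde L^{\sigma})^{\circ}=L'$. To upgrade this to an equality $\tilde L=L$, it suffices to show $Z_{L'}^{\circ}\subset Z_L^{\circ}$, i.e.\ that $L$ centralizes $Z_{L'}^{\circ}$. I would argue that the restriction of $\sigma$ to $L$ is itself quasi-central, by intersecting the global identification $W_{G^{\circ}}(T)^{\sigma}\cong W_H(T')$ with $W_L\subset W_{G^{\circ}}(T)$ to obtain $W_L^{\sigma}\cong W_{L'}$. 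Applied to the torus $Z_L^{\circ}$, the resulting quasi-centrality forces the connected component $(Z_L^{\circ})^{\sigma,\circ}$ to coincide with $Z_{L'}^{\circ}$ by a rank comparison, giving the desired reverse inclusion. This simultaneously identifies the inverse map as $L'\mapsto C_{G^{\circ}}(Z_{L'}^{\circ})$.

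For part (1), I would first fix a $\sigma$-stable pair $T\subset B\subset G^{\circ}$, which exists by the quasi-semi-simplicity of $\sigma$, and set $T':=(T^{\sigma})^{\circ}\subset B':=(B^{\sigma})^{\circ}$. Quasi-centrality identifies the simple roots of $(H,T',B')$ with the $\sigma$-orbits on the simple roots $\Delta$ of $(G^{\circ},T,B)$, so $P\mapsto (P^{\sigma})^{\circ}$ restricts to an obvious bijection between $\sigma$-stable subsets of $\Delta$ and subsets of the simple roots of $H$, and hence between $\sigma$-stable standard parabolics of $G^{\circ}$ containing $B$ and standard parabolics of $H$ containing $B'$. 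To extend to all $\sigma$-stable parabolics, I would show that every such parabolic is $H$-conjugate to a standard one: it contains a $\sigma$-stable Borel (apply Proposition \ref{PLsigma}(ii) to any Borel of its $\sigma$-fixed part, which is a parabolic of $H$), and any two $\sigma$-stable Borels are $H$-conjugate because their images under $B\mapsto (B^{\sigma})^{\circ}$ exhaust the Borels of $H$ and this assignment is manifestly $H$-equivariant. Combining these two observations with the standard case yields the bijectivity in part (1).

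The main technical obstacle I anticipate is the rank comparison in part (2): rigorously establishing that $\sigma|_L$ is quasi-central on $L$ and that, as a consequence, $(Z_L^{\circ})^{\sigma,\circ}=Z_{L'}^{\circ}$. Once this structural step is in place, both bijections follow cleanly from the machinery already set up in Propositions \ref{PLsigma} and \ref{K'etK}, and everything else reduces to routine manipulations of parabolic and Levi subgroups in reductive groups.
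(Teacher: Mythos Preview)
The paper does not actually prove this proposition; it is quoted from \cite[Corollaire~1.25]{DM94}, and the only comment offered is the single sentence ``Considering the fact that $W_{G^{\circ}}(T)^{\sigma}=W_{(G^{\sigma})^{\circ}}((T^{\sigma})^{\circ})$, the bijection is obtained at the level of Weyl groups.''  Your treatment of part~(1) is precisely an unpacking of that sentence: reduce to standard parabolics via the bijection between $\sigma$-stable subsets of $\Delta$ and subsets of the simple roots of $H$, then transport by $H$-conjugacy.  So for part~(1) you and the paper are aligned.

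For part~(2) you take a genuinely different route through Proposition~\ref{K'etK}, aiming to exhibit $L'\mapsto C_{G^{\circ}}(Z_{L'}^{\circ})$ as an explicit inverse.  This is attractive, but the gap you flag is real and your proposed fix does not close it.  From the global isomorphism $W_{G^{\circ}}(T)^{\sigma}\cong W_H(T')$ you get, for $w\in W_L^{\sigma}$, a representative $n\in H$; since $w\in W_L$ forces $n\in L$, you obtain $n\in L\cap H\subset L^{\sigma}$.  But the quasi-centrality criterion for $\sigma|_L$ requires $n\in (L^{\sigma})^{\circ}$, and $L\cap H$ need not be connected, so ``intersecting the global identification with $W_L$'' does not by itself yield $W_L^{\sigma}\cong W_{L'}$.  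The statement that $\sigma|_L$ is quasi-central is true (it is part of the package proved in \cite{DM94}), but it needs an independent argument; once granted, your rank comparison $(Z_L^{\circ})^{\sigma,\circ}=Z_{L'}^{\circ}$ goes through.

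A shorter path for part~(2), closer to the paper's hint, is to run the same Weyl-group argument as in part~(1): any $\sigma$-stable Levi factor of a $\sigma$-stable parabolic contains a $\sigma$-stable pair $(T_1,B_1)$; any two such pairs are $H$-conjugate (the Borels by your own argument, and then the tori by the Remark after Proposition~\ref{K'etK}, which says $C_{G^{\circ}}(T')$ is the unique maximal torus containing $T'$); hence every such $L$ is $H$-conjugate to a $\sigma$-stable standard Levi, and the bijection reduces to the obvious one between $\sigma$-stable subsets of $\Delta$ and subsets of the simple roots of $H$.  Your approach has the merit of naming the inverse explicitly, but it costs you the extra input about $\sigma|_L$.
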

Considering the fact that $W_{G^{\circ}}(T)^{\sigma}=W_{(G^{\sigma})^{\circ}}((T^{\sigma})^{\circ})$, the bijection is obtained at the level of Weyl groups.

\subsubsection{}
The following propositions will be useful.
\begin{Prop}(\cite[Proposition 1.6]{DM94})\label{DM2Prop1.6}
Let $\sigma$ be a quasi-semi-simple element of $G$ and let $(L^{\circ},P^{\circ})$ be a pair consisting of a Levi subgroup of $G^{\circ}$ and a parabolic subgroup that contains it as a Levi factor. If the $G^{\circ}$-conjugacy class of $(L^{\circ},P^{\circ})$ is $\sigma$-stable, then there exists $x\in G^{\circ}$  such that $(xL^{\circ}x^{-1},xP^{\circ}x^{-1})$ is $\sigma$-stable.
\end{Prop}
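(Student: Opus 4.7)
The plan is to exploit the fact that $\sigma$, being quasi-semi-simple, stabilises a reference pair $(T_0,B_0)$ and to move $(L^{\circ},P^{\circ})$ into a standard pair relative to $(T_0,B_0)$, where $\sigma$-stability becomes transparent. So first I would fix a maximal torus $T_0$ and a Borel subgroup $B_0\supset T_0$ of $G^{\circ}$ both normalised by $\sigma$ (which exists by the definition of quasi-semi-simple), let $\Delta$ be the associated set of simple roots, and for each $I\subseteq\Delta$ let $(L_I,P_I)$ be the standard Levi and parabolic containing $T_0$ and $B_0$. Since $\sigma$ normalises both $T_0$ and $B_0$, it permutes $\Delta$, and sends the pair $(L_I,P_I)$ to $(L_{\sigma(I)},P_{\sigma(I)})$: indeed, $\sigma(P_I)$ is a standard parabolic containing $B_0$, so it equals some $P_{I'}$, and $\sigma(L_I)$ is the unique Levi factor of $P_{I'}$ containing $T_0=\sigma(T_0)$, hence equals $L_{I'}$; tracking roots gives $I'=\sigma(I)$.

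Next I would invoke the classical fact that every pair $(L,P)$, with $P$ a parabolic subgroup of $G^{\circ}$ and $L$ a Levi factor of $P$, is $G^{\circ}$-conjugate to exactly one standard pair $(L_I,P_I)$. This produces some $x_0\in G^{\circ}$ and a uniquely determined $I\subseteq\Delta$ with
\begin{equation*}
(x_0L^{\circ}x_0^{-1},\, x_0P^{\circ}x_0^{-1})=(L_I,P_I).
\end{equation*}

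Now the hypothesis that the $G^{\circ}$-conjugacy class of $(L^{\circ},P^{\circ})$ is $\sigma$-stable translates, after conjugating by $x_0$, into the statement that $(L_{\sigma(I)},P_{\sigma(I)})=\sigma(L_I,P_I)\sigma^{-1}$ is $G^{\circ}$-conjugate to $(L_I,P_I)$. By the uniqueness part of the standard-pair classification, this forces $\sigma(I)=I$, and therefore $(L_I,P_I)$ itself is $\sigma$-stable. Taking $x=x_0$ then gives the conclusion.

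The only real obstacle is to justify that the standard pairs form a set of representatives for the $G^{\circ}$-conjugacy classes of Levi-parabolic pairs, and that $\sigma$ acts on them in the expected way. Both are standard consequences of the Bruhat decomposition together with the uniqueness of the Levi factor of a parabolic containing a prescribed maximal torus, so once the $\sigma$-stable reference pair $(T_0,B_0)$ has been chosen, the remainder of the argument is essentially bookkeeping in the root-system combinatorics.
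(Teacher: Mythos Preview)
Your argument is correct. The paper does not supply its own proof of this proposition; it simply quotes the result from \cite[Proposition 1.6]{DM94}. Your reduction to standard pairs via a $\sigma$-stable reference pair $(T_0,B_0)$, followed by the observation that $\sigma$ permutes standard pairs through its action on $\Delta$ and that the uniqueness of the standard representative forces $\sigma(I)=I$, is exactly the natural proof and matches the argument in the cited reference.
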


\begin{Prop}(\cite[Proposition 1.38]{DM94})\label{1.38}
Let $\sigma$ be an $F$-stable quasi-central element of $G$ and let $(L^{\circ},P^{\circ})$ be a pair consisting of an $F$-stable Levi factor and a parabolic subgroup containing it as a Levi factor. If the $G^{\circ F}$-conjugacy class of $(L^{\circ},P^{\circ})$ is $\sigma$-stable, then there exists $x\in G^{\circ F}$ such that $(xL^{\circ}x^{-1},xP^{\circ}x^{-1})$ is $\sigma$-stable.
\end{Prop}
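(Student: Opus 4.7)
The plan is to translate the problem to the connected reductive group $(G^{\sigma})^{\circ}$ via Proposition~\ref{Corrparalevi} and then apply the Lang--Steinberg theorem twice.

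First, the hypothesis \emph{a fortiori} implies that the $G^{\circ}$-conjugacy class of $(L^{\circ},P^{\circ})$ is $\sigma$-stable, so Proposition~\ref{DM2Prop1.6} furnishes $y\in G^{\circ}$ such that $(L_1,P_1):=(yL^{\circ}y^{-1},yP^{\circ}y^{-1})$ is $\sigma$-stable. By Proposition~\ref{Corrparalevi}, this pair corresponds to a Levi factor and a parabolic subgroup $(\bar L_1,\bar P_1):=((L_1^{\sigma})^{\circ},(P_1^{\sigma})^{\circ})$ of $(G^{\sigma})^{\circ}$; recall that the latter is connected reductive and inherits a Frobenius from $F$ because $F$ and $\sigma$ commute.

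The crux is to verify that the $(G^{\sigma})^{\circ}$-conjugacy class of $(\bar L_1,\bar P_1)$ is $F$-stable. Since $F\sigma=\sigma F$, the pair $F(L_1,P_1)$ is again $\sigma$-stable, and it is $G^{\circ}$-conjugate to $(L_1,P_1)$ because it lies in the $G^{\circ}$-orbit of $(L^{\circ},P^{\circ})$. Under Proposition~\ref{Corrparalevi} it maps to $F(\bar L_1,\bar P_1)$, and the canonicity of the bijection together with the Weyl group identification $W_{G^{\circ}}(T)^{\sigma}\cong W_{(G^{\sigma})^{\circ}}((T^{\sigma})^{\circ})$ implies that the $(G^{\sigma})^{\circ}$-conjugacy class of the image depends only on the $G^{\circ}$-conjugacy class of the $\sigma$-stable pair upstairs. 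So $(\bar L_1,\bar P_1)$ and $F(\bar L_1,\bar P_1)$ are $(G^{\sigma})^{\circ}$-conjugate, and a first application of Lang--Steinberg in $(G^{\sigma})^{\circ}$ produces an $F$-fixed pair $(\bar L_2,\bar P_2)$ in this class. Lifting via Proposition~\ref{Corrparalevi} yields a pair $(L_2,P_2)$ in $G^{\circ}$ that is simultaneously $F$-stable and $\sigma$-stable and still $G^{\circ}$-conjugate to $(L^{\circ},P^{\circ})$.

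Finally, $(L^{\circ},P^{\circ})$ and $(L_2,P_2)$ are both $F$-fixed points of the $G^{\circ}$-homogeneous space $G^{\circ}/L^{\circ}$ parametrising $G^{\circ}$-conjugates of $(L^{\circ},P^{\circ})$; since the stabilizer $L^{\circ}=N_{G^{\circ}}(L^{\circ},P^{\circ})$ is connected, a second application of Lang--Steinberg gives $x\in G^{\circ F}$ with $(L_2,P_2)=x(L^{\circ},P^{\circ})x^{-1}$, completing the proof. The main obstacle lies in the middle paragraph: one must verify that Proposition~\ref{Corrparalevi}, although stated merely as a bijection of subgroup data, is compatible with the conjugation actions so that $G^{\circ}$-conjugacy of $\sigma$-stable pairs translates to $(G^{\sigma})^{\circ}$-conjugacy of their $\sigma$-fixed images; granting this, the rest is standard Lang--Steinberg.
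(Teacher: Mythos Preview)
The paper does not supply its own proof of this statement: Proposition~\ref{1.38} is quoted verbatim from \cite[Proposition 1.38]{DM94} and is used as a black box, so there is nothing in the present paper to compare your argument against.

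That said, your strategy is sound and is essentially the standard route one would expect. The reduction via Proposition~\ref{Corrparalevi} to $(G^{\sigma})^{\circ}$, followed by two applications of Lang--Steinberg (once inside $(G^{\sigma})^{\circ}$ to manufacture an $F$-stable $\sigma$-stable pair, once in $G^{\circ}$ to realise the conjugation by an $F$-rational element), is correct. The point you flag as the main obstacle---that two $G^{\circ}$-conjugate $\sigma$-stable pairs have $(G^{\sigma})^{\circ}$-conjugate images under the bijection of Proposition~\ref{Corrparalevi}---is genuine but follows from the remark immediately after that proposition: the bijection is realised at the level of Weyl groups via $W_{G^{\circ}}(T)^{\sigma}\cong W_{(G^{\sigma})^{\circ}}((T^{\sigma})^{\circ})$, and since standard parabolics in a fixed Borel are rigid (two are $G^{\circ}$-conjugate only if equal), $G^{\circ}$-conjugate $\sigma$-stable pairs reduce to the same $\sigma$-stable standard pair, hence to the same standard pair downstairs. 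The final Lang--Steinberg step is justified because $N_{G^{\circ}}(L^{\circ},P^{\circ})=L^{\circ}$ is connected.
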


Let $L^{\circ}$ be a Levi factor of some parabolic subgroup $P^{\circ}$ of $G^{\circ}$, put $L=N_G(L^{\circ},P^{\circ})$. Let $G^1$ be a connected component of $G$. It acts by conjugation on the $G^{\circ}$-conjugacy classes of the pairs $(L^{\circ},P^{\circ})$. Then $L$ meets $G^1$ if and only if the class of $(L^{\circ},P^{\circ})$ is stable for this action. According to the above propositions, there is a conjugate of $(L^{\circ},P^{\circ})$ that is $\sigma$-stable. This means that $L$ contains $\sigma$ and so $(L^{\sigma})^{\circ}$ is a Levi subgroup of $(G^{\sigma})^{\circ}$. 

\begin{Prop}(\cite[Proposition 1.40]{DM94})\label{1.40}
Assume that $\sigma\in G$ is quasi-central, $F$-stable, and $G/G^{\circ}$ is generated by the component of $\sigma$. Then the $G^F$-conjugacy classes of the $F$-stable groups $L=N_G(L^{\circ},P^{\circ})$ meeting the connected component $G^{\circ}\sigma$ are in bijection with the $((G^{\sigma})^{\circ})^F$-conjugacy classes of the $F$-stable Levi subgroups of $(G^{\sigma})^{\circ}$ in the following manner. Each $L$ has a $G^F$-conjugate $L_1$ containing $\sigma$, and the bijection associates the $((G^{\sigma})^{\circ})^F$-class of $((L_1)^{\sigma})^{\circ}$  to the $G^F$-class of $L$.
\end{Prop}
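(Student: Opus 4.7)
The plan is to transfer the bijection of Corollary 1.25 (Proposition \ref{Corrparalevi}) from the algebraic level to the rational level, using Proposition \ref{1.38} to promote $G^{\circ}$-conjugacy statements to $G^{\circ F}$-conjugacy, and Lang--Steinberg arguments to control conjugations by elements living in $G^{\sigma}$.

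First, every $F$-stable $L=N_G(L^{\circ},P^{\circ})$ meeting $G^{\circ}\sigma$ admits a $G^F$-conjugate $L_1$ that contains $\sigma$ itself: the existence of some element of $L\cap G^{\circ}\sigma$ means that $\sigma$ stabilises the $G^{\circ}$-conjugacy class of the pair $(L^{\circ},P^{\circ})$, and this class is also $F$-stable because $L$ is. Proposition \ref{1.38} then yields $x\in G^{\circ F}$ such that $(xL^{\circ}x^{-1},xP^{\circ}x^{-1})$ is genuinely $\sigma$-stable, so that $\sigma\in L_1:=xLx^{-1}$. One then defines $\Phi(L):=((L_1)^{\sigma})^{\circ}$; by Corollary 1.25 this is a Levi subgroup of $(G^{\sigma})^{\circ}$, and it is $F$-stable since $L_1$ is. Surjectivity of $\Phi$ onto the $F$-stable Levi subgroups of $(G^{\sigma})^{\circ}$ follows from the same two ingredients applied in reverse: starting from an $F$-stable Levi $L'$ of $(G^{\sigma})^{\circ}$, Corollary 1.25 produces a $\sigma$-stable pair $(L^{\circ},P^{\circ})$ in $G^{\circ}$ with $((L^{\circ})^{\sigma})^{\circ}=L'$, whose $G^{\circ}$-class is both $\sigma$- and $F$-stable, so Proposition \ref{1.38} provides an $F$-stable and $\sigma$-stable representative $(L_1^{\circ},P_1^{\circ})$, and one sets $L_1:=N_G(L_1^{\circ},P_1^{\circ})$.

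The main work, and the main obstacle, lies in verifying that $\Phi$ descends to a well-defined bijection of conjugacy classes. Suppose $L_1$ and $L_1'$ are $F$-stable, both contain $\sigma$, and $L_1'=gL_1g^{-1}$ for some $g\in G^F$. Then $\sigma$ and $g\sigma g^{-1}$ both lie in $L_1'\cap G^{\circ}\sigma$, so $h:=g\sigma g^{-1}\sigma^{-1}$ belongs to $(L_1')^{\circ}$ and is $F$-fixed. A Lang--Steinberg argument inside the connected group $(L_1')^{\circ}$, twisted by the quasi-central action of $\sigma$ in the spirit of the proof of Proposition \ref{1.38}, produces $y\in (L_1')^{\circ}$ such that after replacing $g$ by $yg$ one has $g\sigma=\sigma g$, i.e.\ $g\in (G^{\sigma})^F$. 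A further standard Lang--Steinberg argument inside the connected reductive group $(G^{\sigma})^{\circ}$ refines $g$ to an element of $((G^{\sigma})^{\circ})^F$, yielding the required $((G^{\sigma})^{\circ})^F$-conjugacy between $\Phi(L_1)$ and $\Phi(L_1')$. Injectivity is the reverse implication: any $((G^{\sigma})^{\circ})^F$-conjugacy of the two Levi subgroups of $(G^{\sigma})^{\circ}$ lifts through the algebraic bijection of Corollary 1.25 to a conjugacy of $L_1$ and $L_1'$ by an element of $(G^{\sigma})^F\subset G^F$.

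The delicate point is the twisted Lang--Steinberg step for the quasi-central $\sigma$; the hypothesis that $G/G^{\circ}$ is generated by the component of $\sigma$ is what ensures that every element of $G$ is of the form $g\sigma^{k}$ with $g\in G^{\circ}$, so that the commutator $g\sigma g^{-1}\sigma^{-1}$ really lands in the connected component $(L_1')^{\circ}$ and the Lang--Steinberg machine can be applied within a connected reductive group. Without this hypothesis one would have to track several components of the ambient group and the correspondence would no longer be a plain bijection.
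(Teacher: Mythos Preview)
The paper does not supply its own proof of this proposition: it is quoted verbatim from \cite[Proposition 1.40]{DM94} and immediately followed by the remark on maximal tori, with no argument given. There is therefore nothing in the paper to compare your attempt against.

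That said, your outline follows the natural strategy one would expect for such a result---combining the geometric bijection of Proposition~\ref{Corrparalevi} with the rational refinement of Proposition~\ref{1.38} and Lang--Steinberg---and is broadly in the spirit of how Digne and Michel argue in \cite{DM94}. One point to tighten: when you invoke Proposition~\ref{1.38}, its hypothesis is that the $G^{\circ F}$-conjugacy class (not merely the $G^{\circ}$-class) of $(L^{\circ},P^{\circ})$ is $\sigma$-stable, and you should check that $F$-stability of $L$ actually gives you an $F$-stable pair $(L^{\circ},P^{\circ})$ to start from, or else adjust the argument. Similarly, the ``twisted Lang--Steinberg step'' you sketch in the well-definedness paragraph needs the map $y\mapsto y^{-1}\sigma(F(y))$ on $(L_1')^{\circ}$ to be surjective, which is fine since $\sigma F$ is a Frobenius on that connected group; making this explicit would strengthen the write-up.
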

This gives in particular the classification of the $G^F$-conjugacy classes of the $F$-stable groups of the form $T=N_G(T^{\circ},B^{\circ})$.

\section{Generalised Deligne-Lusztig Induction}
\subsection{Induction for Connected Groups}
We recall some generalities on the Deligne-Lusztig induction for connected reductive groups. In this section we assume $G$ to be connected. If $X$ is an algebraic variety over $k$, we denote by $H^i_c(X)$ the i-th cohomology group with compact support with coefficient in $\ladic$, and we denote by $H^{\ast}_c(X)=\bigoplus (-1)^iH^i_c(X)$ the virtual vector space. For a finite group $H$, denote by $\mathcal{C}(H)$ the set of the invariant $\ladic$-valued functions on $H$.
\subsubsection{}
Let $L$ be an $F$-stable Levi factor of some parabolic subgroup $P\subset G$ not necessarily $F$-stable. The Levi decomposition writes $P=LU$. Put $\mathcal{L}_{G}^{-1}(U)=\{x\in G|x^{-1}F(x)\in U\}$.  Then $G^F$ acts on $\mathcal{L}_G^{-1}(U)$ by left multiplication and $L^F$ acts by right multiplication. This induces a $G^F\times (L^{F})^{op}$-module structure on $H^i_c(\mathcal{L}^{-1}_G(U))$ for any $i$. Let $\theta\in\mathcal{C}(L^F)$, then the Deligne-Lusztig induction of $\theta$, denoted by $R^G_L\theta$, is the invariant function on $G^F$ defined by
\begin{equation}
R^G_{L}\theta(g)=|L^F|^{-1}\sum_{l\in L^F}\theta(l^{-1})\tr((g,l)|H^{\ast}_c(\mathcal{L}^{-1}(U))),\quad\text{for any }g\in G^F.
\end{equation}
It does not depend on the choice of $P$ if $q\ne 2$ (\textit{cf.} \cite[\S 9.2]{DM20}). The functions of the form $R^G_L\theta$ with $L$ being a maximal torus are called Deligne-Lusztig characters.

\subsubsection{}\label{formucarconn}
The Green function is defined on the subset of unipotent elements in the following manner.
\begin{equation}
\begin{split}
Q_{L}^{G}(-,-):G^F_u\times L^F_u&\longrightarrow \mathbb{Z}\\
(u,v)&\longmapsto
\tr((u,v)|H^{\ast}_c(\mathcal{L}_G^{-1}(U))).
\end{split}
\end{equation}
The calculation of the Deligne-Lusztig induction is reduced to the calculation of the Green functions. If $g=su$ is the Jordan decomposition of $g\in G^F$, we have the character formula (\cite[Proposition 10.1.2]{DM20}), 
\begin{equation}\label{FdCconn}
R^{G}_{L}\theta(g)=|L^F|^{-1}|C_G(s)^{\circ F}|^{-1}\sum_{\{h\in G^F|s\in \leftidx{^h}{L}{} \}}\sum_{\{v\in C_{\leftidx{^h}{L}{}}(s)^{\circ F}_u\}}Q^{C_G(s)^{\circ}}_{C_{\prescript{h}{}L}(s)^{\circ}}(u,v^{-1})\prescript{h}{}\theta(sv),
\end{equation}
where $\prescript{h}{}L=h^{-1}Lh$ and $\prescript{h}{}\theta(sv)=\theta(hsvh^{-1})$. The Green functions are sometimes normalised in such a way that the factor $|L^F|^{-1}$ is removed from the above formula.

\subsubsection{}
We will need the following simple lemmas. Let $\sigma$ be an automorphism of $G$ that commutes with $F$. If $L\subset G$ is an $F$-stable Levi subgroup, then we also denote by $\sigma$ the isomorphism $L^F\rightarrow \sigma(L)^F$ and the isomorphism $W_L(T)\rightarrow W_{\sigma(L)}(\sigma(T))$ for an $F$-stable maximal torus $T$.
\begin{Lem}\label{sigmaRGM}
Let $M\subset G$ be an $F$-stable Levi subgroup and let $\theta$ be a character of $M^F$. Then the character $(R^G_M\theta)\circ\sigma^{-1}$ is equal to $R^G_{\sigma(M)}\sigma_{\ast}\theta$.
\end{Lem}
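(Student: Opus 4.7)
The plan is to unwind both sides using the geometric definition of Deligne--Lusztig induction and transport everything through the automorphism $\sigma$.

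First I would fix a parabolic $P = MU$ of $G$ with Levi factor $M$ (not necessarily $F$-stable). Since $\sigma$ is an automorphism of $G$, the subgroup $\sigma(P) = \sigma(M)\sigma(U)$ is a parabolic with Levi decomposition given by the images, and $\sigma(M)$ remains $F$-stable because $\sigma$ commutes with $F$. Thus $R^G_{\sigma(M)}\sigma_{\ast}\theta$ can be computed using the variety $\mathcal{L}_G^{-1}(\sigma(U))$.

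Next I would observe that $\sigma$ restricts to an isomorphism of varieties
\begin{equation*}
\sigma : \mathcal{L}_G^{-1}(U) \lisom \mathcal{L}_G^{-1}(\sigma(U)),
\end{equation*}
because $x^{-1}F(x)\in U$ iff $\sigma(x)^{-1}F(\sigma(x)) = \sigma(x^{-1}F(x))\in\sigma(U)$, using $\sigma F = F\sigma$. This isomorphism intertwines the $G^F\times (M^F)^{\mathrm{op}}$-action on the left with the $G^F\times(\sigma(M)^F)^{\mathrm{op}}$-action on the right through the automorphisms $\sigma\colon G^F\to G^F$ and $\sigma\colon M^F\to \sigma(M)^F$. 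Taking cohomology with compact support produces an isomorphism of virtual bimodules $H^{\ast}_c(\mathcal{L}_G^{-1}(U))\cong H^{\ast}_c(\mathcal{L}_G^{-1}(\sigma(U)))$ compatible with this twisting, so that for $g\in G^F$ and $m\in M^F$,
\begin{equation*}
\tr\bigl((g,m)\,\big|\,H^{\ast}_c(\mathcal{L}_G^{-1}(U))\bigr)
=\tr\bigl((\sigma(g),\sigma(m))\,\big|\,H^{\ast}_c(\mathcal{L}_G^{-1}(\sigma(U)))\bigr).
\end{equation*}

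Plugging this into the definition of $R^G_M\theta$ and making the change of variables $m'=\sigma(m)$ in the sum over $M^F$ (which is a bijection onto $\sigma(M)^F$) gives
\begin{equation*}
(R^G_M\theta)(\sigma^{-1}(g)) = |\sigma(M)^F|^{-1}\sum_{m'\in\sigma(M)^F}(\sigma_{\ast}\theta)(m'^{-1})\,\tr\bigl((g,m')\,\big|\,H^{\ast}_c(\mathcal{L}_G^{-1}(\sigma(U)))\bigr),
\end{equation*}
which is exactly $R^G_{\sigma(M)}(\sigma_{\ast}\theta)(g)$. Nothing here is subtle; the only point worth double-checking is the equivariance of $\sigma$ on $\mathcal{L}_G^{-1}(U)$, and independence of $R^G_M$ on the choice of parabolic (valid since $q\neq 2$) ensures the identity holds regardless of the auxiliary $P$.
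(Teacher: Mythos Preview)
Your proof is correct and follows essentially the same approach as the paper: both arguments rest on the isomorphism $\sigma:\mathcal{L}_G^{-1}(U)\isom\mathcal{L}_G^{-1}(\sigma(U))$ (using that $\sigma$ commutes with $F$), deduce the trace identity $\tr((g,l)\mid H^{\ast}_c(\mathcal{L}^{-1}(U)))=\tr((\sigma(g),\sigma(l))\mid H^{\ast}_c(\mathcal{L}^{-1}(\sigma(U))))$, and conclude from the definition of $R^G_M$. Your write-up simply spells out the equivariance and change of variables more explicitly than the paper's terse version.
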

\begin{proof}
Let $Q$ be a parabolic subgroup containing $M$ such that $Q=MU_Q$. Then $\sigma(\mathcal{L}^{-1}(U_Q))=\mathcal{L}^{-1}(\sigma(U_Q))$ as $F$ commutes with $\sigma$, and so $$
\tr((\sigma(g),\sigma(l))|H^{\ast}_c(\mathcal{L}^{-1}(\sigma(U_Q)))=
\tr((g,l)|H^{\ast}_c(\mathcal{L}^{-1}(U_Q))$$ for any $g\in G^F$ and $l\in M^F$. The assertion then follows from the definition of $R^G_M\theta$.
\end{proof}
\begin{Lem}\label{sigmachi}
Assume that $\chi\in\Irr(\GL_n(q))$ is of the form $R^G_{\varphi}\theta$ for a triple $(M,\varphi,\theta)$ as in Theorem \ref{LS}. Then , the character $\chi\circ\sigma^{-1}$ is of the form $R^G_{\sigma_{\ast}\varphi}\sigma_{\ast}\theta$ for a triple $(\sigma(M),\sigma_{\ast}\varphi,\sigma_{\ast}\theta)$.
\end{Lem}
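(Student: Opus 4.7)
My plan is to unfold the definition of $R^G_\varphi\theta$ given by (\ref{eqthm1}) and transport each Deligne--Lusztig character using the preceding Lemma \ref{sigmaRGM}. Writing
\begin{equation*}
\chi\circ\sigma^{-1}=(R^G_\varphi\theta)\circ\sigma^{-1}=\epsilon_G\epsilon_M|W_M|^{-1}\sum_{w\in W_M}\tilde\varphi(wF)\,(R^G_{T_w}\theta)\circ\sigma^{-1},
\end{equation*}
Lemma \ref{sigmaRGM} immediately rewrites each summand as $R^G_{\sigma(T_w)}\sigma_\ast\theta$. Since $\sigma$ commutes with $F$ and is an automorphism of $G$, it sends the $F$-stable Levi $M$ to the $F$-stable Levi $\sigma(M)$, and induces a group isomorphism $\sigma\colon W_M\lisom W_{\sigma(M)}$ compatible with the $F$-action. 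Under this isomorphism, if $w\in W_M$ parametrises (the $G^F$-conjugacy class of) the $F$-stable maximal torus $T_w\subset M$, then $\sigma(w)$ parametrises $\sigma(T_w)\subset \sigma(M)$.

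I would then reindex by $w'=\sigma(w)$. Writing $\tilde{\sigma_\ast\varphi}:=\sigma_\ast\tilde\varphi$, i.e.\ $(\sigma_\ast\tilde\varphi)(w'F)=\tilde\varphi(\sigma^{-1}(w')F)$, one checks that this is a well-defined extension of $\sigma_\ast\varphi$ from $W_{\sigma(M)}$ to $W_{\sigma(M)}\rtimes\lb F\rb$ (the only point being that $\sigma$ commuting with $F$ makes the semi-direct product structure compatible). Also $|W_M|=|W_{\sigma(M)}|$ and $\epsilon_M=\epsilon_{\sigma(M)}$ because $\sigma$ is an isomorphism of algebraic groups defined over $\mathbb{F}_q$, so it preserves the $\mathbb{F}_q$-rank. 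Combining these,
\begin{equation*}
\chi\circ\sigma^{-1}=\epsilon_G\epsilon_{\sigma(M)}|W_{\sigma(M)}|^{-1}\sum_{w'\in W_{\sigma(M)}}(\sigma_\ast\tilde\varphi)(w'F)\,R^G_{T_{w'}}(\sigma_\ast\theta)=R^G_{\sigma_\ast\varphi}\sigma_\ast\theta,
\end{equation*}
which is exactly the character attached to the triple $(\sigma(M),\sigma_\ast\varphi,\sigma_\ast\theta)$ by Theorem \ref{LS}.

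There is no real obstacle; the argument is pure bookkeeping, and the only point that deserves a line of verification is the compatibility of the chosen extension $\tilde\varphi$ with transport by $\sigma$, i.e.\ that pushing $\tilde\varphi$ forward along $\sigma$ produces a legitimate extension of $\sigma_\ast\varphi$. This is automatic from the hypothesis that $\sigma$ commutes with $F$, so that $\sigma$ intertwines the two semi-direct products $W_M\rtimes\lb F\rb$ and $W_{\sigma(M)}\rtimes\lb F\rb$.
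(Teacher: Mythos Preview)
Your proof is correct and follows essentially the same approach as the paper: unfold the definition of $R^G_\varphi\theta$, apply Lemma~\ref{sigmaRGM} termwise to get $R^G_{\sigma(T_w)}\sigma_\ast\theta$, identify $\sigma(T_w)$ with $T_{\sigma(w)}$ using that $\sigma$ commutes with $F$, and reindex. The paper compresses all of this into a single sentence, while you have spelled out the bookkeeping (transport of $\tilde\varphi$, equality of $|W_M|$ and $\epsilon_M$) explicitly.
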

\begin{proof}
Since $F$ commutes with $\sigma$, we can define $T_{\sigma(w)}$ to be $\sigma(T_w)$ and so by the definition of $R^G_{\varphi}\theta$, the lemma follows from the equality $R^G_{\sigma(T_w)}\sigma_{\ast}\theta(g)=R^G_{T_w}\theta(\sigma^{-1}(g))$. 
\end{proof}

\subsection{Induction for Non Connected Groups}We will assume that $G/G^{\circ}$ is cyclic, and fix $\sigma\in G$ quasi-central and $F$-stable such that $G/G^{\circ}$ is generated by the component of $\sigma$. We may write $G=G^{\circ}\lb\sigma\rb$.
\subsubsection{}
Given an $F$-stable Levi factor $L^{\circ}$ of some parabolic subgroup $P^{\circ}$ not necessarily $F$-stable that is decomposed as $P^{\circ}=L^{\circ}U$, we put $L$ and $P$ to be the normalisers defined in \S \ref{barT}. Put $\mathcal{L}_{G}^{-1}(U)=\{x\in G|x^{-1}F(x)\in U\}$ and $\mathcal{L}_{G^{\circ}}^{-1}(U)=\{x\in G^{\circ}|x^{-1}F(x)\in U\}$. Then $G^F\times (L^F)^{op}$ acts on $\mathcal{L}_{G}^{-1}(U)$, and $H^{\ast}_c(\mathcal{L}_{G}^{-1}(U))$ is thus a $G^F\times (L^F)^{op}$-module. For $\theta\in\mathcal{C}(L^F)$, the (generalised) Deligne-Lusztig induction of $\theta$ is defined by,
\begin{equation}
R^G_{L}\theta(g)=|L^F|^{-1}\sum_{l\in L^F}\theta(l^{-1})\tr((g,l)|H^{\ast}_c(\mathcal{L}_{G}^{-1}(U))),\quad\text{for any }g\in G^F.
\end{equation}
It does not depend on the choice of $P^{\circ}$ if $q\ne 2$.\footnote{As is pointed out by F. Digne, for $G=G^{\circ}\lb\sigma\rb$ and $\sigma$ inducing a non trivial automorphism, this is reduced to the case of connected groups. (See \cite[\S 9.2]{DM20}.)}  According to Proposition \ref{1.40}, the generalised Deligne-Lusztig induction are parametrised by the pairs $(L^{\circ},P^{\circ})$ consisting of an $F$-stable and $\sigma$-stable Levi factor of some $\sigma$-stable parabolic subgroup. Since only those $L$ that meets $G^{\circ}\sigma$ interest us, we can assume that $L=L^{\circ}\lb\sigma\rb$. The restriction of $R^G_L$ to $L^{\circ F}\sigma$ is a map $\mathcal{C}(L^{\circ F}\sigma)\rightarrow\mathcal{C}(G^{\circ F}\sigma)$, that we denote by $R^{G^{\circ}\sigma}_{L^{\circ}\sigma}$. To simplify the terminology, we may also call these maps Deligne-Lusztig inductions.

\subsubsection{}
The following lemma shows that the induction thus defined is compatible with that defined for connected groups.
\begin{Lem}\label{IndRes}
We keep the notations as above. Denote by $\Res$ the usual restriction of functions. If $G^F=L^F.G^{\circ F}$, then,
\begin{equation}
\Res^{G^{F}}_{G^{\circ F}}\circ R^G_{L}=R^{G^{\circ}}_{L^{\circ}}\circ\Res^{L^F}_{L^{\circ F}}.
\end{equation}
\end{Lem}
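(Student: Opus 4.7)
The plan is to restrict $R^G_L\theta$ to $G^{\circ F}$ by decomposing the Lang torsor $\mathcal{L}_G^{-1}(U)$ according to the connected components of $G$, identifying each piece with $\mathcal{L}_{G^{\circ}}^{-1}(U)$ via an $F$-fixed representative drawn from $L^F$, and then absorbing the resulting average over components by means of Lemma \ref{sigmaRGM}. Set $r=|G/G^{\circ}|$. Since $\sigma$ is $F$-stable, each connected component $G^i$ of $G$ is $F$-stable; moreover $U\subset G^{\circ}$ forces $x$ and $F(x)$ to lie in the same component whenever $x^{-1}F(x)\in U$, whence the disjoint decomposition $\mathcal{L}_G^{-1}(U)=\bigsqcup_{i=0}^{r-1}\mathcal{L}_{G^i}^{-1}(U)$. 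The hypothesis $G^F=L^F\cdot G^{\circ F}$ forces $L^F/L^{\circ F}\to G^F/G^{\circ F}=G/G^{\circ}$ to be surjective, so I can pick $F$-fixed representatives $\sigma_0=1,\sigma_1,\ldots,\sigma_{r-1}$ of these components inside $L^F$; in particular $|L^F|=r|L^{\circ F}|$.

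Since each $\sigma_i\in L\subset N_G(P^{\circ})$ normalises $U$ and is $F$-fixed, the map $y\mapsto\sigma_iy$ is an $F$-equivariant isomorphism $\mathcal{L}_{G^{\circ}}^{-1}(U)\xrightarrow{\sim}\mathcal{L}_{G^i}^{-1}(U)$; it intertwines the natural action of $(g,l)\in G^{\circ F}\times L^{\circ F}$ on the target with that of $(\sigma_i^{-1}g\sigma_i,l)$ on the source. On the other hand, if $l\in L^{\circ F}\sigma^{j}$ with $j\not\equiv 0\pmod r$, then left multiplication by $g\in G^{\circ F}$ preserves each $\mathcal{L}_{G^i}^{-1}(U)$ while right multiplication by $l^{-1}$ cyclically shifts these summands; the induced endomorphism of $H^{\ast}_c(\mathcal{L}_G^{-1}(U))=\bigoplus_iH^{\ast}_c(\mathcal{L}_{G^i}^{-1}(U))$ permutes the direct summands without fixed points and therefore has zero trace. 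Thus only $l\in L^{\circ F}$ contribute to the defining sum for $R^G_L\theta(g)$.

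Combining these two observations, for $g\in G^{\circ F}$ the formula for $R^G_L\theta(g)$ collapses to
\[
R^G_L\theta(g)=\frac{1}{r\,|L^{\circ F}|}\sum_{i=0}^{r-1}\sum_{l\in L^{\circ F}}\theta(l^{-1})\tr\bigl((\sigma_i^{-1}g\sigma_i,l)\mid H^{\ast}_c(\mathcal{L}_{G^{\circ}}^{-1}(U))\bigr)=\frac{1}{r}\sum_{i=0}^{r-1}R^{G^{\circ}}_{L^{\circ}}(\Res\theta)(\sigma_i^{-1}g\sigma_i).
\]
To collapse the average I invoke Lemma \ref{sigmaRGM} with the $F$-commuting automorphism $\mathrm{Ad}\,\sigma_i$ of $G^{\circ}$, which stabilises $L^{\circ}$: for any $\phi\in\mathcal{C}(L^{\circ F})$ one has $R^{G^{\circ}}_{L^{\circ}}\phi\circ\mathrm{Ad}\,\sigma_i^{-1}=R^{G^{\circ}}_{L^{\circ}}((\mathrm{Ad}\,\sigma_i)_{\ast}\phi)$. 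Taking $\phi=\Res\theta$ and using that $\theta$ is a class function on $L^F\ni\sigma_i$ gives $(\mathrm{Ad}\,\sigma_i)_{\ast}(\Res\theta)=\Res\theta$, so each term of the average already equals $R^{G^{\circ}}_{L^{\circ}}(\Res\theta)(g)$, yielding the claimed identity.

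The main technical point I expect is the trace vanishing in the second paragraph: verifying cleanly that, for $l$ outside $L^{\circ F}$, the pair $(g,l)$ genuinely permutes the cohomology summands $H^{\ast}_c(\mathcal{L}_{G^i}^{-1}(U))$ without fixing any of them, which is a standard Lefschetz-style observation but the part that requires the hypothesis $G^F=L^F\cdot G^{\circ F}$ only in order to ensure that the identification between components can be arranged using representatives inside $L^F$, so that $(\mathrm{Ad}\,\sigma_i)_{\ast}\Res\theta=\Res\theta$ and the average in the final step collapses symmetrically.
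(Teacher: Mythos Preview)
Your argument is correct. The paper does not actually give a proof of this lemma; it simply cites \cite[Corollaire 2.4 (i)]{DM94}. Your direct cohomological argument---decomposing $\mathcal{L}_G^{-1}(U)$ along the connected components of $G$, identifying each piece with $\mathcal{L}_{G^{\circ}}^{-1}(U)$ via left translation by an $F$-fixed representative chosen in $L^F$, discarding the off-diagonal contributions by the block-permutation trace vanishing, and collapsing the residual average via $\mathrm{Ad}\,\sigma_i$-invariance---is the natural proof and is essentially what the cited reference does.

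Two small remarks. First, a typo: you write $l\in L^{\circ F}\sigma^{j}$ where you mean $\sigma_j$. Second, your appeal to Lemma~\ref{sigmaRGM} is legitimate but slightly heavier than necessary: since $\sigma_i\in L=N_G(L^{\circ},P^{\circ})$ normalises $U$ and is $F$-fixed, conjugation by $\sigma_i$ is an automorphism of $\mathcal{L}_{G^{\circ}}^{-1}(U)$ intertwining the $(g,l)$-action with the $(\sigma_i^{-1}g\sigma_i,\sigma_i^{-1}l\sigma_i)$-action, and the $L^F$-invariance of $\theta$ immediately gives the desired invariance of $R^{G^{\circ}}_{L^{\circ}}(\Res\theta)$ under $\mathrm{Ad}\,\sigma_i$. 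This is of course exactly what the proof of Lemma~\ref{sigmaRGM} does, so the distinction is cosmetic.
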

\begin{proof}
See \cite[Corollaire 2.4 (i)]{DM94}.
\end{proof}
\subsubsection{}\label{2-var-Green}
The Green function is defined by
\begin{equation}
\begin{split}
Q_{L}^{G}(-,-):G^F_u\times L^F_u&\longrightarrow \mathbb{Z}\\
(u,v)&\longmapsto
\begin{cases}
0 & \text{if $uv\notin G^{\circ}$}\\
\tr((u,v)|H^{\ast}_c(\mathcal{L}_{G^{\circ}}^{-1}(U))) & \text{otherwise}.
\end{cases}
\end{split}
\end{equation}
Note that $\mathcal{L}_{G^{\circ}}^{-1}(U)$ is the usual Deligne-Lusztig variety.
\begin{Prop}(Character Formula, \cite[Proposition 2.6]{DM94})\label{FormdeCara}
Let $L$ be the normaliser of the pair $(L^{\circ}\subset P^{\circ})$  as above, and let $\theta$ be a character of $L^F$. Then for any $g\in G^F$ with Jordan decomposition $g=su$,
\begin{equation}\label{FdCnonconn}
R^{G}_{L}\theta(g)=|L^F|^{-1}|C_G(s)^{\circ F}|^{-1}\sum_{\{h\in G^F|s\in ^hL \}}\sum_{\{v\in C_{^hL}(s)^F_u\}}Q^{C_G(s)^{\circ}}_{C_{\prescript{h}{}L}(s)^{\circ}}(u,v^{-1})\prescript{h}{}\theta(sv).
\end{equation}
\end{Prop}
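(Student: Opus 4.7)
The plan is to adapt the classical Deligne-Lusztig fixed-point localization argument underlying formula \eqref{FdCconn} to the non-connected setting. The key inputs are that a semi-simple element $s\in G$ is automatically quasi-semi-simple, so its connected centraliser $C_G(s)^\circ$ is reductive (see \S\ref{QSS}), and that Proposition \ref{PLsigma} provides the parabolic/Levi structure inside $C_G(s)^\circ$ needed to carry the geometric argument through the component $G^\circ\sigma$.

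First I would start from
\[
R^G_L \theta(g) = |L^F|^{-1} \sum_{l\in L^F} \theta(l^{-1})\, \tr\bigl((g,l)\mid H^*_c(\mathcal{L}_G^{-1}(U))\bigr),
\]
and decompose each $l\in L^F$ via its Jordan decomposition $l=tv^{-1}$ with $t$ semi-simple, $v^{-1}$ unipotent, and $tv=vt$. The pair $(g,l)$ then has commuting Jordan decomposition whose semi-simple part $(s,t)$ has order prime to $p$ and whose remaining part $(u,v^{-1})$ is unipotent. By the standard Lefschetz-type localization used throughout Deligne-Lusztig theory, the trace of $(g,l)$ on $H^*_c(\mathcal{L}_G^{-1}(U))$ equals the trace of $(u,v^{-1})$ on the cohomology of the fixed-point subvariety $\mathcal{L}_G^{-1}(U)^{(s,t^{-1})}$.

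The heart of the proof, and the main obstacle, is the geometric identification of this fixed-point subvariety. A fixed point $x$ must satisfy $x^{-1}sx=t\in L$, forcing $s$ to be $G^F$-conjugate (via some $h\in G^F$) into $L$. Working modulo the natural free action of $C_G(s)^{\circ F}$, I would exhibit $\mathcal{L}_G^{-1}(U)^{(s,t^{-1})}$ as a disjoint union, indexed by representatives $h\in G^F$ with $s\in{}^h L$, of Deligne-Lusztig varieties $\mathcal{L}_{C_G(s)^\circ}^{-1}(U_h)$ for the connected reductive group $C_G(s)^\circ$, where $U_h=U\cap C_G(s)^\circ$ is the unipotent radical of the parabolic $P^\circ\cap C_G(s)^\circ$ having the Levi factor $C_{{}^h L}(s)^\circ$. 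That $U_h$ and $C_{{}^h L}(s)^\circ$ really form a parabolic-Levi pair inside $C_G(s)^\circ$ is exactly the content of Proposition \ref{PLsigma}; this step is where being in the non-identity component $G^\circ\sigma$ could a priori fail, but is rescued by the quasi-semi-simplicity of $s$.

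Finally, the trace of $(u,v^{-1})$ on each piece produces by definition the Green function $Q^{C_G(s)^\circ}_{C_{{}^h L}(s)^\circ}(u,v^{-1})$, the built-in vanishing when $uv^{-1}\notin G^\circ$ matching the components of $\mathcal{L}_G^{-1}(U)$ that carry no fixed points. The character value $\theta(l^{-1})=\theta(vt^{-1})$ transforms into ${}^h\theta(sv)$ after using $t=hsh^{-1}$ and the $L^F$-invariance of $\theta$. The factor $|L^F|^{-1}$ is inherited from the definition; the factor $|C_G(s)^{\circ F}|^{-1}$ appears because freely summing over $h\in G^F$ overcounts each orbit by the stabiliser $C_G(s)^{\circ F}$; and the inner sum over $v\in C_{{}^h L}(s)^F_u$ records the unipotent part of $l$ transported into ${}^h L$. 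Once the geometric step is justified, reassembling these pieces reproduces the stated formula in full parallel with the connected case \eqref{FdCconn}.
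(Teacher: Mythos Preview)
The paper does not actually prove this proposition: it is stated with a citation to \cite[Proposition 2.6]{DM94} and no argument is supplied. So there is nothing in the present paper to compare your sketch against.

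That said, your outline is essentially the argument one finds in the original reference: start from the defining trace formula, localise on the fixed points of the semi-simple part $(s,t)$ acting on $\mathcal{L}_G^{-1}(U)$, and identify each piece with a Deligne-Lusztig variety for $C_G(s)^\circ$ with Levi $C_{{}^hL}(s)^\circ$. Your use of Proposition~\ref{PLsigma} is exactly the right ingredient to justify that $({}^hP^\circ\cap C_G(s)^\circ,\,C_{{}^hL}(s)^\circ)$ is a parabolic--Levi pair when $s$ lies outside $G^\circ$; this is indeed the step that distinguishes the non-connected proof from the connected one. Two small points of bookkeeping: the unipotent radical in the localised picture should be ${}^hU\cap C_G(s)^\circ$ rather than $U\cap C_G(s)^\circ$, and the passage from $\theta(l^{-1})$ to ${}^h\theta(sv)$ requires tracking that the condition $x^{-1}sx\in L$ actually pins down $t=h^{-1}sh$ (after translating by $h$) together with a change of variables in the sum over $l$. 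None of this affects the validity of your strategy.
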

This formula will only be used in the following form.
\begin{Prop}(\cite[Proposition 2.10]{DM94})\label{2.10}
We keep the notations as above, except that $\theta$ is now a $\sigma$-stable character of $L^{\circ F}$. Denote by $\smash{\tilde{\theta}}$ an extension of $\theta$ to $L^{\circ F}\sigma$, and let $su$ be the Jordan decomposition of $g\sigma$, $g\in G^{\circ F}$. Then,
\begin{equation}
R^{G^{\circ}\sigma}_{L^{\circ}\sigma}\tilde{\theta}(g\sigma)=|L^{\circ F}|^{-1}|C_G(s)^{\circ F}|^{-1}\sum_{\{h\in G^{\circ F}|s\in ^hL \}}\sum_{\{v\in C_{^hL}(s)^F_u\}}Q^{C_G(s)^{\circ}}_{C_{\prescript{h}{}L}(s)^{\circ}}(u,v^{-1})\leftidx{^h}{\tilde{\theta}}{}(sv).
\end{equation}
\end{Prop}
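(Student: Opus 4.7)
The plan is to reduce this statement directly to the character formula of Proposition \ref{FormdeCara} by a careful bookkeeping of the component structure. First, I would extend $\tilde\theta$ by zero from $L^{\circ F}\sigma$ to all of $L^F$, obtaining a class function $\tilde\theta^\ast$ on $L^F$. By the very definition of $R^{G^\circ\sigma}_{L^\circ\sigma}$ as the restriction of $R^G_L$ to input functions supported on $L^{\circ F}\sigma$ and to output values on $G^{\circ F}\sigma$, one has
\[
R^{G^\circ\sigma}_{L^\circ\sigma}\tilde\theta(g\sigma)=R^G_L\tilde\theta^\ast(g\sigma),
\]
and then Proposition \ref{FormdeCara} applied to $\tilde\theta^\ast$ yields the character formula with $|L^F|^{-1}$ in front and summation over $\{h\in G^F\mid s\in{}^hL\}$.

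The next step is to observe that the passage from $\tilde\theta^\ast$ back to $\tilde\theta$ is automatic in this evaluation. Since $g\sigma\in G^{\circ F}\sigma$ and the unipotent part $u$ lies in $G^\circ$ by the standing hypothesis on $p$, the semi-simple part $s=g\sigma u^{-1}$ lies in the component $G^\circ\sigma$. Conjugation by any $h\in G^F$ preserves $G$-components, so whenever $s\in{}^hL$ we have $hsh^{-1}\in L\cap G^\circ\sigma=L^{\circ F}\sigma$, and a fortiori $h(sv)h^{-1}\in L^{\circ F}\sigma$ because $v\in L^\circ$. Hence the support condition is always satisfied and ${}^h\tilde\theta^\ast(sv)={}^h\tilde\theta(sv)$ for every term that appears.

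It remains to collapse the $G^F$-summation onto $G^{\circ F}$ and to convert $|L^F|^{-1}$ to $|L^{\circ F}|^{-1}$. Since $\tilde\theta^\ast$ is a class function on $L^F$, the entire summand depends only on the coset $hL^F$. By the assumption that $L$ meets $G^\circ\sigma$ (combined with Proposition \ref{1.40}, which lets us take $\sigma\in L$ up to conjugation), every coset of $L^F$ in $G^F$ has a representative in $G^{\circ F}$, and two such representatives differ by an element of $L^F\cap G^{\circ F}=L^{\circ F}$; thus the natural map $G^{\circ F}/L^{\circ F}\to G^F/L^F$ is a bijection. Unfolding the coset sum in two ways then gives
\[
|L^F|^{-1}\sum_{\{h\in G^F\mid s\in{}^hL\}}[\cdots]=|L^{\circ F}|^{-1}\sum_{\{h\in G^{\circ F}\mid s\in{}^hL\}}[\cdots],
\]
which is exactly the claimed formula. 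The only real checks are the two observations in the previous two paragraphs; the main (mild) obstacle is keeping careful track of which component the extension $\tilde\theta$ lives on and verifying that conjugation does not move $sv$ off $L^{\circ F}\sigma$, but no ingredient beyond Proposition \ref{FormdeCara} and an elementary index count is required.
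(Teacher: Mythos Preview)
The paper does not supply its own proof of this proposition: it is simply quoted from \cite[Proposition~2.10]{DM94}, immediately after the general character formula of Proposition~\ref{FormdeCara} (itself quoted from \cite[Proposition~2.6]{DM94}). Your strategy of deducing the statement from Proposition~\ref{FormdeCara} by extending $\tilde\theta$ by zero, observing that every term that survives lands on $L^{\circ F}\sigma$, and then collapsing the $G^F$-sum to a $G^{\circ F}$-sum via the coset bijection is correct and is precisely how one passes from the general formula to this special case.

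Two small notational slips, neither of which affects the argument. First, with the paper's convention $\prescript{h}{}L=h^{-1}Lh$ and $\prescript{h}{}\theta(x)=\theta(hxh^{-1})$, the summand is invariant under $h\mapsto lh$ for $l\in L^F$, so it depends on the \emph{left} coset $L^Fh$ rather than on $hL^F$; the bijection $L^{\circ F}\backslash G^{\circ F}\to L^F\backslash G^F$ is established by the same reasoning you gave (write $h=\sigma^i h_0$ and absorb $\sigma^i\in L^F$ on the left). Second, the identity you wrote as ``$L\cap G^\circ\sigma=L^{\circ F}\sigma$'' should read $L\cap G^\circ\sigma=L^\circ\sigma$; the $F$ enters only because $hsh^{-1}\in G^F$, whence $hsh^{-1}\in L^\circ\sigma\cap G^F=L^{\circ F}\sigma$. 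With these cosmetic fixes your derivation is complete.
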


The Green functions are sometimes normalised in such a way that the above two formulas should be multiplied by $|C_{\leftidx{^h\!}{L}{}}(s)^{\circ F}|$.

\subsection{Uniform Functions}

\subsubsection{}
The irreducible characters of $G^F$ for connected $G$ can be expressed as linear combinations of the Deligne-Lusztig inductions of cuspidal functions on various $L^F$. For $G=\GL_n^{\epsilon}(\mathbb{F}_q)$, we have Theorem \ref{LS} in Introduction. It shows that in this particular case one only needs the functions $R^G_T\theta$ induced from $F$-stable tori, and the transition matrix is given by the characters of the Weyl group of $G$. In general, the transition matrix could be more complicated and the functions induced from the characters of tori are not sufficient. The invariant functions on $G^F$ that are linear combinations of the $R^G_T\theta$'s are called \textit{uniform functions}.

Recall that for each $w\in W_G$, the Weyl group of $G$, and some $\dot{w}\in G$ representing $w$, one can find $g\in G$ such that $g^{-1}F(g)=\dot{w}$, then $T_w:=gTg^{-1}$ is an $F$-stable maximal torus such that $T_w^F\cong T^{F_w}$ with $F_w:=\ad\dot{w}\circ F$. Then $w\mapsto T_w$ defines a bijection between the $F$-conjugacy classes of $W_G$ and the $G^F$-conjugacy classes of $F$-stable maximal tori.

\subsubsection{}\label{nonconnunif}
Now assume that $G$ is non connected. Let $T\subset G^{\circ}$ be an $F$-stable and $\sigma$-stable maximal torus contained in a $\sigma$-stable Borel subgroup. If $\theta\in\Irr(T^F)$ extends into $\tilde{\theta}\in \Irr(T^F\lb\sigma\rb)$, then $R^{G^{\circ}\sigma}_{T\sigma}\tilde{\theta}$ belongs to $\mathcal{C}(G^{\circ F}\sigma)$, the set of $\ladic$-valued functions on $G^{\circ F}\sigma$ that are invariant under $G^{\circ F}$. A function in $\mathcal{C}(G^{\circ F}\sigma)$ is called \textit{uniform} if it is a linear combination of functions of the form $R^{G^{\circ}\sigma}_{T\sigma}\tilde{\theta}$.

\subsubsection{}\label{uniforme}
Let $\tilde{\chi}$ be an irreducible character of $G^F$. It is called \textit{unipotent} if $\chi:=\Res^{G^F}_{G^{\circ F}}\tilde{\chi}$  contains a unipotent character as a direct summand. In this case, $\chi$ is a sum of unipotent characters, as its summands are $G^F$-conjugate. Denote by $$\mathcal{E}(G^{\circ F}\sigma,(1))=\{\tilde{\chi}|_{G^{\circ F}\sigma}\mid\tilde{\chi}\text{ is a unipotent character}\}.$$ An element of $\mathcal{C}(G^{\circ F}\sigma)$ is called \textit{unipotent} if it is a linear combination of some elements of $\mathcal{E}(G^{F}\sigma,(1))$, and we denote by $\mathcal{C}(G^{\circ F}\sigma,(1))$ this subspace. It is clear that the characters $R^{G^{\circ}\sigma}_{T_w\sigma}1$ are unipotent functions, and they are parametrised by the $F$-conjugacy classes of $W^{\sigma}$. An element of $\mathcal{C}(G^{\circ F}\sigma)$ is called \textit{uniform-unipotent} if it is a linear combination of the functions $R^{G^{\circ}\sigma}_{T_w\sigma}1$.

\subsubsection{}
A natural question is to identify those elements of $\mathcal{E}(G^{\circ F}\sigma,(1))$ that are uniform. We have,
\begin{Thm}(\cite[Th\'eor\`eme 5.2]{DM94})
Let $G=\GL_n^{\epsilon}(k)$ and let $W_G$ be the Weyl group defined by some $F$-stable and $\sigma$-stable maximal torus of $G$, and so $F$ acts trivially on the $\sigma$-fixed part $W_G^{\sigma}$. For any $\varphi\in\Irr(W_G^{\sigma})$, put
\begin{equation}
R^{G\sigma}_{\varphi}1:=|W_{G}^{\sigma}|^{-1}\sum_{w\in W_{G}^{\sigma}}\varphi(w)R_{T_w\sigma}^{G\sigma}1.
\end{equation}
Then, $R^{G\sigma}_{\varphi}1$ is an extension of a principal series unipotent representation of $G(q)$.
\end{Thm}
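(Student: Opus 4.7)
The plan is to construct a natural extension $\tilde\chi$ of a principal series unipotent character of $G^{\circ F}$ attached to $\varphi$, and then to verify that $\tilde\chi|_{G^{\circ F}\sigma}$ coincides with $R^{G\sigma}_\varphi 1$.

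First, I fix a $\sigma$-stable, $F$-stable Borel $B^\circ\supset T$ and consider the induced representation $\pi := \Ind_{B^\circ\lb\sigma\rb}^{G^F}1$. Its restriction to $G^{\circ F}$ is the ordinary principal series $\Ind_{B^{\circ F}}^{G^{\circ F}}1$, whose irreducible constituents are the principal series unipotent characters $\chi_\psi = |W|^{-1}\sum_w\psi(w)R^{G^\circ}_{T_w}1$ indexed by $\psi\in\Irr(W)$. For $q>n$ and $W$ of type $A$, the endomorphism algebra $\End_{G^F}(\pi)$ is a crossed-product Iwahori--Hecke algebra that specialises to $\ladic[W\lb\sigma\rb]$. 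Clifford theory thus yields, for each $\psi\in\Irr(W)^\sigma$, two extensions $\tilde\chi_\psi^{\pm}\in\Irr(G^F)$ of $\chi_\psi$, corresponding to the two extensions $\tilde\psi^{\pm}\in\Irr(W\lb\sigma\rb)$ of $\psi$.

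Next, since $\sigma$ is an exterior automorphism of type $A$, $W^\sigma$ is a Coxeter group of type $B_m$ or $BC_m$, and there is a classical bijection between $\Irr(W^\sigma)$ and pairs $(\psi,\tilde\psi)$ with $\psi\in\Irr(W)^\sigma$, characterised by $\varphi(w)=\tilde\psi(w\sigma)$ on representatives of $\sigma$-twisted conjugacy classes of $W$. Under this dictionary, the statement reduces to the identity
\[
\tilde\chi_\psi(g\sigma) = |W^\sigma|^{-1}\sum_{w\in W^\sigma}\tilde\psi(w\sigma)\,R^{G^\circ\sigma}_{T_w\sigma}1(g\sigma).
\]
To prove this, I would realise $\pi$ inside $H^*_c(G^\circ/B^\circ)$ with its natural $G^F$-action, and evaluate the isotypic projector $|W\lb\sigma\rb|^{-1}\sum_{x\in W\lb\sigma\rb}\tilde\psi(x)x^{-1}$ at $g\sigma$. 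The contributions from $x\in W$ vanish because $g\sigma$ lies in the non-identity component, while the contributions from $x=w\sigma\in W\sigma$ give, via the Bruhat decomposition and the identification $\tr((g\sigma,\dot{w}\sigma)\mid H^*_c(G^\circ/B^\circ)) = R^{G^\circ\sigma}_{T_w\sigma}1(g\sigma)$ for $w\in W^\sigma$, a sum which collapses to the required normalisation $|W^\sigma|^{-1}$ upon using the bijection between $\sigma$-twisted classes of $W$ and ordinary classes of $W^\sigma$ together with the constancy of $\tilde\psi$ on these classes.

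The main obstacle is this last step: properly setting up $\pi$ as a virtual $G^F$-submodule of $H^*_c(G^\circ/B^\circ)$ with Hecke-algebra generators $T_w\in\End_{G^F}(\pi)$ realised geometrically via $\sigma$-stable representatives $\dot{w}\in N_{G^\circ}(T)$ (which exist by quasi-centrality of $\sigma$, \cite[Th\'eor\`eme 1.15]{DM94}), and matching this data with the natural $\sigma$-action on the Deligne--Lusztig varieties $X_w$. This is essentially the extension of the Deligne--Lusztig paradigm to non-connected groups carried out in \cite{DM94}, which also underpins the uniform-unipotent framework introduced in \S\ref{nonconnunif}.
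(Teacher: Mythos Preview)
The paper does not supply a proof of this statement; it is quoted from \cite[Th\'eor\`eme 5.2]{DM94} and used as input. So there is no argument in the paper to compare against, and I can only assess your proposal on its own terms. There are two genuine gaps.

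First, the ``classical bijection'' you invoke between $\Irr(W^\sigma)$ and pairs $(\psi,\tilde\psi)$ with $\psi\in\Irr(W)^\sigma$ does not exist in the split case $G=\GL_n(k)$. Here $\sigma$ acts on $W\cong\mathfrak{S}_n$ as conjugation by $w_0$, which is inner, so $\Irr(W)^\sigma=\Irr(W)$ and there are $2p(n)$ pairs $(\psi,\tilde\psi)$; meanwhile $W^\sigma\cong\mathfrak{W}_{[n/2]}$ has as many irreducibles as there are bipartitions of $[n/2]$. Already for $n=4$ these counts are $10$ and $5$. The relation $\varphi(w)=\tilde\psi(w\sigma)$ therefore cannot characterise a bijection. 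In fact the theorem is not claiming one: not every unipotent character of $\GL_n(q)$ has a uniform extension to $G^F\sigma$ (cf.\ Corollary~\ref{base-of-uniform}, where only partitions with trivial $2$-core, or $2$-core $(1)$, occur), so the map $\varphi\mapsto R^{G\sigma}_\varphi 1$ is a proper injection into the set of extensions, not a bijection.

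Second, the trace identity you write, $\tr((g\sigma,\dot w\sigma)\mid H^\ast_c(G^\circ/B^\circ))=R^{G^\circ\sigma}_{T_w\sigma}1(g\sigma)$, is not correct. The right-hand side equals $\tr(g\sigma\mid H^\ast_c(X_w))$ for the Deligne--Lusztig variety $X_w$, which is not the full flag variety. The module $\Ind_{B^{\circ F}}^{G^{\circ F}}1\cong\ladic[(G^\circ/B^\circ)^F]$ realises only the $w=1$ induction, and the Hecke operators on it are not given by any geometric action of $\dot w$ (which does not normalise $B^\circ$ and hence does not act on $(G^\circ/B^\circ)^F$). The actual proof in \cite{DM94} works directly with the $\sigma$-action on each $X_w$ for $w\in W^\sigma$ and establishes the result via orthogonality relations for the functions $R^{G\sigma}_{T_w\sigma}1$, not by realising everything inside a single principal series module.
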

This gives all uniform-unipotent functions on $\GL_n^{\epsilon}(\mathbb{F}_q)\sigma$. It follows that an element of $\mathcal{E}(G^{\circ F}\sigma,(1))$ is either uniform or orthogonal to the space of uniform(-unipotent) functions.

\subsubsection{}
(\cite[Proposition 6.4]{DM15}) The characteristic functions of quasi-semi-simple classes are uniform. Consequently, all non uniform characters vanish on the quasi-semi-simple classes.

\section{The Group $\GL_n(k)\rtimes\mathbb{Z}/2\mathbb{Z}$}
In what follows, we write $G=\GL_n(k)$.
\subsection{Automorphisms of $\GL_n(k)$}
\subsubsection{}\label{autostand}
Let $\mathscr{J}_n$ be the matrix
\begin{equation}
(\mathscr{J}_n)_{ij}=\delta_{i,n+1-j}
=
\left(
\begin{array}{ccc}
& & 1\\
&\reflectbox{$\ddots$}&\\
1& &
\end{array}
\right)
\end{equation}
Put $t_0=\diag(a_1,\ldots,a_n)$ with $a_i=1$ if $i\le[(n+1)/2]$ and $a_i=-1$ otherwise. Put $\mathscr{J}'_n=t_0\mathscr{J}_n$. Define the matrices
\begin{equation}
\mathscr{J}=
\begin{cases}
\mathscr{J}'_n & \text{if $n$ is even}\\
\mathscr{J}_n & \text{if $n$ is odd}
\end{cases},
\quad
\mathscr{J}'=
\begin{cases}
\mathscr{J}_n & \text{if $n$ is even}\\
\mathscr{J}'_n & \text{if $n$ is odd}
\end{cases}
\end{equation}

The automorphism $\sigma\in\Aut(\GL_n(k))$ that sends $g$ to $\mathscr{J}g^{-t}\mathscr{J}^{-1}$ will be called the standard automorphism. We will denote by $\sigma'$ the automorphism defined by replacing $\mathscr{J}$ with $\mathscr{J}'$ in the definition of $\sigma$. They are quasi-semi-simple automorphisms because the maximal torus consisting of the diagonal matrices and the Borel subgroup consisting of the upper triangular matrices are stable under the action of $\sigma$ and $\sigma'$. Moreover, $\sigma$ is a quasi-central involution regardless of the parity of $n$, while $\sigma'$ is not an involution if $n$ is odd and is not quasi-central if $n$ is even.

\subsubsection{}
The classification of the involutions and the quasi-central automorphisms is well known.(\cite[Lemma 2.9]{LiSe} and \cite[Proposition 1.22]{DM94}).

The conjugacy classes of involutions are described as follows.
\begin{itemize}
\item[-]If $n=2m+1$, the exterior involutions (exterior automorphisms of order 2) are all $G$-conjugate and their centralisers are of type $B_m$.
\item[-]If $n=2m>2$, there are two $G$-conjugacy classes of exterior involutions, with centralisers of type $C_{m}$ and $D_{m}$ respectively. If $n=2$, the connected centralisers are $\SL_2(k)$ and  $k^{\ast}$ respectively.
\end{itemize}

The conjugacy classes of quasi-central automorphisms are described as follows.
\begin{itemize}
\item[-]If $n=2m+1$, there are two classes of exterior quasi-central automorphisms, with centralisers of type $B_m$ and of type $C_m$ respectively.
\item[-]If $n=2m$, there is one single class of exterior quasi-central automorphisms, with centraliser of type $C_m$.
\end{itemize}
Explicitly,
\begin{equation}
\text{ if $n=2m$,}
\begin{cases}
(G^{\sigma})^{\circ}\cong\Sp_{2m}(k)\\
(G^{\sigma'})^{\circ}\cong\SO_{2m}(k)
\end{cases},\quad
\text{ if $n=2m+1$,}
\begin{cases}
(G^{\sigma})^{\circ}\cong\SO_{2m+1}(k)\\
(G^{\sigma'})^{\circ}\cong\Sp_{2m}(k)
\end{cases}.
\end{equation}
Put $t=\diag(a_1,\ldots,a_m,a_{m+1},\ldots,a_{2m})$ or $\diag(a_1,\ldots,a_m,1,a_{m+1},\ldots,a_{2m})$ depending on the parity of $n$, with $a_i=\mathfrak{i}$ for $i\le m$ and $a_i=-\mathfrak{i}$ for $i>m$, we have
\begin{equation}\label{t}
\begin{cases}
(G^{t\sigma})^{\circ}=\SO_{2m}(k) &\text{ if $n=2m$,}\\
(G^{t\sigma})^{\circ}=\Sp_{2m}(k) &\text{ if $n=2m+1$.}
\end{cases}
\end{equation}

We say that an automorphism is of symplectic type or orthogonal type according to the type of its centraliser.

\subsubsection{}\label{2facQC}
We will encounter another type of quasi-central automorphism. Let $\tau_0$ be an involution of $G$, not necessarily an exterior automorphism. Define an automorphism $\tau$ of $G\times G$ by $(g,h)\mapsto (\tau_0(h),\tau_0(g))$. It is easy to see from the definition that $\tau$ is quasi-central and $C_{G\times G}(\tau)\cong G$.

\subsection{The Group $\bar{G}$}
\subsubsection{}
Let $H$ be an abstract group and let $\tau$ be an automorphism of finite order of $H$. By a $\tau$-conjugacy class of $H$, we mean an orbit in $H$ under the action $h:x\mapsto hx\tau(h^{-1})$, $x$, $h\in H$. By a $\tau$-class function on $H$, we mean a function that is constant on the $\tau$-conjugacy classes. We denote by $\mathcal{C}(H.\tau)$ the set of $\tau$-class functions.

On the other hand, the conjugacy classes of $H\rtimes\lb\tau\rb$ contained in $H.\tau$ are identified with the $H$-conjugacy classes in $H.\tau$, as $\tau(h)\tau=h^{-1}(h\tau)h$, which are in turn identified with the $\tau$-conjugacy classes of $H$. This justifies the notation $\mathcal{C}(H.\tau)$.

\subsubsection{}\label{barG}
The semi-direct product $\bar{G}:=G\rtimes\mathbb{Z}/2\mathbb{Z}$  is defined by an exterior involution. If $n$ is odd, there is only one class of involutions. It is the class of $\sigma$. If $n$ is even, there are two classes of involutions, and the resulting semi-direct products are not isomorphic. Whenever it is necessary to distinguish the two semi-direct products, we will denote by $\leftidx{^s\!}{\bar{G}}$ the one defined by the symplectic type automorphism, i.e. by $\sigma$, and denote by $\leftidx{^o\!}{\bar{G}}$ the one defined by the orthogonal type automorphisms, i.e. by $\sigma'$. We will often denote by $\sigma$ the element $(e,1)\in\bar{G}$, although $\sigma$ is not acturally an element of $\bar{G}$.

By definition, $\sigma$ is a quasi-central element in $G\sigma$.

\subsubsection{}\label{GsGo}
The character tables of $\leftidx{^s\!}{\bar{G}}^F$ and of $\leftidx{^o\!}{\bar{G}}^F$ are related in the following manner.

The $G^F$-conjugacy classes in $\leftidx{^s\!}{\bar{G}}^F\setminus G^F$ are in bijection with the $\sigma$-conjugacy classes in $G^F$, which are in bijection with the $t_0\sigma'$-conjugacy classes in $G^F$ (See \S \ref{autostand} for $t_0$), which are in bijection with the $G^F$-conjugacy classes in $\leftidx{^o\!}{\bar{G}}^F\setminus G^F$. Under this bijection, for any $g\in G^F$,  the $G^F$-class of $g\sigma\in\leftidx{^s\!}{\bar{G}}^F$ corresponds to the $G^F$-class of $gt_0\sigma'\in\leftidx{^o\!}{\bar{G}}^F$.

Since $\sigma$ and $\sigma'$ differ by an inner automorphism, the set of $\sigma$-stable characters coincides with that of $\sigma'$-stable characters. However, the extension of a $\sigma$-stable character to $\bar{G}^F$ behaves differently for $\leftidx{^s\!}{\bar{G}}^F$ and $\leftidx{^o\!}{\bar{G}}^F$ with respect to the above bijection of conjugacy classes. Let $\rho:G^F\rightarrow\GL(V)$ be a $\sigma$-stable irreducible representation of $G^F$. To find an extension $\tilde{\rho}\in\Irr(\leftidx{^s\!}{\bar{G}}^F)$ of $\rho$ is to define $\tilde{\rho}(\sigma)$ in such a way that $\tilde{\rho}(\sigma)^2=\rho(\sigma^2)=\Id$ and $\tilde{\rho}(\sigma)\rho(g)\tilde{\rho}(\sigma)^{-1}=\rho(\sigma(g))$ for all $g\in G^F$. Suppose that we have defined such an extension, and we would like to define $\tilde{\rho}'\in\Irr(\leftidx{^o\!}{\bar{G}}^F)$ by $\tilde{\rho}'(t_0\sigma')=\tilde{\rho}(\sigma)$. For $\leftidx{^o\!}{\bar{G}}^F$, if $\rho(-1)\ne \Id_V$, the equality $\tilde{\rho}(\sigma)^2=\Id$ would be violated. Consequently, we define instead $\tilde{\rho}'(t_0\sigma')=\tilde{\rho}(\sigma)\sqrt{\rho(-1)}$, where $\rho(-1)$ has as value $\pm\Id$ regarded as a scalar. Replacing $\sqrt{\rho(-1)}$ by $-\sqrt{\rho(-1)}$ defines another extension of $\rho$ to $\leftidx{^o\!}{\bar{G}}^F$. We denote by $\tilde{\chi}$ and $\tilde{\chi}'$ the characters of $\tilde{\rho}$ et $\tilde{\rho}'$ respectively. Then, for all $g\in G^F$, 
\begin{equation}
\tilde{\chi}'(gt_0\sigma')=\tilde{\chi}(g\sigma)\sqrt{\rho(-1)}
\end{equation}
up to a sign.

\begin{Conv}\label{convsigma}
Because of the above discussion, we will also denote by $\sigma$ the element $t_0\sigma'\in\leftidx{^o\!}{\bar{G}}^F$. We will later parametrise the conjugacy classes in $\leftidx{^o\!}{\bar{G}}^F\setminus G^F$ with respect to $\sigma$ (following Proposition \ref{DM1.16}).
\end{Conv}

\subsection{Quasi-Semi-Simple Elements}

\subsubsection{}
We have said in \S \ref{QSS} that all semi-simple elements are quasi-semi-simple.  For $\bar{G}$, we have
\begin{Lem}
An element of $\bar{G}$ is quasi-semi-simple if and only if it is semi-simple.
\end{Lem}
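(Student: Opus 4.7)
The forward implication, that every semi-simple element is quasi-semi-simple, is Steinberg's Theorem 7.5 and was recalled earlier in \S\ref{QSS}, so I would only need to prove the converse: every quasi-semi-simple $x \in \bar G$ is semi-simple. I would split the argument according to which connected component of $\bar{G}$ contains $x$.

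If $x$ lies in the identity component $G = \GL_n(k)$, then $x$ normalizes some Borel subgroup $B^{\circ}$ and some maximal torus $T^{\circ} \subset B^{\circ}$. Since $B^{\circ}$ is self-normalizing in the connected reductive group $G$, one has $x \in B^{\circ}$. Writing $B^{\circ} = T^{\circ} \ltimes U$ and using that $C_U(T^{\circ}) = \{1\}$, a short computation shows $B^{\circ} \cap N_G(T^{\circ}) = T^{\circ}$, so $x \in T^{\circ}$ is semi-simple. The substantive case is $x \in G\sigma$. Here I would apply Proposition \ref{DM1.16} with reference element $\sigma$ (which is quasi-central, in particular quasi-semi-simple): up to $G$-conjugation, which preserves both semi-simplicity and quasi-semi-simplicity, we may assume
\[
x = t\sigma, \qquad t \in C_{T^{\circ}}(\sigma)^{\circ},
\]
for a $\sigma$-stable maximal torus $T^{\circ}$ contained in a $\sigma$-stable Borel. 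Since $t$ commutes with $\sigma$, we have $\sigma(t) = t$ and therefore
\[
x^2 = t\sigma \, t\sigma = t\,\sigma(t)\,\sigma^2 = t^2 \in T^{\circ},
\]
which is semi-simple.

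The final step is to pass from semi-simplicity of $x^2$ to that of $x$ via the Jordan decomposition in $\bar G$ (available because $\bar G^{\circ} = G$ is reductive; this decomposition is used throughout the paper, cf.\ the General Notations). Writing $x = x_s x_u$ with $x_s$ semi-simple, $x_u$ unipotent and $[x_s,x_u] = 1$, one has $x^2 = x_s^2 x_u^2$, and by uniqueness of Jordan decomposition this is the Jordan decomposition of the semi-simple element $x^2$. Hence $x_u^2 = 1$. Since $p$ is odd, a unipotent element $x_u$ satisfying $x_u^2 = 1$ must be trivial: indeed $N := x_u - 1$ is nilpotent and $(x_u-1)(x_u+1) = 0$ forces $N^2 = -2N$, so iterating gives $N = 0$. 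Therefore $x = x_s$ is semi-simple. I do not foresee a genuine obstacle in this argument; the only delicate point is a correct invocation of Proposition \ref{DM1.16} so that the reduction to $x = t\sigma$ with $\sigma(t) = t$ is legitimate, and everything else is routine manipulation of the Jordan decomposition in the non-connected reductive group $\bar{G}$.
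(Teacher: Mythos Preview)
Your proof is correct and follows essentially the same route as the paper's. Both arguments reduce, via Proposition~\ref{DM1.16}, to an element $t\sigma$ with $t\in (T^{\sigma})^{\circ}$, observe that its square lies in $T$ and is therefore semi-simple, and then invoke the odd-characteristic hypothesis to deduce that the element itself is semi-simple; you spell out the Jordan-decomposition step and the identity-component case in more detail than the paper does, but the substance is the same.
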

More generally, if $G$ is a reductive algebraic group and $G/G^{\circ}$ is semi-simple, then all quasi-semi-simple elements are semi-simple. (See \cite[Remarque 2.7]{DM94}) In positive characteristic, this is to require that $\ch k\nmid |G/G^{\circ}|$. We give a short proof below in the particular case of $\bar{G}$.
\begin{proof}
It suffices to show that each quasi-semi-simple element $s\sigma\in G\sigma$ is semi-simple. We see that $s\sigma$ is semi-simple if and only if $(s\sigma)^2=s\sigma(s)\sigma^2$ is semi-simple, as we have assumed $\ch k$ to be odd. Let $(T,B)$ be a pair consisting of a maximal torus and a Borel subgroup containing it, both normalised by $\sigma$. Then every quasi-semi-simple element is conjugate to an element of $(T^{\sigma})^{\circ}\sigma$ (Proposition \ref{DM1.16}), and its square, lies in $T$, and so is semi-simple.
\end{proof}

\begin{Rem}
That $s\sigma$ is semi-simple does not imply that $s$ is so. Let us fix $T$ and $B$ as above, and write $B=TU$, where $U$ is the unipotent radical of $B$. If we take $u\in U$, then $u\sigma(u^{-1})$ is a unipotent element, whereas $u\sigma u^{-1}$ is semi-simple.
\end{Rem}
\begin{Rem}
There is no unipotent element in $G\sigma$ because an odd power of $s\sigma$ lies in $G\sigma$.
\end{Rem}

\subsubsection{Isolated Elements}\label{isolelts}
Define the diagonal matrix
\begin{equation}
t(j)=\diag(\overbrace{\mathfrak{i},\ldots,\mathfrak{i}}^j,1,\ldots,1,\overbrace{-\mathfrak{i},\ldots,-\mathfrak{i}}^j)\in\GL_n(k)
\end{equation} 
The elements $t(j)\sigma$, $0\le j\le[n/2]$, are the representatives of the isolated elements (\S\ref{qisole}), except when $n$ is even and $j=1$, in which case $t(j)\sigma$ is quasi-isolated (\cite[Proposition 4.2]{DM18}). We have,
\begin{equation}
\begin{split}
C_G(t(j)\sigma)\cong \Ort_{2j}(k)\times\Sp_{n-2j}(k), \text{ if $n$ is even;} \\
C_G(t(j)\sigma)\cong\Sp_{2j}(k)\times\Ort_{n-2j}(k), \text{ if $n$ is odd.}
\end{split}
\end{equation}
In particular, when $n$ is even, $t(j)\sigma$ is quasi-central only if $j=0$, and when $n=2m+1$, $t(j)\sigma$ is quasi-central only if $j=m$. Note that our choice of $\sigma$ for odd $n$ is different from that of \cite{DM18}. In this article, we will not encounter quasi-isolated elements that are not isolated.

\subsection{Parabolic Subgroups and Levi Subgroups}\label{notationgene}
\subsubsection{}
The parabolic subgroups $P\subset G$ such that $N_{\bar{G}}(P)$ meets $G\sigma$ are described as follows.

Let $P\subset G$ be a parabolic subgroup. The normaliser $N_{\bar{G}}(P)$ meets $G\sigma$ if and only if there exists $g\in G$ such that $g\sigma$ normalises $P$, if and only if the $G$-conjugacy classes of $P$ is $\sigma$-stable. According to Proposition \ref{DM2Prop1.6}, this means that there exists a $G$-conjugate of $P$ that is $\sigma$-stable. We can then assume that $P$ is $\sigma$-stable. Then, there exist a maximal torus and Borel subgroup containing this torus, both being $\sigma$-stable and contained in $P$. A standard parabolic subgroup with respect to $T$ and $B$ is $\sigma$-stable if and only if the Dynkin subdiagram that defines it is $\sigma$-stable. We conclude that if $T$ is the maximal torus of the diagonal matrices and $B$ is the Borel subgroup of the upper triangular matrices, then every standard parabolic subgroup with respect to $T\subset B$ such that $N_{\bar{G}}(P)$ meets $G\sigma$ has as its unique Levi factor containing $T$ the group of the form (\ref{LI}).

\section{Parametrisation of Characters}\label{SecParaChar}
\subsection{$F$-Stable Levi Subgroups}
Recall the parametrisation of the $F$-stable Levi subgroups of $G=\GL_n(k)$. 

\subsubsection{Notations}\label{standLevinotation}
Denote by $T\subset G$ the maximal torus consisting of the diagonal matrices and denote by $B\subset G$ the Borel subgroup consisting of the upper triangular matrices. The Frobenius $F$ of $G$ sends each entry of an matrix to its $q$-th power. Denote by $\Phi$ the root system defined by $T$ and $\Delta\subset\Phi$ the set of simple roots determined by $B$. Denote by $W:=W_G(T)=N_G(T)/T$ the Weyl group of $G$. The Frobenius acts trivially on $W$. Given a subset $I\subset \Delta$, we denote by $P_I$ the standard parabolic subgroup defined by $I$, and $L_I$ the unique Levi factor of $P_I$ containing $T$. A Levi subgroup of the form $L_I$ is called a standard Levi subgroup. Every Levi subgroup is conjugated to a standard Levi subgroup

\begin{Prop}\label{G^FLevi}
The set of the $G^F$-conjugacy classes of the $F$-stable Levi subgroups of $G=\GL_n(k)$ is in bijection with the set of the unordered sequence of pairs of positive integers $(r_1,d_1)\cdots(r_s,d_s)$, satisfying $\sum_ir_id_i=n$.
\end{Prop}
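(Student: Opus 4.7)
The plan is to combine the standard Lang--Steinberg classification of $F$-stable Levi subgroups with a direct description of the Weyl group of a Levi of $\GL_n$ in terms of permutations of isomorphic blocks.

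First I would recall the general bijection: if $L_0 \subset G$ is a fixed Levi (say a standard $L_I$), the $G^F$-conjugacy classes of $F$-stable $G$-conjugates of $L_0$ are in bijection with the $F$-conjugacy classes of $N_G(L_0)/L_0$. Concretely, any $F$-stable Levi in the $G$-class of $L_0$ is of the form ${}^g L_0$ with $g^{-1}F(g) \in N_G(L_0)$, and by Lang--Steinberg the image $w \in N_G(L_0)/L_0$ is well-defined up to $F$-conjugacy and classifies the $G^F$-orbit of ${}^g L_0$. Since two standard Levis $L_I$ and $L_J$ are $G$-conjugate if and only if their block-size multisets agree, it is enough to enumerate, for each $G$-conjugacy class of Levis, the $F$-conjugacy classes in the corresponding $N_G(L_I)/L_I$.

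Next I would identify $N_G(L_I)/L_I$ explicitly. If $L_I \cong \prod_k \GL_k(k)^{m_k}$ (with $\sum_k k\, m_k = n$), the connected centre $Z_{L_I}^{\circ}$ is the product of the scalar tori of each block, and $N_G(L_I)$ permutes the blocks of the same size, so $N_G(L_I)/L_I \cong \prod_k \mathfrak{S}_{m_k}$. Because the standard Frobenius $F$ on $\GL_n$ acts trivially on $W$ and on each factor $\mathfrak{S}_{m_k}$, $F$-conjugacy coincides with ordinary conjugacy; hence the $F$-conjugacy classes are indexed by a partition of each $m_k$, i.e.\ by the cycle type of a representative permutation in $\mathfrak{S}_{m_k}$.

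Finally I would package the data invariantly. A $G^F$-conjugacy class of $F$-stable Levis is thus given by, for each positive integer $k$, a partition $(r_{k,1}, r_{k,2}, \ldots)$ of the multiplicity $m_k$ of $\GL_k$-blocks in $L_I$; equivalently it is an unordered multiset of pairs $(r_{k,j}, k)$ satisfying $\sum_{k,j} r_{k,j}\cdot k = n$, which is exactly the data $(r_1,d_1),\ldots,(r_s,d_s)$ with $\sum_i r_i d_i = n$ in the statement. Under this bijection, a cycle of length $r$ on $\GL_d$-blocks corresponds to the Weil restriction $\GL_d(\mathbb{F}_{q^r})$ appearing as a factor of $L^F$, so the bookkeeping also matches the expected structure $L^F \cong \prod_i \GL_{d_i}(\mathbb{F}_{q^{r_i}})$. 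The only delicate point is checking that distinct pairs $(I, [w])$ (up to relabelling isomorphic blocks) give non-conjugate Levis; this is immediate because the multiset of pairs $(r_i,d_i)$ can be recovered from $L^F$ itself, reading off the sizes and fields of the linear factors.
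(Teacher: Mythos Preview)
Your argument is correct and follows essentially the same route as the paper's own sketch: reduce to a fixed standard Levi $L_I$, invoke the Lang--Steinberg bijection between $G^F$-classes of $F$-stable conjugates of $L_I$ and $F$-conjugacy classes of $N_G(L_I)/L_I\cong\prod_r\mathfrak{S}_{N_r}$, observe that $F$ acts trivially so these are ordinary conjugacy classes, and repackage the resulting tuple of partitions as an unordered multiset of pairs. One minor remark: you have interpreted the pair $(r_i,d_i)$ as (cycle length, block size), whereas the paper's subsequent conventions take it as (block size, cycle length) with $L^F\cong\prod_i\GL_{r_i}(q^{d_i})$; since the proposition itself is symmetric in the two entries this does not affect the proof, but you should align your notation with the paper's when using the result later.
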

We sketch the proof in order to introduce some notations that will be used later.
\begin{proof}[sketch of proof]
The $G$-conjugacy classes of the Levi subgroups are in bijection with the equivalence classes of the subsets $I\subset\Delta$. Two subsets $I$ and $I'$ are equivalent if there is an element $w\in W$ such that $I'=wI$. It suffices for us to fix $I\subset \Delta$ and only consider the $G$-conjugacy class of $L_I$. The $G^F$-conjugacy classes of the $G$-conjugates of $L_I$ are in bijection with the $F$-conjugacy classes of $N_G(L_I)/L_I$. Note that $F$ acts trivially on $N_G(L_I)/L_I$. 

Let $\Gamma_I$ be a finite set such that $|\Gamma_I|=\dim Z_{L_I}$ and we fix an isomorphism $Z_{L_I}\cong(k^{\ast})^{\Gamma_I}$. There are positive integers $n_i$, $i\in\Gamma_I$, such that $L_I\cong \prod_{i\in\Gamma_I}\GL_{n_i}$. For any $r\in\mathbb{Z}_{>0}$, put $\Gamma_{I,r}:=\{i\in\Gamma_I\mid n_i=r\}$ and put $N_r=|\Gamma_{I,r}|$. Then the equivalence class of $I\subset\Delta$ is specified by the sequence $(N_r)_{r>0}$. It is known that $N_G(L_I)/L_I\cong \prod_r\mathfrak{S}(\Gamma_{I,r})$, where $\mathfrak{S}(\Gamma_{I,r})$ denotes the permutation group of the set $\Gamma_{I,r}$. The conjugacy classes of $N_G(L_I)/L_I$ are then parametrised by the partitions of $N_r$, $r\in\mathbb{Z}_{>0}$. The desired sequence $(r_1,d_1)\cdots(r_s,d_s)$ will then be defined in such a way that $(d_i)_{i\in\{1\le i\le s\mid r_i=r\}}$ forms the corresponding partition of $N_r$.

Let $nL_I\in N_G(L_I)/L_I$ be a class representing a $G^F$-class of $F$-stable Levi subgroup. Then $n$ acts on $Z_{L_I}$ by a permutation $w$ of $\Gamma_I$. For each $r\in\mathbb{Z}_{>0}$, denote by $\Lambda_{I,r}(n)$ the set of the orbits in $\Gamma_{I,r}$ under the action of $n$ and write $\Lambda_I(n)=\cup_r \Lambda_{I,r}(n)$. For each $i\in \Lambda_I(n)$, put $r_i=r$ if $i\in \Lambda_{I,r}(n)$ and define $d_i$ to be the cardinality of $i$. The $d_i$'s, for $i\in \Lambda_{I,r}(n)$, form a partition of $|\Gamma_{I,r}|$. The integer $s$ in the statement of the proposition is equal to $|\Lambda_I(n)|$.
\end{proof}

If $L$ is an $F$-stable Levi subgroup corresponding to $(n_1,d_1)\cdots(n_s,d_s)$, then $(L,F)$ is isomorphic to a standard Levi subgroup
\begin{equation}
L_I\cong\prod_i\GL_{n_i}(k)^{d_i},
\end{equation}
equipped with a Frobenius acting on each factor $\GL_{n_i}(k)^{d_i}$ in the following manner,
\begin{equation}
\begin{split}
\GL_{n_i}(k)^{d_i}&\longrightarrow\GL_{n_i}(k)^{d_i}\\
(g_1,g_2,\ldots,g_{d_i})&\longmapsto(F_0(g_{d_i}),F_0(g_1),\ldots,F_0(g_{d_i-1})),
\end{split}
\end{equation}
where $F_0$ is the Frobenius of $\GL_{n_i}(k)$ that sends each entry to its q-th power.

\subsection{$\sigma$-Stable Characters}
Fix $T$ and $B$ as above, then standard Levi subgroups and standard parabolic subgroups are always taken with respect to $T\subset B$. In this section, we will consider the automorphism $\sigma$ defined in \S \ref{autostand}. The operation $\chi\mapsto\chi\circ\sigma^{-1}$ defines an involution of $\Irr(\GL_n(q))$. Denote by $\leftidx{^{\sigma}\!}{}{\chi}$ the image of $\chi$ under this involution. 

\subsubsection{Quadratic-Unipotent Characters}
Given a 2-partition $(\mu_+,\mu_-)\in\mathcal{P}_n(2)$, we define an irreducible character $\chi_{(\mu_+,\mu_-)}\in\Irr(\GL_n(q))$ as follows. Put $n_+=|\mu_+|$ and $n_-=|\mu_-|$  and so $n_++n_-=n$. Let $M\subset\GL_n(k)$ be a standard Levi subgroup isomorphic to $\GL_{n_+}(k)\times\GL_{n_-}(k)$, i.e. 
\begin{equation}\label{q.u.M}
  \renewcommand{\arraystretch}{1.2}
M= \left(
  \begin{array}{ c c | c c }
     & & \multicolumn{2}{c}{\raisebox{-.6\normalbaselineskip}[0pt][0pt]{$0$}}  \\
     \multicolumn{2}{c|}{\raisebox{.6\normalbaselineskip}[0pt][0pt]{$\GL_{n_+}$}} &  & \\
    \cline{1-4}
     & & & \\
     \multicolumn{2}{c|}{\raisebox{.6\normalbaselineskip}[0pt][0pt]{$0$}} & \multicolumn{2}{c}{\raisebox{.6\normalbaselineskip}[0pt][0pt]{$\GL_{n_-}$}}
     \end{array}
  \right).
\end{equation}
Denote by $W_M:=W_M(T)$ the Weyl group of $M$. It is isomorphic to $\mathfrak{S}_{n_+}\times\mathfrak{S}_{n_-}$. The Frobenius $F$ acts trivially on $W_M$. With respect to the isomorphism $M^F\cong\GL_{n_+}(q)\times\GL_{n_-}(q)$, we define a linear character $\theta\in\Irr(M^F)$ to be $(1\circ\det,\eta\circ\det)$, where $1$ is the trivial character of $\mathbb{F}_q^{\ast}$ and $\eta$ is the order 2 irreducible character of $\mathbb{F}_q^{\ast}$. It is regular in the sense of \cite[\S 3.1]{LS}. We define $\varphi\in\Irr(W_M)$  by a 2-partition $(\mu_+,\mu_-)$ in such a way that the factor corresponding to $\mathfrak{S}_{n_+}$ is defined by the partition $\mu_+$, and the other by $\mu_-$. According to Theorem \ref{LS}, the triple $(M,\theta,\varphi)$  gives an irreducible character of $\GL_n(q)$, which we denote by $\chi_{(\mu_+,\mu_-)}$.  The irreducible characters of $\GL_n(q)$ thus obtained are called quadratic-unipotents.
\begin{Lem}
For any $(\mu_+,\mu_-)\in\mathcal{P}_n(2)$, the character $\chi_{(\mu_+,\mu_-)}$ is $\sigma$-stable.
\end{Lem}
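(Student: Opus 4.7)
By Lemma~\ref{sigmachi}, $\chi_{(\mu_+,\mu_-)}\circ\sigma^{-1}$ is the irreducible character of $\GL_n(q)$ associated, via Theorem~\ref{LS}, to the triple $(\sigma(M),\sigma_*\varphi,\sigma_*\theta)$. Since $\sigma$ is an involution, $\sigma$-stability of $\chi_{(\mu_+,\mu_-)}$ is equivalent to showing that some element $w\in G^F$ conjugates $(\sigma(M),\sigma_*\theta)$ to $(M,\theta)$ and simultaneously sends $\sigma_*\varphi$ to $\varphi$; this would force equality of the two characters by Theorem~\ref{LS}. The plan is to exhibit such a $w$ explicitly and to verify the two transport statements.

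First, I identify $\sigma(M)$ explicitly. Since $\sigma(g)=\mathscr{J}g^{-t}\mathscr{J}^{-1}$ and $M$ from~(\ref{q.u.M}) is block-diagonal with blocks of sizes $(n_+,n_-)$ from top-left to bottom-right, the inverse-transpose $g\mapsto g^{-t}$ preserves $M$ setwise, while conjugation by $\mathscr{J}$ reverses the order of the two blocks. Hence $\sigma(M)$ is the standard Levi with diagonal blocks of sizes $(n_-,n_+)$. I take $w\in G^F$ to be the block permutation matrix interchanging the coordinate ranges $\{1,\ldots,n_+\}$ and $\{n_+{+}1,\ldots,n\}$; then $w\sigma(M)w^{-1}=M$.

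Next, I transport $\sigma_*\theta$ through $w$ and compare with $\theta$. The character $\sigma_*\theta$ on $\sigma(M)^F\cong\GL_{n_-}(q)\times\GL_{n_+}(q)$ satisfies $(\sigma_*\theta)(g)=\theta(\sigma^{-1}(g))$. Since $\sigma^{-1}$ inverts the determinant of each block, the $\GL_{n_-}$-factor of $\sigma(M)^F$ carries $\eta\circ\det$ and the $\GL_{n_+}$-factor carries $1\circ\det$, using crucially that $\eta^2=1$ so $\eta(x^{-1})=\eta(x)$. Pulling back by $w$ swaps the two factors and restores the pair $\theta=(1\circ\det,\eta\circ\det)$ on $M^F$. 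For $\varphi=\varphi_{\mu_+}\boxtimes\varphi_{\mu_-}$, the composite of the $\sigma$-action on $W_M$ (which amounts to conjugation by $\mathscr{J}$, reversing the two index ranges) with the $w$-conjugation preserves each of the factors $\mathfrak{S}_{n_+}$ and $\mathfrak{S}_{n_-}$ of $W_M$ and induces only an inner automorphism on each. As $\varphi_{\mu_+}$ and $\varphi_{\mu_-}$ are class functions, they are unchanged, so the transported character coincides with $\varphi$.

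The only mildly delicate point in this plan is keeping track of which factor of $W_M$ lands on which block after the composite of $\sigma$ and $w$; once this combinatorial bookkeeping is settled, the essential algebraic fact making everything work is the order-2 relation $\eta^{-1}=\eta$, which ensures $\theta$ is literally recovered (not merely conjugate) from $\sigma_*\theta$ after pulling back by $w$.
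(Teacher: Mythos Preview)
Your proof is correct and follows essentially the same route as the paper's: apply Lemma~\ref{sigmachi}, then exhibit an explicit element of $G^F$ conjugating $(\sigma(M),\sigma_*\theta,\sigma_*\varphi)$ back to $(M,\theta,\varphi)$, and invoke Theorem~\ref{LS}. The only cosmetic difference is the choice of conjugating element: the paper uses $\mathscr{J}_n$ itself, whereas you use a block permutation matrix $w$; these differ by an element of $M$, which is immaterial since it induces only inner automorphisms on each factor of $W_M$ and fixes $\theta$.
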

\begin{proof}
If $\chi_{(\mu_+,\mu_-)}$ is of the form $R^G_{\varphi}\theta$ for a triple $(M,\theta,\varphi)$, then $\leftidx{^{\sigma}\!}{\chi}_{(\mu_+,\mu_-)}$ is of the form $R^G_{\sigma_{\ast}\varphi}\sigma_{\ast}\theta$ for the triple $(\sigma(M),\sigma_{\ast}\theta,\sigma_{\ast}\varphi)$ according to Lemma \ref{sigmachi}. Explicitly, $\sigma(M)$ is the Levi subgroup (\ref{q.u.M}) with $\GL_{n_+}$ and $\GL_{n_-}$ exchanged, $\sigma_{\ast}\theta$ associates to the factor $\GL_{n_+}$ the trivial character of $\mathbb{F}_q^{\ast}$ and $\eta$ to the other factor, and $\sigma_{\ast}\varphi$ associates to the factor $\GL_{n_+}$ the character of $\mathfrak{S}_{n_+}$  corresponding to $\mu_+$ and to the other factor the character corresponding to $\mu_-$. Then the conjugation by $\mathscr{J}_n$ sends $(\sigma(M),\sigma_{\ast}\theta,\sigma_{\ast}\varphi)$ to $(M,\theta,\varphi)$,  and so $\leftidx{^{\sigma}\!}{\chi}_{(\mu_+,\mu_-)}=\chi_{(\mu_+,\mu_-)}$ according to Theorem \ref{LS}
\end{proof}

\subsubsection{}
Now we construct some more general $\sigma$-stable irreducible characters. If $I\subset \Delta$ is a $\sigma$-stable subset, then it defines a $\sigma$-stable standard Levi subgroup. Every $\sigma$-stable standard Levi subgroup is of this form. We are going to use the notations of \S \ref{standLevinotation} and of Proposition \ref{G^FLevi}. Denote by $0$ the unique element of $\Gamma_I$ fixed by $\sigma$. For any $i\in\Gamma_I$, denote by $i^{\ast}$  its image under $\sigma$. The standard Levi subgroup $L_I$ is decomposed as $\prod_{i\in\Gamma_I}L_i$. For any $i\in\Gamma_I$, write $n_i=r$ if $i\in\Gamma_{I,r}$, and so $L_i\cong \GL_{n_i}(k)$. Schematically, $L_I$ is equal to 
\begin{equation}\label{LI}
  \renewcommand{\arraystretch}{1.3}
\left(\!
  \begin{array}{cccccccc}
    \multicolumn{1}{c|}{L_{i_1}} & & & & & & &\\
    \cline{1-1}
    & \ddots & & & & & &\\
    \cline{3-3}
    & & \multicolumn{1}{|c|}{L_{i_s}\!} &  \multicolumn{2}{c}{} & & & \\
    \cline{3-5}
    & & & \multicolumn{2}{|c|}{} & \multirow{2}*{} & & \\
    & & & \multicolumn{2}{|c|}{\raisebox{.6\normalbaselineskip}[0pt][0pt]{$~\GL_{n_0}~$}} & & & \\
    \cline{4-6}
    & & & & & \multicolumn{1}{|c|}{\!L_{i_s^{\ast}}\!} & & \\
    \cline{6-6}
    & & & & & & \ddots & \\
    \cline{8-8}
    & & & & & & &\multicolumn{1}{|c}{L_{i_1^{\ast}}\!\!}\\
     \end{array}
  \right).
\end{equation}

\subsubsection{}\label{constIrr1}
Recall that $W_G(L_I):=N_G(L_I)/L_I$ is isomorphic to $\prod_r\mathfrak{S}_{N_r}$. Write $N_r'=N_r/2$ if $n_0\ne r$ and write $N'_r=(N_r-1)/2$ if $n_0=r$. Then, $W_G(L_I)^{\sigma}\cong\prod_r \mathfrak{W}_{N'_r}^C$. Regarded as a block permutation matrix\footnote{By \textit{block permutation matrix}, we always mean a block matrix with each block equal to an identity matrix or a zero matrix, which itself is also a permutation matrix. Of course this is more restrictive than a block matrix that is also a permutation matrix}, an element of $\smash{\mathfrak{W}_{N'_r}^C}$ typically acts on $\prod_{\{i\mid n_i=r\}}L_i$ in the following manners,
\begin{equation}\label{ActionSymetrique}
  \renewcommand{\arraystretch}{1.3}
\left(\!
  \begin{array}{cccccc}
    \multicolumn{1}{c|}{\rn{1}{L_{i_1}}\!\!} & & & & & \\
    \cline{1-1}
    & \rn{2}{\ddots} & & & & \\
    \cline{3-3}
    & & \multicolumn{1}{|c|}{\rn{3}{L_{i_t}}\!\!} &  \multicolumn{2}{c}{} & \\
    \cline{3-4}
    & & &  \multicolumn{1}{|c|}{\rn{4}{\!L_{i_t^{\ast}}}\!\!} & & \\
    \cline{4-4}
    & & & & \rn{5}{\ddots} & \\
    \cline{6-6}
    & & & & & \multicolumn{1}{|c}{\rn{6}{\!L_{i_1^{\ast}}}\!\!}\\
     \end{array}
  \right),\quad
  \left(\!
  \begin{array}{cccccc}
    \multicolumn{1}{c|}{\rn{1'}{L_{i_1}}\!\!} & & & & & \\
    \cline{1-1}
    & \rn{2'}{\ddots} & & & & \\
    \cline{3-3}
    & & \multicolumn{1}{|c|}{\rn{3'}{L_{i_t}}\!\!} &  \multicolumn{2}{c}{} & \\
    \cline{3-4}
    & & &  \multicolumn{1}{|c|}{\rn{4'}{\!L_{i_t^{\ast}}}\!\!} & & \\
    \cline{4-4}
    & & & & \rn{5'}{\ddots} & \\
    \cline{6-6}
    & & & & & \multicolumn{1}{|c}{\rn{6'}{\!L_{i_1^{\ast}}}\!\!}\\
     \end{array}
  \right),
\begin{tikzpicture}[overlay,remember picture]
 \node [below= -0.1 of 1] (1a) {};
 \node [right= 0 of 1] (1b) {};
 \node [left=-0.2 of 2] (2a') {};
 \node [below=-0.15 of 2a'] (2a) {};
 \node [below= -0.2 of 2] (2b') {};
 \node [left= -0.15 of 2b'] (2b) {};
 \node [left= 0 of 3] (3a') {};
 \node [above= -0.1 of 3a'] (3a) {};
 \node [above= 0.1 of 3] (3b) {};
 
 \node [below= -0.2 of 4] (4a) {};
 \node [right= 0 of 4] (4b') {};
 \node [above= -0.1 of 4b'] (4b) {};
 \node [left=-0.2 of 5] (5a') {};
 \node [below=-0.15 of 5a'] (5a) {};
 \node [below= -0.2 of 5] (5b') {};
 \node [left= -0.2 of 5b'] (5b) {};
 \node [left= 0 of 6] (6a') {};
 \node [above= -0.1 of 6a'] (6a) {};
 \node [above= 0.1 of 6] (6b) {};
 
 \node [below= -0.1 of 1'] (1'a) {};
 \node [right= 0 of 1'] (1'b) {};
 \node [left=-0.2 of 2'] (2'a') {};
 \node [below=-0.15 of 2'a'] (2'a) {};
 \node [below= -0.2 of 2'] (2'b') {};
 \node [left= -0.15 of 2'b'] (2'b) {};
 \node [left= 0 of 3'] (3'a') {};
 \node [above= -0.1 of 3'a'] (3'a) {};
 \node [below= 0 of 3'] (3'b) {};
 
 \node [right= -0.1 of 4'] (4'a) {};
 \node [above= 0 of 4'] (4'b) {};
 \node [above= -0.45 of 5'] (5'a') {};
 \node [right= -0.2 of 5'a'] (5'a) {};
 \node [right= -0.2 of 5'] (5'b') {};
 \node [below= -0.2 of 5'b'] (5'b) {};
 \node [left= 0 of 6'] (6'a) {};
 \node [above= 0 of 6'] (6'b) {};
 
        \draw [->] (1a) to [out=265,in=190, looseness=1.3] (2a);
        \draw [->] (2b) to [out=265,in=190, looseness=1.3] (3a);
        \draw [->] (3b) to [out=85,in=10, looseness=1.3] (1b);
        
        \draw [<-] (4a) to [out=265,in=190, looseness=1.3] (5a);
        \draw [<-] (5b) to [out=260,in=190, looseness=1.3] (6a);
        \draw [<-] (6b) to [out=85,in=2, looseness=1.3] (4b);
        
        \draw [->] (1'a) to [out=265,in=190, looseness=1.3] (2'a);
        \draw [->] (2'b) to [out=265,in=190, looseness=1.3] (3'a);
        \draw [->] (3'b) to [out=265,in=185, looseness=1.3] (6'a);
        
        \draw [<-] (4'a) to [out=10,in=80, looseness=1.3] (5'a);
        \draw [<-] (5'b) to [out=15,in=80, looseness=1.3] (6'b);
        \draw [->] (4'b) to [out=85,in=5, looseness=1.3] (1'b);
 \end{tikzpicture}
\end{equation}
corresponding to a cycle of positive sign and a cycle of negative sign respectively.

If $w$ is an element of $W_G(L_I)^{\sigma}$, then a block permutation matrix\footnote{If $n$ is even, then certain blocks of this matrix should be modified by $-1$ due to our choice of $\sigma$.} that represents it, denoted by $\dot{w}$, obviously can be chosen to be $\sigma$-stable, and so there is some $g\in (G^{\sigma})^{\circ}$ such that $g^{-1}F(g)=\dot{w}$. Put $M_{I,w}:=gL_Ig^{-1}$. It is an $F$-stable and $\sigma$-stable Levi factor of a $\sigma$-stable parabolic subgroup, and there is an isomorphism $\ad g:L_I^{F_w}\cong M_{I,w}^F$. A character of $M^F_{I,w}$ is $\sigma$-stable if and only if it is identified with a $\sigma$-stable character of $L_I^{F_w}$ by $\ad g$ since $g$ is $\sigma$-stable. Let us fix such a $w$ and construct some $\sigma$-stable irreducible characters of $L_I^{F_w}$.

Write $\Lambda=\Lambda_I(\dot{w})$, following the proof of Proposition \ref{G^FLevi}. The action of $\sigma$ on $\Gamma_I$ induces an action on $\Lambda$ as $w$ commutes with $\sigma$, which allows us to use the notation $i^{\ast}$ for $i\in\Lambda$. If $i$, $j\in\Gamma_I$ belong to the same orbit of $w$, then $n_i=n_j$, which justfies the notation $n_i$ for $i\in\Lambda$. Also note that $n_i=n_{i^{\ast}}$. For any $i\in\Lambda$, denote by $d_i$ the cardinality of the orbit $i$. Write
\begin{equation}\label{Lambda1chi}
\Lambda_1=\{i\in \Lambda |i^{\ast}\ne i\}/\sim
\end{equation}
where the equivalence identifies $i$ to $i^{\ast}$. We will say that $i$ belongs to $\Lambda_1$ if $i\ne i^{\ast}$ and the equivalence class of $i$ belongs to $\Lambda_1$. Write
\begin{equation}\label{Lambda2chi}
\Lambda_2=\{i\in \Lambda |i=i^{\ast}\}\setminus\{0\}.
\end{equation}
We can choose an isomorphism
\begin{equation}\label{decdeLI}
L_I\cong\prod_{i\in \Lambda_1}(\GL_{n_i}\times\GL_{n_{i^{\ast}}})^{d_i}\times\prod_{i\in \Lambda_2}(\GL_{n_i}\times\GL_{n_{i^{\ast}}})^{d_i/2}\times\GL_{n_0}
\end{equation}
such that $F_w$ and $\sigma$ act on $L_I$ in the following manner.

The action of $F_w$ is given by:
\begin{IEEEeqnarray}{lRCl}
i=0:&\GL_{n_0}&\longrightarrow&\GL_{n_0}\label{Fn_0}\\
&A&\longmapsto &F_0(A);\nonumber\\
i\in\Lambda_1:&(\GL_{n_i}\times\GL_{n_{i^{\ast}}})^{d_i}&\longrightarrow& (\GL_{n_i}\times\GL_{n_{i^{\ast}}})^{d_i}\label{Flineaire}\\
&(A_1,B_1,A_2,B_2\ldots,A_{d_i},B_{d_i})&\longmapsto &\nonumber\\
&&&\!\!\!\!\!\!\!\!\!\!\!\!\!\!\!\!\!\!\!\!\!\!\!\!\!\!\!\!\!\!\!\!\!\!\!\!\!\!\!\!\!\!\!\!\!\!\!\!\!\!\!\!\!\!\!\!\!\!\!\!\!\!\!\!(F_0(A_{d_i}),F_0(B_{d_i}),F_0(A_1),F_0(B_1)\ldots F_0(A_{d_i-1}),F_0(B_{d_i-1}));\nonumber\\
i\in\Lambda_2:&(\GL_{n_i}\times\GL_{n_{i^{\ast}}})^{d_i/2}&\longrightarrow &(\GL_{n_i}\times\GL_{n_{i^{\ast}}})^{d_i/2}\label{Funitaire}\\
&(A_1,B_1,A_2,B_2\ldots,A_{d_i/2},B_{d_i/2})&\longmapsto & \nonumber\\
&&&\!\!\!\!\!\!\!\!\!\!\!\!\!\!\!\!\!\!\!\!\!\!\!\!\!\!\!\!\!\!\!\!\!\!\!\!\!\!\!\!\!\!\!\!\!\!\!\!\!\!\!\!\!\!\!\!\!\!\!\!\!\!\!\!
(F_0(B_{d_i/2}),F_0(A_{d_i/2}),F_0(A_1),F_0(B_1)\ldots F_0(A_{d_i/2-1}),F_0(B_{d_i/2-1})).\nonumber
\end{IEEEeqnarray}
where $F_0$ is the Frobenius of $\GL_r(k)$ that sends each entry to its q-th power, for any $r$.

The action of $\sigma$ is given by:
\begin{IEEEeqnarray}{lRCl}
i=0:&\GL_{n_0}&\longrightarrow&\GL_{n_0}\label{sigman_0}\\
&A&\longmapsto &\sigma_0(A);\nonumber\\
i\ne 0:&\GL_{n_i}\times\GL_{n_{i^{\ast}}}&\longrightarrow&\GL_{n_i}\times\GL_{n_{i^{\ast}}}\label{sigmatransposition}\\
&(A,B)&\longmapsto& (\sigma_i(B),\sigma_i(A))\nonumber
\end{IEEEeqnarray}
where $\sigma_0$ is the standard automorphism of $\GL_{n_0}$ (\S \ref{autostand}) and $\sigma_i$ is the automorphism of $\GL_{n_i}$  that sends $g$ to $\smash{\mathscr{J}_{n_i}g^{-t}\mathscr{J}^{-1}_{n_i}}$  regardless of the parity of $n_i$ (\S \ref{autostand}). We have
\begin{equation}\label{decdeLI}
L^{F_w}_I\cong\prod_{i\in \Lambda_1}(\GL_{n_i}(q^{d_i})\times\GL_{n_{i^{\ast}}}(q^{d_i}))\times\prod_{i\in \Lambda_2}\GL_{n_i}(q^{d_i})\times\GL_{n_0}(q)
\end{equation}
and $\sigma$ acts on it in the following manner,
\begin{IEEEeqnarray}{lRCl}
i=0:&\GL_{n_0}(q)&\longrightarrow&\GL_{n_0}(q)\\
&A&\longmapsto &\sigma_0(A);\nonumber\\
i\in\Lambda_1:&\GL_{n_i}(q^{d_i})\times\GL_{n_{i^{\ast}}}(q^{d_i})&\longrightarrow&\GL_{n_i}(q^{d_i})\times\GL_{n_{i^{\ast}}}(q^{d_i})\\
&(A,B)&\longmapsto& (\sigma_i(B),\sigma_i(A));\nonumber\\
i\in\Lambda_2:&\GL_{n_i}(q^{d_i})&\longrightarrow&\GL_{n_i}(q^{d_i})\\
&A&\longmapsto &\sigma_iF_0^{d_i/2}(A).\nonumber
\end{IEEEeqnarray}

Denote by $L_1$ the product of the direct factors of $L_I$ except $L_0$. If no confusion arises, we may also denote by $F_w$ and $\sigma$ their restrictions on $L_1$. With respect to the decomposition of $L_I^{F_w}$ as above, a linear character $\theta_1$ of $L_1^{F_w}$ can be written as 
\begin{equation}
\prod_{i\in\Lambda_1}(\alpha_i,\alpha_{i^{\ast}})\prod_{i\in\Lambda_2}\alpha_i\in\prod_{i\in\Lambda_1}(\Irr(\mathbb{F}_{q^{d_i}}^{\ast})\times\Irr(\mathbb{F}^{\ast}_{q^{d_i}}))\times\prod_{i\in\Lambda_2}\Irr(\mathbb{F}_{q^{d_i}}^{\ast}).
\end{equation}

The set $\Irr(W_{L_1})^{F_w}$ is in bijection with the irreducible characters of 
\begin{equation}
\prod_{i\in\Lambda_1}(\mathfrak{S}_{n_i}\times\mathfrak{S}_{n_{i^{\ast}}})\times\prod_{i\in\Lambda_2}\mathfrak{S}_{n_i}.
\end{equation}
Such a character can be written as $\varphi_1=\prod_{i\in\Lambda_1}(\varphi_i,\varphi_{i^{\ast}})\prod_{i\in\Lambda_2}\varphi_i$. 

\subsubsection{}\label{hypothesethetavarphi}
Suppose that the factors of $\theta_1$ and $\varphi_1$ satisfy
\begin{IEEEeqnarray}{l}
\text{for any }i\in\Lambda_1\sqcup\Lambda_2,\quad\alpha_i\ne 1\text{ or }\eta\circ N_{\mathbb{F}_{q^{d_i}}/\mathbb{F}_q},\\
\text{for any }i\in\Lambda_1,\quad\alpha_{i^{\ast}}=\alpha_i^{-1},\\
\text{for any }i\in\Lambda_2,\quad\alpha_i^{q^{d_i/2}}=\alpha_i^{-1},\\
\text{for any }i\in\Lambda_1,\quad\varphi_{i^{\ast}}=\varphi_i.
\end{IEEEeqnarray}
We choose $\tilde{\varphi}_1$, an extension of $\varphi_1$ to $W_{L_1}\lb F_w\rb$, in such a way that
\begin{equation}
\chi_1=R^{L_1}_{\varphi_1}\theta_1=|W_{L_1}|^{-1}\sum_{v\in W_{L_1}}\tilde{\varphi_1}(vF_w)R_{T_v}^{L_1}\theta_1,
\end{equation}
is an irreducible character of $L_1^{F_w}$.
\begin{Prop}\label{IrrLIstable}
Let $\chi_0$ be a quadratic-unipotent  character of $L_0^{F_0}$. Then, $\chi_1\boxtimes\chi_0$ is a $\sigma$-stable irreducible character of $L_I^{F_w}$. Identified with a character of $M^F_{I,w}$, its induction $R^G_{M_{I,w}}(\chi_1\boxtimes\chi_0)$ is a $\sigma$-stable irreducible character of $\GL_n(q)$.
\end{Prop}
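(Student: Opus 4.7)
The plan is to verify in turn that $\chi_1$ is $\sigma$-stable and irreducible on $L_1^{F_w}$, deduce the same for $\chi_1\boxtimes\chi_0$ on $L_I^{F_w}\cong M_{I,w}^F$, and then transfer both properties to its Deligne--Lusztig induction to $\GL_n(q)$. For $\sigma$-stability of $\chi_1$ I would work factor by factor on the decomposition~(\ref{decdeLI}). The key point is that $\det\circ\sigma_i=\det^{-1}$, so on a pair of linear factors $\GL_{n_i}(q^{d_i})\times\GL_{n_{i^{\ast}}}(q^{d_i})$ with $i\in\Lambda_1$ the $\sigma$-action pulls $(\alpha_i,\alpha_{i^{\ast}})$ back to $(\alpha_{i^{\ast}}^{-1},\alpha_i^{-1})$, which equals $(\alpha_i,\alpha_{i^{\ast}})$ exactly by the hypothesis $\alpha_{i^{\ast}}=\alpha_i^{-1}$; simultaneously $(\varphi_i,\varphi_{i^{\ast}})$ is swapped and fixed by the hypothesis $\varphi_{i^{\ast}}=\varphi_i$. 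For $i\in\Lambda_2$, $\sigma=\sigma_iF_0^{d_i/2}$ on the single factor $\GL_{n_i}(q^{d_i})$ sends $\alpha_i\mapsto\alpha_i^{-q^{d_i/2}}$, which equals $\alpha_i$ precisely by $\alpha_i^{q^{d_i/2}}=\alpha_i^{-1}$; on the Weyl factor $\mathfrak{S}_{n_i}$ the automorphism $\sigma$ acts as the graph automorphism of $A_{n_i-1}$, which is inner (conjugation by $w_0$), so every $\varphi_i$ is automatically fixed. Combined with the preceding lemma giving $\sigma$-stability of $\chi_0$, these checks show that the LS-triple $(L_1\times M_0,\theta_1\otimes\theta_0,\varphi_1\otimes\varphi_0)$ attached to $\chi_1\boxtimes\chi_0$ in $L_I$ is literally $\sigma$-fixed; a componentwise application of Lemma~\ref{sigmachi} combined with the uniqueness in Theorem~\ref{LS} then yields $\leftidx{^{\sigma}\!}{(\chi_1\boxtimes\chi_0)}{}=\chi_1\boxtimes\chi_0$ after choosing $\tilde{\varphi}_1$ so that the LS formula produces an honest character rather than its negative.

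Irreducibility of $\chi_1$ then follows by applying Theorem~\ref{LS} to each $\GL$-factor of $L_1^{F_w}$ separately (for $i\in\Lambda_2$ the Frobenius $F_w$ restricted to the factor is the ordinary $F_0^{d_i}$, the twisted unitary structure arising only from $\sigma F_w$, so standard LS applies), and $\chi_1\boxtimes\chi_0$ is the irreducible product. For the global induction I would use transitivity of Deligne--Lusztig induction together with the LS formula~(\ref{eqthm1}) to obtain, up to a global sign,
\[
R^G_{M_{I,w}}(\chi_1\boxtimes\chi_0)\;=\;\pm\,R^G_{\varphi_1\otimes\varphi_0}(\theta_1\otimes\theta_0),
\]
the right-hand side being the LS character attached to $(L_1\times M_0,\theta_1\otimes\theta_0,\varphi_1\otimes\varphi_0)$ now viewed as a triple inside $G=\GL_n$. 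The hypothesis that $\alpha_i$ be neither $1$ nor $\eta\circ N_{\mathbb{F}_{q^{d_i}}/\mathbb{F}_q}$ for any $i$ ensures that no component of $\theta_1$ lies in the Frobenius orbit of $1$ or $\eta$, hence $\theta_1\otimes\theta_0$ remains a \emph{regular} linear character on $L_1\times M_0\subset G$; Theorem~\ref{LS} then gives irreducibility in $\Irr(\GL_n(q))$. Finally, Lemma~\ref{sigmaRGM} together with the $\sigma$-stability of $M_{I,w}$ and of $\chi_1\boxtimes\chi_0$ yields
\[
R^G_{M_{I,w}}(\chi_1\boxtimes\chi_0)\circ\sigma \;=\; R^G_{\sigma(M_{I,w})}\bigl(\sigma_{\ast}(\chi_1\boxtimes\chi_0)\bigr) \;=\; R^G_{M_{I,w}}(\chi_1\boxtimes\chi_0),
\]
which is the desired $\sigma$-stability.

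The main obstacle I anticipate is the book-keeping of the various $\tilde{\varphi}$-extensions appearing in each local LS construction (on each $\GL^{\pm}$-factor of $L_1$, on $L_0$, and globally on $G$) so that all sign choices align consistently, and verifying precisely that the listed hypotheses on $(\theta_1,\varphi_1)$ capture the full regularity of $\theta_1\otimes\theta_0$ on the Levi $L_1\times M_0$ of $G$ rather than merely on $L_1$ itself.
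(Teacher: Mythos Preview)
Your proposal is correct and follows the same approach as the paper, only with far more detail. The paper's own proof is three sentences: it asserts that the hypotheses on $\theta_1$ and $\varphi_1$ force $\chi_1$ (hence $\chi_1\boxtimes\chi_0$) to be $\sigma$-stable, and then observes that $\sigma$-stability passes through $R^G_{M_{I,w}}$ by its definition (your invocation of Lemma~\ref{sigmaRGM} is exactly this step made explicit). Your factor-by-factor verification of the $\sigma$-action on $(\theta_1,\varphi_1)$ and your explicit treatment of irreducibility via regularity of $\theta_1\otimes\theta_0$ in $G$ are things the paper leaves implicit in the construction leading up to the proposition; they are correct and worth spelling out, but they do not constitute a different route.
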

\begin{proof}
By the hypothesis on $\theta_1$ and $\varphi_1$, $\chi_1$ is $\sigma$-stable, and so $\chi_1\boxtimes\chi_0$ is $\sigma$-stable. It follows from the definition of $R^G_{M_{I,w}}$ that if $\chi_1\boxtimes\chi_0$ is $\sigma$-stable, then $R^G_{M_{I,w}}(\chi_1\boxtimes\chi_0)$ is $\sigma$-stable.
\end{proof}

\subsubsection{}
The rest of this section is devoted to proving the following proposition.
\begin{Prop}\label{IrrMstable}
Every $\sigma$-stable irreducible character of $\GL_n(q)$ is of the form given by Proposition \ref{IrrLIstable}.
\end{Prop}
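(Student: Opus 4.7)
The plan is to reverse the construction of Proposition \ref{IrrLIstable}. Starting from a $\sigma$-stable irreducible character $\chi$ of $\GL_n(q)$, I would invoke Theorem \ref{LS} to write $\chi = R^G_\varphi\theta$ for some triple $(M,\theta,\varphi)$, unique up to $G^F$-conjugacy of $(M,\theta)$. The $\sigma$-stability of $\chi$ combined with Lemma \ref{sigmachi} forces $(M,\theta,\varphi)$ and $(\sigma(M),\sigma_*\theta,\sigma_*\varphi)$ to represent the same character, so by the uniqueness in Theorem \ref{LS} there exists $g\in G^F$ with $g\sigma(M)g^{-1}=M$, $\ad(g)^*\sigma_*\theta=\theta$, and $\sigma_*\varphi=\varphi$ under this identification.

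I would then bring the Levi into a standard form. Since the $G^F$-class of $(M,P)$ is $\sigma$-stable, Proposition \ref{1.38} lets me $G^F$-conjugate so that $M$ itself is $\sigma$-stable. By Proposition \ref{G^FLevi}, $M$ is $G^F$-conjugate to $M_{I,w}$ for some $I\subset\Delta$ and $w\in W_G(L_I)$; after further conjugation I can take $I$ genuinely $\sigma$-stable (not merely its $W$-orbit) and arrange $w\in W_G(L_I)^\sigma$. The last point uses the product structure $W_G(L_I)\cong\prod_r\mathfrak{S}(\Gamma_{I,r})$ from the proof of Proposition \ref{G^FLevi}, together with the elementary fact that when $\sigma$ acts on $\{1,\dots,N\}$ by an involution with at most one fixed point, every conjugacy class of $\mathfrak{S}_N$ meets $\mathfrak{S}_N^\sigma$; cycle by cycle, this reduces to showing that a cycle and its $\sigma$-image can be jointly conjugated to a $\sigma$-stable or $\sigma$-paired configuration.

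With $M=M_{I,w}$ normalized, I would use the decomposition (\ref{decdeLI}) to write $\theta=\alpha_0\boxtimes\prod_{i\in\Lambda_1}(\alpha_i,\alpha_{i^*})\boxtimes\prod_{i\in\Lambda_2}\alpha_i$. The action of $\sigma$ on $L_I^{F_w}$ given in (\ref{sigman_0})--(\ref{sigmatransposition}) acts on each $\alpha_i$ by composition with the transpose-inverse $\sigma_i$, which inverts $\alpha_i$ since it factors through $\det$. Matching $\sigma_*\theta=\ad(g)^*\theta$, where the residual action of $\ad(g)$ can only permute factors sharing the same $(n_i,d_i)$, and absorbing this permutation into a new choice of representative, forces $\alpha_{i^*}=\alpha_i^{-1}$ for $i\in\Lambda_1$, $\alpha_i^{q^{d_i/2}}=\alpha_i^{-1}$ for $i\in\Lambda_2$, and $\alpha_0^2=1$. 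A parallel argument applied to $\varphi$ yields $\varphi_{i^*}=\varphi_i$ for $i\in\Lambda_1$.

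The main technical obstacle is the final repackaging of the data into the form of Proposition \ref{IrrLIstable}, which insists on a single quadratic-unipotent factor $\chi_0$ on a $\sigma$-fixed $L_0\cong\GL_{n_0}$. Any index $i\in\Lambda_1\sqcup\Lambda_2$ at which $\alpha_i$ lies in $\{1,\eta\circ N_{\mathbb{F}_{q^{d_i}}/\mathbb{F}_q}\}$ must be absorbed, alongside the original $\alpha_0$, into an enlarged $\sigma$-fixed factor $\GL_{n'_0}$ of a larger standard $\sigma$-stable Levi $L_{I'}$; the character $\chi'_0$ on this enlarged factor is then, by construction, a quadratic-unipotent character, and the remaining factors give $\chi'_1$ satisfying the hypotheses of \S\ref{hypothesethetavarphi}. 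The equality $\chi=R^G_{M_{I',w'}}(\chi'_0\boxtimes\chi'_1)$ follows from transitivity of Deligne--Lusztig induction, decomposing $R^G_M=R^G_{M_{I',w'}}\circ R^{M_{I',w'}}_M$ and recognizing the inner induction as the Lusztig--Srinivasan construction of $\chi'_0$ from the linear data indexed by the absorbed factors.
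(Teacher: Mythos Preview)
Your argument has a genuine gap at steps 3--4. You claim that the $G^F$-class of the pair $(M,P)$ is $\sigma$-stable and then invoke Proposition~\ref{1.38} to conjugate $M$ to a $\sigma$-stable Levi. But from the $\sigma$-stability of $\chi$ and Theorem~\ref{LS} you only learn that $(M,\theta)$ is $G^F$-conjugate to $(\sigma(M),\sigma_*\theta)$; this says nothing about any parabolic $P$, and Proposition~\ref{1.38} is a statement about pairs $(L,P)$. In fact the conclusion you want is simply false: take $G=\GL_3$, $M=\GL_2\times\GL_1$, $\theta=(\tC\circ\det,\eta)$. Then $\chi$ is $\sigma$-stable, yet the only $\sigma$-stable standard Levi subgroups of $\GL_3$ are $T$ and $G$, so $M$ is not $G$-conjugate to any $\sigma$-stable Levi at all. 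Consequently your step~4 (``take $I$ genuinely $\sigma$-stable'') cannot be carried out, and the decomposition~(\ref{decdeLI}) that your step~5 relies on never becomes available.

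The paper's proof navigates precisely this obstacle. It does \emph{not} attempt to make $M$ itself $\sigma$-stable. Instead, a preliminary lemma shows that the action of (a twist of) $\sigma$ on $Z_M$ is inversion composed with an element of $N_G(M)$; from this one extracts directly the constraint $\hat\alpha_{i^*}=\hat\alpha_i^{-1}$ on the $F$-orbits (Proposition~\ref{FsigOrbite}), \emph{before} any $\sigma$-stability of $M$ is assumed. Only then, guided by this constraint, does the paper reorganise the blocks so that the two factors with $\alpha_i^2=1$ sit adjacently in the centre; merging them produces the strictly larger standard Levi $L_I$ of shape~(\ref{LI}), and it is $L_I$ (equivalently $M_{I,v}$), not $M$, that is $\sigma$-stable. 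Your step~6 is morally the same merging, but it presupposes the $\sigma$-stable framework of steps~3--5 that cannot be set up in the first place. The fix is to derive the $\alpha_i\leftrightarrow\alpha_i^{-1}$ pairing first, from the action on $Z_M$, and only afterwards pass to the enlarged $\sigma$-stable Levi.
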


\subsubsection{}
We will need the following lemma.
\begin{Lem}
Let $M$ be a Levi subgroup of $G=\GL_n(k)$. Let $x\in G$ be such that $x^{-1}\sigma$ preserves $M$, i.e. $x^{-1}\sigma(M)x=M$. Then the action of $x^{-1}\sigma$ on $Z_M$ is the composition of $z\mapsto z^{-1}$ and an automorphism induced by an element of $N_{\GL_n}(M)$.
\end{Lem}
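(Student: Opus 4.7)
The plan is to exploit the factorisation $\sigma=\operatorname{Ad}(\mathscr{J})\circ\tau$, where $\tau\colon g\mapsto g^{-t}$ is the transpose-inverse automorphism of $\GL_n$. The key feature is that $\tau$ restricts to ordinary inversion on the diagonal torus $T$, since every diagonal matrix is symmetric. My strategy is to compute the action of $x^{-1}\sigma$ on $Z_M$ directly, using that $Z_M$ is $\GL_n$-conjugate to a subtorus of $T$, and then invoke the identity $M=C_{\GL_n}(Z_M)$, valid for every Levi subgroup of $\GL_n$, to upgrade a normaliser of $Z_M$ into a normaliser of $M$.

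Every torus of $\GL_n$ is conjugate to a subtorus of $T$, so I can write $Z_M=gZ_0g^{-1}$ with $Z_0\subset T$ and $g\in G$. For any $z=gz_0g^{-1}\in Z_M$, the fact that $z_0\in Z_0$ is diagonal gives the identity $z^{-t}=g^{-t}z_0^{-1}g^{t}$. Setting
\begin{equation*}
n:=x^{-1}\mathscr{J}\,g^{-t}g^{-1},
\end{equation*}
a direct check then yields
\begin{equation*}
(x^{-1}\sigma)(z)\;=\;x^{-1}\mathscr{J}\,z^{-t}\mathscr{J}^{-1}x\;=\;n\,z^{-1}n^{-1}.
\end{equation*}
Hence the action of $x^{-1}\sigma$ on $Z_M$ is inversion followed by conjugation by $n$.

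Next, the hypothesis that $x^{-1}\sigma$ preserves $M$ forces $(x^{-1}\sigma)(Z_M)=Z_M$; since inversion already preserves $Z_M$, the displayed formula gives $nZ_Mn^{-1}=Z_M$. Because any Levi subgroup of $\GL_n$ is recovered from its centre as $M=C_{\GL_n}(Z_M)$, every element normalising $Z_M$ automatically normalises $M$. Therefore $n\in N_{\GL_n}(M)$, which is exactly the form claimed by the lemma.

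I do not anticipate any substantial obstacle beyond the bookkeeping in the central identity $z^{-t}=g^{-t}z_0^{-1}g^{t}$, which relies crucially on the diagonal form of $z_0$. The parity-dependent distinction between $\mathscr{J}$ and $\mathscr{J}'$ of \S\ref{autostand} modifies $n$ only by the diagonal element $t_0\in T\subset N_G(M)$, and so does not affect the conclusion.
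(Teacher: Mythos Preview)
Your proof is correct and follows essentially the same approach as the paper: both factor $\sigma$ as $\Ad(\mathscr{J})\circ\tau$ and exploit that the transpose-inverse $\tau$ acts by inversion on diagonal matrices. The only cosmetic difference is that the paper conjugates the whole Levi $M$ to a standard Levi $L_J$, which is automatically $\tau$-stable, so the resulting element $n$ lies in $N_G(L_J)$ immediately; you instead conjugate only $Z_M$ into $T$, obtain $n\in N_G(Z_M)$, and then invoke $M=C_{\GL_n}(Z_M)$ to upgrade this to $n\in N_G(M)$. Both routes are equivalent and equally short.
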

\begin{proof}
Let $L_J$ be a standard Levi subgroup and $g\in G$ be such that $M=gL_Jg^{-1}$. We have the maps $$L_J ~\stackrel{\ad g}{\longrightarrow}~ M ~
\begin{array}{c}
\stackrel{\ad x}{\longrightarrow}\\[-0.7em] \stackrel[\sigma]{}{\longrightarrow}
\end{array}~\sigma(M),$$ and it is equivalent to considering $Z_{L_J}$. The automorphism $$(\ad g)^{-1}\circ\sigma^{-1}\circ(\ad x)\circ(\ad g)(l),$$ can be written as $\tau^{-1}\circ \ad n$, with $\tau$ being the transpose-inverse automorphism and $n=\mathscr{J}^{-1}\sigma(g)^{-1}xg$. Since $L_J$ is automatically $\tau$-stable, we have $n\in N_G(L_J)$. Note that $\tau$ acts on the centre by inversion.
\end{proof}

Suppose that $\chi\in\Irr(\GL_n(q))$ is an irreducible character induced from the triple $(M,\varphi,\theta)$ following Theorem \ref{LS}. Let $L_J$ be a standard Levi subgroup that is $G$-conjugate to $M$. If the $G^F$-conjugacy class of $M$ is represented by $(n_1,d_1)\cdots(n_s,d_s)$, then $M\cong \prod_{i\in\Lambda}\GL_{n_i}(k)^{d_i}$, and $M^F\cong \prod_{i\in\Lambda}\GL_{n_i}(q^{d_i})$, where $\Lambda:=\Lambda_J(n)$ with $n\in N_G(L_J)$ following the notations in the proof of Proposition \ref{G^FLevi}. With respect to this decomposition, we write $\theta=(\alpha_i)_{i\in\Lambda}$ with $\alpha_i\in\Irr(\mathbb{F}^{\ast}_{q^{d_i}})$, where we have abbreviated $\alpha_i\circ\det$ as $\alpha_i$. Denote by $\hat{\alpha}_i$ the $F$-orbit of $\alpha_i$ for each $i$, and write $\hat{\alpha}_i^{-1}=\{a^{-1}\mid a\in\hat{\alpha}_i\}$. The semi-simple part of $\chi$ can then be represented by the sequence $(n_1,\hat{\alpha}_i)\cdots(n_s,\hat{\alpha}_s)$. 

\begin{Prop}\label{FsigOrbite}
In order for $\chi$ to be $\sigma$-stable, it is necessary that for each $i\in\Lambda$, there exists a unique $i^{\ast}\in\Lambda$ such that $n_i=n_{i^{\ast}}$ and $\hat{\alpha}_i^{-1}=\hat{\alpha}_{i^{\ast}}$.
\end{Prop}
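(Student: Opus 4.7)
The plan is to compare the defining data of $\chi$ and $\leftidx{^{\sigma}\!}{\chi}$ via Lemma \ref{sigmachi}, the uniqueness clause of Theorem \ref{LS}, and the lemma immediately preceding the proposition. By Lemma \ref{sigmachi}, the character $\leftidx{^{\sigma}\!}{\chi}$ is attached to the triple $(\sigma(M), \sigma_{\ast}\varphi, \sigma_{\ast}\theta)$. The hypothesis $\chi = \leftidx{^{\sigma}\!}{\chi}$ therefore forces, by Theorem \ref{LS}, the pairs $(M,\theta)$ and $(\sigma(M), \sigma_{\ast}\theta)$ to be $G^F$-conjugate. I would choose $x \in G^F$ realising this conjugation, so that $xMx^{-1} = \sigma(M)$ and the automorphism $\phi := \ad(x^{-1}) \circ \sigma$ of $M$ satisfies $\theta \circ \phi = \theta$.

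Next, the preceding lemma, applied to this $x$, asserts that on $Z_M$ the automorphism $\phi$ is the composition of the inversion $z \mapsto z^{-1}$ with $\ad n$ for some $n \in N_G(M)$. Transporting via an isomorphism with a standard Levi $L_J$, the element $n$ induces a permutation $\pi$ of the index set labelling the irreducible factors of $L_J$, and this permutation preserves the ranks $n_i$ because $N_G(L_J)/L_J$ only permutes isomorphic factors. Since $x \in G^F$, $\phi$ commutes with $F$, so $\pi$ descends to a permutation of $\Lambda$ compatible with the $F$-orbits used to define the $\hat\alpha_i$.

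Each $\alpha_i$ is recovered from $\theta$ by restriction to the centre of the $i$-th factor (the value on a central element $\lambda I_{n_i}$ being $\alpha_i(\lambda^{n_i})$), so the $\phi$-invariance of $\theta$ translates on the $i$-th coordinate into the relation $\hat\alpha_{\pi(i)} = \hat\alpha_i^{-1}$. Setting $i^{\ast} := \pi(i)$ then produces the required index with $n_{i^{\ast}} = n_i$ and $\hat\alpha_{i^{\ast}} = \hat\alpha_i^{-1}$. Uniqueness of $i^{\ast}$ is a consequence of the regularity of $\theta$ in the sense of \cite[\S 3.1]{LS}, which forbids two distinct indices in $\Lambda$ from carrying the same pair $(n_i, \hat\alpha_i)$, so that the equations $n_{i^{\ast}} = n_i$ and $\hat\alpha_{i^{\ast}} = \hat\alpha_i^{-1}$ admit at most one solution. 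The main obstacle to be executed carefully is the bookkeeping between the decomposition of $L_J$ into irreducible factors indexed by $\Lambda_J$ and the further aggregation into $F$-orbits that yields $\Lambda$: one must verify that $\pi$ descends to $\Lambda$ unambiguously and that inversion on each centre commutes correctly with the passage to $F$-orbits of characters, after which the coordinate-wise conclusion is immediate.
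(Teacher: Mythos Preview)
Your proposal is correct and follows essentially the same route as the paper: both arguments invoke Theorem~\ref{LS} to produce $x\in G^F$ with $xMx^{-1}=\sigma(M)$ and $\theta$ fixed by $\ad(x^{-1})\circ\sigma$, apply the preceding lemma to identify the induced action on the centre as inversion composed with a permutation coming from $N_G(M)$, use $F$-equivariance to pass from the index set of irreducible factors to the set $\Lambda$ of $F$-orbits, and deduce uniqueness from regularity. The paper carries out explicitly the ``bookkeeping'' you flag as the main obstacle---namely, it shows that the permutation $v$ commutes with the cycle structure $\tau$ of $F$ and hence acts on orbits while only cyclically shifting within each orbit, giving $\alpha_i\mapsto\alpha_{i^\ast}^{-q^c}$ for some $c$---whereas you state this step without executing it; but the approach is the same.
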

\begin{proof}
Assume that $\chi$ is $\sigma$-stable. Theorem \ref{LS} implies that there exists $x\in G^F$ such that
\begin{eqnarray}
\ad x(M)&=&\sigma(M), \\
(\ad x)^{\ast}\sigma_{\ast}\theta&=&\theta,\label{sigmatheta=theta}
\end{eqnarray} 
so $x$ satisfies the assumption in the previous lemma. Put $\mathcal{Z}_M:=Z_M/[M,M]$ and regard $\theta$ as a character of $\mathcal{Z}_M^F$. Choose an isomorphism $\mathcal{Z}_M\cong (k^{\ast})^r$ for some $r$. Let $v\in\mathfrak{S}_r$ be such that the action of $x^{-1}\sigma$ on $\mathcal{Z}_M$ is the composition of $v$ and the inversion. Since $x$ and $\sigma$ are $F$-stable, the action of $v$ is compatible with the action of $F$ and so induces an action on $\mathcal{Z}_M^F$. If we express the action of $F$ as the composition of some permutation $\tau\in\mathfrak{S}_r$ and raising each entry to the $q$-th power, then $v$ commutes with $\tau$. The centraliser of $\tau$ is described by its orbits in $\{1,\ldots,r\}$: an element commuting with $\tau$ induces a permutation among the orbits of the same size and a cyclic permutation within each orbit. If we write $\theta=(\alpha_i)$, then the action of $x^{-1}\sigma$ on $\mathcal{Z}_M^F$ induces $(\alpha_i)\mapsto(\alpha_{i^{\ast}}^{-q^c})$ for some $0\le c< d_i$, with $i^{\ast}:=v^{-1}(i)$, whence the assertion. 

For any $i\in\Lambda$, the $i^{\ast}$ as claimed in the proposition is unique. Suppose $i^{\ast}$ and $i^{\ast\prime}$ with $i^{\ast}\ne i^{\ast\prime}$ both satisfy the assertion in the proposition, then $\hat{\alpha}_{i^{\ast}}=\hat{\alpha}_{i^{\ast\prime}}$, which contradicts the regularity of  $\theta$.
\end{proof}
\begin{Rem}
If $i^{\ast}=i$, then it is necessary that $d_i$ is an even number, and $\alpha_i^{-1}=\alpha_i^{q^{d_i/2}}$.
\end{Rem}
\begin{Rem}\label{pm}
There are at most two $i\in\Lambda$ such that $\alpha_i^2=1$ in order for $\theta$ to be regular. We denote them by $\pm$. It is necessary that $d_+=d_-=1$. Also denote by $\pm$ the corresponding two elements of $\Gamma$.
\end{Rem}

\subsubsection{}
The previous proposition allows us to define the sets
\begin{eqnarray}
\Lambda_1&=&\{i\in \Lambda |i^{\ast}\ne i\}/\sim\\
\Lambda_2&=&\{i\in \Lambda |i=i^{\ast}\}\setminus\{\pm\},
\end{eqnarray}
in a way similar to (\ref{Lambda1chi}) and (\ref{Lambda2chi}). Since the $G^F$-conjugacy class of $M$ only depends on the coset of $n$, we can assume that $n$ is a block permutation matrix. There exists a block permutation matrix $x$ such that $L_{J'}:=xL_Jx^{-1}$ is of the form
\[
  \renewcommand{\arraystretch}{1.3}
\left(\!
  \begin{array}{ccccccccc}
    \multicolumn{1}{c|}{L_{i_1}\!} & & & & & & & &\\
    \cline{1-1}
    & \ddots & & & & & & &\\
    \cline{3-3}
    & & \multicolumn{1}{|c|}{L_{i_s}\!} &  \multicolumn{2}{c}{} & & & &\\
    \cline{3-5}
    & & & \multicolumn{2}{|c|}{} & \multirow{2}*{} & & &\\
    & & & \multicolumn{2}{|c|}{\raisebox{.6\normalbaselineskip}[0pt][0pt]{~~~$L_+$~~~}} & & & &\\
    \cline{4-6}
    & & & & & \multicolumn{1}{|c|}{\!L_{-}\!\!} & & &\\
    \cline{6-7}
    & & & & & & \multicolumn{1}{|c|}{\!L_{i_s'}\!\!} & & \\
    \cline{7-7}
    & & & & & & & \ddots & \\
    \cline{9-9}
    & & & & & & & &\multicolumn{1}{|c}{\!L_{i_1'}\!\!}\\
     \end{array}
  \right).
\]
where $L_{i'{\!\!}_j}\cong L_{i_j}$ for any $j$. From this, we can recover the induction data of \S \ref{hypothesethetavarphi} as follows.

On $L_{J'}$, the Frobenius concerned is $F_{xnx^{-1}}$, which fixes $L_+$ and $L_-$ as $d_+=d_-=1$. We can further conjugate by a permutation matrix that normalises $L_{J'}$, say $y$, in such a way that the action of $F_{yxnx^{-1}y^{-1}}$ on $L_{J'}$ is built up from (\ref{ActionSymetrique}) and moreover the $\alpha_i$'s satisfy the hypothesis \S \ref{hypothesethetavarphi}. Such $y$ exists because a conjugacy class in $N_G(L_J)/L_J$ is uniquely determined by the $d_i$'s. Denote by $\theta_{J'}$ and $\varphi_{J'}$ the characters associated to $L_{J'}$ that are transferred from $\theta$ and $\varphi$ via $\ad g$, $\ad x$ and $\ad y$.

Put $v=yxnx^{-1}y^{-1}$, it is a $\sigma$-stable block permutation matrix by the choice of $y$. Let $h\in (G^{\sigma})^{\circ}$  be such that $h^{-1}F(h)=v$. Then $M_{J',v}=hL_{J'}h^{-1}$ is an $F$-stable Levi subgroup that is $G^F$-conjugate to $M$, because $nL_J$ is conjugate to $vL_{J'}$. If we regard $\theta_{J'}$ and $\varphi_{J'}$ as some characters associated to $M_{J',v}$ via the isomorphism $\ad h$, then $\chi$ is equal to the induction $R^G_{\varphi_{J'}}\theta_{J'}$ for a triple $(M_{J',v},\varphi_{J'},\theta_{J'})$.

Define $L_I$ to be the standard Levi subgroup of the form (\ref{LI}) such that $n_0=n_++n_-$ and that $L_I$ coincides with $L_{J'}$ away from $\GL_{n_0}$. We see that $M_{I,v}:=hL_Ih^{-1}$ is a $\sigma$-stable and $F$-stable Levi factor of a $\sigma$-stable parabolic subgroup. Moreover, it contains $M_{J',v}$ and $\sigma(M_{J',v})$. 

Note that $\leftidx{^{\sigma}\!}{\chi}$ is defined by the triple $(\sigma(M_{J',v}),\sigma_{\ast}\varphi_{J'},\sigma_{\ast}\theta_{J'})$ and $\sigma_{\ast}\theta_{J'}$ is already equal to $\theta_{J'}$ away from $\GL_{n_o}$ by construction. By Theorem \ref{LS}, in order for $\chi$ to be $\sigma$-stable, it is necessary that $\sigma_{\ast}\varphi_{J'}=\varphi_{J'}$ away from $\GL_{n_0}$. And it suffices since in the component $\GL_{n_0}$ we have a quadratic-unipotent character. This completes the proof of Proposition \ref{IrrMstable}.

\subsubsection{}\label{Tchi}
The \textit{type} of a $\sigma$-stable irreducible character consists of some non negative integers $n_{\pm}$, and some positive integers $n_i$, $d_i$, $n'_j$, and $d'_j$ parametrised by some possibly empty finite sets $\Lambda_1$ and $\Lambda_2$, denoted by
\begin{equation}
\mathfrak{t}=n_+n_-(n_i,d_i)_{i\in\Lambda_1}(n'_j,d'_j)_{j\in\Lambda_2},
\end{equation}
satisfying
\begin{equation}
n=n_++n_-+\sum_{i}2n_id_i+\sum_{j}2n_j'd_j'.
\end{equation}
If $\smash{\tilde{\mathfrak{t}}=\tilde{n}_+\tilde{n}_-(\tilde{n}_i,\tilde{d}_i)_{i\in\tilde{\Lambda}_1}(\tilde{n}'_j,\tilde{d}'_j)_{j\in\tilde{\Lambda}_2}}$ is another sequence of integers, we regard it as the same as $\mathfrak{t}$ if and only if there exist some bijections $\smash{\Lambda_1\cong\tilde{\Lambda}_1}$ and $\smash{\Lambda_2\cong\tilde{\Lambda}_2}$ such that the integers are matched (and $n_+=\tilde{n}_+$ and $n_-=\tilde{n}_-$). We denote by $\mathfrak{T}_{\chi}$ the set of the types of the $\sigma$-stable irreducible characters of $\GL_n(q)$.

Given $\mathfrak{t}\in \mathfrak{T}_{\chi}$, denote by $\bar{\mathfrak{T}}_{\chi}(\mathfrak{t})$ the set of the data
\begin{equation}
\bar{\mathfrak{t}}=\lambda_+\lambda_-(\lambda_i,\hat{\alpha}_i)_{i\in\Lambda_1}(\lambda'_j,\hat{\alpha}'_j)_{j\in\Lambda_2}
\end{equation}
satisfying
\begin{itemize}
\item[-]
$\lambda_{\pm}\in\mathcal{P}_{n_{\pm}}$, $\lambda_i\in\mathcal{P}_{n_i}$, $\lambda'_j\in\mathcal{P}_{n'_j}$, for the integers $n_+$, $n_-$, $n_i$ and $n'_j$ associated to $\mathfrak{t}$;
\item[-]
$\hat{\alpha}_i\subset\Irr(\bar{\mathbb{F}}_{\smash{q^{d_i}}}^{\ast})$ is an $F$-orbit of order $d_i$ that is not stable under inversion, with $\hat{\alpha}_i$ identified with $\hat{\alpha}^{-1}_i$;
\item[-]
$\hat{\alpha}'_j\subset\Irr(\bar{\mathbb{F}}^{\ast}_{\smash{q^{2d'_j}}})$ is an $F$-orbit of order $2d'_j$ that is stable under inversion;
\item[-]
$\hat{\alpha}_{i}\ne\hat{\alpha}^{\pm 1}_{i'}$ if $i\ne i'$ and $\hat{\alpha}'_{j}\ne\hat{\alpha}'_{j'}$ if $j\ne j'$.
\end{itemize}
If $\tilde{\lambda}_+\tilde{\lambda}_-(\tilde{\lambda}_i,\hat{\beta}_i)_{i\in\tilde{\Lambda}_1}(\tilde{\lambda}_j,\hat{\beta}'_j)_{j\in\tilde{\Lambda}_2}$ is another such datum, we regard it as the same as $\bar{\mathfrak{t}}$ if and only if $\tilde{\lambda}_+=\lambda_+$, $\tilde{\lambda}_-=\lambda_-$, and there exist bijetions $\Lambda_1\cong\tilde{\Lambda}_1$ and $\Lambda_2\cong\tilde{\Lambda}_2$ such that the partitions and orbits are matched.

Write $\bar{\mathfrak{T}}_{\chi}=\sqcup_{\mathfrak{t}\in\mathfrak{T}_{\chi}}\bar{\mathfrak{T}}_{\chi}(\mathfrak{t})$. By Proposition \ref{IrrLIstable} and Proposition \ref{IrrMstable}, the $\sigma$-stable irreducible characters are in bijection with $\bar{\mathfrak{T}}_{\chi}$. 

\subsubsection{}
Equivalently, the set $\bar{\mathfrak{T}}_{\chi}$ can be described as follows. Write $$k^{\vee}=\lim_{\longrightarrow}\Irr(\mathbb{F}_{q^n}^{\ast}),$$ as $n$ runs over positive integers. Denote by $\sigma$ the inversion $a\mapsto a^{-1}$ on this set. Denote by $\Phi^{\ast}$ the set of $\lb F\rb\times \lb\sigma\rb$-orbits in $k^{\vee}$. For any map of sets $f:\Phi^{\ast}\rightarrow\mathcal{P}$, write $$||f||=\sum_{x\in\Phi^{\ast}}|f(x)|\cdot|x|,$$where $|x|$ is the size of the orbit. Then $\bar{\mathfrak{T}}_{\chi}$ is identified with the set $$\{f:\Phi^{\ast}\mapsto\mathcal{P}; ||f||=n\}.$$ The $\lb F\rb\times \lb\sigma\rb$-orbits are either the single points $\{1\}$, $\{\eta\}$, or the union of two $F$-orbits that are not stable under inversion, or a single $F$-orbit that is stable under inversion. One easily sees how $\bar{\mathfrak{T}}_{\chi}$ is recovered from these maps.

\section{Parametrisation of Conjugacy Classes}\label{SecParaClass}
\subsection{$F$-Stable Quasi-Semi-Simples Classes}
A $G$-conjugacy class contains some $G^F$-conjugacy classes if and only if it is $F$-stable. We will first give the parametrisation of the $F$-stable quasi-semi-simple conjugacy classes in $G\sigma$. Recall that in $\leftidx{^o\!}{\bar{G}}^F$, we denote by $\sigma$ the element $t_0\sigma'$ (\textit{cf.} Convention \ref{convsigma}).
\subsubsection{}
We begin with the parametrisation of the quasi-semi-simple $G$-conjugacy classes. We take for $T$ the maximal torus consisting of the diagonal matrices, then $(T^{\sigma})^{\circ}$ consists of the matrices
\begin{eqnarray}
\diag(a_1,a_2,\ldots,a_m,a_m^{-1},\ldots,a_2^{-1},a_1^{-1}), \text{if $n=2m$,}\\
\diag(a_1,a_2,\ldots,a_m,1,a_m^{-1},\ldots,a_2^{-1},a_1^{-1}), \text{if $n=2m+1$,}
\end{eqnarray}
with $a_i\in k^{\ast}$ for all $i$, and the commutator $[T,\sigma]$ consists of the matrices
\begin{eqnarray}
\diag(b_1,b_2,\ldots,b_m,b_m,\ldots,b_2,b_1),\text{if $n=2m$,}\\
\diag(b_1,b_2,\ldots,b_m,b_{m+1},b_m,\ldots,b_2,b_1),\text{if $n=2m+1$,}
\end{eqnarray}
with $b_i\in k^{\ast}$ for all $i$. So the elements of  $S:=[T,\sigma]\cap(T^{\sigma})^{\circ}$ are the matrices 
\begin{eqnarray}
\diag(e_1,e_2,\ldots,e_m,e_m,\ldots,e_2,e_1),\text{if $n=2m$,}\\
\diag(e_1,e_2,\ldots,e_m,1,e_m,\ldots,e_2,e_1),\text{if $n=2m+1$,}
\end{eqnarray} 
with $e_i=\pm1$ for all $i$. We index the entries of a diagonal matrix by the set $$\{1,2,\ldots,m,-m,\ldots,-2,-1\}$$ or the set $$\{1,2,\ldots,m,0,-m,\ldots,-2,-1\}$$ according to the parity of $n$ so that every matrix in $(T^{\sigma})^{\circ}$ satisfies $a_{-i}=a_i^{-1}$ for all $i$. We will abbreviate an element of $(T^{\sigma})^{\circ}$ as $[a_1,\ldots,a_m]$ regardless of the parity of $n$.

We have the following proposition.
\begin{Prop}(\cite[Proposition 1.16]{DM18})\label{6.11}
The quasi-semi-simple classes in $G\sigma$ are in bijection with the $W^{\sigma}$-orbits in $T/[T,\sigma]\cong(T^{\sigma})^{\circ}/S$. 
\end{Prop}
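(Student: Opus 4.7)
The plan is to derive both halves of the statement by specializing the general result of Proposition~\ref{DM1.16} to our concrete setup and then matching its quotient $T/[T,\sigma]$ with the quotient $(T^{\sigma})^{\circ}/S$ written down above. First I would apply Proposition~\ref{DM1.16} with $G^{\circ}=G=\GL_n(k)$, $G^1=G\sigma$, $s=\sigma$, and $T^{\circ}=T$ the diagonal torus, using that $T$ is $\sigma$-stable and sits inside the $\sigma$-stable Borel of upper triangular matrices. This directly yields that every quasi-semi-simple class in $G\sigma$ meets $(T^{\sigma})^{\circ}\sigma$ and that two such representatives $t\sigma,t'\sigma$ are $G$-conjugate iff the images of $t,t'$ in $T/[T,\sigma]$ lie in the same $W^{\sigma}$-orbit, where $W^{\sigma}=\{w\in W\mid \sigma w\sigma^{-1}=w\}$ is the type $B_m/C_m$ Weyl group preserving $[T,\sigma]$.

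Next I would prove the isomorphism $T/[T,\sigma]\cong (T^{\sigma})^{\circ}/S$ by considering the composition
\begin{equation*}
\varphi:(T^{\sigma})^{\circ}\hookrightarrow T\twoheadrightarrow T/[T,\sigma].
\end{equation*}
Its kernel is $(T^{\sigma})^{\circ}\cap [T,\sigma]=S$ by definition, so it remains to show surjectivity. Given $t=\diag(a_1,\ldots,a_n)\in T$, the explicit form of $\sigma$ computed above gives $\sigma(t)=\diag(a_n^{-1},\ldots,a_1^{-1})$, so $[T,\sigma]$ coincides with the subgroup of diagonal matrices $\diag(c_1,\ldots)$ symmetric in the sense $c_i=c_{-i}$. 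I would then write $t=s\cdot r$ by setting $s_i=(a_i/a_{-i})^{1/2}$ (indexed so that $s_{-i}=s_i^{-1}$) and $r_i=(a_ia_{-i})^{1/2}=r_{-i}$, with $s_0=1$ and $r_0=a_0$ in the odd case; the required square roots exist because $k$ is algebraically closed. Then $s\in (T^{\sigma})^{\circ}$, $r\in [T,\sigma]$, and $\varphi(s)=\bar t$, giving surjectivity. The $W^{\sigma}$-equivariance of $\varphi$ is immediate because $W^{\sigma}$ acts through permutations of coordinates that respect both the symmetry defining $(T^{\sigma})^{\circ}$ and the symmetry defining $[T,\sigma]$, so the bijection of orbits transports across the isomorphism.

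There is no serious obstacle: the heavy lifting has been done by Proposition~\ref{DM1.16}, and the required identification of quotients is a direct computation on diagonal matrices. The only mildly delicate point is keeping track of the sign-twist in the even case where $\sigma$ is defined via $\mathscr{J}'=t_0\mathscr{J}_n$ rather than $\mathscr{J}_n$; but conjugation by the diagonal matrix $t_0$ acts trivially on $T$, so $\sigma(t)=\diag(a_n^{-1},\ldots,a_1^{-1})$ regardless of parity, and the signs play no role in the torus-level computation. Similarly, in the odd case one checks that the middle coordinate contributes nothing new to either $[T,\sigma]$ or $(T^{\sigma})^{\circ}/S$, so the argument is uniform in $n$.
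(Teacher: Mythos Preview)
Your proposal is correct and follows the same approach as the paper: the proposition is simply a specialization of Proposition~\ref{DM1.16} (both are cited as \cite[Proposition 1.16]{DM18}), and the paper gives no further argument. Your explicit verification that $(T^{\sigma})^{\circ}\to T/[T,\sigma]$ is surjective via the square-root decomposition $t=s\cdot r$ is a detail the paper leaves implicit, but it is exactly what is needed to justify the isomorphism $T/[T,\sigma]\cong(T^{\sigma})^{\circ}/S$ stated there.
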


That is to say, the class of $[a_1,\ldots,a_m]\sigma$ is invariant under the following operations,
\begin{itemize}
\item[-] Permutation of the $a_i$'s;
\item[-] $a_i\mapsto a_i^{-1}$, for any $i$;
\item[-] $a_i\mapsto -a_i$, for any $i$,
\end{itemize}
and $[b_1,\ldots,b_m]\sigma$ belongs to the same class if it only differs from $[a_1,\ldots,a_m]\sigma$ by these operations. For another description of these conjugacy classes, see also \cite[Example 7.3]{DM15}.
\begin{Rem}
The parametrisation given by this proposition relies on the choice of $\sigma$. In either $\leftidx{^s\!}{\bar{G}}$ or $\leftidx{^o\!}{\bar{G}}$, we will use the symplectic type quasi-central element $\sigma$.
\end{Rem}

\subsubsection{}
Denote by $\hat{k}$ the quotient of $k^{\ast}$ by the action of $\mathbb{Z}/2\mathbb{Z}\times\mathbb{Z}/2\mathbb{Z}$:
\begin{equation}
\begin{split}
(1,0):a\mapsto a^{-1},\quad
(0,1):a\mapsto -a.
\end{split}
\end{equation} 
For any $a\in k$, denote by $\hat{a}$ the set $\{a,-a,a^{-1},-a^{-1}\}$. Write $\mathbb{S}=(\mathbb{Z}/2\mathbb{Z}\times\mathbb{Z}/2\mathbb{Z})^m$. The torus $(T^{\sigma})^{\circ}$ is isomorphic to $(k^{\ast})^m$. Let $\mathbb{S}$ act on it componentwise, i.e. each component $\mathbb{Z}/2\mathbb{Z}\times\mathbb{Z}/2\mathbb{Z}$ acts on $k^{\ast}$ as above. Then the quasi-semi-simple conjugacy classes in $G\sigma$ are parametrised by the $\mathfrak{S}_m$-orbits in $\hat{k}^m$. The conjugacy class of $t\sigma=[a_1,\ldots,a_m]\sigma$ is determined by the multiset $\{\hat{a}_1,\ldots,\hat{a}_m\}$. We may regard its elements as the eigenvalues of $t\sigma$.

The action of $F$ on $k$ induces an action on $\hat{k}$, given by $$\hat{a}\mapsto \hat{a}^q:=\{a^q,-a^q,a^{-q},-a^{-q}\}.$$ Denote by $\Phi_{\ast}$ the set of $F$-orbits in $\hat{k}$. Let $\hat{\alpha}\in\Phi_{\ast}$ and take $\hat{a}\in\hat{\alpha}$. Write $d=|\hat{\alpha}|$. Denote by $e$ and $\epsilon$ the signs such that $\smash{a^{q^d}}=ea^{\epsilon}$, for any $a\in\hat{a}$. Note that $e$ and $\epsilon$ are independent of the choice of $\hat{a}\in\hat{\alpha}$ or the choice of $a\in\hat{a}$. We say that $\hat{\alpha}$ is an orbit of type $(d,\epsilon,e)$.

There is some ambiguity with the type thus defined. Let $\hat{\alpha}_1$ be an orbit of type $(d_1,\epsilon_1,e_1)$ and let $\hat{\alpha}_2$ be an orbit of type $(d_2,\epsilon_2,e_2)$. Obviously if $d_1> d_2$, then $\hat{\alpha}_1$ and $\hat{\alpha}_2$ are distinct orbits. Suppose $d_1=d_2=d$. If $x\in k^{\ast}$ satisfies both of the two equations $\smash{x^{q^{d_1}}}=e_1x^{\epsilon_1}$ and $\smash{x^{q^{d_2}}}=e_2x^{\epsilon_2}$, then $e_1e_2x^{\epsilon_1-\epsilon_2}=1$. In order for this equaiton to be solvable, either $e_1=e_2$, $\epsilon_1=\epsilon_2$, that is, $\hat{\alpha}_1$ and $\hat{\alpha}_2$ coincide, or $e_1=e_2$, $\epsilon_1=-\epsilon_2$, which gives $x^2=1$, or $e_1=-e_2$, $\epsilon_1=-\epsilon_2$, which gives $x^2=-1$. The latter ones are the orbits $\{1,-1\}$ and $\{\mathfrak{i},-\mathfrak{i}\}$, so $d=1$, and they are said to be of type $(1)$ and of type $(\mathfrak{i})$ respectively. Except these cases, orbits of different types are all distinct. In the following, these two orbits are treated separately and so there will be no confusion among types.

\subsubsection{}
Let us define some combinatorial data that parametrise the $F$-stable quasi-semi-simple conjugacy classes.

We call the type of an $F$-stable quasi-semi-simple conjugacy class the data consisting of some non negative integers $n_+$, $n_-$, with the parity of $n_+$ being that of $n$, some positive integers $n_i$, $d_i$ and some signs $e_i$ and $\epsilon_i$, parametrised by a possibly empty finite set $\Lambda$, denoted by
\begin{equation}
\mathfrak{t}=n_+n_-(n_i,d_i,\epsilon_i,e_i)_{i\in\Lambda},
\end{equation}
satisfying
\begin{equation}
\sum_{i\in\Lambda}2n_id_i+n_++n_-=n.
\end{equation}
If $\tilde{\mathfrak{t}}=\tilde{n}_+\tilde{n}_-(\tilde{n}_i,\tilde{d}_i,,\tilde{\epsilon}_i,\tilde{e}_i)_{i\in\tilde{\Lambda}}$ is another sequence of integers, we regard it as the same as $\mathfrak{t}$ if and only if there exists a bijection $\Lambda\isomLR\tilde{\Lambda}$ such that the integers and the signs are mathched, and moreover, $n_+=\tilde{n}_+$ and $n_-=\tilde{n}_-$. We will denote by $\mathfrak{T}_{C,s}$ the set of the types of the $F$-stable quasi-semi-simple conjugacy classes. 
 
Given $\mathfrak{t}\in \mathfrak{T}_{C,s}$, denote by $\bar{\mathfrak{T}}_{C,s}(\mathfrak{t})$ the set of the data
\begin{equation}
\bar{\mathfrak{t}}=n_+n_-(n_i,\hat{\alpha}_i)_{i\in\Lambda},
\end{equation}
satisfying
\begin{itemize}
\item[-]
for any $i$, $\hat{\alpha}_i$ is an orbit of type $(d_i,\epsilon_i,e_i)\ne(1)$ or $(\mathfrak{i})$;
\item[-]
if $i\ne i'$, then $\hat{\alpha}_i\ne\hat{\alpha}_{i'}$.
\end{itemize}
If $\bar{\mathfrak{s}}=m_+m_-(m_i,\hat{\beta}_i)_{i\in\tilde{\Lambda}}$ is another such datum, we regard it the same as $\bar{\mathfrak{t}}$ if and only if there exists a bijection $\Lambda\isomLR\tilde{\Lambda}$ such that the integers and the orbits are matched, and moreover, $n_+=m_+$ and $n_-=m_-$. We will denote by $\bar{\mathfrak{T}}_{C,s}$ the union of $\bar{\mathfrak{T}}_{C,s}(\mathfrak{t})$ as $\mathfrak{t}$ runs over $\mathfrak{T}_{C,s}$.

It will sometimes be convenient to distinguish between the $i$'s with $\epsilon_i=1$ and the $i$'s with $\epsilon_i=-1$. Put $\Lambda_1\subset\Lambda$ to be the subset of the $i$'s such that $\epsilon_i=1$ and put $\Lambda_2=\Lambda\setminus\Lambda_1$. The following notations will also be used,
\begin{equation}
\begin{split}
\mathfrak{t}&=n_+n_-(n_i,d_i,e_i)_{i\in\Lambda_1}(n'_j,d'_j,e'_j)_{j\in\Lambda_2},\\
\bar{\mathfrak{t}}&=n_+n_-(n_i,\hat{\alpha}_i)_{i\in\Lambda_1}(n'_j,\hat{\alpha}'_j)_{j\in\Lambda_2}.
\end{split}
\end{equation}

There is another equivalent description of $\bar{\mathfrak{T}}_{C,s}$. Write $\Phi_{\ast}^{\circ}=\Phi_{\ast}\setminus(\hat{1}\sqcup\hat{\mathfrak{i}})$. For any map of sets $\Phi_{\ast}\rightarrow\mathbb{Z}_{\ge 0}$, define $$||f||=f(\hat{1})+f(\hat{\mathfrak{i}})+\sum_{x\in\Phi^{\circ}_{\ast}}2f(x)\cdot|x|,$$ where $|x|$ is the size of the orbit. Then $\bar{\mathfrak{T}}_{C,s}$ can be identified with the set $$\{f:\Phi_{\ast}\rightarrow\mathbb{Z}_{\ge 0};||f||=n\}.$$ Under this identification, $n_+$ is the image of the orbit of type $(1)$, $n_-$ is the image of the orbit of type $(\mathfrak{i})$ and $\Lambda$ parametrises the inverse image of $\mathbb{Z}_{>0}$ in $\Phi^{\circ}_{\ast}$.

\begin{Prop}\label{TCs}
The $F$-stable quasi-semi-simple conjugacy classes in $G\sigma$ are in bijection with $\bar{\mathfrak{T}}_{C,s}$.
\end{Prop}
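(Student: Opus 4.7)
The plan is to combine Proposition \ref{6.11} with the explicit description of $(T^\sigma)^\circ/S$ and $\hat{k}$ given in the paragraphs before the statement. Since $T$, $B$, $S$ and the identifications underlying the isomorphism $W^\sigma \cong \mathfrak{W}_m$ are all defined over $\mathbb{F}_q$, the bijection of Proposition \ref{6.11} between quasi-semi-simple conjugacy classes in $G\sigma$ and $W^\sigma$-orbits in $(T^\sigma)^\circ/S$ is $F$-equivariant, so $F$-stable classes correspond to $F$-stable $W^\sigma$-orbits.

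Next, I would identify such orbits with multisets in $\hat{k}$. The quotient $(T^\sigma)^\circ/S$ is isomorphic to $(k^{\ast}/\{\pm 1\})^m$ (with $m = \lfloor n/2 \rfloor$), because $S$ absorbs precisely the sign generators sitting inside $(T^\sigma)^\circ$. The residual action of $W^\sigma = \mathfrak{W}_m$ is by permutation together with inversion of each coordinate, so the orbit space is $\hat{k}^m/\mathfrak{S}_m$, i.e. unordered multisets of $m$ elements of $\hat{k}$. Such a multiset is $F$-stable if and only if it is a union, with multiplicities, of complete $F$-orbits in $\hat{k}$, i.e. of elements of $\Phi_{\ast}$.

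Decompose such a multiset according to orbit type: let $m_+$, $m_-$ be the multiplicities of the two singleton orbits $\hat 1$ and $\hat{\mathfrak{i}}$, and for each non-special orbit $\hat\alpha_i \in \Phi_{\ast}^{\circ}$ of type $(d_i,\epsilon_i,e_i)$, let $n_i$ be the common multiplicity (which is forced to be constant along the orbit by $F$-stability) of its elements in the multiset. Writing $\delta = 1$ when $n$ is odd and $0$ otherwise, the cardinality condition on the multiset reads $m_+ + m_- + \sum_i n_i d_i = m = (n-\delta)/2$. Setting $n_+ := 2m_+ + \delta$ and $n_- := 2m_-$, this becomes
\[
n_+ + n_- + \sum_i 2 n_i d_i = n,
\]
and the parity of $n_+$ agrees with that of $n$ by construction. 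These are exactly the defining conditions of $\bar{\mathfrak{T}}_{C,s}$, and the procedure is manifestly reversible: given $\bar{\mathfrak{t}}\in\bar{\mathfrak{T}}_{C,s}$, set $m_+ := (n_+-\delta)/2$ (a non-negative integer thanks to the parity condition on $n_+$), $m_- := n_-/2$, and take the multiset with the prescribed multiplicities on each orbit.

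The main subtlety is bookkeeping rather than conceptual: one must carefully account for the fixed middle entry $1$ in the odd case, which is what causes the parity shift forcing $n_+ \equiv n \pmod 2$, and verify that after passing to $(T^\sigma)^\circ/S$ the sign generators inside $W^\sigma$ act trivially so that only the inversion part of $\mathfrak{W}_m$ survives. Once this dictionary is laid out, the proposition follows immediately from Proposition \ref{6.11} and the $F$-equivariance established in the first step.
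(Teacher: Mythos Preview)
Your proof is correct and follows essentially the same route as the paper: both identify $F$-stable quasi-semi-simple classes with $F$-stable multisets in $\hat k^m/\mathfrak S_m$ via Proposition~\ref{6.11}, and then bijectively match those multisets with $\bar{\mathfrak T}_{C,s}$ by decomposing into $F$-orbits. The paper packages the inverse direction as an explicit construction $\psi$ and checks well-definedness and bijectivity directly, whereas you phrase it through the equivalent description of $\bar{\mathfrak T}_{C,s}$ as functions $\Phi_\ast\to\mathbb Z_{\ge0}$ of the correct total; these are the same argument.

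One small correction: your closing remark that ``the sign generators inside $W^\sigma$ act trivially after passing to $(T^\sigma)^\circ/S$'' is misstated. The $(\mathbb Z/2\mathbb Z)^m$ part of $W^\sigma\cong\mathfrak W_m$ acts on $(T^\sigma)^\circ$ by inversion $a_i\mapsto a_i^{-1}$, not by sign change, and this action is not trivial on $(T^\sigma)^\circ/S\cong(k^\ast/\{\pm1\})^m$; it is precisely this inversion, together with the sign identification already absorbed by $S$, that yields $\hat k$. You stated this correctly in your second paragraph, so the offending sentence should simply be deleted.
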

\begin{proof}
Define a map
\begin{equation}\label{psi}
\psi:\bar{\mathfrak{T}}_{C,s}\longrightarrow\{\text{$F$-stable quasi-semi-simple classes in $G\sigma$}\}\subset \hat{k}^m/\mathfrak{S}_m
\end{equation}
as follows.

Write $\bar{\mathfrak{t}}=n_+n_-(n_i,\hat{\alpha}_i)_{i\in\Lambda}$ with $\hat{\alpha}_i$ of type $(d_i,\epsilon_i,e_i)$. We are going to define an element of $\hat{k}^m$ from $\bar{\mathfrak{t}}$, regarding the elements of $\hat{\alpha}_i$'s as eigenvalues and the $n_i$'s as their multiplicities. We will write $\psi(\bar{\mathfrak{t}})=(\hat{a}_j)_{1\le j\le m}$.
 
(i). Take $[n_+/2]$ subsets of $\{1,\ldots,m\}$, each consisting of a point, which will be called of type $(1)$, and then $(n_-/2)$ subsets, each consisting of a point, which will be called of type $(\mathfrak{i})$, and take for each $i\in\Lambda$, $n_i$ subsets of cardinality $d_i$. These subsets, combined with $\{0\}$ if $n$ is odd (we require that $\{0\}$ is of type $(1)$.), form a partition of $\{1,\ldots,m\}$($\cup\{0\}$) and we denote it by $(I_r)_r$.  

(ii). Choose for each $r$ an identification $I_r\cong\mathbb{Z}/d_r\mathbb{Z}$, where $d_r:=d_i$ if $I_r$ comes from $i$. 

(iii). If $I_r$ comes from $i\in\Lambda$ by the procedure (i), and $\hat{\alpha}_i=\{\hat{a},\hat{a}^q,\ldots,\hat{a}^{q^{d_i-1}}\}$, define for all $k\in I_r$, $\hat{a}_k:=\hat{a}^{q^k}$, under the identification $I_r\cong\mathbb{Z}/d_r\mathbb{Z}$. If $I_r$ is of type $(1)$(resp. $(\mathfrak{i})$), we define the only entry of $t$ corresponding to $I_r$ to be $\hat{1}$(resp. $\hat{\mathfrak{i}}$). Thus, we have defined an element of $\hat{k}^m$, whence a quasi-semi-simple conjugacy class in $G\sigma$.

The class $\psi(\bar{\mathfrak{t}})$ does not depend on the choices of the subsets $I_r$ or the identifications $I_r\cong\mathbb{Z}/d_r\mathbb{Z}$ due to the conjugation by $\mathfrak{S}_m$. Observe that the class of $\psi(\bar{\mathfrak{t}})$ is $F$-stable if and only if $\{\hat{a}_j\}_{1\le j\le m}$ and $\{\hat{a}_j^q\}_{1\le j\le m}$ coincide as multisets. This is satisfied by $\psi(\bar{\mathfrak{t}})$ since to each $I_r$ is associated an $F$-orbit. The map $\psi$ is surjective because the map $\hat{a}_j\mapsto\hat{a}_j^q$ identifies $\{\hat{a}_j\}_{1\le j\le m}$ with itself, and so we can choose a bijection $f$ from $\{1,\ldots,m\}$ to itself such that $\hat{a}_{f(j)}=\hat{a}_{j}^q$. Then $f$ defines a permutation of $\{1,\ldots,m\}$, which can be expressed as a product of some cyclic permutations, so the above constructions can be reversed. Injectivity of $\psi$ is obvious.
\end{proof}

\subsection{Centralisers and $G^F$-Classes}
We will see that the centraliser of a quasi-semi-simple element is in general a product of a symplectic group, an orthogonal group and some linear groups.
\subsubsection{}\label{centgeo}
Let $C$ be an $F$-stable quasi-semi-simple conjugacy class  corresponding to $\bar{\mathfrak{t}}=n_+n_-(n_i,\hat{\alpha}_i)_{i\in\Lambda_1}(n_j,\hat{\alpha}_j)_{j\in\Lambda_2}$ following Proposition \ref{TCs}, and we denote its type by $\mathfrak{t}=n_+n_-(n_i,d_i,e_i)_{i\in\Lambda_1}(n'_j,d'_j,e'_j)_{j\in\Lambda_2}$.  Let $t\sigma$, with $t\in(T^{\sigma})^{\circ}$ be a representative of $C$ as defined in the proof of Proposition \ref{TCs}. 
\begin{Lem}
We have,
\begingroup
\allowdisplaybreaks
\begin{align*}
C_G(t\sigma)\cong\Sp_{n_+}(k)\times\Ort_{n_-}(k)\times\prod_{i\in\Lambda_1}\GL_{n_i}(k)\times\prod_{j\in\Lambda_2}\GL_{n_j'}(k),\quad &\text{if $n$ is even,}\\
C_G(t\sigma)\cong\Sp_{n_-}(k)\times\Ort_{n_+}(k)\times\prod_{i\in\Lambda_1}\GL_{n_i}(k)\times\prod_{j\in\Lambda_2}\GL_{n_j'}(k),\quad &\text{if $n$ is odd.}\\
\end{align*}
\endgroup
In particular, the isomorphism class of $C_G(t\sigma)$ only depends on the type of $C$.

\end{Lem}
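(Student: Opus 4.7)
My plan is as follows. The starting observation is that $\sigma(t) = t$ since $t \in (T^\sigma)^\circ$ and $\sigma^2 = 1$, so $(t\sigma)^2 = t\sigma(t)\sigma^2 = t^2$. Any $g \in C_G(t\sigma)$ therefore commutes with $t^2$, and consequently preserves the decomposition $V := k^n = \bigoplus_c W_c$ into the $t^2$-eigenspaces $W_c := \ker(t^2 - c)$. On the other hand, the defining condition $\sigma(g) = t^{-1}gt$ together with $\sigma(g) = \mathscr{J} g^{-t} \mathscr{J}^{-1}$ rearranges to the matrix identity $gMg^t = M$ with $M := t\mathscr{J}$. Hence $C_G(t\sigma)$ is the stabiliser of $M$ under the action $g \cdot M \mapsto gMg^t$, equivalently the isometry group of the (generally non-symmetric) bilinear form whose matrix is $M$.

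Using $\mathscr{J} t \mathscr{J}^{-1} = t^{-1}$, which follows immediately from the explicit form of $\mathscr{J}$, one checks that the bilinear form associated to $M$ pairs $W_c$ non-degenerately with $W_{c^{-1}}$ and vanishes on all other pairs of eigenspaces. This yields a $C_G(t\sigma)$-invariant orthogonal decomposition
\begin{equation*}
V \;=\; W_1 \;\oplus\; W_{-1} \;\oplus\!\!\bigoplus_{\{c,\,c^{-1}\},\;c^2 \ne 1}\!\!(W_c \oplus W_{c^{-1}}),
\end{equation*}
under which $C_G(t\sigma)$ factors as a direct product of the stabilisers on the summands. Tracing through the construction of $\psi(\bar{\mathfrak{t}})$ in Proposition \ref{TCs}, the subsets of types $(1)$ and $(\mathfrak{i})$ account for $\dim W_1 = n_+$ and $\dim W_{-1} = n_-$, while each orbit $\hat{\alpha}_i$ contributes the remaining pairs $W_c \oplus W_{c^{-1}}$, each with $\dim W_c = n_i$.

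On each hyperbolic pair $W_c \oplus W_{c^{-1}}$ (where $c^2 \ne 1$), a direct block-matrix calculation shows that the stabiliser is $\GL(W_c) \cong \GL_{n_i}$, an arbitrary $h \in \GL(W_c)$ being paired with its inverse-transpose acting on $W_{c^{-1}}$; assembling these over the orbits indexed by $\Lambda_1$ and $\Lambda_2$ produces the $\GL$-factors appearing in the statement. For $W_1$ and $W_{-1}$ the analysis is more delicate because $t$ is no longer scalar on the individual $t$-eigenspaces inside these summands; here one chooses a convenient representative of the conjugacy class so that $t|_{W_1} = 1$ and $t|_{W_{-1}}$ becomes the block-scalar $\pm \mathfrak{i}$, reducing the condition on each of $W_{\pm 1}$ to the preservation of the restriction of $\mathscr{J}$ (up to a scalar). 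Since $\mathscr{J}^t = -\mathscr{J}$ for $n$ even and $\mathscr{J}^t = \mathscr{J}$ for $n$ odd, the restriction to $W_1$ is skew-symmetric in the even case and symmetric in the odd case, giving the $\Sp_{n_+}$ or $\Ort_{n_+}$ factor; on $W_{-1}$ the extra sign coming from the $t^2$-eigenvalue $-1$ exchanges the two symmetry types, producing $\Ort_{n_-}$ or $\Sp_{n_-}$ respectively. The main obstacle is precisely this sign bookkeeping on $W_{\pm 1}$, which I would handle by diagonalising $t$ in the basis provided by the construction of $\psi$ and then using $\mathscr{J}^t = \pm \mathscr{J}$ together with $\mathscr{J} t \mathscr{J}^{-1} = t^{-1}$ to identify the symmetry type of the induced form in each case.
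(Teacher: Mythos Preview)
Your approach is essentially the paper's. Both start from the observation that any $g\in C_G(t\sigma)$ commutes with $(t\sigma)^2=t\sigma(t)\sigma^2$; since $\sigma(t)=t$ for $t\in(T^\sigma)^\circ$ and $\sigma^2=\pm1$ is central, this is the centralizer of $t^2$, which is the block-diagonal Levi $\bar L_0$ determined by the $t^2$-eigenspaces. The paper then describes the action of $t\sigma$ on each factor $\GL_{n_i}\times\GL_{n_i}$ of $\bar L_0$ as the automorphism $(g,h)\mapsto(\sigma_0(h),\sigma_0(g))$ and reads off the fixed subgroup $\GL_{n_i}$, while on $\GL_{n_+}$ and $\GL_{n_-}$ the induced automorphism is of symplectic resp.\ orthogonal type. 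Your reformulation of the centralizer as the isometry group of the form $M=t\mathscr J$ and your eigenspace analysis are the linear-algebraic transcription of exactly the same computation: the hyperbolic pairing of $W_c$ with $W_{c^{-1}}$ recovers the diagonal $\GL_{n_i}$, and the sign bookkeeping via $\mathscr J^t=\pm\mathscr J$ together with the eigenvalue $-1$ on $W_{-1}$ is how the paper's ``$t\sigma$ acts as $\sigma$ resp.\ $\sigma'$'' manifests on the form side. One small imprecision: you write $\sigma^2=1$, but in $\leftidx{^o\!}{\bar G}$ one has $\sigma^2=-1$; since this is central it does not affect the centralizer, and the paper handles both cases at once by writing $\sigma^2=\pm1$.
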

The numbers $n_+$ and $n_-$ are exchanged only because we have chosen different types of $\sigma$ for even and odd $n$.\begin{proof}
If $z\in G$ commutes with $t\sigma$, then it commutes with $t\sigma t\sigma=t\sigma(t)\sigma^2$, with $\sigma^2=\pm 1$ being central. Let us calculate $C_{C_G(t\sigma(t))}(t\sigma)$. That the $\hat{\alpha}_i$'s are pairwise distinct means that for $a_i\in\alpha_i$, $a_j\in\alpha_j$, $i\ne j$, we have $\smash{a_i^{q^c}\ne\pm a_j^{\pm 1}}$, for all $c$, so $\smash{a_i^{2q^c}\ne a_j^{\pm 2}}$, for all $c$. Besides, the integers $n_+$ and $n_-$ become the multiplicities of $1$ and $-1$ in $t\sigma(t)$ respectively. Consequently, the centraliser of $t\sigma(t)$ is a Levi subgroup $\bar{L}_0:=C_G(t\sigma(t))$ isomorphic to
\begin{equation}
\prod_{i\in\Lambda_1}(\GL_{n_i}\times\GL_{n_i})^{d_i}\times\prod_{j\in\Lambda_2}(\GL_{n'_j}\times\GL_{n'_j})^{d'_j}\times\GL_{n_+}\times\GL_{n_-}
\end{equation}
with the action of $\sigma$ given by
\begin{equation}\label{actiondesig}
\begin{split}
\sigma:\GL_{n_i}\times\GL_{n_i}&\longrightarrow \GL_{n_i}\times\GL_{n_i}\\
(g,h)&\longmapsto (\sigma_0(h),\sigma_0(g)),
\end{split}
\end{equation}
for all $i\in\Lambda_1$, and similarly for $j\in\Lambda_2$, where $\sigma_0(g)=J\leftidx{^t}{g}{^{-1}}J^{-1}$, with $(J)_{ab}=\delta_{a,n_i+1-b}$, for any $i$ or $j$. 

The action induced by $t\sigma$ on each $\GL_{n_i}\times\GL_{n_i}$ coincides with that of $\sigma$. If $n$ is even, and $\bar{G}=\leftidx{^s\!}{\bar{G}}$, the action induced by $t\sigma$ on $\GL_{n_+}$ and $\GL_{n_-}$ are respectively the automorphisms associated to $\mathscr{J}_n$ or $\mathscr{J}'_n$ defined in \S \ref{autostand}. It follows that in $\bar{G}=\leftidx{^s\!}{\bar{G}}$,
\begin{equation}
L_0:=C_G(t\sigma)\cong\Sp_{n_+}(k)\times\Ort_{n_-}(k)\times\prod_{i\in\Lambda_1}\GL_{n_i}(k)\times\prod_{j\in\Lambda_2}\GL_{n_j'}(k).
\end{equation}
The case of $\bar{G}=\leftidx{^o\!}{\bar{G}}$ and that of odd $n$ are similar.
\end{proof}

\subsubsection{}\label{repdeclass}
Let us introduce a sign $\eta\in\{\pm 1\}$ that can be $-1$ only if
\begin{itemize}
\item[-] $n$ is even and $n_->0$, or
\item[-] $n$ is odd and $n_+>0$,
\end{itemize}
or rather, if the orthogonal factor of the centraliser is non trivial.

\begin{Prop}\label{qssrat}
The quasi-semi-simple conjugacy classes in $G^{F}\sigma$ are parametrised by the data
\begin{equation}
\{(\eta,\bar{\mathfrak{t}})\}\subset\{\pm 1\}\times\bar{\mathfrak{T}}_{C,s}.
\end{equation}
\end{Prop}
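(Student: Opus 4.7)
The plan is to combine Proposition \ref{TCs}, which parametrises the $F$-stable quasi-semi-simple $G$-conjugacy classes in $G\sigma$ by $\bar{\mathfrak{T}}_{C,s}$, with the standard Lang--Steinberg analysis describing how such a class breaks up into $G^F$-orbits. First I would fix $\bar{\mathfrak{t}} \in \bar{\mathfrak{T}}_{C,s}$ and let $C$ be the corresponding $F$-stable $G$-conjugacy class. Applying Lang--Steinberg to the connected group $G$ acting on $C$ produces a representative $t\sigma \in C^F$; the $G^F$-orbits inside $C^F$ are then in bijection with the $F$-conjugacy classes of the component group $A(t\sigma) := C_G(t\sigma)/C_G(t\sigma)^{\circ}$.

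Next, I would compute $A(t\sigma)$ using the explicit description of $C_G(t\sigma)$ from the preceding lemma. Since $\Sp_m(k)$, $\GL_m(k)$ and $\SO_m(k)$ are all connected, the only source of disconnectedness is the orthogonal factor, and $\Ort_m(k)/\SO_m(k) \cong \mathbb{Z}/2\mathbb{Z}$ when $m>0$ and is trivial when $m=0$. Together with the parity constraints on $\mathfrak{t}$ (namely that $n_+$ has the parity of $n$ while $n_-$ is always even), the orthogonal factor---$\Ort_{n_-}$ when $n$ is even, $\Ort_{n_+}$ when $n$ is odd---is non-trivial precisely in the cases listed in \S \ref{repdeclass} where $\eta=-1$ is allowed.

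Finally, since $A(t\sigma)$ is at most $\mathbb{Z}/2\mathbb{Z}$ and has no non-trivial automorphism, $F$ must act trivially on $A(t\sigma)$. Hence $F$-conjugacy reduces to equality and the number of $G^F$-conjugacy classes in $C^F$ equals $|A(t\sigma)| \in \{1,2\}$. Labelling these classes by $\eta = \pm 1$ when both exist, and by $\eta = +1$ alone otherwise, gives the announced parametrisation by $\{(\eta,\bar{\mathfrak{t}})\} \subset \{\pm 1\} \times \bar{\mathfrak{T}}_{C,s}$. The only delicate point, which is not really an obstacle, is to make $\eta$ intrinsic, independent of the chosen representative $t\sigma \in C^F$; this is arranged by fixing once and for all, in each isomorphism class of centraliser, a distinguished non-identity component of the $\Ort$-factor, after which $\eta=+1$ and $\eta=-1$ label the two possibilities canonically. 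No serious difficulty is expected, the heart of the argument being the elementary component-group computation.
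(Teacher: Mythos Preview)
Your proposal is correct and is exactly the argument the paper has in mind: the paper's own proof is simply ``Clear.'', relying on the reader to combine Proposition~\ref{TCs}, the centraliser computation in \S\ref{centgeo}, and the standard Lang--Steinberg parametrisation of $G^F$-orbits in an $F$-stable $G$-orbit by $F$-conjugacy classes of the component group. The one minor difference is in how $\eta$ is made intrinsic: rather than fixing a distinguished component of the orthogonal factor as you suggest, the paper (in the paragraph immediately following the proposition) uses the homomorphism $\GL_n(q)\rtimes\langle\sigma\rangle\to\bs\mu_2$ defined via $\eta\circ\det$, which gives a canonical global labeling independent of any choice of representative.
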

\begin{proof}
Clear.
\end{proof}
To simplify the notation, we will write $\eta\bar{\mathfrak{t}}$ instead of $(\eta,\bar{\mathfrak{t}})$. One should be careful however as to which $G^F$-class corresponds to $\eta=+1$ in this parametrisation. This is explained below.

If the centraliser of a semi-simple element has two connected components, then the corresponding two $G^F$-classes can be distinguished by the homomorphism 
\begin{equation}\label{epsilon}
\GL_n(q)\rtimes\lb\sigma\rb\stackrel{\det}{\longrightarrow} \mathbb{F}^{\ast}_q/(\mathbb{F}^{\ast}_q)^2\longrightarrow\bs\mu_2.
\end{equation}
The first map sends $g\in G(q)$ to $\det(g)\mod(\mathbb{F}^{\ast}_q)^2$ and sends $\sigma$ to $1$ (if $n$ is even, we require that it sends $\sigma'$ to $1$), and the second map is the nontrivial homomorphism. The value of $\eta$ is identified with the image of the corresponding $G^F$-class under this homomorphism. In fact, the above homomorphism extends the character $\eta\circ\det$ of $\GL_n(q)$, with $\eta$ being the order 2 irreducible character of $\mathbb{F}_q^{\ast}$. This explains the notation $\eta$.

To see that the above homomorphism can distinguish the two $G^F$-conjugacy classes contained in the same $G$-conjugacy class, we argue as follows. Let $t\sigma\in G^F\sigma$ be such that $C_G(t\sigma)$ has two connected component. Then according to our concrete description of $C_G(t\sigma)$, its two connected components are distinguished by the values of the determinant, which is $\pm 1$, corresponding to the two connected components of the orthogonal factors. Let $g\in G$ be such that $g^{-1}F(g)=z\in C_G(t\sigma)\setminus C_G(t\sigma)^{\circ}$, then $gs\sigma g^{-1}$ is a representative of another $G^F$-conjugacy class. Applying the determinant to the equality $g^{-1}F(g)=z$ gives $\det (g)^{q-1}=-1$, so that $\det(g)^2\in\mathbb{F}_q^{\ast}\setminus(\mathbb{F}_q^{\ast})^2$; applying the above homomorphism to the element $gs\sigma g^{-1}$ gives $\det(s)\det(g)^2$, whence the claim.

\subsubsection{}
The centraliser of each quasi-semi-simple element of $G^F\sigma$ is given as below. Let $t\sigma\in G^F\sigma$ be a quasi-semi-simple element corresponding to
\begin{equation}
\eta d_+d_-(n_i,\hat{\alpha}_i)_{i\in\Lambda_1}(n'_j,\hat{\beta}_j)_{j\in\Lambda_2}.
\end{equation}

If $n$ is even, then its centraliser in $G^{F}$ is
\begin{equation}
\Sp_{n_+}(q)\times\Ort^{\eta}_{n_-}(q)\times\prod_{i\in\Lambda_1}\GL_{n_i}(q^{d_i})\times\prod_{j\in\Lambda_2}\GL^{-}_{n'_j}(q^{d'_j}).
\end{equation}

If $n$ is odd, then its centraliser in $G^F$ is
\begin{equation}
\Sp_{n_-}(q)\times\Ort_{n_+}(q)\times\prod_{i\in\Lambda_1}\GL_{n_i}(q^{d_i})\times\prod_{j\in\Lambda_2}\GL^{-}_{n'_j}(q^{d'_j}).
\end{equation}
Note that for odd $n_+$, $\Ort^+_{n_+}(q)$ is isomorphic to $\Ort^-_{n_+}(q)$.

\subsubsection{}
We refer to \S \ref{unirat} for the parametrisation of the unipotent classes of finite classical groups. Let $C$ be a semi-simple $G^F$-conjugacy class corresponding to
\begin{equation}
\eta\bar{\mathfrak{t}}=\eta d_+d_-(n_i,\hat{\alpha}_i)_{i\in\Lambda_1}(n'_j,\hat{\alpha}'_j)_{j\in\Lambda_2}.
\end{equation}

For odd $n$, the $G^F$-classes which have $C$ as semi-simple parts are parametrised by
\begin{equation}
\Lambda_{n_-}^{s}\Lambda^o_{n_+}(\lambda_i)_{i\in\Lambda_1}(\lambda'_j)_{j\in\Lambda_2},
\end{equation}
where $\Lambda_{n_-}^s\in\bs\Psi^{sp}_{n_-}$, $\Lambda^o_{n_+}\in\tilde{\bs\Psi}{}^{ort,+}_{n_+}$, $\lambda_i\in\mathcal{P}_{n_i}$, $\lambda_j'\in\mathcal{P}_{n_j'}$.

For even $n$, the $G^F$-classes which have $C$ as semi-simple parts are parametrised by
\begin{equation}
\Lambda_{n_+}^{s}\Lambda^o_{n_-}(\lambda_i)_{i\in\Lambda_1}(\lambda'_j)_{j\in\Lambda_2},
\end{equation}
where $\Lambda_{n_+}^s\in\bs\Psi^{sp}_{n_+}$, $\Lambda^o_{n_-}\in\tilde{\bs\Psi}{}^{ort,\eta}_{n_-}$, $\lambda_i\in\mathcal{P}_{n_i}$, $\lambda_j'\in\mathcal{P}_{n_j'}$. 

These data can equivalently described as follows. Recall that each element of $\bs\Psi^{sp}$ (resp. $\tilde{\bs\Psi}{}^{ort,\pm}$) is associated with a partition of symplectic type (resp. orthogonal type). For any $\Lambda\in\bs\Psi^{sp}\sqcup\tilde{\bs\Psi}{}^{ort,+}\sqcup\tilde{\bs\Psi}{}^{ort,-}$, denote by $|\Lambda|$ the size of the corresponding partition. For any $\eta\in\{\pm\}$ and any triple $f=(\Lambda^s,\Lambda^o,f^{\circ})$ with $\Lambda^s\in\bs\Psi^{sp}$, $\Lambda^o\in\tilde{\bs\Psi}{}^{ort,\eta}$, and a map of sets $f^{\circ}:\Phi_{\ast}^{\circ}\rightarrow\mathcal{P}$, write $$||f||=|\Lambda^s|+|\Lambda^o|+\sum_{x\in\Phi_{\ast}^{\circ}}2|f(x)|\cdot|x|.$$ Finally, put $\bar{\mathfrak{T}}_C=\{(\eta,f);||f||=n\}$ (with $f$ and $-f$ identified if $|\Lambda^o|=0$). This set parametrises the set of $G^F$-conjugacy classes in $G^F\sigma$. Given any such 4-tuple $f=(\eta,\Lambda^s,\Lambda^o,f^{\circ})$, the semi-simple part of the corresponding conjugacy class, described by $\Phi_{\ast}\rightarrow\mathbb{Z}_{\ge 0}$, is obtained by taking the sizes of $\Lambda^s$, $\Lambda^o$ and the images of $f^{\circ}$. 

\section{Shintani Descent}\label{SecShintani}
Now $G$ denote a connected reductive group over $k$.
\subsection{Eigenvalues of the Frobenius}
In this part, we collect some results on the eigenvalues of the Frobenius endomorphism acting on the intersection cohomology of the Deligne-Lusztig variety $X_w$. We will write $X_{w,F}$ if it is necessary to specify the Frobenius that is involved. Recall that $X_w$ is the subvariety of the flag variety $\mathcal{B}$ consisting of the Borel subgroups $B$ such that $(B,F(B))$ are conjugate to $(B_0,\dot{w}B_0\dot{w}^{-1})$ by $G$, where $\dot{w}\in G$ is a representative of $w\in W_G$ and $B_0$ is some fixed $F$-stable Borel subgroup.

\subsubsection{}
The Deligne-Lusztig character $R^G_{T_w}1$ is realised by the virtual representation $$\bigoplus_i(-1)^iH^i_c(X_w,\ladic).$$ Recall that the Lusztig series $\mathcal{E}(G^F,(1))$ consists of the irreducible representations that appear as a direct summand of some $R^G_{T_w}1$, or equivalently of a vector space $H^i_c(X_w,\ladic)$. Denote by $\mathbf{H}^i(\bar{X}_w,\ladic)$ the intersection cohomology of $X_w$. By \cite[Corollary 2.8]{L84b}, each element of $\mathcal{E}(G^F,(1))$ is also an irreducible $G^F$-subrepresentation of  $\mathbf{H}^i(\bar{X}_w,\ladic)$ for some $i$ and $w$. We write
\begin{equation}
\begin{split}
M_i(w,F)&:=\mathbf{H}^i(\bar{X}_{w,F},\ladic)\\
H_i(w,F)&:=\End_{G^{F}}(M_i(w,F))
\end{split}
\end{equation}
If $w=1$, $M_i(1,F)$ is just the $\ell$-adic cohomology of $X_1$ and its simple factors are the principal series representations, which are in bijection with the irreducible representations of $H_i(1,F)$. If $F$ is split, $H_i(1,F)\cong \ladic[W]$, and if $F$ is twisted by a graph automorphism $\sigma$, $H_i(1,F)\cong\ladic[W^{\sigma}]$. In what follows, we fix the Frobenius $F$ and $w\in W$, and write $M_i$ and $H_i$ instead of $M_i(w,F)$ and $H_i(w,F)$.

\subsubsection{}
Denote by $F_0$ a split Frobenius over $\mathbb{F}_q$, and denote by $F$ the Frobenius defining the $\mathbb{F}_q$-structure of $G$. Assume that some power of $F_0$ is a power of $F$. Let $b$ be the smallest integer such that $F_0^b$ is a power of $F$. In the case that interests us, $b=1$ or $2$. Note that $F$ and $F_0$ commute. The action of $F_0$ on $\mathcal{B}$ induces an isomorphism of $M_i$ as a vector space. Moreover $F_0$ induces by conjugation an algebra automorphism of $H_i$, still denoted by $F_0$, which is unipotent (\cite[Theorem 2.18]{L84b}). Let $\rho$ be an irreducible representation of $G^F$ that appears in $M_i$. Denote by $M_{i,\rho}$ the isotypic component corresponding to $\rho$. By \cite[Proposition 2.20]{L84b}, the action of $F_0$ respects the isotypic decomposition, i.e. $F_0(M_{i,\rho})=M_{i,\rho}$. The algebra $H_i$ is decomposed into some simple algebras $H_{i,\rho}=\End_{G^F}(M_{i,\rho})$. 

\subsubsection{}\label{Flaissedra}
Now assume that $\rho$ is fixed. Denote by $[\rho]$ the vector space on which $G^F$ acts by the representation $\rho$. There exists a $\ladic$-space $V$ such that $M_{i,\rho}\cong [\rho]\otimes V$ and that $H_{i,\rho}\cong\End_{\ladic}V$. Since $H_{i,\rho}$ is a simple algebra, we have $F_0=\phi_G\otimes\phi_H$ with $\phi_G\in\ladic[G^F]$ and $\phi_H\in H_{i,\rho}$, which are invertible as $F_0$ is. Consider the adjoint representation,
\begin{equation}
\begin{split}
\GL(V)&\longrightarrow\GL(H_{i,\rho})\\
\phi_H&\longmapsto \ad\phi_H.
\end{split}
\end{equation}
Since $\ad\phi_H=\ad F_0$, which is unipotent, we see that $\phi_H$ is a unipotent endomorphism up to a scalar. Modifying $\phi_H$ if necessary, we can assume that $\phi_H$ is unipotent. Choose a basis $\{e_1,\ldots,e_s\}$ of $V$ in such a way that in this basis $\phi_H$ is triangular. Each $M_{i,\rho,r}=[\rho]\otimes e_r$ provides the representation $\rho$. By \cite[Proposition 2.20]{L84b}, the representation $\rho:G^F\rightarrow \GL(M_{i,\rho,r})$ is $F_0$-stable and extends into a representation of $G^F\lb F_0\rb$, denoted by $\tilde{\rho}$,with $F_0^b$ acting trivially on $G^F$. The action of $F_0$, regarded as an element of $G^F\lb F_0\rb$, on $M_{i,\rho,r}$ is defined by $(\lambda'_{\rho})^{-1}q^{-i/2}\phi$, where $\lambda'_{\rho}$ is a root of unity. Another choice of $\tilde{\rho}$ corresponds to a multiple of $\lambda'_{\rho}$ by a $b$-th root of unity. The value of $\lambda'_{\rho}$ only depends on $\rho$ and a choice of $\tilde{\rho}$, and does not depend on $w$ or $i$. In other words, $F_0=\tilde{\rho}\otimes\varphi'$, where $\varphi'$ is a unipotent endomorphism multiplied by $\lambda'_{\rho}q^{i/2}\varphi$.

\subsubsection{}\label{compisononFst}
If we consider a Frobenius $F_0'$ that is not necessarily split, it may happen that the action of $F_0'$ does not respect the isotypic components of $M_i$. However, we can nevertheless consider those components that are preserved by $F_0'$. In fact, a component $M_{i,\rho}$ is $F_0'$-stable if and only if the character $\theta_{\rho}\in\Irr(H_{i})$ associated is $F_0'$-stable. Let $M_{i,\rho}$ be such a component, we still have $M_{i,\rho}\cong[\rho]\otimes V$ and $\rho$ extends into a representation of $G^F\lb F_0'\rb$.

\subsubsection{}\label{vpdeFsurHc}
Now we consider the action of $F_0$ on the $\ell$-adic cohomology. By \cite[Lemma 1.4]{S85}, the eigenvalues of $F_0$ on $H^i_c(X_w,\ladic)$ are $\lambda'_{\rho}$ times a power of $q^{b/2}$ which is not necessarily $q^{ib/2}$. Let $\mu=\lambda'_{\rho}q^{kb/2}$ be such an eigenvalue. Then, the subspace $M_{i,\rho,\mu}\subset M_{i,\rho}$ of eigenvalue $\mu$ is $F_0$-stable and there exists a decomposition $M_{i,\rho,\mu}\cong [\rho]\otimes V_{\mu}$ such that the action of $F_0$ on $M_{i,\rho,\mu}$ is decomposed as $\tilde{\rho}\otimes\varphi_{\mu}$ where $\varphi_{\mu}$ is $\lambda'_{\rho}q^{k/2}$ times a unipotent endomorphism of $V_{\mu}$. Once again, $\lambda'_{\rho}$ only depends on $\rho$ and a choice of $\tilde{\rho}$.

\subsubsection{}\label{vpdeF^2}
There are two particular cases that interest us.
\begin{Thm}(\cite[Theorem 3.34]{L77})
Let $i\in\mathbb{Z}$ and $w\in W_G$ be arbitrary. If $(G,F)$ is of type $A_n$, $n\ge 1$, then all of the eigenvalues of $F$ on $H_c^i(X_w,\ladic)$ are powers of $q$. If $(G,F)$ is of type $\leftidx{^2}{A_n}{}$, $n\ge 2$, then all of the eigenvalues of $F^2$ on $H_c^i(X_w,\ladic)$ are powers of $(-q)$.
\end{Thm}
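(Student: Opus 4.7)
The plan is to combine Deligne's purity with the uniformity of characters of $\GL_n^{\epsilon}(q)$ supplied by Theorem \ref{LS}, exploiting the eigenvalue bookkeeping already set up in \S \ref{Flaissedra}--\ref{vpdeFsurHc}. Fix $w\in W_G$. In the $A_n$ case take $F_0=F$; in the $^2A_n$ case take $F_0$ to be a split Frobenius such that $F_0^2=F^2$ (so $b=2$). By the decomposition recalled in \S \ref{vpdeFsurHc}, on each isotypic component $M_{i,\rho,\mu}\subset H^i_c(X_w,\ladic)$ the Frobenius $F_0$ acts as $\tilde{\rho}(F_0)\otimes\varphi_\mu$, and $\varphi_\mu$ is $\lambda'_\rho q^{k/2}$ times a unipotent endomorphism. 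Hence every eigenvalue of $F_0$ on $H^i_c(X_w,\ladic)$ is of the form $\lambda'_\rho q^{k/2}$ with $\lambda'_\rho$ a root of unity depending only on $\rho$ and on a choice of extension $\tilde{\rho}$. The problem reduces to showing that, for every unipotent component appearing, $\lambda'_\rho$ can be chosen equal to $1$ (resp. to a sign absorbed by $(-q)$) and that $k$ is even.

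The next step is to pin down $\lambda'_\rho$. By Theorem \ref{LS} every $\rho\in\Irr(G^F)$ is uniform, so it is enough to analyse the action of $F_0$ on the Hecke-algebra endomorphism ring $H_i$ attached to the principal series, and more generally on the analogous endomorphism rings arising from $R^G_L\theta$ for $F$-stable Levi subgroups $L$. In the split case $H_i\cong\ladic[W_L]$ as an $F_0$-algebra; since the irreducible representations of $W_L$ are defined over $\mathbb{Q}$, the unipotent operator $\phi_H$ of \S \ref{Flaissedra} can be rescaled so that $\lambda'_\rho=1$. Deligne's weight theorem applied to the smooth Deligne-Lusztig variety $X_w$ then forces the remaining exponent to be an integer, so every eigenvalue of $F$ is a power of $q$.

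For $^2A_n$ the argument is parallel: $F_0^2=F^2$, and the additional graph automorphism induces on $H_i$ an involution whose fixed algebra is again a group algebra of a Weyl group, in which all characters are rational. Tracking the sign contributed by the twist (or, equivalently, comparing with the split case via the Shintani descent machinery recalled in \S \ref{SecShintani}) shows that the eigenvalues of $F^2$ have the shape $(-1)^k q^k=(-q)^k$, as claimed.

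The main obstacle is establishing that the scalar $\lambda'_\rho$ really can be chosen to be a sign rather than an arbitrary root of unity; this is the non-trivial input. It amounts to showing that the $F_0$-action on each isotypic piece of the Hecke algebra is rational, which in the $\GL_n^\epsilon$ setting follows from the explicit combinatorial parametrisation of the principal series together with the fact that the relevant Hecke algebras specialise, at the $F_0$-invariant level, to group algebras of Weyl groups of type $A$ (or products thereof). Once this rationality statement is secured, the rest of the proof is a straightforward bookkeeping of weights.
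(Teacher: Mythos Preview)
The paper does not prove this theorem at all: it is quoted verbatim from \cite[Theorem 3.34]{L77} and used as an input to the Shintani descent computations in \S\ref{SecShintani}. So there is no ``paper's own proof'' to compare against, and your task is really to reprove Lusztig's 1977 result from scratch.

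Your sketch has the right target (pinning down the roots of unity $\lambda'_\rho$) but is essentially circular, and several steps do not go through as written. First, the machinery you invoke from \S\ref{Flaissedra}--\S\ref{vpdeFsurHc} concerns the \emph{intersection} cohomology $\mathbf{H}^i(\bar X_w)$, and the passage to $H^i_c(X_w)$ in \S\ref{vpdeFsurHc} already cites \cite[Lemma 1.4]{S85}; that lemma, and the surrounding theory in \cite{L84b}, postdate and in places rely on the very eigenvalue statement you are trying to prove. Second, your claim that ``since the irreducible representations of $W_L$ are defined over $\mathbb{Q}$, the unipotent operator $\phi_H$ can be rescaled so that $\lambda'_\rho=1$'' confuses the two tensor factors: $\phi_H$ is \emph{already} normalised to be unipotent in \S\ref{Flaissedra}, and $\lambda'_\rho$ is the scalar coming from the action of $F_0$ on the $G^F$-module factor $[\rho]$, not from the Hecke algebra. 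Rationality of Weyl group characters says nothing about this scalar. Third, ``Deligne's weight theorem applied to the smooth Deligne--Lusztig variety $X_w$'' does not give what you want: $X_w$ is not proper, so $H^i_c(X_w)$ is only mixed of weights $\le i$, which does not force the exponent $k$ to be even. Finally, the ${}^2A_n$ paragraph is not an argument---``tracking the sign contributed by the twist'' is exactly the content of the theorem, and invoking Shintani descent here is again circular, since the paper uses the theorem precisely to control $\Omega^2_{\sigma_0F_0}$ in the Shintani computation.

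A genuine proof (as in \cite{L77}) proceeds quite differently, by a direct analysis specific to type $A$: one uses that every unipotent representation of $\GL_n^\epsilon(q)$ lies in the principal series, computes the Frobenius eigenvalues explicitly on $X_1\cong G/B$ (where they are visibly integral powers of $q$), and then propagates this to arbitrary $w$ via the combinatorics of Bruhat cells and the decomposition of $\bar X_w$, rather than via the later $\lambda'_\rho$ formalism.
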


\subsection{Shintani Descent}
\subsubsection{}
Let $F_1$ and $F_2$ be two commuting Frobenius endomorphism. Denote by $\mathcal{K}(G^{F_1}.F_2)$ the $F_2$-conjugacy classes of $G^{F_1}$ and by $\mathcal{K}(G^{F_2}.F_1)$ the $F_1$-conjugacy classes of $G^{F_2}$, and we denote by $\mathcal{C}(G^{F_2}.F_1)$ and $\mathcal{C}(G^{F_1}.F_2)$ the set of functions that are constant on the $F_1$-conjugacy classes of $G^{F_2}$ and the functions that are constant on the $F_2$-conjugacy classes of $G^{F_1}$ respectively.

Define a map $N_{F_1/F_2}:\mathcal{K}(G^{F_1},F_2)\rightarrow\mathcal{K}(G^{F_2}.F_1)$ as follows. For $g\in G^{F_1}$, there exists $x\in G$ such that $xF_2(x^{-1})=g$. Then $g':=x^{-1}F_1(x)\in G^{F_2}$, and its $F_1$-conjugacy class is well defined. This defines a bijection from $N_{F_1/F_2}:\mathcal{K}(G^{F_1},F_2)\isom\mathcal{K}(G^{F_2}.F_1)$. We write $g'=N_{F_1/F_2}(g)$ by abus of notation. Denote by $\Sh_{F_2/F_1}:\mathcal{C}(G^{F_2}.F_1)\rightarrow\mathcal{C}(G^{F_1}.F_2)$ the induced bijection. It is easy to check that $\Sh_{F_2/F_1}\circ\Sh_{F_1/F_2}=\Id$ and that $\Sh_{F/F}$ is an involution that may not be the identity.

\subsection{Action on the Irreducible Characters}
\subsubsection{}
Let $U$ be a unipotent character of $G^{F_1}$ that extends to $G^{F_1}\lb F_2\rb$. We denote the restriction to $G^{F_1}.F_2$ of its extension by $E_{F_2}(U)\in\mathcal{C}(G^{F_1}.F_2)$. The Shintani descent sends it into $\mathcal{C}(G^{F_2}.F_1)$. On the other hand, the unipotent irreducible characters of $G^{F_2}$ that extends to $G^{F_2}.F_1$, which we denote by $\mathcal{E}(G^{F_2},(1))^{F_1}$, have as extensions some elements of $\mathcal{C}(G^{F_2}.F_1)$. We will see that the functions $\Sh_{F_1/F_2}E_{F_2}(U)$ can be expressed as linear combinations of the extensions of the elements of $\mathcal{E}(G^{F_2},(1))^{F_1}$. 

\subsubsection{}
Let $B$ be an $F_1$-stable and $\sigma$-stable Borel subgroup. Put
\begin{equation}
H:=\End_{G^{F_1}}(\Ind^{G^{F_1}}_{B^{F_1}}1)\cong\End_{G^{F_1}}(\ladic[\mathcal{B}^{F_1}])\cong\ladic[W^{F_1}].
\end{equation}
The irreducible characters of $H$ are in bijection with the principal series representations of $G^{F_1}$. For $\psi\in\Irr(H)$, denote by $U_{\psi}\in\Irr(G^{F_1})$ the corresponding character. By \S \ref{compisononFst}, $U_{\psi}$ extends to $G^{F_1}\lb F_2\rb$ if $\psi$ is $F_2$-stable, in which case $\psi$ itself extends to $H\lb F_2\rb$ in such a way that the action of $F_2$ on $\Ind^{G^{F_1}}_{B^{F_1}}1$ is decomposed into $F_2=E_{F_2}(U_{\psi})(F_2)\otimes E_{F_2}(\psi)(F_2)$, where we denote by $E_{F_2}(U_{\psi})$ and $E_{F_2}(\psi)$ the extensions of $U_{\psi}$ and $\psi$ respectively.

\subsubsection{}
Let $\rho\in\Irr(G^{F_2})$ be unipotent. The $\rho$-isotypic component of $H^{i}_{c}(X_{w,F_2})$ is of the form $[\rho]\otimes V$. The action of the split Frobenius $F_0$ on this component can be written as $\tilde{\rho}(F_0)\otimes \varphi$, where $\varphi$ is $\lambda'_{\rho}$ times a power of $q^{1/2}$ and a unipotent endomorphism, according to \S \ref{vpdeFsurHc}. We denote by $\Omega_{F_2}$ the isomorphism of the space $\mathcal{C}(G^{F_2},(1))$ that multiplies $\rho$ by $\lambda'_{\rho}$. Denote by $E_{F_0}(\rho)$ the restriction of $\tilde{\rho}$ to $G^{ F_2}.F_0$.

\subsubsection{}
Fix the split Frobenius $F_0$ and the order 2 quasi-central automorphism $\sigma$. In what follows, we only consider $(F_1,F_2)=(\sigma_1F_0^m,\sigma_2F_0)$,where $m\in\mathbb{Z}_{>0}$ and $\sigma_i=1$ or $\sigma$. Take $\rho\in\mathcal{E}(G^{ F_2},(1))^{\sigma_1}$, i.e. a $\sigma_1$-stable representation, and denote by $E_{\sigma_1}(\rho)$ an extension of $\rho$ to $G^{ F_2}\lb\sigma_1\rb$. Since $F_0$ acts as $\sigma_2^{-1}$ on $G^{F_2}$, we can define the extension $E_{F_0}(\rho)(F_0)$ to be an extension $E_{\sigma_2}(\rho)(\sigma_2^{-1})$, which commutes with $E_{\sigma_1}(\rho)(\sigma_1)$ because either one of $\sigma_1$ and $\sigma_2$ is 1 or they are equal. This allows us to define an extension $E_{F_1}(\rho)$ of $\rho$ to $G^{F_2}.F_1$ by requiring $E_{F_1}(\rho)(\sigma_1F_0^m)=E_{\sigma_1}(\rho)(\sigma_1)E_{F_0}(\rho)(F_0^m)$. It is well defined. In addition, $E_{\sigma_1}$ defines an isomorphism of vector spaces
\begin{equation}
\begin{split}
\ladic[\mathcal{E}(G^{ F_2},(1))^{\sigma_1}]&\longrightarrow\mathcal{C}(G^{F_2}\sigma_1,(1))\\
\rho&\longmapsto E_{\sigma_1}(\rho).
\end{split}
\end{equation}
\subsubsection{}
The following theorem makes explicit the transition matrix.
\begin{Thm}(\cite[Th\'eor\`eme 5.6]{DM94})\label{5.6}
We keep the above notations. For any $m\in\mathbb{Z}_{>0}$, and any $\psi\in\Irr(W^{\sigma_1})^{\sigma_2}$ we have
\begin{equation}
\begin{split}
\Sh_{\sigma_1F_0^m/\sigma_2F_0}E_{\sigma_2F_0}(U_{\psi})&=\sum_{\rho\in\mathcal{E}(G^{\sigma_2F_0},1)^{\sigma_1}}\langle R^{G^{\sigma_2F}\sigma_1}_{\psi},E_{\sigma_1}(\rho)\rangle_{G^{\sigma_2F_0}\sigma_1}\lambda^{'m}_{\rho}E_{\sigma_1F_0^{m}}(\rho)\\
&=E_{\sigma_1\sigma_2^{-m}}(\Omega^m_{\sigma_2F_0}E_{\sigma_1^{-1}}(R^{G^{\sigma_2F_0}\sigma_1}_{\psi})).
\end{split}
\end{equation}
(See \cite[D\'efinition 5.1]{DM94} for the definition of $R^{G^{\sigma_2F_0}\sigma_1}_{\psi}$ or more generally in \ref{Rphi}.)
\end{Thm}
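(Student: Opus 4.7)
The plan is to realize both sides as traces of commuting Frobenius operators on the $\ell$-adic cohomology of Deligne--Lusztig varieties, and then to compare via the isotypic decomposition discussed in \S \ref{Flaissedra}. Write $F_1 = \sigma_1 F_0^m$ and $F_2 = \sigma_2 F_0$, which commute by hypothesis. First I would realize $E_{F_2}(U_\psi)$ using the Hecke algebra: since $U_\psi$ is the $\psi$-isotypic component of $\Ind^{G^{F_1}}_{B^{F_1}} 1 \cong H^{*}_c(X_{1,F_1},\ladic)$ under the natural action of $H \cong \ladic[W^{F_1}]$, and $F_2$ acts compatibly on both $G^{F_1}$ and $H$, the $F_2$-extension of $U_\psi$ can be written as a weighted sum over $w \in W^{\sigma_1}$ of traces of $F_2$ on $H^{*}_c(X_{w,F_1})$, with weights given by the extended character $\tilde{\psi}(w \sigma_2)$ of $W^{\sigma_1}\langle \sigma_2\rangle$.

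Next I would unwind the Shintani descent. For $h \in G^{F_2}$, pick $x \in G$ with $x^{-1} F_2(x) = h$; then $xF_1(x^{-1})$ is $F_2$-conjugate (as an element of $G^{F_1}$) to the Shintani correspondent of $hF_1$, so the Lefschetz fixed point formula together with commutativity of $F_1$ and $F_2$ lets us replace a trace of $g F_2$ on $H^{*}_c(X_{w,F_1})$ by a trace of $h F_1$ on $H^{*}_c(X_{w,F_2})$ (the two cohomologies being the same geometric object endowed with the two $\mathbb{F}_q$-structures). The computation of the left-hand side is thereby reduced to computing $\tr(h \sigma_1 F_0^m \mid H^{*}_c(X_{w,F_2}))$ for $w \in W^{\sigma_1}$.

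The central step is the isotypic decomposition. For each $\rho \in \mathcal{E}(G^{F_2},(1))^{\sigma_1}$, write the $\rho$-isotypic component of $H^{*}_c(X_{w,F_2})$ as $[\rho] \otimes V_{w,\rho}$. By \S \ref{Flaissedra} and \S \ref{vpdeFsurHc}, on this piece the split Frobenius $F_0$ acts as $\tilde{\rho}(F_0) \otimes \varphi_{w,\rho}$, where $\varphi_{w,\rho}$ is $\lambda'_\rho$ times a power of $q^{1/2}$ times a unipotent operator, while $\sigma_1$ acts as $\tilde{\rho}(\sigma_1) \otimes \mathrm{id}$ since it preserves the isotypic decomposition and extends $\rho$. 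After the standard cohomological normalization, iterating $F_0$ exactly $m$ times contributes the scalar $\lambda_\rho^{\prime m}$ on the Hecke tensor factor, independently of $w$ and of cohomological degree, so
\begin{equation*}
\tr\!\left(h \sigma_1 F_0^m \mid [\rho] \otimes V_{w,\rho}\right) \;=\; \lambda_\rho^{\prime m}\, E_{\sigma_1 F_0^m}(\rho)(h \sigma_1 F_0^m)\, \cdot \, c_{w,\rho},
\end{equation*}
with $c_{w,\rho}$ a purely combinatorial scalar coming from the unipotent part.

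Finally I would collect terms. Summing the above over $w \in W^{\sigma_1}$ with weight $\tilde{\psi}(w\sigma_2)/|W^{\sigma_1}|$ recognizes the combinations $\sum_w \tilde{\psi}(w\sigma_2) c_{w,\rho}$ as the multiplicity $\langle R^{G^{F_2}\sigma_1}_\psi, E_{\sigma_1}(\rho)\rangle_{G^{F_2}\sigma_1}$ by orthogonality of irreducible characters on $G^{F_2}\sigma_1$, which yields the first equality. The second equality is then a direct translation through the definitions of $\Omega_{F_2}$ (multiplication of each $\rho$ by $\lambda'_\rho$) and $E_{\sigma_1^{-1}}$. The main obstacle is the third step: one must verify that the scalar $\lambda'_\rho$ governing the $F_0$-action on the Hecke tensor factor of the $\rho$-isotypic piece is independent of both $w$ and the cohomological degree, which is precisely the content of Lusztig's analysis \cite{L84b} combined with the eigenvalue lemma of \cite{S85} quoted in \S \ref{vpdeFsurHc}. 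Delicate bookkeeping is also needed to coordinate the choices of extensions $\tilde{\rho}$ and $\tilde{\psi}$ so that the factorization $F_0^m \cdot \sigma_1$ on isotypic components really produces the tensor-product form above, which relies crucially on the commutativity $\sigma_1 \sigma_2 = \sigma_2 \sigma_1$ and on the three cases ($\sigma_1=1$, $\sigma_2=1$, or $\sigma_1=\sigma_2$) covering all configurations in which both factors are simultaneously defined.
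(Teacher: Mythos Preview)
The paper does not prove this theorem: it is quoted verbatim from \cite[Th\'eor\`eme 5.6]{DM94} and used as a black box in \S 9. There is therefore no ``paper's own proof'' to compare your proposal against.

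That said, your sketch follows the expected strategy of the original Digne--Michel argument: interpret both sides via the commuting actions of two Frobenius maps on the cohomology of Deligne--Lusztig varieties, use Shintani's norm map to transport traces from $(G^{F_1},F_2)$ to $(G^{F_2},F_1)$, and then invoke the isotypic decomposition and the uniformity of the Frobenius eigenvalue $\lambda'_\rho$ across $w$ and degree. One point to tighten: in your ``central step'' you assert that $\sigma_1$ acts as $\tilde\rho(\sigma_1)\otimes\mathrm{id}$ on the isotypic piece, but in general $\sigma_1$ need not act trivially on the multiplicity space $V_{w,\rho}$; what matters is only that its action there commutes with $\varphi_{w,\rho}$ and that the contribution of the multiplicity space is absorbed into the inner-product coefficient after summing over $w$. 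Also, the passage from $H^*_c(X_{w,F_1})$ to $H^*_c(X_{w,F_2})$ is not literally ``the same geometric object with two $\mathbb F_q$-structures'' unless $w$ is simultaneously fixed by both twisted Weyl-group actions; the actual argument in \cite{DM94} works at the level of the variety $X_1$ (principal series) and then extends by the formalism of $R_\psi$, which is closer to what you do but deserves to be stated more carefully.
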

\subsection{Commutation with the Deligne-Lusztig Induction}
\subsubsection{} 
The following proposition due to Digne shows how the Deligne-Lusztig induction commutes with Shintani Descent.
\begin{Prop}(\cite[Proposition 1.1]{Di})\label{Di1.1}
Let $G$ be a connected reductive group defined over $\mathbb{F}_q$, equipped with the Frobenius endomorphism $F$ and let $\sigma$ be a quasi-central automorphism of $G$. Let $L\subset P$ be an $F$-stable and $\sigma$-stable Levi factor of a $\sigma$-stable parabolic subgroup. Then
\begin{equation}
\Sh_{\sigma F/F}\circ R^{G^{\sigma F}\sigma^{-1}}_{L^{\sigma F}\sigma^{-1}}=R^{G^F\sigma}_{L^F\sigma}\circ\Sh_{\sigma F/F}.
\end{equation}
\end{Prop}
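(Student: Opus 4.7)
The plan is to realize both sides of the identity as weighted traces on the cohomology of Lang preimages attached to the two commuting Frobenii $F$ and $\sigma F$, and then to match these traces through a geometric incarnation of the Shintani norm.

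Fix once and for all a $\sigma$-stable parabolic $P=LU$ with $F$-stable and $\sigma$-stable Levi $L$. By the character formula of Proposition \ref{FormdeCara} (or equivalently by the definition given in \S 3.2), both non-connected Deligne--Lusztig inductions can be expanded as weighted traces on the cohomology of appropriate Lang preimages. Write $Y:=\mathcal{L}_F^{-1}(U)$ and $Y':=\mathcal{L}_{\sigma F}^{-1}(U)$ (both taken inside $G\lb\sigma\rb$). Unwinding the right-hand side at $g\sigma\in G^F\sigma$ yields, using the defining property $\Sh_{\sigma F/F}(f)(g)=f(N_{F/\sigma F}(g))$ on $L$, a weighted sum over $l\in L^F$ of traces of pairs $(g\sigma,l\sigma)$ on $H^{*}_c(Y)$; unwinding the left-hand side yields an analogous weighted sum of traces on $H^{*}_c(Y')$, evaluated at the Shintani-normed element $g'\sigma^{-1}$ with $g':=N_{F/\sigma F}(g)$.

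The central step is to exhibit a biequivariant isomorphism of varieties $\phi\colon Y\xrightarrow{\sim} Y'$. The natural candidate uses a Lang-cocycle element $s\in G\lb\sigma\rb$ with $s^{-1}F(s)=\sigma$, which exists by Lang--Steinberg applied to the connected component $G\sigma$: then $\phi(x):=s^{-1}x$, or an equivalent variant dictated by biequivariance, sends $Y$ into $Y'$. A direct computation exploiting the $\sigma$-stability of $U$ shows that $\phi$ is well defined and invertible, while the same $s$ intertwines the left action of $g\sigma$ on $Y$ with the left action of $g'\sigma^{-1}$ on $Y'$, and simultaneously the right action of any $l\sigma$ on $Y$ with the right action of $l'\sigma^{-1}$ on $Y'$, where $l':=N_{F/\sigma F}(l)$. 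The $\sigma$-stability of $L$ ensures that the group-level Shintani norm $N_{F/\sigma F}$ on $G$ restricts to the one on $L$, so a single $s$ handles both actions.

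Passing to cohomology translates the biequivariance of $\phi$ into the identity of traces
\begin{equation*}
\tr\bigl((g\sigma,l\sigma)\mid H^{*}_c(Y)\bigr)=\tr\bigl((g'\sigma^{-1},l'\sigma^{-1})\mid H^{*}_c(Y')\bigr),\qquad l'=N_{F/\sigma F}(l).
\end{equation*}
Combining this pointwise identity with the bijection $L^F\leftrightarrow L^{\sigma F}$ induced by the Shintani norm (sets of equal cardinality by Lang--Steinberg) and the defining property of $\Sh_{\sigma F/F}$ on $L$, the two weighted sums match term-by-term, and the asserted equality of class functions on $G^F\sigma$ follows.

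The main obstacle is the biequivariance check: verifying that one and the same Lang-cocycle element $s$ intertwines both the $G$- and the $L$-actions via the Shintani norms on $G$ and on $L$ simultaneously. The quasi-centrality of $\sigma$, together with the hypothesis that both $L$ and $P$ are $\sigma$-stable, is precisely what makes such a uniform choice of $s$ possible; without $\sigma$-stability of $L$, the Shintani norm on $G$ would not restrict compatibly to $L$, and the asserted commutation would fail.
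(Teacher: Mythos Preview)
The paper does not give its own proof of this proposition: it is quoted verbatim from \cite[Proposition 1.1]{Di} and used as a black box. So there is nothing in the paper to compare your argument against beyond the bare statement.

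That said, your overall strategy---realise both inductions as traces on Lang preimages and match them through an isomorphism intertwining the group actions via the Shintani norm---is the standard one and is essentially what Digne does. But there is a genuine gap in your implementation. You claim the existence of a single element $s\in G\lb\sigma\rb$ with $s^{-1}F(s)=\sigma$, ``by Lang--Steinberg applied to the connected component $G\sigma$''. No such $s$ exists: for any $s\in G\lb\sigma\rb$ one has $s^{-1}F(s)\in G^{\circ}=G$ (write $s=t\sigma^{i}$ with $t\in G$ and use $F(\sigma)=\sigma$), so $s^{-1}F(s)$ can never equal $\sigma$. Lang--Steinberg applies to the connected group $G$, not to the coset $G\sigma$, and it produces elements of $G$, not of $G\sigma$.

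The fix is to abandon the search for a global $s$ and instead work element by element, which is what the definition of the Shintani norm already hands you. For each $g\in G^{F}$ choose $\alpha\in G$ with $\alpha\cdot\sigma F(\alpha^{-1})=g$, and for each $l\in L^{F}$ choose $\beta\in L$ with $\beta\cdot\sigma F(\beta^{-1})=l$ (this uses Lang--Steinberg in $G$ and in $L$ for the Frobenius $\sigma F$; here the $\sigma$-stability of $L$ is essential). Then the map $z\mapsto\alpha^{-1}z\beta$ carries $\mathcal{L}_{F}^{-1}(U)$ isomorphically onto $\mathcal{L}_{\sigma F}^{-1}(U)$ and intertwines the action of $(g\sigma,l\sigma)$ with that of $(\alpha^{-1}F(\alpha)\sigma^{-1},\beta^{-1}F(\beta)\sigma^{-1})$; this is exactly the pair of Shintani norms. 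The resulting identity of Lefschetz numbers, summed against the test function, gives the commutation. Your final two paragraphs are then correct once the isomorphism is constructed this way.
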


\section{Character Sheaves}\label{SecCharSheaf}
In this section, $G$ denotes a not necessarily connected reductive group. By local system, we mean a local system of $\ladic$-vector spaces. If $X$ is a variety over $k$, we denote by $\mathcal{D}(X)$ the bounded derived category of constructible $\ladic$-sheaves on $X$. For any $g\in G$, denote by $g_sg_u$ the Jordan decomposition of $g$, with $g_s$ being semi-simple and $g_u$ unipotent.

\subsection{Character Sheaves on Groups Not Necessarily Connected}
\subsubsection{}
If $G^1$ is a connected component of $G$, define $$Z_{G^{\circ},G^1}^{\circ}:=C_{Z_{G^{\circ}}}(g)^{\circ},\quad\text{for any $g\in G^1$}.$$ It does not depend on the choice of $g\in G^1$. An \emph{isolated stratum} of $G^1$ is an orbit of isolated elements under the action of $Z_{G^{\circ},G^1}^{\circ}\times G^{\circ}$ given by $$(z,x):g\longmapsto zxgx^{-1}.$$(See \cite[\S 1.21 (d), \S 3.3 (a)]{L03I})
\begin{Eg}\label{isostm}
For the group $\bar{G}$ defined in \S \ref{barG}, we have $Z_{G,G\sigma}^{\circ}=\{1\}$, and so an isolated stratum of $G\sigma$ is an isolated $G$-conjugacy class.
\end{Eg}

Given an isolated stratum $S$, denote by $\mathcal{S}(S)$ the category of local systems on $S$ invariant under the action of $Z_{G^{\circ},G^1}^{\circ}\times G^{\circ}$ given by
$$(z,x):g\longmapsto z^nxgx^{-1},$$ for some integer $n>0$. We refer to \cite[\S 6]{L03I} for the definition of \emph{cuspidal local system (for $G$)}. If $\mathcal{E}\in\mathcal{S}(S)$ is cuspidal, we say that $(S,\mathcal{E})$ is a \emph{cuspidal pair (for $G$)}.

\subsubsection{}
Let $S$ be an isolated stratum of $G$ contained in some connected component $G^1$, let $\mathcal{E}\in\mathcal{S}(S)$, and let $S_s$ be the set of semi-simple parts of the elements of $S$. We assume that $Z_{G^{\circ},G^1}^{\circ}=\{1\}$ until the lemma below.

Let $s\in S_s$ be an arbitrary element and put $$U_s:=\{u\in C_G(s)\mid u \text{ is unipotent such that }su\in S\}.$$Then any $C_G(s)^{\circ}$-conjugacy class in $U_s$ is a stratum of $C_G(s)$ (\textit{cf.} \cite[\S 6.5]{L03I}). Note that the map $U_s\rightarrow S$, $u\mapsto su$, defines an embedding.
\begin{Lem}(\cite[Lemma 6.6]{L03I})\label{L036.6}
The pair $(S,\mathcal{E})$ is cuspidal for $G$ if and only if for any $C_G(s)^{\circ}$-conjugacy class $S'\subset U_s$, the pair $(S',\mathcal{E}|_{S'})$ is cuspidal for $C_G(s)$.
\end{Lem}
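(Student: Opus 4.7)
The plan is to prove both directions simultaneously by reducing the cuspidality condition on $G$ near elements with semisimple part $s$ to a cuspidality condition on $C_G(s)$ at unipotent elements. This is a Jordan-decomposition argument whose two main ingredients are already available in the excerpt: the correspondence between $s$-stable parabolics/Levis of $G^{\circ}$ and parabolics/Levis of $C_G(s)^{\circ}$ from Proposition \ref{PLsigma}, and the hypothesis $Z^{\circ}_{G^{\circ},G^{1}}=\{1\}$ which makes the notion of ``isolated stratum'' collapse to that of a conjugacy class near $s$.

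First I would unfold the definition of cuspidality for $(S,\mathcal{E})$ on $G$: for every proper parabolic $P^{\circ}\subsetneq G^{\circ}$ with an $s$-stable Levi factor $L^{\circ}$ whose normaliser $L:=N_G(L^{\circ},P^{\circ})$ meets $G^{1}$, the parabolic restriction functor (induction from, and restriction to, the subvariety $\{(g,xP^{\circ})\mid x^{-1}gx\in L^{1}U_{P^{\circ}}\}$) applied to the IC-extension of $\mathcal{E}$ vanishes. Standard cohomological manipulation shows it is enough to check this vanishing stalkwise at points $g\in S$, whose semisimple parts are $G^{\circ}$-conjugate to $s$, so we may and do test the condition at $g=su$ with $u\in U_{s}$.

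The key step is a stalk formula of ``character-formula type''. Choosing a representative $g=su$, the fibre of the parabolic restriction diagram over $g$ decomposes according to the $C_{G^{\circ}}(s)$-orbits on $\{x\in G^{\circ}\mid x^{-1}sx\in L^{\circ}\}/L^{\circ}$, and using the bijection of Proposition \ref{PLsigma}(i)--(ii), each summand is identified with a stalk of the parabolic restriction on $C_G(s)$ from $C_L(s)^{\circ}\subset C_P(s)^{\circ}$, applied to the restriction $\mathcal{E}|_{U_{s}}$ regarded as an equivariant local system on the unipotent variety of $C_G(s)$. Since $C_L(s)^{\circ}\subsetneq C_G(s)^{\circ}$ exactly when $L^{\circ}\subsetneq G^{\circ}$ (here one uses that $s$ is isolated and that $Z^{\circ}_{G^{\circ},G^{1}}=\{1\}$, forcing $L^{\circ}=G^{\circ}$ to be the only possibility making the $C_G(s)$-side trivial), the bijection preserves ``properness''. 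Therefore the totality of vanishing conditions for $(S,\mathcal{E})$ on $G$ matches the totality of vanishing conditions for each $(S',\mathcal{E}|_{S'})$ on $C_G(s)$, proving the equivalence.

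The main obstacle will be to make the stalk formula and the equivariance bookkeeping precise: one must verify that the parabolic $C_{P^{\circ}}(s)^{\circ}\subset C_G(s)^{\circ}$ really exhausts the proper parabolics of $C_G(s)^{\circ}$ as $(L^{\circ},P^{\circ})$ varies (this is exactly Proposition \ref{PLsigma}(ii)), and that the equivariance datum on $\mathcal{E}$ with respect to the action of $Z^{\circ}_{G^{\circ},G^{1}}\times G^{\circ}$ restricts correctly to the equivariance datum needed for the cuspidality of $(S',\mathcal{E}|_{S'})$ on each $C_G(s)^{\circ}$-orbit $S'\subset U_{s}$. Once these are in place, both implications of the lemma follow from the same matching of vanishing conditions, and no further computation is required.
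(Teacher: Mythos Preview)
The paper does not supply a proof of this lemma at all: it is quoted verbatim as \cite[Lemma 6.6]{L03I} and used as a black box. So there is no in-paper argument to compare against.

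That said, your sketch is the right shape and is essentially the argument Lusztig gives in \cite[\S 6.5--6.6]{L03I}. The core mechanism is indeed the Jordan-decomposition reduction: test cuspidality (which in Lusztig's setup is a vanishing condition for parabolic restriction of $\mathcal{E}$ along every proper $(L^{\circ},P^{\circ})$) at points $su$ with fixed $s$, and use Proposition~\ref{PLsigma} to translate proper $s$-stable pairs $(L^{\circ},P^{\circ})$ in $G^{\circ}$ into proper parabolic data in $C_G(s)^{\circ}$. The hypothesis $Z^{\circ}_{G^{\circ},G^1}=\{1\}$ is exactly what makes the equivariance conditions on both sides match, and the isolatedness of $s$ is what guarantees that $C_{L^{\circ}}(s)^{\circ}$ is a \emph{proper} Levi of $C_G(s)^{\circ}$ whenever $L^{\circ}$ is proper in $G^{\circ}$ (cf.\ Proposition~\ref{L3S2.2}). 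Your identified obstacles---making the stalk formula precise and checking that Proposition~\ref{PLsigma}(ii) really exhausts the proper parabolics on the centraliser side---are the genuine technical points, and Lusztig handles them by working directly with the sheaf-theoretic definition of cuspidality rather than through characteristic functions. If you want to turn your outline into a self-contained proof, that is where the work lies; otherwise, citing \cite[Lemma 6.6]{L03I} as the paper does is entirely appropriate.
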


\subsubsection{}\label{YLS}
Let $L$ be a Levi subgroup of $G^{\circ}$ and let $S$ be an isolated stratum of $N_G(L,P)$ for a parabolic subgroup $P$ with Levi factor $L$. (See \cite[\S 2.2 (a), \S 3.5]{L03I}) Define, $$S_{reg}=\{g\in S\mid C_G(g_s)^{\circ}\subset L\}.$$ Define $$Y_{L.S}=\bigcup_{x\in G^{\circ}}xS_{reg}x^{-1},$$ and $$\tilde{Y}_{L,S}=\{(g,xL)\in G\times G^{\circ}/L\mid x^{-1}gx\in S_{reg}\},$$ equipped with the action by $G^{\circ}$, $h:(g,xL)\mapsto(hgh^{-1},hxL)$, and $$\hat{Y}_{L.S}=\{(g,x)\in G\times G^{\circ}\mid x^{-1}gx\in S_{reg}\},$$equipped with the action by $G^{\circ}\times L$, $(h,l):(g,x)\mapsto(hgh^{-1},hxl^{-1})$. Consider the morphisms $$S\stackrel{\alpha}{\longleftarrow}\hat{Y}_{L,S}\stackrel{\beta}{\longrightarrow}\tilde{Y}_{L,S}\stackrel{\pi}{\longrightarrow }Y_{L,S},$$ where $\alpha(g,x)=x^{-1}gx$, $\beta(g,x)=(g,xL)$ and $\pi(g,xL)=g$. Put $$\tilde{\mathcal{W}}_S:=\{n\in N_{G^{\circ}}(L)\mid nSn^{-1}=S\},$$and $\mathcal{W}_S=\tilde{\mathcal{W}}_S/L$. It is a finite group. Then $\beta$ is a principal  $L$-bundle and $\pi$ is a principal $\mathcal{W}_S$-bundle. (See \cite[\S 3.13]{L03I}) If $\mathcal{E}\in\mathcal{S}(S)$ is irreducible and cuspidal for $N_G(L,P)$, put $$\tilde{\mathcal{W}}_{\mathcal{E}}:=\{n\in\tilde{\mathcal{W}}_S\mid\ad(n)^{\ast}\mathcal{E}\cong\mathcal{E}\},$$ and $\mathcal{W}_{\mathcal{E}}=\tilde{\mathcal{W}}_{\mathcal{E}}/L$. 
\begin{Eg}\label{Eg-W_E}
Let $G=\GL_n(k)$ and $\bar{G}=G\rtimes\lb\sigma\rb$ as in \S \ref{barG}. Let $T$ be the maximal torus consisting of diagonal matrices and $B$ the Borel subgroup consisting of upper triangular matrices. Let $P$ be a $\sigma$-stable standard parabolic subgroup with respect to $B$, such that the unique Levi factor $L$ containing $T$ is isomorphic to $\GL_m(k)\times(k^{\ast})^{2N}$ for some non negative integers $m$ and $N$. Put $\bar{L}=N_{\bar{G}}(L,P)=L\sqcup L\sigma$ and let $S\subset L\sigma$ be an isolated stratum. Then $N_G(L)/L\cong \mathfrak{S}_{2N}$ and $\mathcal{W}_S\cong\mathfrak{W}_N$. We consider $\mathcal{W}_{\mathcal{E}}$ for some particular $\mathcal{E}$ in the following.

Let $S'=S/Z_{L,L\sigma}^{\circ}$ and let $\pi':S\rightarrow S'$ be the projection. Let $M=L/[L,L]$ and $\bar{M}=\bar{L}/[L.L]$. Then $\ad\sigma$ induces an action on $M$. Define the map $\mathcal{L}_{\sigma}:M\rightarrow M$, $m\mapsto m^{-1}\sigma(m)$. Then put $\bar{L}''=\bar{M}/\Ima(\mathcal{L}_{\sigma})$ and let $\pi'':\bar{L}\rightarrow \bar{L}''$ be the projection. Let $S''$ be the image of $S$ under $\pi''$. Suppose that $\mathcal{E}'\in\mathcal{S}(S')$ is irreducible and $\mathscr{L}\in\mathcal{S}(S'')$ satisfies $\mathscr{L}^{\otimes 2}\cong\ladic$. Consider the local system $\mathcal{E}:=\pi^{\prime\ast}\mathcal{E}'\otimes\pi^{\prime\prime\ast}\mathscr{L}$ on $S$, which lies in $\mathcal{S}(S)$ by \cite[\S 5.3]{L03I}.  Note that $S''$ is a connected component of $\bar{L}''$, and since the identity component $L''$ is central, the equivariant local systems on $S''$ can be identified with those on $L''$, which is a torus isomorphic to $(k^{\ast})^N$. The action of $\mathcal{W}_S$ on $L$ induces an action on $L''$. Let $T_+\subset L''$ be the largest $\mathcal{W}_{\mathcal{E}}$-stable subtorus such that $\mathscr{L}|_{T_+}\cong\ladic$. Let $T_-\subset L''$ be the unique $\mathcal{W}_{\mathcal{E}}$-stable subtorus such that $L''\cong T_+\times T_-$. Let $N_+=\dim T_+$ and $N_-=\dim T_-$, then $N_++N_-=N$. Since the factor $\pi^{\prime\ast}\mathcal{E}'$ is automatically $\mathcal{W}_S$-invariant, we have $\mathcal{W}_{\mathcal{E}}\cong\mathfrak{W}_{N_+}\times\mathfrak{W}_{N_-}$.
\end{Eg}

\subsubsection{}\label{CharSheaf}
Fix $\mathcal{E}\in\mathcal{S}(S)$. There exists a $G^{\circ}$-equivariant local system $\tilde{\mathcal{E}}$ on $\tilde{Y}_{L,S}$ such that $\beta^{\ast}\tilde{\mathcal{E}}\cong\alpha^{\ast}\mathcal{E}$. Denote by $\mathbf{E}=\End(\pi_!\tilde{\mathcal{E}})$, the endomorphism algebra of $\pi_!\tilde{\mathcal{E}}$. We have a canonical decomposition (\cite[\S 7.10 (a); IV, \S 21.6]{L03II}) $$\mathbf{E}=\bigoplus_{w\in\mathcal{W}_{\mathcal{E}}}\mathbf{E}_w,$$where the factors $\mathbf{E}_w:=\Hom(\ad(n_w)^{\ast}\mathcal{E},\mathcal{E})$, each one defined by some representative  $n_w$ of $w$, are of dimension 1 and satisfy $\mathbf{E}_w\mathbf{E}_v=\mathbf{E}_{wv}$. Choose base $\{b_w\mid w\in\mathcal{W}_{\mathcal{E}}\}$ of $\mathbf{E}$ with $b_w\in\mathbf{E}_w$ for any $w$.

Define $$K=\IC(\bar{Y}_{L,S},\pi_{!}\tilde{\mathcal{E}}),$$
where $\bar{Y}_{L,S}$ is the closure of $Y_{L,S}$ in $G$. There exists a canonical isomorphism $\mathbf{E}\cong\End(K)$. Let $\Lambda'$ be a finite set parametrising the isomorphism classes of the irreducible representations of $\mathbf{E}$ and for each  $i\in\Lambda'$, we denote by $V_i$ a corresponding representation. Then, we have the canonical decompositions
$$\pi_!\tilde{\mathcal{E}}\cong\bigoplus_{i\in\Lambda'}V_i\otimes(\pi_!\tilde{\mathcal{E}})_i,\quad K\cong\bigoplus_{i\in\Lambda'}V_i\otimes K_i,$$
where $$ (\pi_!\tilde{\mathcal{E}})_i=\Hom_{\mathbf{E}}(V_i,\pi_!\tilde{\mathcal{E}}),\quad K_i=\Hom_{\mathbf{E}}(V_i,K)$$ are the simple factors. Moreover, $K_i\cong \IC(\bar{Y}_{L,S},(\pi_!\tilde{\mathcal{E}})_i)$.

\subsubsection{}\label{FoncCara}
Assume that $F(L)=L$, $F(S)=S$ and $F^{\ast}\mathcal{E}\isom\mathcal{E}$, where $F$ is the Frobenius of $G$. We fix an isomorphism $\phi_0:F^{\ast}\mathcal{E}\isom\mathcal{E}$. It induces an isomorphism $\tilde{\phi}:F^{\ast}\pi_!\tilde{\mathcal{E}}\isom\pi_!\tilde{\mathcal{E}}$ and an isomorphisme $\phi:F^{\ast}K\isom K$. Recall that, given a variety $X/k$ equipped with the  Frobenius $F$, a complex $A\in\mathscr{D}(X)$ and an isomorphism $\phi:F^{\ast}A\isom A$, the \textit{characteristic function} $\chi_{A,\phi}:X^F\rightarrow\ladic$ is defined by
\begin{equation}
\chi_{A,\phi}(x)=\sum_{i\in\mathbb{Z}}(-1)^i\tr(\phi,\mathcal{H}^i_xA),
\end{equation}
where $\mathcal{H}^i_xA$ is the stalk at $x$ of the cohomology sheaf in degree $i$ of $A$.

\begin{Thm}(\cite[Theorem 16.14, \S 16.5, \S 16.13]{L03III})\label{16.14}
Let $s$ and $u\in G^F$ be a semi-simple element and a unipotent element such that $su=us\in\bar{Y}_{L,S}$. Then,
\begin{equation}
\chi_{K,\phi}(su)=\sum_{\substack{h\in G^{\circ F};\\h^{-1}sh\in S_s}}\frac{|L^F_h|}{|C_G(s)^{\circ F}||L^F|}Q_{L_h,C_G(s)^{\circ},\mathfrak{c}_h,\mathcal{F}_h,\phi_h}(u),
\end{equation}
where $S_s$ is the set of the semi-simple parts of the elements of  $S$, and $Q_{L_h,C_G(s)^{\circ},\mathfrak{c}_h,\mathcal{F}_h,\phi_h}$ is the \textit{generalised Green function} (See \S \ref{FoncGrGe} below) associated to the data $L_h$, $C_G(s)^{\circ}$, $\mathfrak{c}_h$, $\mathcal{F}_h$, $\phi_h$ defined by
\begin{itemize}
\item[-] $L_h:=hLh^{-1}\cap C_G(s)^{\circ}$;
\item[-] $\mathfrak{c}_h:=\{v\in C_G(s)\mid v\text{ unipotent, }h^{-1}svh\in S\}$;
\item[-] $\mathcal{F}_h$: inverse image of $\mathcal{E}$ under the embedding $\mathfrak{c}_h\rightarrow S$, $v\mapsto h^{-1}svh$;
\item[-] $\phi_h:F^{\ast}\mathcal{F}_h\isom\mathcal{F}_h$, an isomorphism induced from $\phi_0$ under the above embedding.
\end{itemize}
\end{Thm}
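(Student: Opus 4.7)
The plan is to reduce the character formula to the already-developed theory of generalised Green functions on the connected reductive group $C_G(s)^{\circ}$, via a Jordan-decomposition argument on the induction diagram
\[
S \stackrel{\alpha}{\longleftarrow} \hat{Y}_{L,S} \stackrel{\beta}{\longrightarrow} \tilde{Y}_{L,S} \stackrel{\pi}{\longrightarrow} Y_{L,S}.
\]
First I would compute $\chi_{K,\phi}(su)$ on the open stratum $Y_{L,S}$. By proper base change and the fact that $\beta$ is a principal $L$-bundle while $\alpha$ has fibre over $(g,x)$ determined by the condition $x^{-1}gx \in S_{reg}$, the stalk $(\pi_!\tilde{\mathcal{E}})_g$ is computed as a sum over the $G^{\circ}$-translates $x$ with $x^{-1}gx \in S$; after taking $F$-fixed points and tracking the compatibility of $\phi$ with $\phi_0$, one gets a sum over $h \in G^{\circ F}$ with $h^{-1}gh \in S$, weighted by $|L^F|^{-1}$ to account for the $L$-bundle structure of $\beta$.

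Second I would separate the Jordan pieces. Since $S$ is an isolated stratum, the condition $h^{-1}gh = h^{-1}(su)h \in S$ forces $h^{-1}sh \in S_s$ and $h^{-1}uh \in \mathfrak{c}_h$ (in the notation of the theorem), so the outer sum naturally splits as
\[
\sum_{\substack{h\in G^{\circ F}\\ h^{-1}sh \in S_s}} (\text{inner sum over unipotents in } \mathfrak{c}_h).
\]
For fixed $h$, Lemma \ref{L036.6} says that $\mathcal{F}_h$ with its underlying $C_G(s)^{\circ}$-equivariant structure on $\mathfrak{c}_h$ is a cuspidal local system for $C_G(s)$; and the Levi subgroup of $C_G(s)^{\circ}$ attached to the situation is precisely $L_h = hLh^{-1}\cap C_G(s)^{\circ}$ (Proposition \ref{PLsigma}). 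Thus the unipotent contribution of the inner sum at $u$ is, by construction, the characteristic function of the induction complex built from the triple $(L_h,\mathfrak{c}_h,\mathcal{F}_h)$ inside $C_G(s)^{\circ}$, which is the generalised Green function $Q_{L_h,C_G(s)^{\circ},\mathfrak{c}_h,\mathcal{F}_h,\phi_h}(u)$. Counting how many $h$'s give the same conjugate of $s$ (namely $|C_G(s)^{\circ F}|$) and how many give the same Levi (namely $|L^F|/|L_h^F|$) produces the coefficient $|L_h^F|/(|C_G(s)^{\circ F}|\,|L^F|)$ after we re-index $h$ up to $C_G(s)^{\circ F}$-equivalence.

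Third, one must pass from the open stratum $Y_{L,S}$ to its closure $\bar{Y}_{L,S}$, because $su$ may not lie in $Y_{L,S}$. The key point here is that the right-hand side is, tautologically, a characteristic function of an IC extension: the generalised Green function is defined (see \S\ref{FoncGrGe}, to be invoked as a black box) as $\chi$ of the induced IC sheaf $\operatorname{ind}_{L_h \subset Q_h}^{C_G(s)^{\circ}}(\mathcal{F}_h)$ on the unipotent variety of $C_G(s)^{\circ}$, and Lusztig's Jordan decomposition for character sheaves says precisely that the IC sheaf $K$ on $\bar{Y}_{L,S}$, restricted to a transversal slice at $s$, is the parabolic induction of $\mathcal{F}_h$. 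So the IC extension on the global side matches term by term with the IC extension built into the generalised Green function, and one concludes by comparing characteristic functions with their chosen Frobenius structures $\phi, \phi_h$ (both normalised from the same $\phi_0$).

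The main obstacle is this third step: reconciling the global IC on $\bar{Y}_{L,S}$ with the local IC defining the generalised Green function requires a transversal/slice argument à la Lusztig (the restriction of $K$ to $sC_G(s)^{\circ}_u$ is, up to shift, the induced complex in the centraliser). Once that transversal compatibility is granted, both the coefficient bookkeeping and the passage to $\mathbb{F}_q$-points are essentially formal, and the formula follows by summing over $h$.
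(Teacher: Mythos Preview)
The paper does not give its own proof of this theorem: it is quoted verbatim from Lusztig \cite[Theorem 16.14, \S 16.5, \S 16.13]{L03III} as an input to the subsequent calculations, and no argument is supplied here. So there is no ``paper's proof'' to compare your proposal against.

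That said, your sketch is a fair high-level account of how Lusztig's argument actually proceeds in \cite{L03III}. The three-step structure (compute on the open stratum $Y_{L,S}$ via the diagram $S\leftarrow\hat{Y}\rightarrow\tilde{Y}\rightarrow Y$, separate semi-simple and unipotent parts to land in the centraliser, then invoke the compatibility of the global IC extension with the local one on $C_G(s)^{\circ}$) is correct in outline, and you rightly identify the slice/transversality step as the substantive point. Two places where your write-up is loose: the counting that produces the coefficient $|L_h^F|/(|C_G(s)^{\circ F}|\,|L^F|)$ is not quite the double-counting you describe (one does not re-index ``up to $C_G(s)^{\circ F}$-equivalence''; rather the factor $|L_h^F|/|L^F|$ arises directly from comparing the $L$-bundle $\beta$ with the $L_h$-bundle in the centraliser picture, while $|C_G(s)^{\circ F}|^{-1}$ normalises the unipotent-variety integral implicit in the Green function), and your appeal to Lemma~\ref{L036.6} for cuspidality of $\mathcal{F}_h$ requires the extra hypothesis $Z^{\circ}_{G^{\circ},G^1}=\{1\}$, which holds in the application to $\bar{G}=\GL_n\rtimes\langle\sigma\rangle$ but is not assumed in the general statement. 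Neither of these is fatal to the strategy; they are bookkeeping details that Lusztig handles carefully in \cite{L03III}.
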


Denote by $L^1$ (resp. $G^1$) the connected component of $N_G(L)$ (resp. $G$) containing $S$. If we define
\begin{itemize}
\item[-]
$\Sigma_h:=hZ^{\circ}_{L,L^1}h^{-1}\mathfrak{c}_h$;
\item[-]
$\mathcal{E}_h$: inverse image of $\mathcal{E}$ by the embedding $\Sigma_h\rightarrow S$, $v\mapsto h^{-1}svh$,
\item[-]
$\phi'_h:F^{\ast}\mathcal{E}_h\isom\mathcal{E}_h$ an isomorphism induced from $\phi_0$ under the above embedding,
\end{itemize}
then the above embedding $\mathfrak{c}_h\rightarrow S$ factors through the inclusion $\mathfrak{c}_h\rightarrow\Sigma_h$, $\mathcal{F}_h$ is the inverse image of $\mathcal{E}_h$ and $\phi_h$ is induced from $\phi'_h$ under this inclusion. The point is that, $\Sigma_h$ is a finite union of isolated strata, which has $\mathfrak{c}_h$ as the subset of unipotent elements, so that these data fit into the following definition of generalised Green functions.

\subsubsection{Generalised Green Functions}\label{FoncGrGe}
Given 
\begin{itemize}
\item[-] $G$ a reductive algebraic group,
\item[-] $L\subset G^{\circ}$ an $F$-stable Levi subgroup,
\item[-] $\Sigma^u$ the set of the unipotent elements of a finite union of isolated strata $\Sigma$ of $N_G(L)$ satisfying $F(\Sigma)=\Sigma$, $F(\Sigma^u)=\Sigma^u$,
\item[-] $\mathcal{F}$ an $L$-equivariant local system on $\Sigma^u$, and
\item[-] $\phi_1:F^{\ast}\mathcal{F}\isom\mathcal{F}$ an isomorphism,
\end{itemize}
we choose a local system $\mathcal{E}$ on $\Sigma$ that restricts to $\mathcal{F}$ under the inclusion $\Sigma^u\rightarrow\Sigma$ and an isomorphism (which always exists) $\phi'_1:F^{\ast}\mathcal{E}\isom\mathcal{E}$ that induces $\phi_1$. Define $K=\IC(\bar{Y}_{L,\Sigma},\pi_!\tilde{\mathcal{E}})$, where $Y_{L,\Sigma}$, $\tilde{Y}_{L,\Sigma}$ and $\pi:\tilde{Y}_{L,\Sigma}\rightarrow Y_{L,\Sigma}$ are defined by the procedure \S \ref{YLS}, and denote by $\phi:F^{\ast}K\isom K$ the isomorphism induced from $\phi_1'$.

The generalised Green function associated to $G$, $L$, $\Sigma^u$, $\mathcal{F}$ and $\phi_1$, denoted by $Q_{L,G,\Sigma^u,\mathcal{F},\phi_1}$, is defined by (\cite[\S 15.12]{L03III})
\begin{equation}
\begin{split}
G^F_u=\{\text{unipotent elements of $G^F$}\}&\longrightarrow\ladic\\
u&\longmapsto\chi_{K,\phi}(u).
\end{split}
\end{equation}
It does not depend on the choice of $\mathcal{E}$ and $\phi_1'$.

\subsubsection{}
Let $G$, $L$, $\Sigma^u$, $\Sigma$, $\mathcal{F}$, $\mathcal{E}$, $\phi_1$ and $\phi_1'$ be as in \S \ref{FoncGrGe}, and let $Q_L^G(-,-)$ be the two variable Green function as in \S \ref{2-var-Green}.
\begin{Thm}(\cite[Theorem 1.14]{L90})
There exists a constant $q_0>1$ which only depends on the Dynkin diagram of $G$, such that if $q>q_0$, then for any $u\in G_u^F$, $$Q_{L,G,\Sigma^u,\mathcal{F},\phi_1}(u)=(-1)^{\dim\Sigma}|L^F|^{-1}\sum_{v\in L^F_u}Q^G_L(u,v)\chi_{\mathcal{E},\phi_1'}(v).$$
\end{Thm}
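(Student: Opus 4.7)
The plan is to unravel the definition of $Q_{L,G,\Sigma^u,\mathcal{F},\phi_1}(u)$ as the characteristic function $\chi_{K,\phi}(u)$ of the IC complex $K=\IC(\bar{Y}_{L,\Sigma},\pi_!\tilde{\mathcal{E}})$ at a unipotent element $u$, and to match this with the trace formula defining the two-variable Green function $Q_L^G(u,v)$ on the Deligne--Lusztig variety $\mathcal{L}_{G^{\circ}}^{-1}(U)$ for a parabolic $P=LU$ with Levi factor $L$. The first step is to invoke the cleanness of cuspidal character sheaves: for $q>q_0$, the shifted complex $\pi_!\tilde{\mathcal{E}}[\dim Y_{L,\Sigma}]$ already coincides with its IC extension on $\bar{Y}_{L,\Sigma}$, so stalks of $K$ at points of $\bar{Y}_{L,\Sigma}\setminus Y_{L,\Sigma}$ contribute nothing to $\chi_{K,\phi}(u)$. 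By proper base change this reduces $\chi_{K,\phi}(u)$, up to the cohomological shift, to the alternating trace of $\phi$ on $H^{\ast}_c(\pi^{-1}(u),\tilde{\mathcal{E}}|_{\pi^{-1}(u)})$.

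The second step is to identify $\pi^{-1}(u)^F$ geometrically. Its points are cosets $xL\in G^{\circ}/L$ with $x^{-1}ux\in\Sigma_{\mathrm{reg}}$; since $u$ is unipotent, the element $v:=x^{-1}ux$ must lie in $\Sigma^u\cap L^F_u$. Stratifying $\pi^{-1}(u)^F$ by the $L^F$-conjugacy class of $v$ and applying a standard Lang--Steinberg argument identifies the stratum corresponding to $v$ with the $F$-fixed points of a twist of $\mathcal{L}_{G^{\circ}}^{-1}(U)$, with the diagonal $L^F$-action recorded via the centraliser. The trace of $\phi$ on $\tilde{\mathcal{E}}$ restricted to this stratum collapses to $\chi_{\mathcal{F},\phi_1}(v)=\chi_{\mathcal{E},\phi'_1}(v)$, while the trace on the remaining cohomology is, by the very definition in \S \ref{2-var-Green}, equal to $Q_L^G(u,v)$. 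Summing over $v\in L^F_u$, the orbit/stabiliser counts collapse the local factors of $|C_L(v)^F|^{-1}$ into the global $|L^F|^{-1}$ appearing in the statement.

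The sign $(-1)^{\dim\Sigma}$ then comes from a bookkeeping of shifts: $K$ is perverse, and the shift by $\dim Y_{L,\Sigma}$ built into the IC normalisation differs from the natural cohomological degree appearing in the definition of $Q_L^G$ via $H^{\ast}_c(\mathcal{L}_{G^{\circ}}^{-1}(U))$ by a parity. Using $\dim Y_{L,\Sigma}=\dim G^{\circ}-\dim L+\dim\Sigma$ and $\dim\mathcal{L}_{G^{\circ}}^{-1}(U)=\dim U=\dim G^{\circ}-\dim L$, the discrepancy is $\dim\Sigma$ modulo $2$, yielding the claimed sign.

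The principal obstacle is the cleanness input. For connected $G^{\circ}$ in good characteristic, cleanness of cuspidal character sheaves is Lusztig's deep theorem from the final instalment of his character sheaves series. In the present non-connected setting, where $\Sigma$ may live in a non-identity component of $N_G(L)$, cleanness must be transferred using Lemma \ref{L036.6}: each semi-simple part $s\in\Sigma_s$ reduces cleanness on $\Sigma$ to cleanness for the pairs $(S',\mathcal{E}|_{S'})$ on the connected centraliser $C_G(s)^{\circ}$, where the classical connected-group result applies, together with the equivariance data packaged in the category $\mathcal{S}(\Sigma)$. The bound $q>q_0$ is precisely what this inductive reduction requires, and verifying its compatibility with the non-connected structure is the most delicate part of the argument.
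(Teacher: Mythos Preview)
The paper does not give its own proof of this statement: it is quoted verbatim as \cite[Theorem 1.14]{L90} and used as a black box to compare $\chi_{K,\phi}$ with $R^{G^1}_{L^1}\chi_{\mathcal{E},\phi_0}$. So there is no proof in the paper to compare your proposal against.

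That said, your sketch is a reasonable outline of the strategy behind Lusztig's result, but it glosses over the step that carries all the weight. The identification you propose in the second paragraph is not correct as stated: the fibre $\pi^{-1}(u)$ consists of cosets $xL$ with $x^{-1}ux\in\Sigma_{\mathrm{reg}}$, which is a subvariety of $G^{\circ}/L$, whereas $\mathcal{L}_{G^{\circ}}^{-1}(U)$ lives in $G^{\circ}$ and carries a $G^{\circ F}\times L^{\circ F}$-action rather than just an $L^F$-action. There is no direct stratification of $\pi^{-1}(u)^F$ by Deligne--Lusztig varieties; what Lusztig actually does in \cite{L90} is compare two different spectral decompositions of the space of $G^F$-invariant functions on unipotent elements, one coming from generalised Green functions and the other from the two-variable Green functions, and show they agree via an orthogonality argument. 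Cleanness enters, but not in the way you describe, and the constant $q_0$ arises from bounds needed for certain trace computations rather than from a non-connected reduction via Lemma~\ref{L036.6}. Your dimension bookkeeping for the sign is plausible but would need the correct geometric identification to justify it.
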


In the context of Theorem \ref{16.14}, the isomorphism $\phi_h$ is induced from $\phi_0$ via the embedding $\mathfrak{c}_h\rightarrow S$, $v\mapsto h^{-1}svh$, and so $\chi_{\mathcal{E}_h,\phi_h'}(v)=\chi_{\mathcal{E},\phi_0}(h^{-1}svh)$. The above theorem then says $$Q_{L_h,C_G(s)^{\circ},\mathfrak{c}_h,\mathcal{F}_h,\phi_h}(u)=(-1)^{\dim\Sigma_h}|L_h^F|^{-1}\sum_{v\in (L_h)^F_u}Q^{C_G(s)^{\circ}}_{L_h}(u,v)\chi_{\mathcal{E},\phi_0}(h^{-1}svh).$$ Comparing Proposition \ref{2.10} and Theorem \ref{16.14}, this shows that $\chi_{K,\phi}$ is equal to $R^{G^1}_{L^1}\chi_{\mathcal{E},\phi_0}$ up to a sign.

\subsubsection{}
The isomorphism $\phi$ of \S \ref{FoncCara} induces an algebra isomorphism $\iota:\mathbf{E}\isom\mathbf{E}$. There exists a subset $\Lambda\subset\Lambda'$ and some isomorphisms $\iota_i:V_i\isom V_i$, $\phi_i:F^{\ast}K_i\isom K_i$, for $i\in\Lambda$, such that the isomorphism $b_w\phi:F^{\ast}K\isom K$, with respect to the decomposition $K=\bigoplus_{i\in\Lambda'}V_i\otimes K_i$ is of the form
\begin{itemize}
\item[-]
$b_w\iota_i\otimes\phi_i$, if $i\in\Lambda$;
\item[-]
$V_i\otimes F^{\ast}K_i\rightarrow V_j\otimes K_j$, with $j\ne i$, if $i\notin\Lambda$.
\end{itemize}
(See \cite[\S 20.3, \S 21.6]{L03IV}) Consequently,
\begin{equation}\label{KbwKiphii}
\chi_{K,b_w\phi}=\sum_{i\in\Lambda}\tr(b_w\iota_i,V_i)\chi_{K_i,\phi_i}.
\end{equation}

\subsubsection{}
Take a representative $n_w\in\tilde{\mathcal{W}}_{\mathcal{E}}$ of $w$, and an element $g_w\in G^{\circ}$ such that $g_w^{-1}F(g_{w})=n_w$. Define $L_w=g_wLg_w^{-1}$, $S_w=g_wSg_w^{-1}$ and $\mathcal{E}_w=\ad(g_w^{-1})^{\ast}\mathcal{E}$. Then $L_w$ and $S_w$ are $F$-stable and the isomorphism $\phi_0:F^{\ast}\mathcal{E}\isom\mathcal{E}$ induces an isomorphism $\phi_{0,w}:F^{\ast}\mathcal{E}_w\isom\mathcal{E}_w$. These allow us to define $Y_{L^w,S^w}$, $\tilde{Y}_{L^w,S^w}$, $\pi_w:\tilde{Y}_{L^w,S^w}\rightarrow Y_{L^w,S^w}$, $\tilde{\mathcal{E}}_w$, $K_w$ and $\phi_w:F^{\ast}K_w\isom K_w$ by the same procedure. It can be checked that (See \cite[\S 21.6]{L03IV})
\begin{equation}\label{Kwphiw}
\chi_{K,b_w\phi}=\chi_{K_w,\phi_w}.
\end{equation}

\subsubsection{}
Write $\mathcal{W}=\mathcal{W}_{\mathcal{E}}$, denote by $\bar{\mathcal{W}}$ a set of representatives of the \emph{effective} $F$-conjugacy classes (\cite[\S 20.4]{L03IV}), and write $\mathcal{W}_w=\{v\in\mathcal{W}\mid F^{-1}(v)wv^{-1}=w\}$. If $w$ is not in some effective $F$-conjugacy class and $i\in\Lambda$, then $\tr(b_w\iota_i,V_i)=0$ (\cite[\S 20.4 (a)]{L03IV}). We have for all $i$, $j\in\Lambda$ (\cite[\S 20.4 (c)]{L03IV})
\begin{equation}
\sum_{w\in \bar{\mathcal{W}}}|\mathcal{W}_w|^{-1}\tr(b_w\iota_i,V_i)\tr(\iota_j^{-1}b_w^{-1},V_j)=\delta_{ij},
\end{equation}
where $\delta_{ij}=1$ if $i=j$ and $\delta_{ij}=0$ otherwise. In fact, $|\bar{\mathcal{W}}|=|\Lambda|$ and $(\tr(b_w\iota_i,V_i))_{i\in\Lambda,w\in\bar{\mathcal{W}}}$ is an invertible square matrix (\cite[\S 20.4 (e), (f), (g)]{L03IV}). 

This, combined with the equalities (\ref{KbwKiphii}) and (\ref{Kwphiw}), gives
\begin{equation}\label{dec-char-sheaf}
\chi_{K_i,\phi_i}=\frac{1}{|\mathcal{W}_{\mathcal{E}}|}\sum_{\substack{w\in\mathcal{W}_{\mathcal{E}}\\w\text{ effective}}}\tr(b_w\iota_i,V^{\vee}_i)\chi_{K_w,\phi_w},
\end{equation}
where $V_i^{\vee}$ is the contragradient representation.
\begin{Eg}\label{quad-uni-sheaf}
Let $\bar{G}$, $L$, $S$, and $\mathcal{E}$ be as in Example \ref{Eg-W_E}. Let $K_i$ be the character sheaf corresponding to the irreducible representation $V_i^{\vee}$ and let $\phi_i$ be chosen in such a way that (\ref{dec-char-sheaf}) holds. Since the action of $F$ on $\mathcal{W}_E$ is trivial, we may require that $\iota_i=\Id$ and $b_w$ is the element $w$ in the group algebra of $\mathcal{W}_E$. Then $$\chi_{K_i,\phi_i}=\frac{1}{|\mathfrak{W}_{N_+}\times\mathfrak{W}_{N_-}|}\sum_{w=(w_+,w_-)\in\mathfrak{W}_{N_+}\times\mathfrak{W}_{N_-}}\tr(w,V^{\vee}_i)R^{G\sigma}_{L_w\sigma}\chi_{\mathcal{E}_w,\phi_{0,w}}.$$
\end{Eg}

\section{Extensions of $\sigma$-Stable Characters}
\subsection{Some Elementary Lemmas}
When dealing with a Levi subgroup $L$ of $\GL_n$, one often regards it as a direct product of smaller $\GL_{n'}$'s and reduces the problem to these direct factors. However, if $\sigma$ is an automorphism of $L$, then $L\rtimes\lb\sigma\rb$ is not actually the direct product of groups of the form $\GL_{n'}\rtimes\lb\sigma'\rb$. We give some lemmas that allow us to apply arguments in the same spirit. Let $H$ denote a finite group in this part, which could either be a finite group of Lie type or a Weyl group. Let $\sigma$ be an automorphism of $H$, which could be induced from the automorphism of an algebraic group or the Frobenius. Denote by $H\lb\sigma\rb$ the semi-direct product of $H$ and the cyclic group generated by $\sigma$, with the generator acting as $\sigma$ on $H$.
\begin{Lem}\label{LemdeRed}
Let $H=H_1\times\cdots\times H_s$ be a product of finite groups and let $\sigma=\sigma_1\times\cdots\times\sigma_s$ be the product of some automorphisms of the direct factors. Then $H\lb\sigma\rb$ is a subgroup of $\prod_iH_i\lb\sigma_i\rb$. Moreover, if the characters $\chi_i\in\Irr(H_i)$ extend to $\tilde{\chi}_i\in\Irr(H_i\lb\sigma_i\rb)$, then $\bar{\chi}:=(\boxtimes_i\tilde{\chi}_i)|_{H\lb\sigma\rb}$ is irreducible and restricts to $\boxtimes_i\chi_i\in\Irr(H)$.
\end{Lem}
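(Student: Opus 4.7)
The plan is to proceed in two stages, first establishing the embedding of groups and then deducing the character-theoretic statement from a restriction argument.

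First, I would construct the embedding $\iota \colon H\langle\sigma\rangle \hookrightarrow \prod_{i=1}^s H_i\langle\sigma_i\rangle$ explicitly. On the subgroup $H = H_1 \times \cdots \times H_s$ one takes the canonical inclusion, and the generator $\sigma$ is sent to the diagonal element $(\sigma_1, \ldots, \sigma_s)$. To verify that this extends to a group homomorphism I would check the defining relation: conjugation of $(h_1,\ldots,h_s)$ by $\sigma$ in $H\langle\sigma\rangle$ yields $(\sigma_1(h_1),\ldots,\sigma_s(h_s))$, which coincides with the conjugate by $(\sigma_1,\ldots,\sigma_s)$ in the product group. If $\sigma$ has order $m$, then its image has order dividing $m$, so $\iota$ is well defined. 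Injectivity is clear since $\iota$ is the identity on $H$ and $\sigma$ acts non-trivially on $H$ (or else there is nothing to prove) so that distinct powers of $\sigma$ give distinct images.

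Next, for the character statement, I would simply restrict. By construction of the outer tensor product, $\boxtimes_i \tilde{\chi}_i$ is an irreducible character of $\prod_i H_i\langle\sigma_i\rangle$ since each $\tilde{\chi}_i \in \Irr(H_i\langle\sigma_i\rangle)$. Set $\bar{\chi} := (\boxtimes_i \tilde{\chi}_i) \circ \iota \in \mathcal{C}(H\langle\sigma\rangle)$. Then the further restriction to the subgroup $H \subset H\langle\sigma\rangle$ is given by
\begin{equation*}
\bar{\chi}|_H = \boxtimes_i (\tilde{\chi}_i|_{H_i}) = \boxtimes_i \chi_i,
\end{equation*}
which is an irreducible character of $H$ (outer tensor product of irreducibles is irreducible).

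Finally, irreducibility of $\bar{\chi}$ follows formally: if $\bar{\chi}$ decomposed non-trivially as a sum of characters on $H\langle\sigma\rangle$, its restriction to the subgroup $H$ would decompose as a sum of characters as well, contradicting the irreducibility of $\boxtimes_i \chi_i$ just established. Equivalently, $\langle\bar{\chi},\bar{\chi}\rangle_{H\langle\sigma\rangle} \leq \langle\bar{\chi}|_H,\bar{\chi}|_H\rangle_{H} = 1$ by Frobenius reciprocity considerations on the inclusion $H \subset H\langle\sigma\rangle$, and since $\bar{\chi}$ is a non-zero character this inner product equals $1$. This simultaneously confirms that $\bar{\chi}|_H = \boxtimes_i \chi_i$, completing the proof. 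There is no real obstacle here; the only care needed is in checking that $\iota$ is well defined as a homomorphism, and thereafter the argument is a direct application of the standard fact that irreducibility of a restriction forces irreducibility of the ambient character.
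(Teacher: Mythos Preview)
Your proof is correct and follows essentially the same approach as the paper: define the embedding $\iota$ sending $(h_1,\ldots,h_s)\sigma^j$ to $(h_1\sigma_1^j,\ldots,h_s\sigma_s^j)$, and deduce the character statement by restriction. The paper is much terser, declaring the map ``obviously a homomorphism and injective'' and the assertion on $\bar\chi$ ``immediate''; you supply the details (checking the conjugation relation, the order condition, and the irreducibility-via-restriction argument). One small remark: your first irreducibility argument (a nontrivial decomposition of $\bar\chi$ would restrict to one on $H$) is already sufficient and clean; the ``equivalently'' via an inner-product inequality is fine for genuine characters but is not really Frobenius reciprocity, so you might drop that aside.
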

\begin{proof}
We define a map by
\begin{equation}
\begin{split}
H\lb\sigma\rb&\longrightarrow \prod_iH_i\lb\sigma_i\rb\\
(h_1,\ldots,h_s)\sigma^i&\longmapsto (h_1\sigma_1^i,\ldots,h_s\sigma_s^i),
\end{split}
\end{equation}
which is obviously a homomorphism and is injective. Then the assertion on $\bar{\chi}$ is immediate.
\end{proof}

We define an exterior tensor product that is "twisted" by $\sigma$. 
\begin{Defn}\label{tildebox}
Let $H=H_1\times H_2$ be a product of finite groups and $\sigma=\sigma_1\times\sigma_2$ a product of automorphisms. For $i=1$ and $2$, let $f_i$ be a function on $H_i\sigma_i$ that is invariant under the conjugation by $H_i$. The function  $f_1\widetilde{\boxtimes} f_2$ on $H\sigma$ is defined as the restriction of $$f_1\boxtimes f_2~(=\pr_1^{\ast}f_1\cdot\pr_2^{\ast}f_2)\in \mathcal{C}(H_1\lb\sigma_1\rb\times H_2\lb\sigma_2\rb)$$ to $H\lb\sigma\rb$.
\end{Defn}

\begin{Lem}\label{extdecirc}
Let $H=K\times\cdots\times K$ be the direct product of $d$ copies of a finite group $K$. Let $\psi$ be an automorphism of $K$, let $\zeta=(i_1,\ldots,i_d)\in\mathfrak{S}_d$ be a circular permutation, and let $(n_1,\ldots,n_d)$ be a $d$-tuple of integers. With these data, we can define an automorphism $\Psi$ of $H$ by
\begin{equation}
\begin{split}
\Psi:H&\longrightarrow H\\
(k_1,\ldots,k_d)&\longmapsto(\psi^{n_1}(k_{\zeta(1)}),\ldots,\psi^{n_d}(k_{\zeta(d)})).
\end{split}
\end{equation}
Denote by $\mathscr{H}$ the direct product $K\rtimes\lb\psi\rb\times\cdots\times K\rtimes\lb\psi\rb$, and let $\zeta$ act by permuting the components: $$\zeta:(k_1,\ldots,k_d)\longmapsto(k_{\zeta(1)},\ldots,k_{\zeta(d)}),\quad k_i\in K\rtimes\lb\psi\rb,$$ Let $\chi$ be a $\psi$-stable irreducible character of $K$ and denote by $\tilde{\chi}$ an extension of $\chi$ to $K\rtimes\lb\psi\rb$. Then,
\begin{itemize}
\item[(i)] $H\rtimes\lb\Psi\rb$ is a subgroup of $\mathscr{H}$;
\item[(ii)] The character $\tilde{\chi}\otimes\cdots\otimes\tilde{\chi}$ of $\mathscr{H}$ extends to a character of $\mathscr{H}\rtimes\lb\zeta\rb$. Its restriction $\bar{\chi}$ to $H\rtimes\lb\Psi\rb$ is irreducible;
\item[(iii)] For all $h=(k_1,\ldots,k_d)\in H$, we have $$\bar{\chi}(h\Psi)=\tilde{\chi}(k_{i_1}\psi^{n_{i_1}} k_{i_2}^{n_{i_2}}\psi\cdots k_{i_d}\psi^{n_{i_d}}).$$
\end{itemize}
\end{Lem}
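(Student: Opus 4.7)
My plan is to embed the group in question into a larger ``straight'' semidirect product where everything is transparent. For part (i), I interpret the claim as the existence of an injective homomorphism $H \rtimes \langle \Psi \rangle \hookrightarrow \mathscr{H} \rtimes \langle \zeta \rangle$ (the statement's mention of $\mathscr{H}$ alone reading as a minor imprecision), sending $h \in H$ to itself under the slot-wise inclusion $H = K^d \hookrightarrow (K \rtimes \langle \psi \rangle)^d = \mathscr{H}$, and sending $\Psi$ to $\delta := (\psi^{n_1}, \ldots, \psi^{n_d})\zeta$. The only nontrivial check is that conjugation by $\delta$ on $H$ realises $\Psi$: conjugation by $\zeta$ permutes slots by the rule $(k_1, \ldots, k_d) \mapsto (k_{\zeta(1)}, \ldots, k_{\zeta(d)})$ fixed in the statement, and then slot-wise conjugation by $(\psi^{n_1}, \ldots, \psi^{n_d})$ gives exactly $(k_1,\ldots,k_d) \mapsto (\psi^{n_1}(k_{\zeta(1)}), \ldots, \psi^{n_d}(k_{\zeta(d)}))$. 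Compatibility with multiplication is then immediate.

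For (ii), let $(\pi, V)$ afford $\tilde{\chi}$, viewed as a representation of $K \rtimes \langle \psi \rangle$. The diagonal action of $\mathscr{H}$ on $V^{\otimes d}$ affords $\tilde{\chi}^{\boxtimes d}$, and I extend it to $\mathscr{H} \rtimes \langle \zeta \rangle$ by declaring $\zeta$ to act by the tensor permutation $v_1 \otimes \cdots \otimes v_d \mapsto v_{\zeta(1)} \otimes \cdots \otimes v_{\zeta(d)}$. Let $\bar{\chi}$ denote the character of the restriction of this extended representation along the embedding of (i). Then $\bar{\chi}|_H = \chi^{\boxtimes d}$, which is irreducible as an outer tensor product of irreducibles. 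If $\bar{\chi} = \sum m_i \bar{\chi}_i$ in $\Irr(H \rtimes \langle \Psi \rangle)$, restricting to the normal subgroup $H$ yields a decomposition of the irreducible $\chi^{\boxtimes d}$, which forces the decomposition to be trivial with a single summand of multiplicity one. So $\bar{\chi}$ is irreducible.

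For (iii), there remains a direct trace computation on $V^{\otimes d}$. Under the embedding of (i), $h\Psi$ corresponds to $(k_1\psi^{n_1}, \ldots, k_d\psi^{n_d})\zeta$; setting $g_i := k_i\psi^{n_i} \in K \rtimes \langle \psi \rangle$, the associated operator sends $v_1 \otimes \cdots \otimes v_d$ to $\pi(g_1) v_{\zeta(1)} \otimes \cdots \otimes \pi(g_d) v_{\zeta(d)}$. Choosing a basis $\{e_\alpha\}$ of $V$, the trace becomes $\sum_{\alpha_1, \ldots, \alpha_d} \prod_{j=1}^d (g_{i_j})_{\alpha_{i_j}, \alpha_{i_{j+1}}}$ for the single cycle $\zeta = (i_1, \ldots, i_d)$, which telescopes cyclically to $\tr_V(\pi(g_{i_1} g_{i_2} \cdots g_{i_d})) = \tilde{\chi}(k_{i_1}\psi^{n_{i_1}} k_{i_2}\psi^{n_{i_2}} \cdots k_{i_d}\psi^{n_{i_d}})$, as asserted. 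I do not anticipate a real obstacle; the only delicacy is to keep the conventions for the permutation action of $\zeta$ on tuples and on tensor factors consistent with each other and with the formula defining $\Psi$, and once that convention is pinned down all three parts reduce to short formal checks.
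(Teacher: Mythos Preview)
Your proposal is correct and follows essentially the same approach as the paper: the paper also embeds $H\rtimes\lb\Psi\rb$ into $\mathscr{H}\rtimes\lb\zeta\rb$ (confirming your reading of the statement's imprecision) via $(k_1,\ldots,k_d)\Psi\mapsto(k_1\psi^{n_1},\ldots,k_d\psi^{n_d})\zeta$, argues irreducibility from the restriction to $H$, and computes the trace by the same tensor-permutation identity $\tr(A^{(1)}\otimes\cdots\otimes A^{(d)}\circ\zeta)=\tr(A^{(i_1)}\cdots A^{(i_d)})$ applied to $A^{(i)}=\tilde{\rho}(k_i\psi^{n_i})$. The only cosmetic difference is that the paper writes the embedding explicitly on all powers $\Psi^r$ via $n_i(r)=\sum_{0\le p\le r-1}n_{\zeta^p(i)}$, while you check the defining conjugation relation and extend by universality; both amount to the same verification.
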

\begin{proof}
For each $i\in\{1,\ldots,d\}$ and $r\in\mathbb{Z}_{>0}$, put $n_i(r)=\sum_{0\le p\le r-1}n_{\zeta^p(i)}$. Define a map
\begin{equation}
\begin{split}
H\lb \Psi\rb&\longrightarrow (K\rtimes\lb\psi\rb\times\cdots\times K\rtimes\lb\psi\rb)\rtimes\lb\zeta\rb\\
(k_1,\ldots,k_d)&\longmapsto(k_1,\ldots,k_d)\\
(k_1,\ldots,k_d)\Psi^r&\longmapsto(k_1\psi^{n_1(r)},\ldots,k_d\psi^{n_d(r)})\zeta^r.
\end{split}
\end{equation}
One can verify that it is an injective group homomorphism. The character $\bar{\chi}$ is irreducible as its restriction to $H$ is so.

Let us compute the value of $\bar{\chi}$. Let $\rho:K\rightarrow\GL(V)$ be a representation that realises the character $\chi$. Let $\tilde{\rho}$ denote its extension to $K\lb\psi\rb$. Then $V^{\otimes d}$ is a representation of $\mathscr{H}$, defining the action of $\zeta$ by $v_1\otimes\cdots\otimes v_d\mapsto v_{\zeta(1)}\otimes\cdots\otimes v_{\zeta(d)}$. We use an argument of linear algebra. Take $A^{(1)},\ldots,A^{(d)}\in\GL(V)$ and let $\zeta$ act on $V^{\otimes d}$ as above. Then we have
\begin{equation}
\tr(A^{(1)}\otimes\cdots\otimes A^{(d)}\circ\zeta|V\otimes\cdots\otimes V)=\tr(A^{(i_1)}\cdots A^{(i_d)}|V).
\end{equation}
We conclude the proof by taking $\tilde{\rho}(k_i\psi^{n_i})$ for $A^{(i)}$.
\end{proof}

\subsection{Uniform Extensions}
\subsubsection{}
Denote by $L$ the algebraic group defined over $\mathbb{F}_q$ 
\begin{equation}
L=\prod_{i\in\Lambda_1}(\GL_{n_i}\times\GL_{n_i})^{d_i}\times\prod_{i\in\Lambda_2}(\GL_{n_i}\times\GL_{n_i})^{d_i}
\end{equation}
for some finite sets $\Lambda_1$ and $\Lambda_2$, endowed with the Frobenius $F$ acting on it as $F_w$ in (\ref{Flineaire}) and (\ref{Funitaire}), and with the automorphism $\sigma$ acting on it as in (\ref{sigmatransposition}). Let $T\subset L$ be an $F$-stable and $\sigma$-stable maximal torus and we wirte $W_L:=W_{L}(T)$. For all $i\in\Lambda_1\cup\Lambda_2$, we write $L_{i}:=(\GL_{n_i}\times\GL_{n_i})^{d_i}$, and denote by $T_{i}$ the corresponding direct factor of $T$. Write $W_{i}:=W_{L_{i}}(T_{i})$. Then we have
\begin{equation}
W_{i}\cong(\mathfrak{S}_{n_i}\times\mathfrak{S}_{n_i})^{d_i},\quad W_{i}^{\sigma}\cong \mathfrak{S}_{n_i}^{d_i} 
\end{equation}
for all $i\in\Lambda_1\cup\Lambda_2$. These are some direct factors of $W_L$ and of $W_L^{\sigma}$ that are stable under $F$.

Define an injection 
\begin{equation}\label{mapbracket}
\begin{split}
\Irr(W^{\sigma}_{L})^F&\hooklongrightarrow\Irr(W_{L})^F\\
\varphi&\longmapsto[\varphi]
\end{split}
\end{equation}
in the following manner.

For each $i\in\Lambda_1$, we have the bijections
\begin{equation}
\Irr(W_{i})^F\cong\mathcal{P}_{n_i}\times\mathcal{P}_{n_i},\quad \Irr(W_{i}^{\sigma})^F\cong \mathcal{P}_{n_i}.
\end{equation}
We define $\Irr(W^{\sigma}_{i})^F\rightarrow\Irr(W_{i})^F$ to be sending $\varphi$ to $(\varphi,\varphi)$. 

For each $i\in\Lambda_2$, we have
\begin{equation}
\Irr(W_{i})^F\cong\mathcal{P}_{n_i},\quad \Irr(W_{i}^{\sigma})^F\cong \mathcal{P}_{n_i}.
\end{equation}
We define $\Irr(W\smash{^{\sigma}_{i}})^F\rightarrow\Irr(W_{i})^F$ to be sending $\varphi$ to $\varphi$. 

For any $\varphi\in\Irr(W^{\sigma}_{L})^F$, we denote by $\tilde{\varphi}$ an extension of $\varphi$ to $W^{\sigma}_{L}\rtimes\lb F\rb$ and denote by $\smash{\widetilde{[\varphi]}}$ an extension of $[\varphi]$ to $W_{L}\rtimes\lb F\rb$, where $F$ is regarded as an automorphism of finite order of the Weyl group. Eventually, these choices need to be specified.

Denote by $\smash{\Irr^{\sigma}(L^F)}$ the set of the $\sigma$-stable linear characters of $L^F$. For any $\theta$ in $\smash{\Irr^{\sigma}(L^F)}$, we denote by $\tilde{\theta}$ its trivial extension to $L^F\lb\sigma\rb$. We also denote by the same letter the restriction of $\tilde{\theta}$ to $L^F\sigma$.

\subsubsection{}
Given $\varphi\in\Irr(W^{\sigma}_{L})^F$ and $\theta\in\smash{\Irr^{\sigma}(L^F)}$, by Theorem \ref{LS}, there is a particular choice of the extension $\widetilde{[\varphi]}$ such that 
\begin{equation}
\chi_1:=R^{L}_{[\varphi]}\theta=|W_{L}|^{-1}\sum_{w\in W_{L}}\widetilde{[\varphi]}(wF)R_{T_w}^{L}\theta.
\end{equation}
is a character. Obviously, it is a $\sigma$-stable character of $L^F$. Denote by $\tilde{\chi}_1$ an extension of  $\chi_1$ to $L^F\lb\sigma\rb$.

For any choice of the extension $\tilde{\varphi}$, put
\begin{equation}\label{Rphi}
R^{L\sigma}_{\varphi}\tilde{\theta}=|W_{L}^{\sigma}|^{-1}\sum_{w\in W_{L}^{\sigma}}\tilde{\varphi}(wF)R_{T_w\sigma}^{L\sigma}\tilde{\theta}.
\end{equation}
It is an $L^F$-invariant function on $L^F\sigma$.

\begin{Thm}\label{ExtL_1}
For a particular choice of the extension $\tilde{\varphi}$, we have
\begin{equation}
\tilde{\chi}_1|_{L^{F}\sigma}=\pm R^{L\sigma}_{\varphi}\tilde{\theta}.
\end{equation}
\end{Thm}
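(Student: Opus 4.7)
The plan is to follow the two-case reduction outlined in the introduction (linear case / unitary case) by first decomposing $L$ into the direct factors indexed by $\Lambda_1 \sqcup \Lambda_2$, then within each factor reducing the number of copies of $\GL_{n_i}$ using the two elementary lemmas of \S 9.1.

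First I would apply Lemma \ref{LemdeRed} to the decomposition $L = \prod_{i\in\Lambda_1\sqcup\Lambda_2} L_i$. Since $F$ and $\sigma$ preserve each factor, both sides of the claimed identity are compatible with the product structure: $\chi_1 = \boxtimes_i \chi_{1,i}$ where $\chi_{1,i} = R^{L_i}_{[\varphi_i]}\theta_i$, while $R^{L\sigma}_\varphi \tilde\theta = \widetilde{\boxtimes}_i R^{L_i\sigma}_{\varphi_i}\tilde\theta_i$ by the definition of Deligne--Lusztig induction on non-connected groups and the construction of $\widetilde{\boxtimes}$. Thus it suffices to prove the statement for a single factor $L_i$.

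Second, for a single factor $L_i = (\GL_{n_i}\times\GL_{n_i})^{d_i}$, I would apply Lemma \ref{extdecirc} with $K = \GL_{n_i}\times\GL_{n_i}$, $\psi$ the automorphism switching the two copies (or its composition with $F_0$, depending on whether $i\in\Lambda_1$ or $\Lambda_2$), and $\zeta$ the cyclic permutation of the $d_i$ copies of $K$. The lemma identifies the extension of $\chi_{1,i}$ to $L_i^{F_w}\sigma$ with values of a character of a single copy $K^{F_0^{d_i}}\lb\psi\rb$ (or $K^{F_0^{2d_i}}\lb\psi\rb$ in the unitary case). This reduces us to two cases:
\begin{itemize}
\item[(a)] \emph{Linear case} ($i\in\Lambda_1$): $L_1' = G_0\times G_0$ with $G_0 = \GL_{n_i}$ over $\mathbb{F}_{q^{d_i}}$, $F = F_0\times F_0$, $\sigma:(g,h)\mapsto(\sigma_0(h),\sigma_0(g))$.
\item[(b)] \emph{Unitary case} ($i\in\Lambda_2$): same $L_1'$ but with $F:(g,h)\mapsto(F_0(h),F_0(g))$, so $L_1'^F \cong G_0^{F_0^2}$.
\end{itemize}

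Third, I would handle case (a) by the linear-algebra trick sketched in the introduction: decompose $\sigma = \sigma'\tau$ into the componentwise automorphism $(g,h)\mapsto(\sigma_0(g),\sigma_0(h))$ and the swap $\tau:(g,h)\mapsto(h,g)$. Fixing an extension $\tilde\chi$ of a $\sigma_0$-stable irreducible character $\chi$ of $G_0^{F_0}$, the character $\tilde\chi\boxtimes\tilde\chi$ of $(G_0^{F_0}\lb\sigma_0\rb)^{\times 2}$ is $\tau$-stable, and its extension restricts to an extension $\bar\chi$ of $\chi\boxtimes\chi$ with the explicit value $\bar\chi((g,h)\sigma) = \chi(g\sigma_0(h))$. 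Because $g\sigma_0(h)$ ranges over $\GL_{n_i}(q^{d_i})$ as $(g,h)$ varies, the right-hand side is a value of a character of $\GL_{n_i}(q^{d_i})$, which by Theorem \ref{LS} is a linear combination $R^{G_0}_{[\varphi]}\theta$; translating this decomposition back through the diagonal embedding and comparing with the definition of $R^{L_1'\sigma}_{T_w\sigma}\tilde\theta$ gives the desired identity up to a sign (coming from the freedom in choosing $\tilde\varphi$).

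Fourth, for case (b) the main obstacle is the introduction of Shintani descent. Here $\sigma$ acts on $G_0^{F_0^2}$ as $g\mapsto \sigma_0 F_0(g)$, so that $\sigma$ is itself a Frobenius of $G_0$, and $(\sigma_0 F_0)^2 = F_0^2$ acts trivially on $G_0^{\sigma_0 F_0}\cong\GL_{n_i}^-(q)$. The plan is to transport the problem via $\Sh_{\sigma_0 F_0/F_0^2}$ to $\GL_{n_i}^-(q)$, where a character extends trivially to its product with the finite cyclic group generated by $F_0^2$, and then apply Theorem \ref{5.6} to expand the transported unipotent/character-extension in the basis $\{E_{F_0^2}(U_\psi)\}$. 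By Proposition \ref{Di1.1}, the Shintani descent commutes with the Deligne--Lusztig induction $R^{L_1'\sigma}_{T_w\sigma}$, so both sides of the proposed identity can be pulled back through $\Sh$ and compared on the twisted side; the case (a) already handled then yields case (b) after inserting the scalars $\lambda'_{\rho}$ controlled by \S \ref{vpdeF^2} (which assures these are roots of unity times a power of $q^{1/2}$, ultimately absorbed into the sign).

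The hardest part will be tracking how the extension $\tilde\varphi$ of $\varphi$ and the Shintani scalars $\lambda'_\rho$ combine, ensuring that a single choice of $\tilde\varphi$ makes both the linear and unitary factors work simultaneously and that the global discrepancy is only a sign. This is essentially a bookkeeping problem once one has the dictionary between $\Irr(W_L^\sigma)^F$ and the irreducible representations of the Hecke algebras on both the $\sigma_2 F_0$ and $\sigma_1 F_0^m$ sides provided by Theorem \ref{5.6}.
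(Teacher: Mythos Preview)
Your plan is essentially the same as the paper's proof and is correct in outline: reduce to individual factors $L_i$ via Lemma~\ref{LemdeRed}, collapse the $d_i$ copies to a single $M=\GL_{n_i}\times\GL_{n_i}$ via Lemma~\ref{extdecirc}, then treat the linear case by the trick $\bar\chi((g,h)\sigma)=\chi(g\sigma_0(h))$ and the unitary case by Shintani descent through Theorem~\ref{5.6} and Proposition~\ref{Di1.1}.

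Two points where the paper is more careful than your sketch. First, the paper reduces to \emph{unipotent} characters at the outset (tensoring by $\tilde\theta$ afterwards); this matters for your step~(a), since the character on a single factor $G_0^{F_0}$ with semi-simple part $\alpha_i$ is \emph{not} $\sigma_0$-stable in general (its $\sigma_0$-image has semi-simple part $\alpha_i^{-1}$), so ``fixing an extension $\tilde\chi$ of a $\sigma_0$-stable $\chi$'' only makes sense after that reduction. Second, you understate the work in step~(a): the identity
\[
R^{M\sigma}_{T_{w_M}\sigma}1((g,h)\sigma)=R^{G_0}_{T_w}1(g\sigma_0(h))
\]
is not a formality. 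The paper proves it (Lemma~\ref{lemcaslin}) by a term-by-term comparison of the two character formulas, which requires relating the Jordan decomposition of $(g,h)\sigma$ in $M\lb\sigma\rb$ to that of $g\sigma_0(h)$ in $G_0$, matching the index sets $\{x:x\zeta x^{-1}\in J\sigma\}$ and $\{y:y\bar s y^{-1}\in I\}$, and checking that the Green functions agree via $u\mapsto u^2$. This, rather than the bookkeeping of $\tilde\varphi$ and the Shintani scalars, is the genuinely technical step.
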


We will prove this theorem in the following section.

\subsection{The Proof}
The proof is to reduce the problem to smaller and smaller factors of $L$, until we can apply the known results on $\GL^{\pm}_{n'}(q)$, for various $n'$. The choice of the extension $\tilde{\varphi}$ will also be reduced to the smaller components until the choices are clear.
\subsubsection{Reduction to the Unipotent Characters}
Let $\chi_1=R^{L}_{[\varphi]}1$ be an irreducible character of $L^F$, which is necessarily $\sigma$-stable. Denote by $\tilde{\chi}_1\in\Irr(L^F\lb\sigma\rb)$ such that $\tilde{\chi}_1|_{L^F}=R^{L}_{[\varphi]}1$. Assume that for some choice of $\tilde{\varphi}$,
\begin{equation}
\tilde{\chi}_1|_{L^F\sigma}=R^{L\sigma}_{\varphi}1,
\end{equation}
where we denote by $1$ the trivial extension of the trivial character. 
Since $(R^{L}_{[\varphi]}1)\otimes\theta=R^{L}_{[\varphi]}\theta$ and $(R^{L\sigma}_{\varphi})1\otimes\tilde{\theta}=R^{L\sigma}_{\varphi}\tilde{\theta}$, we have
\begin{equation}
(\tilde{\chi}_1\otimes\tilde{\theta})|_{L^F}=R^{L}_{[\varphi]}\theta,\quad
(\tilde{\chi}_1\otimes\tilde{\theta})|_{L^F\sigma}=R^{L\sigma}_{\varphi}\tilde{\theta}.
\end{equation}
So it suffices to prove the theorem for the unipotent characters.

\subsubsection{Reduction with Respect to the Action of $F$ and $\sigma$}
We have decomposed $L$ into a product of the $L_{i}$'s for $i\in\Lambda_1\sqcup\Lambda_2$, each one being $F$-stable and $\sigma$-stable. Let us show that it suffices to prove the theorem for the $L_{i}$'s.

Write $F=(F_{i})_{i\in\Lambda_1\sqcup\Lambda_2}$ and $\sigma=(\sigma_{i})_{i\in\Lambda_1\sqcup\Lambda_2}$, where for each $i$, $F_{i}$ and $\sigma_{i}$ are respectively a Frobenius and an automorphism of the corresponding direct factor. The given $\varphi$ can be written as $(\varphi_i)_{i\in\Lambda_1\sqcup\Lambda_2}$, with $\varphi_{i}\in\Irr(W_{i}^{\sigma_{i}})^{F_{i}}$, then $[\varphi]=([\varphi_i])_i$. Suppose that $R^{L_{i}}_{[\varphi_i]}1$ are some irreducible characters, denoted by $\chi_i$, each one being $\sigma$-stable, and they extend to $\tilde{\chi}_i\in\Irr(L_{i}^{F_{i}}\lb\sigma_{i}\rb)$. We will show that if for some choices of the extensions $\tilde{\varphi}_i\in\Irr(W_{i}^{\sigma_{i}}\lb F_{i}\rb)$, the following equality holds $$\tilde{\chi}_i|_{L_{i}^{F_{i}}\sigma_{i}}=R_{\varphi_i}^{L_{i}\sigma_i}1,$$ then there is some choice of $\tilde{\varphi}\in\Irr(W_L^{\sigma}\lb F\rb)$ such that $$\tilde{\chi}_1|_{L^F\sigma}=R_{\varphi}^{L\sigma}1.$$

Given the extensions of the factors $\tilde{\chi}_i$, we can obtain an extension $\tilde{\chi}_1$ following Lemma  \ref{LemdeRed}. By definition, for any $l\sigma\in L^F\sigma$ which can be identified with $\prod_il_i\sigma_{i}\in\prod_iL^{F_{i}}_{i}\lb\sigma_{i}\rb$, we have,
\begin{equation}
\tilde{\chi}_1(l\sigma)=\prod_iR_{\varphi_i}^{L_{i}\sigma_{i}}1(l_i\sigma_{i}).
\end{equation}

On the one hand,
\begin{equation}
\begin{split}
\prod_iR_{T_{w_i}\sigma_{i}}^{L_{i}\sigma_{i}}1(l_i\sigma_{i})
&=\prod_i\tr(l_i\sigma_{i}|H^{\ast}_c(X_{w_i}))\\
&=\tr(\prod_il_i\sigma_{i}|\bigotimes_i H^{\ast}_c(X_{w_i}))\\
&=\tr(l\sigma|H^{\ast}_c(X_w))=R^{L\sigma}_{T_w\sigma}1(l\sigma).
\end{split}
\end{equation}
where the $T_{w_i}$ and $T_w$ are defined with respect to $T_{i}$ and $T$.

On the other hand, applying Lemma \ref{LemdeRed} to the Weyl groups and the Frobenius, we obtain an extension $\tilde{\varphi}$, such that for any $w=\prod_iw_i\in W^{\sigma}_L$, we have,
\begin{equation}
\tilde{\varphi}(wF)=(\prod_i\tilde{\varphi}_i)|_{W_L^{\sigma}.F}(\prod_i w_iF_{i})=\prod_i\tilde{\varphi}_i(w_iF_{i}).
\end{equation}

Consequently, $\tilde{\chi}_1|_{L^F\sigma}=R_{\varphi}^{L\sigma}1$. (One may also check that $\boxtimes_iR^{L_{i}}_{[\varphi_i]}1=R^{L}_{[\varphi]}1$ with some similar but simpler arguments.)

\subsubsection{The Linear Part, I}\label{LinearI}
In this part we fix $i\in\Lambda_1$. Write $M=\GL_{n_i}\times\GL_{n_i}$, equipped with the Frobenius $$F_M:(g,h)\mapsto(F_0(g),F_0(h))$$ with $F_0$ being the standard Frobenius of $\GL_n$ and the automorphism $$\tau(g,h)=(\sigma_0(h),\sigma_0(g))$$ of the form \S \ref{2facQC}, with $\sigma_0$ commuting with $F_0$. Then $L_{i}=M\times\cdots\times M$ is a direct product of $d_i$ copies of $M$ equipped with the automorphism $\sigma=\tau\times\cdots\times\tau$ consisting of $d_i$ copies of $\tau$. We will fix $i$ and write $d=d_i$. We may assume that the maximal torus $T_{i}\subset L_{i}$, which is $F_{i}$-stable and $\sigma_{i}$-stable, is of the form $T_M\times\cdots\times T_M$, where $T_M\subset M$ is an $F_M$-stable and $\tau$-stable maximal torus. Note that $F_M$ acts trivially on $W_M:=W_M(T_M)$. The Frobenius $F_{i}$ acts on $L_{i}$ in the following manner
\begin{equation}
\begin{split}
M\times\cdots\times M&\longrightarrow M\times\cdots\times M\\
(m_1,\ldots,m_d)&\longmapsto (F_M(m_d),F_M(m_1),\ldots,F_M(m_{d-1})).
\end{split}
\end{equation}
We have a natural commutative diagram
\begin{equation}
\begin{CD}
\Irr(W^{\tau}_M) @>>>\Irr(W^{\sigma_{i}}_{i})^{F_{i}}\\
@VVV @VVV\\
\Irr(W_M) @>>> \Irr(W_{i})^{F_{i}}
\end{CD}
\end{equation}
where the upper horizontal bijective map $\varphi_M\longmapsto\varphi_i=(\varphi_M,\ldots,\varphi_M)$ 
identifies each element of $\Irr(W^{\sigma_{i}}_{i})^{F_{i}}$ with $d$ identical copies of an element of $\Irr(W^{\tau}_M)$. Denote by $[\varphi_M]$ and $[\varphi_i]$ the images of the vertical maps defined as in (\ref{mapbracket}), which are matched under the lower horizontal map.
 
Endow $M$ with the Frobenius $F_M^d$. Suppose that for some choice of $\tilde{\varphi}_M$,
\begin{equation}\label{LinearM}
R^{M.\tau}_{\varphi_M}1=|W^{\tau}_M|^{-1}\sum_{w\in W^{\tau}_M}\tilde{\varphi}_M(wF^d_M)R^{M.\tau}_{T_w.\tau}1
\end{equation}
where $T_w$ is defined with respect to $T_M$, is an extension of the irreducible character $R^{M}_{[\varphi_M]}1$ of $M^{F_M^d}$. Let us show that
\begin{equation}\label{LinearL}
R^{L_{i}\sigma_{i}}_{\varphi_i}1=|W^{\sigma_{i}}_{i}|^{-1}\sum_{w\in W^{\sigma_{i}}_{i}}\tilde{\varphi}_i(wF_{i})R^{L_{i}\sigma_{i}}_{T_w\sigma}1
\end{equation}
is an extension of the irreducible character $R^{L_{i}}_{[\varphi_i]}1$ of $L_{i}^{F_{i}}$. In fact, there is a natural isomorphism $\smash{M^{F_M^d}\cong L_{i}^{F_{i}}}$ compatible with the action of $\tau$ and $\sigma$. We are going to show that $\smash{R^{M.\tau}_{\varphi_M}1}$ coincides with $\smash{R^{L_{i}\sigma_{i}}_{\varphi_i}1}$ under this isomorphism and they are an extension of the same character of $\smash{M^{F_M^d}=L_{i}^{F_{i}}}$ corresponding to $[\varphi_M]=[\varphi_i]$.

Applying Lemma \ref{extdecirc} to $K=W^{\tau}_M$, $H=W^{\sigma_{i}}_{i}$, $\psi=F_M$, and $\zeta=(d,\ldots,2,1)\in\mathfrak{S}_d$, we deduce from $\tilde{\varphi}_M$ an extension $\tilde{\varphi}_i$ such that for $w=(w_1,\ldots,w_d)\in (W^{\tau}_M)^d\cong W^{\sigma_{i}}_{i}$, 
\begin{equation}
\begin{split}
\tilde{\varphi}_i(wF_{i})&=\tilde{\varphi}_M(w_dF_Mw_{d-1}F_M\ldots w_1F_M)\\
&=\tilde{\varphi}_M(w_dw_{d-1}\ldots w_1F_M^d).
\end{split}
\end{equation}
Write $w_1'=w_dw_{d-1}\ldots w_1$. Then, $w$ is $F_{i}$-conjugate to $w'=(w_1',1,\ldots,1)$, so for any $l\in L_{i}^{F_{i}}$, we have
$$R^{L_{i}\sigma_{i}}_{T_w\sigma_{i}}1(l\sigma_{i})=R^{L_{i}\sigma_{i}}_{T_{w'}\sigma_{i}}1(l\sigma_{i})=\tr(l\sigma_{i}|H^{\ast}_c(X_{w'}))$$ We can write $l=(m,F_M(m),\ldots,F_M^{d-1}(m))$ with $m$ satisfying $F^d_M(m)=m$. Since the two varieties
\begin{equation*}
\begin{split}
X_{w'}&=\{B\in\mathcal{B}_{L_{i}}|(B,F_{i}(B))\in\mathcal{O}(w')\},\\
 X_{w_1'}&=
 \{B\in\mathcal{B}_M|(B,F_M^d(B))\in\mathcal{O}(w_1')\}
\end{split}
\end{equation*}
are isomorphic, and the actions of $l\sigma_{i}$ and of $m\tau$ on the two varieties are compatible, we have
\begin{equation*}
\tr(l\sigma_{i}|H^{\ast}_c(X_{w'}))
=\tr(m\tau|H^{\ast}_c(X_{w'_1}))
=R^{M.\tau}_{T_{w'_1}.\tau}1(m\tau),
\end{equation*}
Consequently, the value of $\smash{\tilde{\varphi}_i(wF_{i})R^{L_{i}\sigma_{i}}_{T_w\sigma_{i}}1}$ only depends on $w_1'\in W_M^{\tau}$ and is equal to $\tilde{\varphi}_M(w_1'F_M^d)R^{M.\tau}_{T_{w'_1}.\tau}1$, This, together with the fact that $|W_{i}^{\sigma_{i}}|=|W^{\tau}_M|^d$, shows that $\smash{R^{M.\tau}_{\varphi_M}1=R^{L_{i}\sigma_{i}}_{\varphi_i}1}$. (Similar arguments show that $\smash{R^{M}_{[\varphi_M]}1=R^{L_{i}}_{[\varphi_i]}1}$.)

\subsubsection{The Unitary Part, I} 
In this part we require that $i\in\Lambda_2$. We keep the same notations as above except that $F_{i}$ acts on $L_{i}$ in the following manner
\begin{equation}
\begin{split}
M\times\cdots\times M&\longrightarrow M\times\cdots\times M\\
(m_1,\ldots,m_d)&\longmapsto (F'_M(m_d),F_M(m_1),\ldots,F_M(m_{d-1})),
\end{split}
\end{equation}
where $$F_M':(g,h)\mapsto(F_0(h),F_0(g)).$$ Denote by $F_M'':M\rightarrow M$ the Frobenius $$F'_MF^{d-1}_M:(g,h)\mapsto (F_0^d(h),F^d_0(g)).$$ We still have a natural identification 
\begin{equation}
\begin{split}
\Irr(W^{\tau}_M)^{F''_M}&\longrightarrow\Irr(W^{\sigma_{i}}_{i})^{F_{i}}\\
\varphi&\longmapsto\varphi_L=(\varphi,\ldots,\varphi)
\end{split}
\end{equation}
($F_M'$ acts trivially on $\Irr(W^{\tau}_M)$ since $\sigma_0$ induces an inner automorphism on the Weyl group) and an isomorphism $\smash{M^{F''_M}\cong L_{i}^{F_{i}}}$. In a way similar to \S \ref{LinearI}, from the equality (\ref{LinearM}), with $F_M^d$ replaced by $F_M''$, one deduces the equality (\ref{LinearL}), with $F_i$ defined in the present setting.

\subsubsection{}
From now on we write $M=G\times G$, $G=\GL_n^{\epsilon}(q)$ with $\epsilon=\pm$. Denote by $F_0$ the split Frobenius of $G$ and by $F'_0$ the Frobenius of $G$ corresponding to $\epsilon$, and denote by $\sigma_0$ an order 2 automorphism of $G$, which commutes with the Frobenius endomorphisms. 

\subsubsection{The Linear Part, II}\label{caslin}
It is essential that we allow $G$ to be $\GL_n^-(q)$, which will be applied to the unitary part later. Define
\begin{IEEEeqnarray*}{rlcrl}
\sigma:M&\longrightarrow M  &\quad\quad &F:M&\longrightarrow M\\
(g,h)&\longmapsto (\sigma_0(h),\sigma_0(g))  &\quad\quad &(g,h)&\longmapsto (F'_0(g),F'_0(h)).
\end{IEEEeqnarray*}
Let $\chi_G$ be a unipotent irreducible character of $G^{F'_0}$ corresponding to some $\varphi\in\Irr(W_G)$, it defines a character $\chi_M=\chi_G\otimes\chi_G\in\Irr(M^F)$ which is invariant under the action of $\sigma$ and so extends to $M^F\lb\sigma\rb$, denoted by $\tilde{\chi}_M$. Every $\sigma$-stable irreducible unipotent character of $M^F$ is of the form $\chi_G\otimes\chi_G$. Regarding $\varphi$ as a character of $W_M^{\sigma}$, we show that up to a sign,
\begin{eqnarray}
\tilde{\chi}_M|_{M^F\sigma}=R^{M\sigma}_{\varphi}1:=|W_{M}^{\sigma}|^{-1}\sum_{w\in W_{M}^{\sigma}}\tilde{\varphi}(wF)R^{M\sigma}_{T_{w}\sigma}1,
\end{eqnarray}
for some choice of the extension $\tilde{\varphi}$.

We apply Lemma \ref{extdecirc} by taking $G^{F'_0}$ for $K$ and obtain $$\tilde{\chi}_M((g,h)\sigma)=\chi_G(g\sigma_0(h))$$ for any $(g,h)\in M^F$. By Theorem \ref{LS}, the irreducible unipotent character of $G^{F'_0}$ can be expressed as $$|W_G|^{-1}\sum_{w\in W_G}\tilde{\varphi}_G(wF'_0)R^G_{T_w}1,$$ for some choice of $\tilde{\varphi}_G$. The extension $\tilde{\varphi}$ is then defined by $\tilde{\varphi}_G$ under the isomorphism $W_G\cong W_M^{\sigma}$, noticing that the action of  $F$ is compatible with the action of $F'_0$ under this isomorphism. Comparing this expression with $R^{M\sigma}_{\varphi}1$, we are reduced to show that for any $\smash{(g,h)\in M^F}$
\begin{equation}\label{caselinearequality}
R^{M\sigma}_{T_{w_M}\sigma}1((g,h)\sigma)=R^G_{T_w}1(g\sigma_0(h))
\end{equation}
with $w_M=(w,\sigma_0(w))\in W^{\sigma}_M$. Observe that $(g,h)\sigma\mapsto R^G_{T_w}1(g\sigma_0(h))$ defines a function on $M^F\sigma$ invariant under the conjugation by $M^F$.

\subsubsection{}
Let us prove a more general assertion. Let $I$ be an $F'_0$-stable Levi subgroup of $G$. Then $J:=I\times\sigma_0(I)$ is a $\sigma$-stable Levi factor of a $\sigma$-stable parabolic subgroup of $M$, which justifies the functor $R^{M\lb\sigma\rb}_{J\lb\sigma\rb}$. Let $\chi_I$ be an irreducible character of $I^{F'_0}$, it defines a character $\chi_J=\chi_I\otimes\sigma_0({\chi}_I)\in\Irr(J^F)$ which is invariant under the action of $\sigma$ and thus extends to $J^F\lb\sigma\rb$. Denoted by $\tilde{\chi}_J$ a choice of such extension.  The following lemma with $I=T_w$ and $\chi_I=1$ proves the above assertion.
\begin{Lem}\label{lemcaslin}
We keep the notations as above. Assume that for any $(g',h')\in J^F$, we have $\tilde{\chi}_J((g',h')\sigma)=\chi_I(g'\sigma_0(h'))$. Then, for any $(g,h)\in M^F$, we have
\begin{equation}
R^{M\lb\sigma\rb}_{J\lb\sigma\rb}\tilde{\chi}_J((g,h)\sigma)=R^G_I\chi_I(g\sigma_0(h)).
\end{equation}
\end{Lem}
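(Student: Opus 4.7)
The plan is to unwind both sides into $\ell$-adic traces on explicit Deligne-Lusztig varieties and match them via K\"unneth plus a change of variables. Pick a parabolic $Q = IV$ of $G$ so that $P_J := Q \times \sigma_0(Q)$ is a $\sigma$-stable parabolic of $M$ with Levi $J$ and unipotent radical $U_J = V \times \sigma_0(V)$. Then $\mathcal{L}^{-1}_M(U_J) = X \times Y$ with $X := \mathcal{L}^{-1}_G(V)$ and $Y := \mathcal{L}^{-1}_G(\sigma_0(V))$, and
\begin{equation*}
\mathcal{L}^{-1}_{M\lb\sigma\rb}(U_J) = \mathcal{L}^{-1}_M(U_J) \sqcup \mathcal{L}^{-1}_M(U_J)\cdot\sigma.
\end{equation*}

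In the definition of $R^{M\lb\sigma\rb}_{J\lb\sigma\rb}\tilde{\chi}_J((g,h)\sigma)$, the summand indexed by $\ell \in J^F$ has vanishing trace, since the associated operator interchanges the two components displayed above; so only $\ell = (j',j'')\sigma \in J^F\sigma$ contributes, with coefficient $\chi_I((j'\sigma_0(j''))^{-1})$ by hypothesis. A direct calculation gives the induced self-map on the first component as
\begin{equation*}
T^{(1)}_{j',j''}(a,b) = (g\sigma_0(b){j'}^{-1},\; h\sigma_0(a){j''}^{-1}),
\end{equation*}
and the self-map on the second component equals $T^{(1)}_{\sigma_0(j''),\sigma_0(j')}$. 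The change of variables $(j',j'') \mapsto (\sigma_0(j''),\sigma_0(j'))$ is a bijection of $J^F$ under which the coefficient is preserved (both values are $I^{F'_0}$-conjugate), so the two components contribute equally and the resulting factor of $2$ cancels $|J^F\lb\sigma\rb| = 2|J^F|$.

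Now identify $Y$ with $X$ via $b \mapsto b' := \sigma_0(b)$. In these coordinates $T^{(1)}_{j',j''}$ becomes $(a,b') \mapsto (A_1(b'), A_2(a))$ with $A_1(b') = gb'{j'}^{-1}$ and $A_2(a) = \sigma_0(h)\,a\,\sigma_0(j'')^{-1}$. Factoring $T^{(1)}_{j',j''}$ as the graded swap $\tau$ composed with $A_1 \times A_2$, the K\"unneth isomorphism together with the super-trace identity $\tr(\tau\circ(A_1\times A_2)\,|\,H^{\ast}_c(X\times X)) = \tr(A_1\circ A_2\,|\,H^{\ast}_c(X))$ yields
\begin{equation*}
\tr(T^{(1)}_{j',j''}\,|\,H^{\ast}_c(\mathcal{L}^{-1}_M(U_J))) = \tr((g\sigma_0(h),\, j'\sigma_0(j''))\,|\,H^{\ast}_c(X)),
\end{equation*}
where the pair $(x,l)$ acts on $X$ by $a \mapsto xal^{-1}$. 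Substituting $l := j'\sigma_0(j'') \in I^{F'_0}$, observing that each such $l$ is hit by exactly $|I^{F'_0}|$ pairs $(j',j'')\in J^F$, and using $|J^F|=|I^{F'_0}|^2$, the sum collapses to
\begin{equation*}
\tfrac{1}{|I^{F'_0}|}\sum_{l\in I^{F'_0}}\chi_I(l^{-1})\tr((g\sigma_0(h),l)\,|\,H^{\ast}_c(\mathcal{L}^{-1}_G(V))) = R^G_I\chi_I(g\sigma_0(h)).
\end{equation*}

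The only delicate step is the K\"unneth--Lefschetz identity for the twisted self-map $T^{(1)}_{j',j''}$: one must carefully track the signs in the super-trace of the graded swap on $H^{\ast}_c(X)\otimes H^{\ast}_c(X)$. Since $A_1$ and $A_2$ preserve cohomological degree, only matched-degree terms contribute, and the signs $(-1)^{d_id_j}$ reduce to $(-1)^{d_i}$, which is precisely what appears in $\tr(A_1\circ A_2\,|\,H^{\ast}_c(X))$. Once this bookkeeping is handled, the remainder of the argument is purely formal manipulation of the defining formula for Deligne-Lusztig induction.
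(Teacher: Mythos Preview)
Your proof is correct and complete. It is, however, a genuinely different argument from the paper's own proof, and it is worth spelling out the contrast.

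The paper proves this lemma by expanding both sides via the character formula (Proposition~\ref{FormdeCara}/\ref{2.10}), which expresses each induced character as a sum of Green functions on centralisers of semi-simple elements. This forces a detailed analysis of the Jordan decomposition of $(g,h)\sigma$: one shows that if $(g,h)\sigma=\zeta\mu$ with $\zeta=(s,t)\sigma$ semi-simple and $\mu=(u,w)$ unipotent, then $g\sigma_0(h)=s\sigma_0(t)\cdot u^2$, and that $C_M(\zeta)^{\circ}\cong C_G(s\sigma_0(t))^{\circ}$. One then matches the two character formulae term by term, constructing an explicit bijection between the indexing sets $\{x\in M^F\mid x\zeta x^{-1}\in J\sigma\}$ and $\{y\in G^{F'_0}\mid y\bar s y^{-1}\in I\}\times I^{F'_0}$, and finally invoking the fact that the two-variable Green function of $\GL$ depends only on the Jordan types, so that $Q(u^2,v_1^{-2})=Q(u,v_1^{-1})$.

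Your argument bypasses all of this by exploiting the product structure $\mathcal{L}^{-1}_M(U_J)\cong X\times Y$ directly and reducing to the K\"unneth--swap identity $\tr\bigl((A_1\times A_2)\circ\tau\mid H^{\ast}_c(X)\otimes H^{\ast}_c(X)\bigr)=\tr\bigl(A_1A_2\mid H^{\ast}_c(X)\bigr)$ on super-traces. No Jordan decomposition, no centraliser analysis, and crucially no appeal to the invariance of Green functions under $u\mapsto u^2$. The price is that your method is tailored to the specific two-factor shape $M=G\times G$ with $\sigma$ swapping the factors; the paper's approach, while heavier, sits inside a general framework (the character formula for non-connected groups) that is used repeatedly elsewhere in the article. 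For this particular lemma, though, your route is shorter and more transparent.
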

\begin{proof}
Let $(g,h)\sigma=\zeta\mu$ be the Jordan decomposition, with $\zeta$ semi-simple and $\mu$ unipotent, and we write $\mu=(u,w)$ and $\zeta=(s,t)\sigma$. Beware that neither $s$ nor $t$ is necessarily semi-simple. Also let $g\sigma_0(h)=\bar{s}\bar{u}$ be the Jordan decomposition, with $\bar{s}$ semi-simple and $\bar{u}$ unipotent. The proof will simply be comparing the following two formulas term by term.
\begin{IEEEeqnarray}{l}
R^{M\lb\sigma\rb}_{J\lb\sigma\rb}\tilde{\chi}_J((g,h)\sigma)=|J^F|^{-1}|C_M(\zeta)^{\circ F}|^{-1}\sum_{\substack{\{x\in M^F|\\x\zeta x^{-1}\in J\sigma\}}}\sum_{v\in C_{x^{-1}\bar{J}x}(\zeta)_u^{F}}Q^{C_M(\zeta)^{\circ}}_{C_{x^{-1}Jx}(\zeta)^{\circ}}(\mu,v^{-1})\leftidx{^x}{\tilde{\chi}_J}(\zeta v)\nonumber\\
R^G_I\chi_I(g\sigma_0(h))=|I^{F'_0}|^{-1}|C_G(\bar{s})^{\circ F'_0}|^{-1}\sum_{\substack{\{y\in G^{F'_0}|\\y\bar{s}y^{-1}\in I\}}}\sum_{v_1\in C_{y^{-1}Iy}(\bar{s})\smash{^{\circ F'_0}_u}}Q^{C_G(\bar{s})^{\circ}}_{C_{y^{-1}Iy}(\bar{s})^{\circ}}(\bar{u},v_1^{-1})\leftidx{^y}{\chi_I}(\bar{s} v_1).\nonumber
\end{IEEEeqnarray}
An element $(z_1,z_2)\in M$ commutes with $(s,t)\sigma$ if and only if $z_2=t\sigma_0(z_1)t^{-1}$ and $z_1=s\sigma_0(z_2)s^{-1}$, if and only if $z_1\in C_G(s\sigma_0(t))$ and $z_2=t\sigma_0(z_1)t^{-1}$, whence an isomorphism $C_M((s,t)\sigma)\cong C_G(s\sigma_0(t))$. An element of $M\lb\sigma\rb$ is semi-simple if and only if its square is semi-simple since the characteristic is odd. The equality $((s,t)\sigma)^2=(s\sigma_0(t),t\sigma_0(s))$ shows that $s\sigma_0(t)$ is semi-simple. The unipotent part $(u,w)$ commutes with $(s,t)\sigma$, so $u\in C_G(s\sigma_0(t))$. Considering the equality $$(g\sigma_0(h),h\sigma_0(g))=((g,h)\sigma)^2=((s,t)\sigma)^2(u,w)^2=(s\sigma_0(t),t\sigma_(s))(u^2,w^2),$$ we have $g\sigma_0(h)=s\sigma_0(t)u^2$. This gives the Jordan decomposition of $g\sigma_0(h)$ because $u$ commutes with $s\sigma_0(t)$ and $u^2$ is unipotent. Therefore, $\bar{s}=s\sigma_0(t)$, $\bar{u}=u^2$ and $|C_M(\zeta)^{\circ F}|=|C_G(\bar{s})^{\circ F'_0}|$.

Write $x=(x_1,x_2)\in M^F$. The condition $x\zeta x^{-1}\in J\sigma$ means that $x_1s\sigma_0(x_2^{-1})\in I$ and that $x_2t\sigma_0(x_1^{-1})\in\sigma_0(I)$, which implies that $x_1s\sigma_0(t)x^{-1}_1\in I$. Fix $(x_1,x_2)$ satisfying $x\zeta x^{-1}\in J\sigma$, then every element of $(x_1,\sigma_0(I^{F'_0})x_2)$ also satisfies it. Let $(x_1,x_2)$ and $(x_1,x_2')$ be two elements satisfying $x\zeta x^{-1}\in J\sigma$, then the conditions $x_1s\sigma_0(x_2^{-1})\in I$ and $x_1s\sigma_0(x_2^{'-1})\in I$ implies that $x_2'x_2^{-1}\in \sigma_0(I^{F'_0})$. We thus obtain a bijection of sets $$\{x\in M^F|x\zeta x^{-1}\in J\sigma\}\cong\{y\in G^{F'_0}|y\bar{s}y^{-1}\in I\}\times I^{F'_0},$$ with $y$ corresponding to the factor $x_1$ of $x$. We will see that the sum over $x$ in the character formula is invariant under multiplying $x_2$ by an element of $\sigma_0(I^{F'_0})$ on the left, which cancels a factor $|I^{F'_0}|$ from $|J^F|=|I^{F'_0}|^2$.

The isomorphism $C_M(\zeta)^{\circ}\cong C_G(\bar{s})^{\circ}$ restricts to an isomorphism $C_{x^{-1}Jx}(\zeta)^{\circ}\cong C_{y^{-1}Iy}(\bar{s})^{\circ}$. For characteristic reason, the unipotent elements of $C_{x^{-1}J x}(\zeta)$ are contained in  $C_{x^{-1}J x}(\zeta)^{\circ}$ by \cite[Remarque 2.7]{DM94} and \cite[Th\'eor\`eme 1.8 (i)]{DM94}, whence a bijection $C_{x^{-1}Jx}(\zeta)^{\circ F}_u\cong C_{y^{-1}Iy}(s\sigma_0(t))\smash{^{\circ F'_0}_u}$, by which $v=(v_1,v_2)$ is sent to $v_1$. 

Now we compare the characters $\leftidx{^x}{\tilde{\chi}_J}(\zeta v)$ et $\leftidx{^y}{\chi_I}(\bar{s} v_1)$. Write $x=(x_1,x_2)$ and $v=(v_1,v_2)$. Then 
\begin{equation}
x\zeta vx^{-1}=(x_1s\sigma_0(v_2)\sigma_0(x_2)^{-1},x_2t\sigma_0(v_1)\sigma_0(x_1)^{-1})\sigma.
\end{equation}
Taking into account the equality $v_2=t\sigma_0(v_1)t^{-1}$, we have
\begin{equation}
\big(x_1s\sigma_0(v_2)\sigma_0(x_2)^{-1}\big)\sigma_0\big(x_2t\sigma_0(v_1)\sigma_0(x_1)^{-1})\big)=x_1s\sigma_0(t)v_1^2x_1^{-1}.
\end{equation}
By assumption, $\tilde{\chi}_J((g',h')\sigma)=\chi_I(g'\sigma_0(h'))$, for any $(g',h')\in J^F$, whence 
\begin{equation}
\leftidx{^x}{\tilde{\chi}_J}(\zeta v)=\chi_I(x_1s\sigma_0(t)v_1^2x_1^{-1})=\leftidx{^{x_1}}{\chi_I}(\bar{s} v_1^2),
\end{equation}
where we also see that multiplying $x_2$ by an element of $\sigma_0(I^{F_0})$ on the left does not change the value. Since $v\mapsto v^2$ defines a bijection of $C_{y^{-1}Iy}(\bar{s})_u^{\smash{\circ F'_0}}$ into itself, it only remains to show the first of the following equalities of Green functions 
\begin{equation}
Q^{C_G(\bar{s})^{\circ}}_{C_{y^{-1}Iy}(\bar{s})^{\circ}}(u^2,v_1^{-2})=Q^{C_G(\bar{s})^{\circ}}_{C_{y^{-1}Iy}(\bar{s})^{\circ}}(u,v_1^{-1})=Q^{C_M(\zeta)^{\circ}}_{C_{x^{-1}Jx}(\zeta)^{\circ}}(\mu,v^{-1}),
\end{equation}
which follows from the fact that the value of the Green function only depends on the associated partition and a power prime to $p$ does not change the Jordan blocks of a unipotent matrix.
\end{proof}

\subsubsection{The Unitary Part, II}
Define
\begin{IEEEeqnarray*}{rlcrl}
\sigma:M&\longrightarrow M  &\quad\quad &F:M&\longrightarrow M\\
(g,h)&\longmapsto (\sigma_0(h),\sigma_0(g))  &\quad\quad &(g,h)&\longmapsto (F_0(h),F_0(g)).
\end{IEEEeqnarray*}
Now, $M^F$ is isomorphic to $G^{F_0^2}$ under the map $(g,F_0(g))\mapsto g$, and $M^{\sigma}$ is isomorphic to $G$ under the map $(g,\sigma_0(g))\mapsto g$. The Frobenius $F$ acts on $M^{\sigma}\cong G$ by $g\mapsto \sigma_0F_0(g)$. The automorphism $\sigma$ acts on $M^F$ by $$(g,F_0(g))\mapsto(\sigma_0F_0(g),\sigma_0(g))=(\sigma_0F_0(g),\sigma_0F_0^2(g)),$$ or in other words, $\sigma$ acts on  $G^{F_0^2}$ as $\sigma_0F_0$. 

Let $\chi_M$ be a unipotent irreducible character of $M^F$, and let $\tilde{\chi}_M$ be an irreducible character of $M^F\lb\sigma\rb$ that extends $\chi_M$. We are going to show that up to a sign,
\begin{eqnarray}
\tilde{\chi}_M|_{M^F\sigma}=R^{M\sigma}_{\varphi}1=|W_{M}^{\sigma}|^{-1}\sum_{w\in W_{M}^{\sigma}}\tilde{\varphi}(wF)R^{M\sigma}_{T_{w}\sigma}1
\end{eqnarray}
for some choice of $\tilde{\varphi}$. Under the isomorphism $W_M^{\sigma}\cong W_G$, the Frobenius $F$ acts as $\sigma_0$ on $W_G$, and so an $F$-stable character of $W_M^{\sigma}$ is just a character of $W_G$. We are reduced to show that if $\chi_G$ is an irreducible unipotent character of $G^{F_0^2}$ corresponding to $\varphi\in\Irr(W_G)$, then its extension $\tilde{\chi}_G$ to $G^{F_0^2}\lb\sigma_0F_0\rb$ is given by the above formula up to a sign.

We need the Shintani descent. Suppose that $\chi_G\in\Irr(G^{F_0^2})^{\sigma_0F_0}$ is the unipotent character corresponding to $\varphi\in\Irr(W_G)^{\sigma_0}$. We apply Theorem \ref{5.6} with $(\sigma_1F_0^m,\sigma_2F_0)=(F_0^2,\sigma_0F_0)$, i.e. $m=2$, $\sigma_1=1$ and $\sigma_2=\sigma_0$, and deduce that
\begin{equation}
\begin{split}
\tilde{\chi}_G&=E_{\sigma_0F_0}(\chi_G)=\Sh_{\sigma_0F_0/F_0^2}\Omega^2_{\sigma_0F_0}R^{G^{\sigma_0F_0}}_{\varphi}1\\
&=\pm\Sh_{\sigma_0F_0/F_0^2}R^{G^{\sigma_0F_0}}_{\varphi}1=\pm\Sh_{\sigma_0F_0/F_0^2}|W_G|^{-1}\sum_{w\in W_G}\tilde{\varphi}(w\sigma_0F_0)R^{G^{\sigma_0F_0}}_{T_w}1,
\end{split}
\end{equation}
since $\Omega^2_{\sigma_0F_0}=\pm 1$ because $R^{G^{\sigma_0F_0}}_{\varphi}1$ is an irreducible unipotent character $G^{\sigma_0F_0}$ on which $\Omega^2_{\sigma_0F_0}$ acts as a scalar, whose value is given by \S \ref{vpdeF^2}. For example, $\Omega_{\sigma_0F_0}=1$ on principal series representations and $\Omega^2_{\sigma_0F_0}=-1$ on cuspidal unipotent characters according to (\cite[Table I]{L77}). The sign $(\pm 1)$ does not matter since the two extensions of $\chi_G$ only differ by a sign. It remains to show that $\Sh_{\sigma_0F_0/F_0^2}R^{G^{\sigma_0F_0}}_{T_w}1=R^{M\sigma}_{T_{w_M}\sigma}1$, where $w_M$ is as in (\ref{caselinearequality}) and $M$ is equipped with the Frobenius $F$. The function $R^{G^{\sigma_0F_0}}_{T_w}1$ is invariant under $F_0^2$-conjugation as $F_0^2$ acts trivially on $G^{\sigma_0F_0}$, which justifies $\smash{\Sh_{\sigma_0F_0/F_0^2}\circ R^{G^{\sigma_0F_0}}_{T_w}1}$.

Proposition  \ref{Di1.1} gives \begin{equation}
\Sh_{\sigma F/F}\circ R^{M^{\sigma F}\sigma}_{T_{w_M}\sigma}1=R^{M^F\sigma}_{T_{w_M}\sigma}1.
\end{equation}
(One checks that with respect to a fixed $F$-stable and $\sigma$-stable maximal torus $T\subset M$, the maximal torus $T_{w_M}$ of type $w_M$ with respect to $F$ is also of type $w_M$ with respect to $\sigma F$, using the fact that for $\sigma$ quasi-central, $w_M$ has a representative in $M^{\sigma}$.) Since $F$ acts as $\sigma$ on $M^{\sigma F}$, the function $\smash{R^{M^{\sigma F}\sigma}_{T_{w_M}\sigma}1}$ is invariant under the $F$-conjugation of $M^{\sigma F}$, and its Shintani descent $\smash{\Sh_{\sigma F/F}\circ R^{M^{\sigma F}\sigma}_{T_{w_M}\sigma}1}$ belongs to $\mathcal{C}(M^F\sigma F)$. There is a natural bijection $\mathcal{C}(M^F\sigma F)\isomLR\mathcal{C}(G^{F_0^2}\sigma_0F_0)$. Let us show that $$\Sh_{\sigma_0F_0/F_0^2}\circ R^{G^{\sigma_0F_0}}_{T_w}1=\Sh_{\sigma F/F}\circ R^{M^{\sigma F}\sigma}_{T_{w_M}\sigma}1,$$ which concludes the proof.

For $g\in G^{F_0^2}$, there exists $x\in G$ such that $x\sigma_0F_0(x)^{-1}=g$, and so $$N_{F_0^2/\sigma_0F_0}(g)=x^{-1}F^2_0(x)\in G^{\sigma_0F_0}.$$ We also have $$(x,F_0(x))\sigma F(x^{-1},F_0(x)^{-1})=(g,F_0(g)),$$ and so $$N_{F/\sigma F}((g,F_0(g))=(x^{-1},F_0(x)^{-1})F(x,F_0(x))=(x^{-1}F_0^2(x),1)\in M^{\sigma F}.$$ Therefore, 
\begin{equation}
\begin{split}
&\Sh_{\sigma F/F}\circ R^{M^{\sigma F}\sigma}_{T_{w_M}\sigma}1((g,F_0(g))\stackrel{\circled{1}}{=}R^{M^{\sigma F}\sigma}_{T_{w_M}\sigma}1((x^{-1}F_0^2(x),1))\\&\stackrel{\circled{2}}{=}R^{G^{\sigma_0F_0}}_{T_w}1(x^{-1}F_0^2(x))\stackrel{\circled{3}}{=}\Sh_{\sigma_0F_0/F_0^2}\circ R^{G^{\sigma_0F_0}}_{T_w}1(g).
\end{split}
\end{equation}
Equality \circled{1} is the definition of Shintani descent and we have identified the functions on $M^{\sigma F}\sigma$ to the functions on $M^{\sigma F}$ invariant under the $F$-conjugation. We have equality \circled{2} by (\S \ref{caselinearequality}) with the automorphism $\sigma(g,h)=(\sigma_0(h),\sigma_0(g))$ and the Frobenius $\sigma F(g,h)=(\sigma_0F_0(g),\sigma_0F_0(h))$. Equality \circled{3} is again the definition of Shintani descent.

\subsection{Extensions of Quadratic-Unipotent Characters}\label{ExtQuaUni}
We now focus on $L_0\cong \GL_{n_0}(k)$. This subsection is devoted to the statement of the main theorem of \cite{W1}.

Let $\sigma_0$ and $\sigma_0'$ be the automorphisms defined for $\GL_{n_0}(k)$ in the same way $\sigma$ and $\sigma'$ are defined for $\GL_n(k)$, and let $F_0$ be the Frobenius that sends each entry to its q-th power. Now the semi-direct product of $\GL_{n_0}(k)$ by $\sigma_0$ (resp. by $\sigma_0'$) is denoted by $\leftidx{^s\!}{\bar{G}}_0$ (resp. $\leftidx{^o\!}{\bar{G}}_0$). We may regard $\sigma_0$ also as an element of $\leftidx{^o\!}{\bar{G}}_0$, acting as $\sigma_0$ on $\GL_{n_0}(k)$ but satisfying $\sigma_0^2=-1$. The point is that, we want to fix a quasi-central element in $\leftidx{^o\!}{\bar{G}}_0$ to work with, and $\sigma_0$ is a convenient choice. 

\subsubsection{}\label{Lw}
Let $m$, $N_+$ and $N_-$ be non-negative integers such that $n_0=m+2N_++2N_-$. We consider the data $(h_1,h_2,w_+,w_-)$, with $(h_1,h_2)\in\mathbb{N}\times\mathbb{Z}$, $w_+\in \mathfrak{W}_{N_+}$ and $w_-\in \mathfrak{W}_{N_-}$. To simplify, we write $\mathfrak{W}_+=\mathfrak{W}_{N_+}$, $\mathfrak{W}_-=\mathfrak{W}_{N_-}$ and $\mathbf{w}=(h_1,h_2,w_+,w_-)$ instead. To each $\mathbf{w}$ is associated an $F_0$-stable and $\sigma_0$-stable Levi factor $L_{\mathbf{w}}$ of a $\sigma_0$-stable parabolic subgroup, isomorphic to $T_{w_+}\times T_{w_-}\times\GL_m(k)$, where $T_{w_{\pm}}$ is isomorphic to $(k^{\ast})^{2N_{\pm}}$ equipped with the Frobenius twisted by $w_{\pm}$. Each of its factors being preserved by $\sigma_0$ and $F_0$, we write $\sigma_0=\sigma_+\times\sigma_-\times\sigma_{00}$ and $F_0=F_+\times F_-\times F_{00}$ with respect to this decomposition. 

\subsubsection{}\label{cuspGL_m}
The cuspidal local systems on $\GL_m(k)\sigma_{00}$ are described as follows. 

Let $(h_1,h_2)\in\mathbb{N}\times\mathbb{Z}$. By \S \ref{isolelts}, there is a unique semi-simple isolated conjugacy class on $\GL_m(k)\sigma_{00}$ with connected centraliser isomorphic to $\Sp_{h_1(h_1+1)}(k)\times\SO_{h_2^2}(k)$. Let $s\sigma_{00}$ be an $F$-stable element representing this conjugacy class. By Theorem \ref{cusploc}, there is a unique cuspidal local system on $\Sp_{h_1(h_1+1)}(k)\times\SO_{h_2^2}(k)$, which is supported on the unipotent conjugacy class whose symplectic (resp. orthogonal) component corresponds to the symplectic partition $\lambda_1:=(2h_1,2h_1-2,\ldots,2)$ (resp. orthogonal partition $\lambda_2:=(2|h_2|-1,2|h_2|-3,\ldots,1)$). Let $u=(u_1,u_2)\in\Sp_{h_1(h_1+1)}(k)\times\SO_{h_2^2}(k)$ be an $F$-stable element representing this unipotent conjugacy class. 

By Proposition \ref{IV18.2} and Lemma \ref{L036.6}, there exist cuspidal local systems supported on the conjugacy class of $s\sigma_{00}u$, which itself is an isolated stratum according to Example \ref{isostm}. The irreducible equivariant local systems on this conjugacy class are parametrised by $((\mathbb{Z}/2\mathbb{Z})^{\bs{\kappa}(\lambda_1)})^{\vee}\times((\mathbb{Z}/2\mathbb{Z})^{\bs{\kappa}(\lambda_2)})^{\vee}$ since the component group of $C_{\GL_m(k)}(s\sigma_{00}u)$ is isomorphic to $(\mathbb{Z}/2\mathbb{Z})^{\bs{\kappa}(\lambda_1)}\times(\mathbb{Z}/2\mathbb{Z})^{\bs{\kappa}(\lambda_2)}$ according to \S \ref{uni-centraliser}. The irreducible equivariant local systems on the conjugacy class of $u_1\in \Sp_{h_1(h_1+1)}(k)$ (resp. $u_2\in\SO_{h_2^2}(k)$) are parametrised by $((\mathbb{Z}/2\mathbb{Z})^{\bs{\kappa}(\lambda_1)})^{\vee}$ (resp. $((\mathbb{Z}/2\mathbb{Z})^{\bs{\kappa}(\lambda_2)})^{\vee}/\Delta$) for similar reasons, where $\Delta\subset((\mathbb{Z}/2\mathbb{Z})^{\bs{\kappa}(\lambda_2)})^{\vee}$ is the subgroup generated by $(\eta,\ldots,\eta)$. The restriction map that sends a local system on the class of $s\sigma_{00}u$ to the class of $u$ in $C_{\GL_m(k)}(s\sigma_{00})^{\circ}$ is given by the natural quotient map 
\begin{equation}\label{Delta-quot}
((\mathbb{Z}/2\mathbb{Z})^{\bs{\kappa}(\lambda_1)})^{\vee}\times((\mathbb{Z}/2\mathbb{Z})^{\bs{\kappa}(\lambda_2)})^{\vee}\longrightarrow ((\mathbb{Z}/2\mathbb{Z})^{\bs{\kappa}(\lambda_1)})^{\vee}\times((\mathbb{Z}/2\mathbb{Z})^{\bs{\kappa}(\lambda_2)})^{\vee}/\Delta.
\end{equation}
Therefore by Lemma \ref{L036.6} again, there are two cuspidal local systems supported on the class of $s\sigma_{00}u$, corresponding to the fibre of the above quotient map over the unique cuspidal local system on $\Sp_{h_1(h_1+1)}(k)\times\SO_{h_2^2}(k)$. These exploit all cuspidal local systems on $\GL_m(k)\sigma_{00}$.

\subsubsection{}\label{cuspSpSO}
We may choose $s\sigma_{00}$ in such a way that $C_{\GL_m(q)}(s\sigma_{00})^F$ is split. Denote by $C_1\subset\Sp_{h_1(h_1+1)}(k)$ (resp. $C_2\subset\SO_{h_2^2}(k)$) the conjugacy class of $u_1$ (resp. $u_2$), and by $\mathcal{E}_1$ (resp. $\mathcal{E}_2$) the unique cuspidal local system supported on $C_1$ (resp $C_2$), which is necessarily $F$-stable, i.e. there is an isomorphism $\psi_1:F^{\ast}\mathcal{E}_1\isom\mathcal{E}_1$ (resp. an isomorphism $\psi_2:F^{\ast}\mathcal{E}_2\isom\mathcal{E}_2$). In the following, we write $\mathbb{Z}/2\mathbb{Z}$ in the multiplicative form.

The $\Sp_{h_1(h_1+1)}(q)$-conjugacy classes contained in $C_1^F$ are parametrised by the elements of $(\bs\mu_2)^{\bs{\kappa}(\lambda_1)}$ as in \S \ref{cent-uni}. For any $a=(e_i)_{i\in\bs{\kappa}(\lambda_1)}\in (\bs\mu_2)^{\bs{\kappa}(\lambda_1)}$, denote by $C_a$ the corresponding conjugacy class.  Following \cite[\S II.4]{W2}, we normalise $\psi_1$ in such a way that the characteristic function $\phi_1$ of $\mathcal{E}_1$ is given by  $$\phi_1(C_a)=\prod_{i\in\{1,\ldots,h_1\},~i\text{ is odd}}e_{2i}.$$

The $\SO_{h_2^2}(q)$-conjugacy classes contained in $C_1^F$ are parametrised by the elements of $(\bs\mu_2)^{\bs{\kappa}(\lambda_2)}$ satisfying (\ref{eta-constraint}) as in \S \ref{cent-uni}. For any such $a=(e_i)_{i\in\bs{\kappa}(\lambda_2)}\in(\bs\mu_2)^{\bs{\kappa}(\lambda_2)}$, denote by $C_a$ the corresponding conjugacy class. Following \cite[\S II.5]{W2}, we normalise $\psi_2$ in such a way that the characteristic function $\phi_2$ of $\mathcal{E}_2$ is given by  $$\phi_2(C_a)=\prod_{i\in\{1,\ldots,|h_2|\},~i+1\equiv h_2\!\!\!\!\mod 2}e_{2i-1}.$$  

Denote by $C$ the conjugacy class of $s\sigma_{00}u$. Let $\mathcal{E}$ be a cuspidal local system on $C$ and let $\psi:F^{\ast}\mathcal{E}\isom\mathcal{E}$ be an isomorphism. The $\GL_m(q)$-conjugacy classes contained in $C^F$ are parametrised by $(a,b)=((e_i),(e'_j))\in(\mathbb{Z}/2\mathbb{Z})^{\bs{\kappa}(\lambda_1)}\times(\mathbb{Z}/2\mathbb{Z})^{\bs{\kappa}(\lambda_2)}$, and we denote by $C_{a,b}$ the corresponding conjugacy class. We can normalise $\psi$ in such a way that the characteristic function $\phi$ of $\mathcal{E}$ is either $$\phi(C_{a,b})=\prod_{i\in\{1,\ldots,h_1\},~i\text{ is odd}}e_{2i}\prod_{j\in\{1,\ldots,|h_2|\},~j+1\equiv h_2\!\!\!\!\mod 2}e'_{2j-1},$$ or$$\phi(C_{a,b})=\prod_{i\in\{1,\ldots,h_1\},~i\text{ is odd}}e_{2i}\prod_{j\in\{1,\ldots,|h_2|\},~j\equiv h_2\!\!\!\!\mod 2}e'_{2j-1},$$
since the two elements in the fibre of (\ref{Delta-quot}) differ by $\Delta=(\eta,\ldots,\eta)$. We call these functions cuspidal functions.

\subsubsection{}
To each $(h_1,h_2)\in\mathbb{N}\times\mathbb{Z}$ is associated a unique cuspidal function $\phi(h_1,h_2)$ supportd on $C^F$, which is defined below. Put
\begin{equation*}
s(h_2)=
\begin{cases}
0, & \text{if $h_2\ge 0$},\\
1, & \text{if $h_2<0$},
\end{cases}
\end{equation*}
and put 
\begin{equation*}
\begin{split}
&\delta(h_1)=\dim C_1=\frac{|h_2^3-h_2|}{3},\\&\delta(h_2)=\dim C_2=\frac{h_1(2h_1+1)(h_1+1)}{6},\\
&\delta(h_1,h_2)=\delta(h_1)+\delta(h_2).
\end{split}
\end{equation*}

Then we define
\begin{equation}\label{phih1h2}
\phi(h_1,h_2)(C_{a,b})=q^{\delta(h_1,h_2)/2}\prod_{\substack{i\in\{1,\ldots,h_1\}\\i\text{ is odd}}}e_{2i}\prod_{\substack{j\in\{1,\ldots,|h_2|\}\\j\equiv h_1+h_2+1+s(h_2)\!\!\mod 2}}e'_{2j-1},
\end{equation}
with $(a,b)=((e_i),(e'_j))$, following \cite[\S 5]{W1}. It is a function on $\GL_m(q)\sigma_{00}$ invariant under the conjugation by $\GL_m(q)$.
\begin{Rem}
We may regard $\GL_m(q)\sigma_{00}$ either as $\GL_m(q)\rtimes\lb\sigma_{00}\rb\setminus\GL_m(q)$ or as $\GL_m(q)\rtimes\lb\sigma'_{00}\rb\setminus\GL_m(q)$ with $\sigma_{00}'$ being the orthogonal type automorphism, i.e. the above construction works for both $\leftidx{^s\!}{\bar{G}}$ and $\leftidx{^o\!}{\bar{G}}$.
\end{Rem}

\subsubsection{}\label{phi(rho)}
As in \S \ref{Lw}, we fix an isomorphism $L_{\mathbf{w}}\cong T_{w_+}\times T_{w_-}\times\GL_m(k)$ and write $\sigma=\sigma_+\times\sigma_-\times\sigma_{00}$. Let $\tC$ be the trivial character of $T_{w_+}^{F_+}$. It trivially extends to $T_{w_+}^{F_+}\lb\sigma_+\rb$, and so we can regard $\tC$ as a function on $T_{w_+}^{F_+}\sigma_+$. Let $\eta$ be the order 2 irreducible character of $\mathbb{F}_q^{\ast}$. Composing $\eta$ with the homomorphism $T^{F_-}_{w_-}\rightarrow\mathbb{F}_q^{\ast}$ defined by the product of norm maps, we can regard $\eta$ as an invariant function on $T_{w_-}^{F_-}\sigma_-$, whose value at $\sigma_-$ is equal to $1$. 

Suppose that $n$ is odd or $\bar{G}=\leftidx{^s\!}{\bar{G}}$ if $n$ is even. Then by Definition \ref{tildebox}, $1\widetilde{\boxtimes}\eta\widetilde{\boxtimes}\phi(h_1,h_2)$ is an invariant function on $L^F_{\mathbf{w}}\sigma_0$, which we will denote by $\phi_{\mathbf{w}}$, where $\mathbf{w}$ is as in \S \ref{Lw}. 

Suppose $\bar{G}=\leftidx{^o\!}{\bar{G}}_0$. The element $\sigma_0$ satisfies $\sigma_0^2=-1$ and so does each of its components: $\sigma_+$, $\sigma_-$ and $\sigma_{00}$. Now $L_{\mathbf{w}}\rtimes\lb\sigma_0\rb\ne L_{\mathbf{w}}\sqcup L_{\mathbf{w}}\sigma_0$ since $\sigma_0$ is not an order 2 element. However, $L_{\mathbf{w}}\sqcup L_{\mathbf{w}}\sigma_0$ is nevertheless a group. If we replace $T_{w_+}\rtimes\lb\sigma_+\rb$ by $T_{w_+}\sqcup T_{w_+}\sigma_+$, $T_{w_-}\rtimes\lb\sigma_-\rb$ by $T_{w_-}\sqcup T_{w_-}\sigma_-$ and $\GL_m\rtimes\lb\sigma_{00}\rb$ by $\GL_m\sqcup\GL_m\sigma_{00}$, then the arguments of Lemma \ref{LemdeRed} give an inclusion: $$L_{\mathbf{w}}\sqcup L_{\mathbf{w}}\sigma_0\subset(T_{w_+}\sqcup T_{w_+}\sigma_+)\times(T_{w_-}\sqcup T_{w_-}\sigma_-)\times(\GL_m\sqcup\GL_m\sigma_{00}).$$

Again, we can regard $\tC$ as a function on $T_{w_+}^{F_+}\sigma_+$. We can also extend $\eta$ to a character of $T_{w_-}^{F_-}\sqcup T_{w_-}^{F_-}\sigma_-$ in such a way that its value at $\sigma_-$ is equal to 1, because the value of $\eta$, regarded as a character of $T_{w_-}^{F_-}$, is always equal to $1$ at $-1\in T_{w_-}^{F_-}$. Consequently, we can define functions $\tC\widetilde{\boxtimes}\eta\widetilde{\boxtimes}\phi(h_1,h_2)$ on $L_{\mathbf{w}}^F\sigma_0\subset\leftidx{^o\!}{\bar{G}}_0$, which we will also denote by $\phi_{\mathbf{w}}$. By definition, for any $g\in\GL_m(q)$, and 
\begin{equation}\label{l=11g}
l=(1,1,g)\in T_{w_+}\times T_{w_-}\times\GL_m(q),
\end{equation} we have 
\begin{equation}\label{phiw(ls)}
\phi_{\mathbf{w}}(l\sigma_0)=\phi(h_1,h_2)(g\sigma_{00}).
\end{equation} 

Waldspurger introduced some generalised Deligne-Lusztig characters, denoted by $\phi(\mathbf{w})$ in \cite[\S 5]{W1}, which is equal to $R^{L_0\sigma_0}_{L_{\mathbf{w}}\sigma_0}\phi'_{\mathbf{w}}$ for some function $\phi'_{\mathbf{w}}$ on $L_{\mathbf{w}}^F\sigma_0$. (The function $\phi'_{\mathbf{w}}$ does not appear explicitly in \cite{W1}.) The two functions $\phi_{\mathbf{w}}$ and $\phi'_{\mathbf{w}}$ are extensions of the same character of $L_{\mathbf{w}}^F$ to $L_{\mathbf{w}}^F\sigma_0$, and so may differ by a sign. Besides, Waldspurger's $\phi'_{\mathbf{w}}$ may not satisfy (\ref{phiw(ls)}). We need to determine exactly when these two functions differ by a sign. It suffices to compare them at an element of $L_{\mathbf{w}}^F\sigma_0$ where both of them are non zero.

\subsubsection{}
In \cite[\S 3]{W1}, using linear algebra, Waldspurger defined an element $\theta_L\in L_{\mathbf{w}}^F\sigma_0$, satisfying:
\begin{enumerate}
\item[(Wi)] \label{Wi} $\theta_L^2=1$; (See \cite[\S 3]{W1}.)
\item[(Wii)] \label{Wii} $\theta_Ll\theta_L^{-1}=(\sigma_+(t_+),\sigma_-(t_-),\bs\theta_m(g))$, for any $l=(t_+,t_-,g)\in T_{w_+}\times T_{w_-}\times\GL_m(k)$, where $\bs\theta_m$ is a given automorphism of $\GL_m$ of orthogonal type; (See \cite[\S 3]{W1}.)
\item[(Wiii)] \label{Wiii} $\eta\circ\det(\theta_L)=\epsilon$. (See the proof of \cite[\S4 Lemme]{W1}.)
\end{enumerate}
Condition (Wiii) deserves some comments. First of all, our $\det$ (see (\ref{epsilon})) satisfies $\det(\sigma'_0)=1$ while Waldspurger's $\mathbf{det}$ satisfies $\mathbf{det}(\tau)=1$, where $\tau$ is the transpose-inverse automorphism. So they differ by $\det(\mathscr{J}_{n_0})$ (See \S \ref{autostand}), whence the absence of the term $\eta(-1)^d$ as in the proof of \cite[\S 4 Lemme]{W1}. Secondly, in \cite[\S 4 Lemme]{W1}, Waldspurger assumed that only one of $w_+$ and $w_-$ is non trivial and its conjugacy class can only be $((d),\varnothing)$, or $(\varnothing,(d))$. The sign $\epsilon$ is equal to $-1$ precisely when the non trivial 2-partition is $(\varnothing,(d))$. In general, $\epsilon$ should be replaced by $\sgn((w_+,w_-))$, where $\sgn$ is the character $$\sgn:\mathfrak{W}_+\times\mathfrak{W}_-\subset \mathfrak{W}_{N_++N_-}\longrightarrow\{\pm1\}.$$ (See \S \ref{W^D}.) If not for (iii), we could have taken $\theta_L=\sigma_0$ when $n$ is odd.

\begin{Lem}\label{thetaL}
Suppose that $n$ is odd. If $\sgn((w_+,w_-))=1$, then $\theta_L$ can be taken to be $\sigma_0$; Otherwise, $\theta_L$ can be taken to be $l_0\sigma_0$, with $$l_0=(1,1,\lambda)\in T_{w_+}\times T_{w_-}\times\GL_m(k),$$where $\lambda\in\mathbb{F}_q^{\ast}\setminus(\mathbb{F}_q^{\ast})^2$ and is regarded as a scalar matrix in $\GL_m$.
\end{Lem}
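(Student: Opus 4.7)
The plan is to directly verify conditions \textup{(Wi)}, \textup{(Wii)}, \textup{(Wiii)} from \S\ref{ExtQuaUni} for each of the two proposed candidates, using that $n$ odd forces $n_0$ odd and therefore $m = n_0 - 2N_+ - 2N_-$ to be odd as well. Under this parity, the standard automorphism $\sigma_{00}$ of $\GL_m$ has centraliser $\SO_m(k)$ (orthogonal type, by the classification recalled in \S\ref{autostand}), so it is a legitimate choice of $\bs\theta_m$ in \textup{(Wii)}. Moreover $\sigma_0^2=1$, again because $n_0$ is odd.

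First I would treat the case $\sgn((w_+,w_-))=1$ by showing that $\theta_L=\sigma_0$ works. Condition \textup{(Wi)} is immediate from $\sigma_0^2=1$. For \textup{(Wii)}, by the very construction of $L_{\mathbf{w}} \cong T_{w_+}\times T_{w_-}\times \GL_m(k)$ in \S\ref{Lw}, the restriction of $\sigma_0$ to $L_{\mathbf{w}}$ decomposes as $\sigma_+\times \sigma_-\times\sigma_{00}$, and by the preceding paragraph $\sigma_{00}$ can be taken for $\bs\theta_m$. For \textup{(Wiii)}, by Convention~\ref{convsigma} and the extension $\det\colon\GL_{n_0}(q)\rtimes\langle\sigma_0\rangle\to\bs\mu_2$ defined in \eqref{epsilon}, the element $\sigma_0$ is sent to $1$, hence $\eta\circ\det(\sigma_0)=1=\sgn((w_+,w_-))$.

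Next I would handle the case $\sgn((w_+,w_-))=-1$ with $\theta_L=l_0\sigma_0$, $l_0=(1,1,\lambda I_m)$. For \textup{(Wi)}: $(l_0\sigma_0)^2=l_0\,\sigma_0(l_0)\,\sigma_0^2=l_0\,\sigma_0(l_0)$. Since $\sigma_{00}(g)=\mathscr{J}_m g^{-t}\mathscr{J}_m^{-1}$ sends a scalar matrix $\lambda I_m$ to $\lambda^{-1}I_m$, we get $l_0\sigma_0(l_0)=1$. For \textup{(Wii)}: conjugation by a central element is trivial, so conjugation by $l_0\sigma_0$ on $L_{\mathbf{w}}$ agrees with conjugation by $\sigma_0$, already checked. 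For \textup{(Wiii)}: $\eta\circ\det(l_0\sigma_0)=\eta(\det(\lambda I_m))\cdot\eta(\det(\sigma_0))=\eta(\lambda^m)$, and since $m$ is odd and $\lambda\in\mathbb{F}_q^\ast\setminus(\mathbb{F}_q^\ast)^2$, this equals $\eta(\lambda)=-1=\sgn((w_+,w_-))$.

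There is no real obstacle in this lemma; the only subtlety worth flagging is the normalisation of $\det$ used in \eqref{epsilon}, which differs from Waldspurger's $\mathbf{det}$ by the factor $\det(\mathscr{J}_{n_0})$. This discrepancy is already accounted for in the discussion preceding the statement of the lemma (see the remark following condition \textup{(Wiii)}), so no extra sign correction is needed here. Thus both candidates satisfy all three conditions, completing the proof.
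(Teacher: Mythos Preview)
Your proof is correct and follows exactly the same approach as the paper: direct verification of conditions \textup{(Wi)}, \textup{(Wii)}, \textup{(Wiii)}. The paper's proof is the single sentence ``Obviously the $\theta_L$ thus defined satisfies conditions (Wi), (Wii) and (Wiii) above,'' whereas you spell out each verification explicitly; the content is the same. One minor remark: the reference to Convention~\ref{convsigma} is not quite to the point (that convention concerns the even case), but the relevant normalisation of $\det$ you actually use is the one from \eqref{epsilon}, which you do cite correctly.
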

\begin{proof}
Obviously the $\theta_L$ thus defined satisfies conditions (Wi), (Wii) and (Wiii) above.
\end{proof}

\begin{Lem}\label{thetaL-even}
Suppose that $n$ is even and $\bar{G}=\leftidx{^o\!}{\bar{G}}_0$, and that only one of $w_+$ and $w_-$ is non trivial. Denote by $w$ the non trivial one among them. Suppose that $w$ lies in the conjugacy class corresponding to the 2-partition $((d),\varnothing)$, or $(\varnothing,(d))$. We may assume that $w=((1,\ldots,1,1),(12\cdots d))$ or $w=((1,\ldots,1,-1),(12\cdots d))$ using the notation of \S \ref{couronne}, where $(12\cdots d)$ is the circular permutation. Denote by $\dot{w}\in\GL_n$ a $\sigma_0$-stable element normalising $L_0\cong T\times\GL_m$, which acts on $T$ according to this permutation, and let $x\in\GL_n$ be a $\sigma_0$-stable element satisfying $x^{-1}F(x)=\dot{w}$. With these notations, if $\sgn(w)=1$, then $\theta_L$ can be taken to be $x\sigma'_0x^{-1}$; Otherwise, $\theta_L$ can be taken to be $xl_0\sigma'_0x^{-1}$, where $$l_0=(t,\Id)\in T\times\GL_m(k),$$and $t=(a_1,\ldots,a_{d-1},a_d,a_d,a_{d-1},\ldots a_1)\in (k^{\ast})^{2d}$, with $a_i^q=a_{i+1}$ for any $1\le i\le d-1$ and $a_d^q=-a_1$.
\end{Lem}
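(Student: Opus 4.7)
The plan is to verify Waldspurger's three conditions (Wi), (Wii), (Wiii) for the explicitly proposed $\theta_L$. Throughout I use the identification $\sigma_0 = t_0\sigma_0'$ in $\leftidx{^o\!}{\bar{G}}_0$ from Convention \ref{convsigma}, which for any $\sigma_0$-stable $g\in\GL_{n_0}$ yields $\sigma_0'(g) = t_0^{-1}g\,t_0$.

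For $\sgn(w)=1$, set $\theta_L = x\sigma_0' x^{-1}$. The signed permutation $\dot{w}$ is then an unsigned cyclic permutation of the first and last $d$ diagonal positions; since $t_0$ is constant $+1$ on the first $d$ of these and constant $-1$ on the last $d$, the element $\dot{w}$ commutes with $t_0$, whence $\sigma_0'(\dot{w})=\dot{w}$. This gives at once $F$-stability, $F(\theta_L)=x\dot{w}\sigma_0'\dot{w}^{-1}x^{-1}=\theta_L$, and (Wi), $\theta_L^2=x(\sigma_0')^2x^{-1}=1$. Condition (Wii) follows by transport under $\mathrm{Ad}(x^{-1})$: conjugation by $\theta_L$ on $L_{\mathbf{w}}$ becomes conjugation by $\sigma_0'$ on $L_0 = T\times\GL_m$, which by definition of the orthogonal-type involution acts as prescribed on each factor. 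For (Wiii), the homomorphism (\ref{epsilon}) sends $\sigma_0'$ to $1$ in $\bs\mu_2$, so $\eta\circ\det(\theta_L) = \eta\circ\det(\sigma_0') = 1 = \sgn(w)$.

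For $\sgn(w)=-1$, set $\theta_L = xl_0\sigma_0' x^{-1}$ with $l_0=(t,\mathrm{Id})$ in the prescribed symmetric form. The symmetry $t_i = t_{2d+1-i}$, combined with $\sigma_0'$ acting on a diagonal element by $a_i\mapsto a_{n_0+1-i}^{-1}$, forces $l_0\sigma_0'(l_0) = 1$, so $\theta_L^2 = 1$, yielding (Wi). Condition (Wii) is unchanged from the previous case because $l_0\in Z(L_0)$ (the torus component lies in $T$, the $\GL_m$ component is the identity), so the inner conjugation by $l_0$ on $L_0$ is trivial. For $F$-stability, after plugging in $\sigma_0'(\dot{w})=t_0^{-1}\dot{w}t_0$ and exploiting $[l_0,t_0]=1$, the condition $F(\theta_L)=\theta_L$ simplifies to
\[
F(l_0) = \dot{w}^{-1}l_0\dot{w}\cdot z,\qquad z:=\dot{w}^{-1}t_0^{-1}\dot{w}\,t_0\in(T^{\sigma_0})^{\circ},
\]
where $z$ has entry $-1$ at the single $\sigma_0$-fixed coordinate (position $d$) corresponding to the sign component $\epsilon_d=-1$ of $w$ and $+1$ elsewhere. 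Coordinate by coordinate in the $(a_1,\ldots,a_d)$-parameters of $t$, the cyclic structure of $\dot{w}$ produces $a_i^q=a_{i+1}$ for $1\le i<d$, while the single $-1$ in $z$ introduces the sign in $a_d^q=-a_1$; this is the prescribed recurrence.

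The principal content is (Wiii). Since $\det(\sigma_0')=1$ via (\ref{epsilon}) and the $\GL_m$-component of $l_0$ is trivial, $\det(\theta_L) \equiv \det(t) = \prod_{i=1}^d a_i^2$ modulo $(\mathbb{F}_q^\ast)^2$. Iterating $a_i^q=a_{i+1}$ gives $a_i=a_1^{q^{i-1}}$, hence
\[
\det(t) = a_1^{2(1+q+\cdots+q^{d-1})} = a_1^{2(q^d-1)/(q-1)},
\]
and the identity $a_d^q=-a_1$ gives $a_1^{q^d-1}=-1$. Consequently $\det(t)\in\mathbb{F}_q^\ast$ (its $(q-1)$-th power squares to $1$), and
\[
\det(t)^{(q-1)/2} = a_1^{q^d-1} = -1,
\]
so by Euler's criterion $\det(t)$ is a non-square in $\mathbb{F}_q^\ast$, giving $\eta\circ\det(\theta_L) = -1 = \sgn(w)$. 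The main obstacle is the joint packaging of these last two steps: one must track the non-commutation between $\dot{w}$ and $\sigma_0'$ through the $F$-stability calculation so that precisely one sign $-1$ emerges in the recurrence, and then convert that single sign into the non-squareness statement via the Euler-criterion computation above.
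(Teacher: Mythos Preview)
Your proof is correct and follows the same route as the paper's: verify (Wi), (Wii), (Wiii) and $F$-stability directly for the proposed $\theta_L$ in each sign case, reducing (Wiii) in the negative case to showing $\det(l_0)=\big(a_1^{1+q+\cdots+q^{d-1}}\big)^2$ is a non-square in $\mathbb{F}_q^\ast$. The paper argues this last point by noting the square root $a_1^{1+q+\cdots+q^{d-1}}$ lies outside $\mathbb{F}_q$, while you phrase it via Euler's criterion; these are equivalent. Your intermediate computation isolating the commutator $z=\dot{w}^{-1}t_0^{-1}\dot{w}\,t_0$ and reading off the single $-1$ is a slightly more explicit unpacking of the paper's one-line ``we obtain the equations in the $a_i$'s,'' but the argument is the same.
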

\begin{proof}
Note that in the group $\leftidx{^o\!}{\bar{G}}_0$ we have $\sigma_0^2=-\Id$ and $\sigma_0^{\prime 2}=\Id$. Also note that $xL_0x^{-1}=L_{\mathbf{w}}$. Obviously, if $\sgn(w)=1$, then $x\sigma'_0x^{-1}$ satisfies all three conditions imposed on $\theta_L$.

Now suppose $\sgn(w)=-1$. It is easy to see that $l_0\sigma'_0$ is an order 2 element, and that $l_0$ does not affect the action of $\sigma'_0$ on $L_0$. These properties are preserved under conjugation by $x$. Note that $\sigma'_0=t_0\sigma_0$, where $t_0$ is as in \S \ref{autostand} with $n$ replaced by $n_0$. That $xl_0\sigma'_0x^{-1}$ is an $F$-stable element: $$F(xl_0\sigma'_0x^{-1})=xl_0\sigma'_0x^{-1},$$is equivalent to $\dot{w}F(l_0)\sigma'_0(\dot{w}^{-1})=l_0$. Note that $\dot{w}$ is not $\sigma'_0$-stable. Write $l_0=(t,\Id)$ and $t=(a_1,\ldots,a_{d-1},a_d,a_d,a_{d-1},\ldots a_1)\in (k^{\ast})^{2d}$, we obtain the equations in the $a_i$'s.

We compute $\eta\circ\det(xl_0\sigma'_0x^{-1})$. Since $\sigma_0(x)=x$, we have $\sigma'_0(x)=t_0xt_0^{-1}$, and so $xl_0\sigma'_0x^{-1}=xl_0t_0x^{-1}t_0^{-1}\sigma'_0$. By definition, $\eta\circ\det(xl_0t_0x^{-1}t_0^{-1}\sigma'_0)=\eta\circ\det(l_0)$. We have $$\det(l_0)=a_1^2a_2^2\cdots a_d^2=(a_1^{1+q+\cdots+q^{d-1}})^2.$$ Using the equations in the $a_i$'s, we can see that this element lies in $\mathbb{F}_q$ while $a_1^{1+q+\cdots+q^{d-1}}$ is not an element of $\mathbb{F}_q$. Therefore $\eta\circ\det(l_0)=-1$.
\end{proof}
\begin{Rem}\label{thetaL-even-Rem}
In the general circumstance, $(w_+,w_-)$ is a product of signed cycles as in \S \ref{couronne} and \S \ref{wreath}, and we may apply the same arguments to each signed cycle $w$. We may write $L_0=T\times\GL_m$ and $T\cong(k^{\ast})^{2N}$ with $2N+m=n_0$. Let $t=(a_1,\ldots,a_N,a_N,\ldots,a_1)\in T$ and $l_0=(t,\Id)\in L_0$. If $\{i_1,\ldots,i_d\}$ is a subset of $\{1,\ldots,N\}$ such that there is a signed cycle $w$ circularly permuting its elements, then we may define $(a_{i_j})_{1\le j\le d}$ as in the lemma. If $x$ is a $\sigma_0$-stable element of $\GL_m$ such that $xL_0x^{-1}=L_{\mathbf{w}}$, then $xl_0\sigma'_0x^{-1}$ is the desired $\theta_L$.
\end{Rem}

\begin{Rem}
Other choices of $\theta_L$, including Waldspurger's, are essentially the same. Let $l'_0=(t',g'_0)\in L_0$ define an $F$-stable $\theta'_L$ satisfying (Wi), (Wii) and (Wiii), in the way $l_0$ defines $\theta_L$. In order for $\theta'_L$ to satisfy (Wii), $g'_0$ must be a scalar matrix (so that it induces trivial automorphism). When $n$ is even, an $F$-stable scalar matrix does not affect (Wiii) and so we may simply take $g'_0=\Id$ in Lemma \ref{thetaL-even}. When $n$ is odd, $g'_0$ is necessarily of the form $\lambda$ in Lemma \ref{thetaL} in order for (Wiii) to be true. To determine $t'$, let us suppose that $w$ is as in Lemma \ref{thetaL-even}. In order for $\theta'_L$ to satisfy (Wi), $t'$ is necessarily of the form $$t'=(b_1,\ldots,b_{d-1},b_d,b_d,b_{d-1},\ldots b_1)\in (k^{\ast})^{2d}.$$And in order for $\theta'_L$ to be $F$-stable, the $b_i$'s must satisfy the equations
\begingroup
\allowdisplaybreaks
\begin{align*}
&\text{$b_i^q=b_{i+1}$ for any $1\le i\le d-1$ and $b_d^q=-b_1$, if $n$ is even,}\\
&\text{$b_i^q=b_{i+1}$ for any $1\le i\le d-1$ and $b_d^q=b_1$, if $n$ is odd.}
\end{align*}
\endgroup
(If $n$ is odd, then $\sigma'_0$ should be replaced by $\sigma_0$ and the $\dot{w}$ in the proof of Lemma \ref{thetaL-even} is $\sigma_0$-stable.) We deduce that the norm of $t't^{-1}$ must lie in $(\mathbb{F}_q^{\ast})^2$, where $t$ is as in Lemma \ref{thetaL} or Remark \ref{thetaL-even-Rem} depending on the parity of $n$. Because of this, the term $t't^{-1}$ will play no role in what follows.
\end{Rem}

\subsubsection{}
In \cite[\S 3]{W1}, Waldspurger constructed a representation $(\rho'_{\chi},E'_{\chi})$ of the group $L_{\mathbf{w}}^F\rtimes\lb\sigma_0\rb$. Then the function $\phi'_{\mathbf{w}}$ mentioned above is the character of this representation restricted to $L_{\mathbf{w}}^F\sigma_0$. By the construction of $\rho'_{\chi}$, for any $l$ as in (\ref{l=11g}), we have
\begin{equation}\label{phi'w(ls)}
\phi'_{\mathbf{w}}(l\theta_L)=\phi(h_1,h_2)(g\sigma_{00}),\text{ if $n$ is odd},
\end{equation} 
and
\begin{equation}\label{phi'w(ls)even}
\phi'_{\mathbf{w}}(l\theta_L)=\phi(h_1,h_2)(g\sigma'_{00}),\text{ if $n$ is even}.
\end{equation} 

\begin{Prop}\label{Choice-h2}
Suppose that $n$ is odd. For any $(h_1,|h_2|)\in\mathbb{N}^2$, there is precisely one value of $h_2$ such that for any $(w_+,w_-)\in\mathfrak{W}_+\times\mathfrak{W}_-$, we have $\phi_{\mathbf{w}}=\phi'_{\mathbf{w}}$ as functions on $L^F_{\mathbf{w}}\sigma_0$. The choice of such $h_2$ is indicated in the table below. If $h_2$ is the negative of these values, then $\phi_{\mathbf{w}}=\sgn((w_+,w_-))\phi'_{\mathbf{w}}$. 
\begin{center}
\begin{tabular}{ |p{3cm}|p{3cm}|p{3cm}| }
\hline
 & $|h_2|\equiv1\mod4$ & $|h_2|\equiv3\mod4$ \\
\hline
$h_1/2\text{ is even}$ & $h_2>0$ & $h_2<0$\\
\hline
$h_1/2\text{ is odd}$ &$h_2<0$ &$h_2>0$\\
\hline
$h_1\equiv1\mod4$ &$h_2>0$ &$h_2<0$\\
\hline
$h_1\equiv3\mod4$ &$h_2<0$ &$h_2>0$\\
\hline
\end{tabular}
\end{center}
\end{Prop}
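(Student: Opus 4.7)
The plan is to exploit the fact that both $\phi_{\mathbf{w}}$ and $\phi'_{\mathbf{w}}$ restrict to the same character of $L^F_{\mathbf{w}}$, so on the coset $L^F_{\mathbf{w}}\sigma_0$ they differ by a global sign $\epsilon=\epsilon(h_1,h_2;w_+,w_-)\in\{\pm1\}$. I will compute this sign by evaluating both functions at a single well-chosen element where both are manifestly non-zero. The natural test element is $l\theta_L$, with $l=(1,1,g)$ and $g\sigma_{00}\in C^F$ inside the support of $\phi(h_1,h_2)$, where $\theta_L$ is the $F$-fixed element provided by Lemma~\ref{thetaL}. The value of $\phi'_{\mathbf{w}}$ at this element is read off directly from~(\ref{phi'w(ls)}) as $\phi(h_1,h_2)(g\sigma_{00})$.

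For $\phi_{\mathbf{w}}$, I will use Lemma~\ref{LemdeRed} to identify the relevant embedding of $L^F_{\mathbf{w}}\sigma_0$ into the product group, then apply Definition~\ref{tildebox}, splitting into the two cases of Lemma~\ref{thetaL}. When $\sgn((w_+,w_-))=+1$, one has $\theta_L=\sigma_0$, and the normalisations $\tC(\sigma_+)=\eta(\sigma_-)=1$ give $\phi_{\mathbf{w}}(l\sigma_0)=\phi(h_1,h_2)(g\sigma_{00})$; the two functions agree and $\epsilon=+1$ automatically. When $\sgn((w_+,w_-))=-1$, one has $\theta_L=l_0\sigma_0$ with $l_0=(1,1,\lambda)$ and $\lambda\in\mathbb{F}_q^*\setminus(\mathbb{F}_q^*)^2$, and the same computation gives $\phi_{\mathbf{w}}(l\theta_L)=\phi(h_1,h_2)(g\lambda\sigma_{00})$. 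Hence $\epsilon$ matches $\sgn((w_+,w_-))$ precisely when the ratio $\phi(h_1,h_2)(g\lambda\sigma_{00})/\phi(h_1,h_2)(g\sigma_{00})$ equals $-1$, and the proposition is reduced to pinpointing, for each $(h_1,|h_2|)$, the sign of $h_2$ that flips this ratio.

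To evaluate the ratio I will identify the $\GL_m(q)$-class of $\lambda g\sigma_{00}$ via the standard Lang--Steinberg recipe. Choose $\mu\in k^*$ with $\mu^2=\lambda$; since $\mu$ is central, $\mu(g\sigma_{00})\mu^{-1}=g\mu^2\sigma_{00}=\lambda g\sigma_{00}$, and the element $z:=\mu^{-1}F(\mu)=\mu^{q-1}=\lambda^{(q-1)/2}=-\Id$ lies in $C_{\GL_m}(g\sigma_{00})$. Therefore $\lambda g\sigma_{00}$ belongs to the $\GL_m(q)$-class obtained from that of $g\sigma_{00}$ by translating by the image of $-\Id$ in the component group $A=C_{\GL_m}(g\sigma_{00})/C_{\GL_m}(g\sigma_{00})^\circ$.

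The last step uses the structure of reductive quotients from \S\ref{cent-uni}. The partitions $\lambda_1=(2h_1,\ldots,2)$ and $\lambda_2=(2|h_2|-1,\ldots,1)$ both have all multiplicities equal to $1$, so the reductive quotient of $C_{\GL_m}(g\sigma_{00})$ is $\prod_{i=1}^{h_1}\Ort_1\times\prod_{j=1}^{|h_2|}\Ort_1$, and the central scalar $-\Id\in\GL_m$ acts as $-1$ on each one-dimensional factor, hence projects to $(1,\ldots,1;1,\ldots,1)\in A\cong(\mathbb{Z}/2\mathbb{Z})^{h_1}\times(\mathbb{Z}/2\mathbb{Z})^{|h_2|}$. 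Under the parametrisation of \S\ref{cuspSpSO}, this translation sends $C_{a,b}$ to $C_{-a,-b}$, and formula~(\ref{phih1h2}) produces the ratio $(-1)^{M(h_1)+N(h_2)}$, with $M(h_1)=|\{i\in\{1,\ldots,h_1\}:i\text{ odd}\}|$ and $N(h_2)=|\{j\in\{1,\ldots,|h_2|\}:j\equiv h_1+h_2+1+s(h_2)\pmod2\}|$. Since $n$ is odd, $m-h_1(h_1+1)=h_2^2$ is odd, so $|h_2|$ is odd; a direct parity analysis of $M(h_1)+N(h_2)$ according to $h_1\pmod4$ and $|h_2|\pmod4$ then reproduces every entry of the table. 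The main difficulty I expect is not the combinatorial case-check but the preceding step: one must carefully justify that $-\Id$ maps to the all-ones vector in $A$ (in particular that the $\Sp$-part of $-\Id$ is \emph{not} in the identity component of the cuspidal centraliser) and that the resulting translation acts on $(a,b)$ as strict negation, without an additional cocycle correction coming from the base-point choices of \S\ref{cuspSpSO}.
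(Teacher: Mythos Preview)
Your proposal is correct and follows essentially the same route as the paper's proof: split according to $\sgn((w_+,w_-))$ via Lemma~\ref{thetaL}, evaluate both functions at $l\theta_L$ using (\ref{phiw(ls)}) and (\ref{phi'w(ls)}), conjugate by a square root of $\lambda$ to land on $-\Id$ in the component group via Lang--Steinberg, identify the image of $-\Id$ using the decomposition of \S\ref{cent-uni}, and finish with the parity count in~(\ref{phih1h2}). The only remark is that your anticipated ``main difficulty'' does not actually arise: for the partitions $\lambda_1=(2h_1,\ldots,2)$ and $\lambda_2=(2|h_2|-1,\ldots,1)$ every part has multiplicity one and the correct parity, so the reductive quotient of the centraliser is a pure product of $\Ort_1$'s with no symplectic factors at all, and $-\Id$ lands in the non-identity component of each $\Ort_1$ simply because $\det(-1)=-1$ on a one-dimensional space; no cocycle adjustment is needed.
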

\begin{proof}
Note that $h_2$ must be odd, since $n$ is odd. We first consider $(w_+,w_-)$ with $\sgn((w_+,w_-))=1$. By Lemma \ref{thetaL}, we have $\theta_L=\sigma_0$. By (\ref{phiw(ls)}) and (\ref{phi'w(ls)}), we have $\phi_{\mathbf{w}}=\phi'_{\mathbf{w}}$, regardless of the value of $h_2$.

Now suppose that $\sgn((w_+,w_-))=-1$ and so $\theta_L=l_0\sigma_0$ as in Lemma \ref{thetaL}. To compare $\phi_{\mathbf{w}}$ and $\phi'_{\mathbf{w}}$ using (\ref{phiw(ls)}) and (\ref{phi'w(ls)}), we first fix an element $l\in L_{\mathbf{w}}^F$ such that they have non zero value. Suppose that $l$ is of the form (\ref{l=11g}). Let $s\in\GL_m(q)$ be such that $s\sigma_{00}$ is semi-simple in $\GL_m\sigma_{00}$, and that $C_{\GL_m}(s\sigma_{00})^{\circ}\cong\Sp_{h_1(h_1+1)}\times\SO_{h_2^2}$. Let $u=(u_1,u_2)\in C_{\GL_m}(s\sigma_{00})^{\circ F}$ be a unipotent element as in \S \ref{cuspGL_m}. Put $g=us$, then $\phi(h_1,h_2)(g\sigma_{00})\ne 0$. We have $\phi_{\mathbf{w}}(l\theta_L)=\phi(h_1,h_2)(\lambda g\sigma_{00})$, where $l$ is as in (\ref{phiw(ls)}) and $\lambda$ is as in Lemma \ref{thetaL}. It remains to compare $\phi(h_1,h_2)(g\sigma_{00})$ and $\phi(h_1,h_2)(\lambda g\sigma_{00})$.

Note that $\lambda g\sigma_{00}$ lies in the $\GL_m(k)$-conjugacy class of $g\sigma_{00}$. We want to identify the $\GL_m(q)$-conjugacy class of $\lambda g\sigma_{00}$ relative to $g\sigma_{00}$. For any $x\in k^{\ast}$ such that $x^2=\lambda$, we have $xg\sigma_{00}x^{-1}=\lambda g\sigma_{00}$, where $x$ is regarded as a scalar matrix. Then $x^{-1}F(x)=x^{q-1}=-\Id\in\GL_m$. On the other hand, we know that $x^{-1}F(x)$ must lie in $C_{\GL_m}(us\sigma_{00})$, whose component group is isomorphic to $(\mathbb{Z}/2\mathbb{Z})^{\bs{\kappa}(\lambda_1)}\times(\mathbb{Z}/2\mathbb{Z})^{\bs{\kappa}(\lambda_2)}$ according to \S \ref{cuspGL_m}. We need to locate $-\Id$ in this group.

The $m$-dimensional $k$-vector space $V$ on which $\GL_m$ acts can be decomposed into $V_1\oplus V_2$ in such a way that $\Sp_{h_1(h_1+1)}\subset\GL(V_1)$ and $\SO_{h_2^2}\subset\GL(V_2)$. According to \S \ref{cent-uni}, for $i=1$, $2$, we have the following decomposition $$V_i\cong (\oplus_dU_{i,d}\otimes W_{i,d})\bigoplus(\oplus_{\delta}U'_{i,\delta}\otimes W'_{i,\delta}).$$ The matrix $-\Id_{V_i}$ can be identified as acting as $-\Id$ on each $U_{i,d}$ and $U'_{i,\delta}$, and as $\Id$ on each $W_{i,d}$ and $W'_{i,\delta}$. We can thus regard $-\Id_{V_i}$ as an element of the reductive group $R$ in \S \ref{cent-uni}, whose orthogonal components are $-\Id_{U_{i,d}}$. Now $-\Id_{U_{i,d}}$ has positive determinant if $U_{i,d}$ has even dimension, and negative determinant otherwise. The location of $-\Id$ in the component group $(\mathbb{Z}/2\mathbb{Z})^{\bs{\kappa}(\lambda_1)}\times(\mathbb{Z}/2\mathbb{Z})^{\bs{\kappa}(\lambda_2)}$ is determined by these determinants. By \S \ref{cuspGL_m}, the uniponent element $u_1$ (resp. $u_2$) corresponds to the symplectic partition $\lambda_1:=(2h_1,2h_1-2,\ldots,2)$ (resp. orthogonal partition $\lambda_2:=(2|h_2|-1,2|h_2|-3,\ldots,1)$). In these partitions, each part has multiplicity one, and so by the construction of \S \ref{cent-uni}, the vector spaces $U_{i,d}$ are one-dimensional. Therefore, $-\Id_{V_i}$ gives an element of $(\mathbb{Z}/2\mathbb{Z})^{\bs{\kappa}(\lambda_i)}$ all of whose components are non trivial.

We deduce that if $\phi(h_1,h_2)(g\sigma_{00})$ is given by (\ref{phih1h2}), then replacing $g\sigma_{00}$ by $\lambda g\sigma_{00}$ amounts to replacing every $e_{2i}$ and $e'_{2j-1}$ by its negative. The effect on the value of $\phi(h_1,h_2)$ is determined by the parity of the number of factors in the products over $i$ and $j$ in (\ref{phih1h2}). Suppose that $h_1/2$ is an even integer and $|h_2|\equiv1\mod4$. The product over $i$ has an even number of terms, and so replacing $e_{2i}$ by $-e_{2i}$ does not affect the value of $\phi(h_1,h_2)$. If $h_2>0$, then $h_1+h_2+1+s(h_2)$ is even. Since $|h_2|\equiv1\mod4$, the number of even numbers in $\{1,\ldots,|h_2|\}$ is even. We conclude that in this case $\phi_{\mathbf{w}}=\phi'_{\mathbf{w}}$. If $h_2<0$, then $h_1+h_2+1+s(h_2)$ is odd. The number of odd numbers in $\{1,\ldots,|h_2|\}$ is odd, and so $\phi_{\mathbf{w}}=-\phi'_{\mathbf{w}}$. We have thus proved the item in the first row and the first column in the table. Other items are completely similar.
\end{proof}

\begin{Prop}\label{phi'weven}
Suppose that $n$ is even and $\bar{G}=\leftidx{^o\!}{\bar{G}}_0$. Then for any $\mathbf{w}=(h_1,h_2,w_+,w_-)$, we have $\phi_{\mathbf{w}}=\sgn(w_-)\phi'_{\mathbf{w}}$ as functions on $L^F_{\mathbf{w}}\sigma_0$.
\end{Prop}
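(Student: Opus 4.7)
The plan is to mimic the proof of Proposition \ref{Choice-h2}: since $\phi_{\mathbf{w}}$ and $\phi'_{\mathbf{w}}$ are two extensions of the same irreducible character of $L_{\mathbf{w}}^F$ to $L_{\mathbf{w}}^F\sigma_0$, they differ by a global sign which I will pin down by a single non-vanishing evaluation. I take $\theta_L = xl_0\sigma'_0 x^{-1}$ as supplied by Lemma \ref{thetaL-even} and Remark \ref{thetaL-even-Rem}, together with $l = (1,1,g) \in L_{\mathbf{w}}^F$ of the form (\ref{l=11g}), where $g = us$ is chosen exactly as in the proof of Proposition \ref{Choice-h2} so that $\phi(h_1,h_2)(g\sigma_{00}) \ne 0$.

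First I would rewrite $\theta_L$ as an element of the coset $L_{\mathbf{w}}^F\sigma_0$ on which $\phi_{\mathbf{w}}$ is defined. Using $\sigma'_0 = t_0\sigma_0$ (Convention \ref{convsigma}) together with $\sigma_0(x) = x$, one obtains $\theta_L = \ell_1 \sigma_0$ with $\ell_1 := xl_0 t_0 x^{-1} \in L_{\mathbf{w}}^F$, the $F$-stability of $\theta_L$ built into Lemma \ref{thetaL-even} guaranteeing that $\ell_1$ is really $F$-fixed. Writing $l_0 t_0 = (t\cdot t_0^T, t_0^m) \in T \times \GL_m$, the element $\ell_1 = \ad x(l_0 t_0)$ has $T_{w_+}$, $T_{w_-}$ and $\GL_m$ components equal to $\ad x(t^+ t_0^+)$, $\ad x(t^- t_0^-)$ and $t_0^m$ respectively, where $x$ has been chosen to commute with the middle $\GL_m$-block (which is possible because $\dot{w}$ acts trivially on $\GL_m$).

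Next I unfold both sides. By the definition of $\phi_{\mathbf{w}} = \tC \widetilde{\boxtimes} \eta \widetilde{\boxtimes} \phi(h_1,h_2)$ in \S\ref{phi(rho)}, the evaluation of $\phi_{\mathbf{w}}$ at $(l\ell_1)\sigma_0$ splits as a product of three factors. The $T_{w_+}$-factor equals $1$ because $\tC$ is trivial; the $\GL_m$-factor is $\phi(h_1,h_2)(g t_0^m \sigma_{00}) = \phi(h_1,h_2)(g\sigma'_{00})$, since $\sigma'_{00} = t_0^m \sigma_{00}$, which by (\ref{phi'w(ls)even}) equals $\phi'_{\mathbf{w}}(l\theta_L)$. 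The ratio therefore reduces to the $T_{w_-}$-factor: $\phi_{\mathbf{w}}(l\theta_L)/\phi'_{\mathbf{w}}(l\theta_L) = \eta\bigl(\ad x(t^- t_0^-)\bigr)$, and it remains to show that this $\eta$-value equals $\sgn(w_-)$.

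The hard part will be the cycle-by-cycle evaluation of $\eta\bigl(\ad x(t^- t_0^-)\bigr)$, where $\eta$ is interpreted as in \S\ref{phi(rho)} through the product of norm maps (so that $\eta(-1_{T_{w_-}^F}) = 1$), rather than as the naive $\eta\circ\det_{\GL_{n_0}}$. For a positive signed cycle of $w_-$, the corresponding block of $t$ is trivial by Remark \ref{thetaL-even-Rem}, and the block of $t_0^-$ contributes $+1$ after passing through the norm map. For a negative signed cycle of length $d$, the block of $t$ is $(a_1,\ldots,a_d,a_d,\ldots,a_1)$ with $a_i^q = a_{i+1}$ and $a_d^q = -a_1$; the identity $a_1^{q^d-1} = -1$ (already used in the proof of Proposition \ref{Choice-h2}) shows that $(a_1\cdots a_d)^2$ is a non-square in $\mathbb{F}_q^*$, and combining this with the contribution of $t_0^-$ on the same block yields a factor of $-1$. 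Multiplying cycle contributions gives $(-1)^{\#\{\text{negative cycles of }w_-\}} = \sgn(w_-)$, while signed cycles of $w_+$ play no role because the $T_{w_+}$-factor is $1$. The subtlety I will have to be careful about is verifying that the norm-map normalisation of $\eta$ really absorbs any $q$-dependent factors coming from positive cycles, so that only $\sgn(w_-)$ survives, with no spurious $\eta(-1)^{N_-}$-type contamination of the form that would arise from a naive determinant computation.
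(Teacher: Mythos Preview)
Your proposal is correct and follows essentially the same route as the paper: evaluate both $\phi_{\mathbf{w}}$ and $\phi'_{\mathbf{w}}$ at $l\theta_L$ with $l=(1,1,g)$ and $\theta_L$ as in Lemma~\ref{thetaL-even}/Remark~\ref{thetaL-even-Rem}, observe that by (\ref{phi'w(ls)even}) the ratio reduces to the $\eta$-factor on the $T_{w_-}$-component, and identify that factor with $\sgn(w_-)$ via the explicit form of $l_0$. The paper's proof is much terser---it writes $l_0=(t_+,t_-,\Id)$, asserts $\phi_{\mathbf{w}}(l\theta_L)=\eta(t_-)\phi(h_1,h_2)(g\sigma'_{00})$ directly, and dispatches $\eta(t_-)=\sgn(w_-)$ with a one-line appeal to the construction---whereas you unfold the passage from $\sigma'_0$ to $\sigma_0$ through $t_0$ and carry out the cycle-by-cycle norm computation. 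Your extra bookkeeping (the $t_0^-$ contribution and the check that positive cycles give~$+1$ under the norm-map normalisation of $\eta$) is exactly what is hidden behind the paper's phrase ``by the construction of Lemma~\ref{thetaL-even} and Remark~\ref{thetaL-even-Rem}''.
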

\begin{proof}
Let $g\in\GL_m(q)$ be such that $\phi(h_1,h_2)(g\sigma'_{00})\ne0$, let $l$ be of the form (\ref{l=11g}) and let $l_0$ be as defined in Remark \ref{thetaL-even-Rem}. Write $l_0=(t_+,t_-,\Id)\in T_{w_+}\times T_{w_-}\times\GL_m$. We have $$\phi_{\mathbf{w}}(l\theta_L)=\phi_{\mathbf{w}}(lxl_0x^{-1}x\sigma'_0x^{-1})=\eta(t_-)\phi(h_1,h_2)(g\sigma'_{00}),$$and $\eta(t_-)=\sgn(w_-)$ by the construction of Lemma \ref{thetaL-even} and Remark \ref{thetaL-even-Rem}. The proposition follows from (\ref{phi'w(ls)even}).
\end{proof}

\subsubsection{}\label{h_1h_2}
We keep the notations as above and assume that $p\ne 2$ and $q>n_0$. Let $\chi_{(\mu_+,\mu_-)}$ be a quadratic-unipotent character, which extends to a character $\tilde{\chi}_{(\mu_+,\mu_-)}\in\Irr(L_0^{F_0}\lb\sigma_0\rb)$. 

Let $(\mu_+,\mu_-)$ be a $2$-partition of $n_0$ and write $n_+=|\mu_+|$ and $n_-=|\mu_-|$. Let $m_+$ and $m_-$ be some non negative integers such that $(m_+,m_+-1,\ldots,1,0)$ and $(m_-,m_--1,\ldots,1,0)$ are the 2-cores of $\mu_+$ and of $\mu_-$ respectively, and write $N_{\pm}=(n_{\pm}-m_{\pm}(m_{\pm}+1)/2)/2$. There exists a unique pair $(h_1,h_2)\in\mathbb{N}\times\mathbb{Z}$ such that 
\begin{equation}
\begin{split}
m_+&=\sup\{h_1+h_2,-h_1-h_2-1\},\\
m_-&=\sup\{h_1-h_2,h_2-h_1-1\}.
\end{split}
\end{equation}
Note that exchanging $\mu_+$ and $\mu_-$ changes $(h_1,h_2)$ into $(h_1,-h_2)$. Write $$m=m_+(m_++1)/2+m_-(m_-+1)/2,$$ and so $n_0=m+2N_++2N_-$. We have $m=h_1(h_1+1)+h_2^2$. Fix some integers $r_+>l(\mu_+)$ and $r_->l(\mu_-)$ satisfying:
\begin{quote}
\textbf{Assumption.} 
\begin{itemize}
\item $r_-\equiv h_2\mod 2$, $r_+\equiv h_2+1\mod 2$, if $n$ is odd and $(h_1,h_2)$ satisfies the table in Proposition \ref{Choice-h2};\\
\item $r_-\equiv h_2+1\mod 2$, $r_+\equiv h_2\mod 2$, if $n$ is odd and $(h_1,h_2)$ does not satisfy the table in Proposition \ref{Choice-h2};\\
\item $r_+\equiv r_-\equiv h_2+1\mod 2$, if $n$ is even.
\end{itemize}
\end{quote}
Let $(\alpha_+,\beta_+)_{r_+}$ and $(\alpha_-,\beta_-)_{r_-}$ be the $2$-partitions associated to $\mu_+$ and to $\mu_-$ respectively (See \S \ref{quotcore}), such that the unordered 2-partitions $(\alpha_+,\beta_+)$ and $(\alpha_-,\beta_-)$ are the corresponding 2-quotients. Each of the two $2$-partitions $(\alpha_{\pm},\beta_{\pm})_{r_{\pm}}$ determines an irreducible character of $\mathfrak{W}_{\pm}$ respectively, denoted by $\varphi_+$ and $\varphi_-$. Then with the fixed $r_+$ and $r_-$, the $2$-partitions $(\mu^+,\mu^-)$ are in bijection with the data $(h_1,h_2,\varphi_+,\varphi_-)$.
\begin{Rem}
In the case of $h_1=0$ and $|h_2|\le1$, the assumptions read:
\begin{itemize}
\item $r_+$ and $r_-$ are odd, if $h_2=0$;\\
\item $r_+$ is even if and only if $n_+$ is odd, and $r_-$ is even if and only if $n_-$ is odd, if $|h_2|=1$.
\end{itemize}
We only need to notice that for $(h_1,h_2)\in\{0\}\times\{\pm1\}$, the pair $(h_1,h_2)$ satisfies the table in Proposition \ref{Choice-h2} if and only if $n_+$ is odd.
\end{Rem}

For each $(h_1,h_2,\varphi_+,\varphi_-)$, we have the invariant function on $L_0^{F_0}\sigma_0$ defined by
\begin{equation}
R^{L_0\sigma_0}_{\varphi}:=\frac{1}{|\mathfrak{W}_+|}\frac{1}{|\mathfrak{W}_-|}\sum_{\substack{w_+\in \mathfrak{W}_+\\ w_-\in \mathfrak{W}_-}}\varphi_+(w_+)\varphi_-(w_-)R^{L_0\sigma_0}_{L_{\mathbf{w}}\sigma_0}\phi_{\mathbf{w}}.
\end{equation}
This is the characteristic function of character sheaf in Example \ref{quad-uni-sheaf}. Again, this definition makes sense in $\leftidx{^s\!}{\bar{G}}_0$ and in $\leftidx{^o\!}{\bar{G}}_0$.
\begin{Lem}
The assumption on $(r_+,r_-)$ above is equivalent to the assumption at the end of \cite[\S 15]{W1}, in the sense that they define the same function $R^{L_0\sigma_0}_{\varphi}$ (denoted by $\phi(\bs\rho)$ in \cite{W1}).
\end{Lem}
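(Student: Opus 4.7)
The plan is to reduce the claim to the combination of two independent facts: $(i)$ how a change of parity of $r_\pm$ affects the character $\varphi_\pm$ of $\mathfrak{W}_\pm$, and $(ii)$ how $\phi_{\mathbf{w}}$ differs from Waldspurger's $\phi'_{\mathbf{w}}$, as recorded in Propositions \ref{Choice-h2} and \ref{phi'weven}. The claim then follows because these two effects cancel exactly.

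First, I would establish the effect of changing $r_\pm$. By \S \ref{quotcore}, changing $r_\pm$ by $1$ swaps the two components of the $2$-quotient $(\alpha_\pm,\beta_\pm)$; the integers $h_1$, $h_2$, $m_\pm$ and $N_\pm$ are intrinsic to $\mu_\pm$ and are unaffected. In the standard parametrisation of $\Irr(\mathfrak{W}_{N_\pm})$ by $2$-partitions of $N_\pm$---obtained via induction from $\mathfrak{W}_{|\alpha_\pm|}\times\mathfrak{W}_{|\beta_\pm|}$---swapping the two components of the $2$-partition corresponds to tensoring the irreducible with the linear character $\sgn:\mathfrak{W}_{N_\pm}\to\{\pm1\}$ of kernel $\mathfrak{W}_{N_\pm}^{D}$ (\S \ref{W^D}). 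This is a direct computation: restricting $\sgn$ to the inducing subgroup exchanges the trivial character with $\sgn$ on each Weyl factor. Hence changing the parity of $r_+$ (resp.\ $r_-$) replaces $\varphi_+$ by $\varphi_+\otimes\sgn$ (resp.\ $\varphi_-$ by $\varphi_-\otimes\sgn$).

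Second, I would package Propositions \ref{Choice-h2} and \ref{phi'weven} as a clean ratio formula $\phi_{\mathbf{w}} = \sgn(w_+)^a\sgn(w_-)^b\phi'_{\mathbf{w}}$ with exponents $a,b\in\{0,1\}$ determined by the following data: $a=b=0$ in the odd-$n$ case when $(h_1,h_2)$ satisfies the table of Proposition \ref{Choice-h2}; $a=b=1$ in the odd-$n$ case otherwise; $a=0$, $b=1$ in the even-$n$ case with $\bar{G}={}^o\bar{G}_0$; and $a=b=0$ in the even-$n$ case with $\bar{G}={}^s\bar{G}$ (the last requires a short direct verification parallel to Proposition \ref{phi'weven}, using that $\sigma_0^2=1$ and consequently $\theta_L$ can be chosen to equal $\sigma_0$). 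Inserting this ratio into
\[
R^{L_0\sigma_0}_{\varphi}=\frac{1}{|\mathfrak{W}_+||\mathfrak{W}_-|}\sum_{w_+,w_-}\varphi_+(w_+)\varphi_-(w_-)\,R^{L_0\sigma_0}_{L_{\mathbf{w}}\sigma_0}\phi_{\mathbf{w}},
\]
the sign factors $\sgn(w_\pm)^{a,b}$ are absorbed into the characters as $\varphi_+\leftrightarrow\varphi_+\otimes\sgn^{a}$ and $\varphi_-\leftrightarrow\varphi_-\otimes\sgn^{b}$. By the first step, this absorption is exactly the effect of changing the parity of $r_+$ by $a$ and of $r_-$ by $b$; the resulting sum is then, by definition, Waldspurger's $\phi(\bs\rho)$.

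It remains to verify that the parities of $(r_+,r_-)$ prescribed in \S \ref{h_1h_2} differ from Waldspurger's by exactly the exponents $(a,b)$ produced by the case analysis above. The main technical obstacle is precisely this multi-case matching: one has to traverse the four rows of the table in Proposition \ref{Choice-h2} (the residues of $h_1\bmod 4$) crossed with the two columns ($|h_2|\bmod 4$), adjoin the even-$n$ cases for both types of $\bar{G}$, and confirm in each case that the parity shift dictated by $(a,b)$ carries the author's assumption onto Waldspurger's. Once the table for this bookkeeping is laid out, each cell reduces to a short arithmetic check, but this is where the real work lies; the preceding two steps are formal.
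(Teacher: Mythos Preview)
Your approach is essentially the same as the paper's: use Propositions \ref{Choice-h2} and \ref{phi'weven} to record the sign discrepancy $\phi_{\mathbf{w}}=\sgn(w_+)^a\sgn(w_-)^b\phi'_{\mathbf{w}}$, observe that multiplying $\varphi_\pm$ by $\sgn$ corresponds to swapping $(\alpha_\pm,\beta_\pm)$, which in turn corresponds to flipping the parity of $r_\pm$, and then check that the author's parity convention differs from Waldspurger's by exactly $(a,b)$.

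Two remarks. First, the case ``even $n$ with $\bar{G}={}^s\bar{G}$'' that you add is outside the scope of the lemma: Waldspurger's \cite[\S 15]{W1}, and hence Theorem \ref{ExtL_0}, is formulated for ${}^o\bar{G}_0$ when $n$ is even (the ${}^s\bar{G}$ case is derived afterwards in the Corollary). Your suggested verification for that case is also not correct: you cannot take $\theta_L=\sigma_0$ whenever $\sgn((w_+,w_-))=-1$, since condition (Wiii) forces $\eta\circ\det(\theta_L)=\sgn((w_+,w_-))$, which $\sigma_0$ fails. Simply drop this case.

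Second, the final ``multi-case matching'' is much lighter than you make it sound. You do not need to traverse the table of Proposition \ref{Choice-h2} cell by cell: the Assumption in \S\ref{h_1h_2} is already stated as three bullets indexed by exactly the three relevant cases (odd $n$ with table satisfied; odd $n$ with table not satisfied; even $n$), and Waldspurger's convention is $r_-\equiv h_2$, $r_+\equiv h_2+1\pmod 2$. Comparing directly gives $(a,b)=(0,0)$, $(1,1)$, $(0,1)$ in the three cases, matching your sign exponents on the nose. The paper's proof does precisely this in two sentences per parity.
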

\begin{proof}
Suppose that $n$ is odd. By Proposition \ref{Choice-h2}, if $(h_1,h_2)$ satisfies the table therein, then $\phi_{\mathbf{w}}=\phi'_{\mathbf{w}}$ for any $(w_+,w_-)$, and so $R^{L_0\sigma_0}_{\varphi}$ coincides with the $\phi(\bs\rho)$ in \cite{W1}. Suppose that $(h_1,h_2)$ does not satisfy the table in Proposition \ref{Choice-h2}, then $\phi_{\mathbf{w}}=\sgn((w_+,w_-))\phi'_{\mathbf{w}}$. Note that $\sgn((w_+,w_-))=\sgn(w_+)\sgn(w_-)$, and that multiplying $\varphi_{\pm}$ by $\sgn$ amounts to switching $\alpha_{\pm}$ with $\beta_{\pm}$ at the level of 2-partitions. Recall that changing the parity of $r_{\pm}$ results in permuting $\alpha_{\pm}$ with $\beta_{\pm}$. (See \S \ref{quotcore}.) Our assumption on $r_{\pm}$ reflects this change of parity.

Suppose that $n$ is even. By Proposition \ref{phi'weven}, we have an extra term $\sgn(w_-)$ when replacing $\phi'_{\mathbf{w}}$ by $\phi_{\mathbf{w}}$. Therefore the parity of $r_-$ is different from Waldspurger's.
\end{proof}

\begin{Eg}
We give some examples to show how $R^{L_0\sigma_0}_{\varphi}$ is affected by the parity of $r_+$ and $r_-$. Recall \S \ref{quotcore} that $r=r_{\pm}$ controls the 2-quotient of a given partition $\lambda$. If $\lambda=(2)$ and $r$ is odd, then the 2-quotient is $((1),\varnothing)$. If $\lambda=(3)$ and $r$ is even, then the 2-quotient is $((1),\varnothing)$. In terms of representation theory, these are the correct 2-quotients, in the sense that if $\lambda$ represents a trivial character then its 2-quotient represents a trivial character. Changing the parity of $r$ results in $(\varnothing,(1))$.

Suppose that $n=2$. In the following table, we consider four quadratic-unipotent characters, we give the corresponding 2-partition $(\mu_+,\mu_-)$, the value of $(h_1,h_2)$, and the correct $\varphi_+$ and $\varphi_-$ in terms of 2-partitions $(\alpha_+,\beta_+)$ and $(\alpha_-,\beta_-)$.
\begin{center}
\begin{tabular}{ |p{2.5cm}|p{2.5cm}|p{2.5cm}|p{2.5cm}| }
\hline
$(\mu_+,\mu_-)$ & $(h_1,h_2)$ & $(\alpha_+,\beta_+)$ & $(\alpha_-,\beta_-)$\\
\hline
$((2),\varnothing)$ & $(0,0)$ & $((1),\varnothing)$ & $(\varnothing,\varnothing)$ \\
\hline
$(\varnothing,(2))$ & $(0,0)$ & $(\varnothing,\varnothing)$ & $((1),\varnothing)$ \\
\hline
$((1^2),\varnothing)$ & $(0,0)$ & $(\varnothing,(1))$ & $(\varnothing,\varnothing)$ \\
\hline
$(\varnothing,(1^2))$ & $(0,0)$ & $(\varnothing,\varnothing)$ & $(\varnothing,(1))$ \\
\hline
\end{tabular}
\end{center}
\end{Eg}
In the first row, we have $(\alpha_+,\beta_+)=((1),\varnothing)$. This means that $\varphi_+$ is the trivial character of $\mathfrak{S}_2$, and so the extension of the identity character to $\GL_2\sigma$ is the sum of two Deligne-Lusztig characters. In the third row, we have $(\alpha_+,\beta_+)=(\varnothing,(1))$. This means that $\varphi_+$ is the sign character of $\mathfrak{S}_2$, and so the extension of the Steinberg character to $\GL_2\sigma$ is the difference of two Deligne-Lusztig characters.

If we follow the assumption at the end of \cite[\S 15]{W1}, then $r_+$ would be odd and $r_-$ even. Following \S \ref{quotcore}, the 2-partition $(\mu_+,\mu_-)=((2),\varnothing)$ would produce $(\alpha_+,\beta_+)_{r_+}=((1),\varnothing)$ as expected. However, the 2-partition $(\mu_+,\mu_-)=(\varnothing,(2))$ would produce $(\alpha_-,\beta_-)_{r_-}=(\varnothing,(1))$, which is wrong. This is precisely because our $R^{\GL_2\sigma}_{T_w\sigma}\eta$ differs from Waldspurger's by a sign when $w\ne 1$.

Suppose that $n=3$. We consider the following four quadratic-unipotent characters.
\begin{center}
\begin{tabular}{ |p{2.5cm}|p{2.5cm}|p{2.5cm}|p{2.5cm}| }
\hline
$(\mu_+,\mu_-)$ & $(h_1,h_2)$ & $(\alpha_+,\beta_+)$ & $(\alpha_-,\beta_-)$\\
\hline
$((3),\varnothing)$ & $(0,1)$ & $((1),\varnothing)$ & $(\varnothing,\varnothing)$ \\
\hline
$(\varnothing,(3))$ & $(0,-1)$ & $(\varnothing,\varnothing)$ & $((1),\varnothing)$ \\
\hline
$((1),(2))$ & $(0,1)$ & $(\varnothing,\varnothing)$ & $((1),\varnothing)$ \\
\hline
$((2),(1))$ & $(0,-1)$ & $((1),\varnothing)$ & $(\varnothing,\varnothing)$ \\
\hline
\end{tabular}
\end{center}
For example, the third row corresponds to the character induced from the character $(\tC,\eta\circ\det)$ of $\mathbb{F}_q^{\ast}\times\GL_2(q)$, and we have $(\alpha_-,\beta_-)=((1),\varnothing)$. This means that the extension of this character to $\GL_3\sigma$ is the sum of two Deligne-Lusztig characters.

Following the assumption at the end of \cite[\S 15]{W1}, we have that $r_+$ is even and $r_-$ odd. Therefore the 2-partition $(\mu_+,\mu_-)=((1),(2))$ produces $(\alpha_-,\beta_-)_{r_-}=((1),\varnothing)$ as expected. However, the 2-partition $(\mu_+,\mu_-)=((2),(1))$ produces $(\alpha_+,\beta_+)_{r_+}=(\varnothing,(1))$, which is wrong. Similarly, $(\alpha_+,\beta_+)_{r_+}$ is correct for $(\mu_+,\mu_-)=((3),\varnothing)$ and $(\alpha_-,\beta_-)_{r_-}$ is wrong for $(\mu_+,\mu_-)=(\varnothing,(3))$. Again, this is because some of our Deligne-Lusztig characters differ from Waldspurger's by a sign. From another point of view, for $(\mu_+,\mu_-)=((3),\varnothing)$ and $((1),(2))$, the pair $(h_1,h_2)$ satisfies the table in Proposition \ref{Choice-h2}, whereas for the other two values of $(\mu_+,\mu_-)$, the pair $(h_1,h_2)$ does not. Therefore in the latter case we must replace Waldspurger's assumption on $(r_+,r_-)$ by ours.

\begin{Thm}(\cite[\S 17]{W1})\label{ExtL_0}
Suppose that $n$ is odd or $L_0^{F_0}\lb\sigma_0\rb=\leftidx{^o\!}{\bar{G}}_0$ if $n_0$ is even. Then for any $(\mu_+,\mu_-)\in\mathcal{P}_{n_0}(2)$, we have,
\begin{equation}\label{L0S0eq1}
\tilde{\chi}_{(\mu_+,\mu_-)}|_{L_0^{F_0}\sigma_0}=\pm R^{L_0\sigma_0}_{\varphi}.
\end{equation}
\end{Thm}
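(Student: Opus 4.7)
The plan is to realise the right-hand side $R^{L_0\sigma_0}_\varphi$ as the characteristic function of a character sheaf on $L_0\sigma_0$ and to match it with the extension of the quadratic-unipotent character on the left-hand side. Write $L\subset L_0$ for the $F_0$- and $\sigma_0$-stable Levi factor isomorphic to $\GL_m(k)\times(k^{\ast})^{2N}$ with $m=h_1(h_1+1)+h_2^2$ and $N=N_++N_-$, contained in a $\sigma_0$-stable parabolic subgroup of $L_0$. Let $s\sigma_{00}u\in\GL_m(k)\sigma_{00}$ be an isolated element of the type singled out in \S\ref{cuspGL_m}, and let $\mathcal{E}_0$ be one of the two cuspidal local systems on its $\GL_m(k)$-class (the one whose characteristic function is precisely the $\phi(h_1,h_2)$ of \S\ref{cuspSpSO}, with the parity convention of \S\ref{h_1h_2}). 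Let $S$ be the isolated stratum of $N_{\bar L_0}(L,P)$ obtained from the $\GL_m$-class of $s\sigma_{00}u$, from the trivial local system on the torus factor $T_+\cong(k^{\ast})^{2N_+}$, and from the order-$2$ Kummer local system $\mathscr{L}$ on the torus factor $T_-\cong(k^{\ast})^{2N_-}$ attached to $\eta$. By Example \ref{Eg-W_E} the pair $(S,\mathcal{E}):=(\pi'^{\ast}\mathcal{E}_0\otimes\pi''^{\ast}\mathscr{L})$ is cuspidal and $\mathcal{W}_{\mathcal{E}}\cong\mathfrak{W}_+\times\mathfrak{W}_-$.

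Next, I would apply the decomposition \eqref{dec-char-sheaf}, in the form given by Example \ref{quad-uni-sheaf}, to the induced complex $K=\IC(\bar{Y}_{L,S},\pi_{!}\tilde{\mathcal{E}})$. Since $F$ acts trivially on $\mathcal{W}_{\mathcal{E}}$ and each irreducible character of $\mathfrak{W}_+\times\mathfrak{W}_-$ is $F$-stable, every $w\in\mathcal{W}_{\mathcal{E}}$ is effective and the simple summands $K_i$ of $K$ are indexed by pairs $(\varphi_+,\varphi_-)\in\Irr(\mathfrak{W}_+)\times\Irr(\mathfrak{W}_-)$. A direct identification of $L_w$ with $L_{\mathbf{w}}$ shows that $\chi_{\mathcal{E}_w,\phi_{0,w}}$ coincides, on $L_{\mathbf{w}}^{F}\sigma_0$, with Waldspurger's function $\phi'_{\mathbf{w}}$ (the character of the representation $(\rho'_\chi,E'_\chi)$ attached to $\theta_L$ in \S\ref{phi(rho)}). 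Propositions \ref{Choice-h2} and \ref{phi'weven}, together with the parity assumption on $(r_+,r_-)$ at \S\ref{h_1h_2}, translate the sign $\sgn((w_+,w_-))$ (odd $n$) or $\sgn(w_-)$ (even $n$, orthogonal case) relating $\phi'_{\mathbf{w}}$ to $\phi_{\mathbf{w}}$ into the passage between the unordered $2$-quotient $(\alpha_\pm,\beta_\pm)$ of $\mu_\pm$ and the characters $\varphi_\pm$ corresponding to $(\alpha_\pm,\beta_\pm)_{r_\pm}$. With those normalisations, \eqref{dec-char-sheaf} reads
\[
\chi_{K_i,\phi_i}\;=\;\frac{1}{|\mathfrak{W}_+|\,|\mathfrak{W}_-|}\sum_{(w_+,w_-)\in\mathfrak{W}_+\times\mathfrak{W}_-}\varphi_+(w_+)\varphi_-(w_-)\,R^{L_0\sigma_0}_{L_{\mathbf{w}}\sigma_0}\phi_{\mathbf{w}}\;=\;R^{L_0\sigma_0}_\varphi.
\]

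It remains to identify $\chi_{K_i,\phi_i}$, as a class function on $L_0^{F_0}\sigma_0$, with $\pm\,\tilde{\chi}_{(\mu_+,\mu_-)}|_{L_0^{F_0}\sigma_0}$. Here I would invoke Waldspurger's theorem that the admissible complexes on $L_0\sigma_0$ with the above cuspidal support, together with the irreducible representations $(\varphi_+,\varphi_-)$ of $\mathcal{W}_{\mathcal{E}}$, exhaust the character sheaves whose characteristic functions realise (up to scalar) the extensions to $L_0^{F_0}\sigma_0$ of all quadratic-unipotent characters attached to $2$-partitions $(\mu_+,\mu_-)$ having $2$-core of the shape $(m_\pm,m_\pm-1,\ldots,1)$ and $2$-quotient $(\alpha_\pm,\beta_\pm)$; see the discussion in \cite{W1}. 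On the character side this is a consequence of the Lusztig--Srinivasan parametrisation (Theorem \ref{LS}) combined with Clifford theory (\S Introduction), which tells us that two extensions differ only by a sign and that the irreducible character of $L_0^{F_0}\sigma_0$ whose restriction is $\chi_{(\mu_+,\mu_-)}$ is characterised by its combinatorial datum $(h_1,h_2,\varphi_+,\varphi_-)$. Thus $\chi_{K_i,\phi_i}=\pm\,\tilde{\chi}_{(\mu_+,\mu_-)}|_{L_0^{F_0}\sigma_0}$, which gives \eqref{L0S0eq1}.

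The main obstacle is clearly step two: pinning down the normalisations of $\phi_0$ (and hence of $\phi_{\mathbf{w}}$) so that the sign $\tr(b_w\iota_i,V_i^\vee)$ in \eqref{dec-char-sheaf} reduces to the product $\varphi_+(w_+)\varphi_-(w_-)$ appearing in $R^{L_0\sigma_0}_\varphi$, with \emph{exactly} the parity prescription on $(r_+,r_-)$ of \S\ref{h_1h_2}. This requires a careful case-by-case bookkeeping of the two ambiguities that intervene: (a) the choice between the two cuspidal local systems on the class of $s\sigma_{00}u$, governed by the quotient map \eqref{Delta-quot}; and (b) the sign $\sgn((w_+,w_-))$ or $\sgn(w_-)$ introduced when $\theta_L\ne\sigma_0$, controlled by Lemmas \ref{thetaL}, \ref{thetaL-even} and Propositions \ref{Choice-h2}, \ref{phi'weven}. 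Once these signs are reconciled with the convention that a transposition of $\alpha_\pm$ and $\beta_\pm$ in the $2$-quotient corresponds to twisting $\varphi_\pm$ by $\sgn$, the proof closes.
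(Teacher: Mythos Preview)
The paper does not supply a proof of this theorem: it is quoted directly from Waldspurger \cite[\S 17]{W1} and used as a black box. What the paper \emph{does} do in \S\ref{ExtQuaUni} is set up all the normalisations (the cuspidal functions $\phi(h_1,h_2)$, the element $\theta_L$, Propositions \ref{Choice-h2} and \ref{phi'weven}, the parity convention on $(r_+,r_-)$) so that its $R^{L_0\sigma_0}_\varphi$ matches Waldspurger's $\phi(\bs\rho)$; once that is verified, the theorem is simply imported.

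Your sketch is a fair high-level outline of how Waldspurger's argument proceeds via character sheaves, and it is consistent with the scaffolding the paper builds in \S\ref{SecCharSheaf} (in particular Example \ref{quad-uni-sheaf}). But you should be aware that the substantive step---identifying the characteristic function $\chi_{K_i,\phi_i}$ with an extension of $\chi_{(\mu_+,\mu_-)}$ up to sign---is precisely the content of \cite{W1}, and it is not something one can extract from the Lusztig--Srinivasan parametrisation plus Clifford theory alone, as your last paragraph suggests. That identification requires Waldspurger's analysis of the Springer-type correspondence for these disconnected groups and his comparison with the explicit models of the quadratic-unipotent representations; the paper does not reproduce this, and neither does your sketch. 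So your proposal is not wrong as a roadmap, but it stops exactly where the real difficulty begins, and the paper handles that difficulty by citation rather than by proof.
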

Given a quadratic-unipotent character $\chi$ of $\GL_{n_0}(q)$, let $\rho:\GL_{n_0}(q)\rightarrow\GL(V)$ be a representation that realises it. Then $\rho(-1)=\pm\Id_V$, with $\rho(-1)=-\Id_V$ exactly when $\chi(-1)=-\chi(1)$. Define the indicator
$$
\bs\gamma_{\chi}=
\begin{cases}
\mathfrak{i} & \text{if $\chi(-1)=-\chi(1)$;} \\
1 & otherwise.
\end{cases}
$$
\begin{Cor}
Suppose that $n_0$ is even and $L_0^{F_0}\lb\sigma_0\rb=\leftidx{^s\!}{\bar{G}}_0$. Then for any $(\mu_+,\mu_-)\in\mathcal{P}_{n_0}(2)$, we have,
\begin{equation}\label{L0S0eq2}
\tilde{\chi}_{(\mu_+,\mu_-)}|_{L_0^{F_0}\sigma_0}=\pm \bs{\gamma}_{\chi}R^{L_0\sigma_0}_{\varphi}.
\end{equation}
\end{Cor}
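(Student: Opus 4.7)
The plan is to deduce this from Theorem \ref{ExtL_0} (which handles the orthogonal case in the even-$n_0$ setting) by invoking the comparison between the two semi-direct products $\leftidx{^s\!}{\bar{G}}_0^{F_0}$ and $\leftidx{^o\!}{\bar{G}}_0^{F_0}$ set up in \S \ref{GsGo}. First, let me write $\sigma_0^s$ for the quasi-central involution in $\leftidx{^s\!}{\bar{G}}_0$ and $\sigma_0^o=t_0\sigma_0'$ for the chosen quasi-central element in $\leftidx{^o\!}{\bar{G}}_0$ (see Convention \ref{convsigma}). Because $\sigma_0^s=\ad(t_0)\circ\sigma_0'$, both $\sigma_0^s$ and $\sigma_0^o$ induce the same automorphism of $L_0$, so the map $g\sigma_0^s\mapsto g\sigma_0^o$ is an $L_0^{F_0}$-equivariant bijection of the two cosets. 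For a $\sigma$-stable character $\chi=\chi_{(\mu_+,\mu_-)}$ realised by $\rho$, \S \ref{GsGo} gives, up to a sign,
\begin{equation*}
\tilde{\chi}'(g\sigma_0^o)=\sqrt{\rho(-1)}\cdot\tilde{\chi}(g\sigma_0^s),
\end{equation*}
where $\tilde{\chi}$ and $\tilde{\chi}'$ denote chosen extensions of $\chi$ to $\leftidx{^s\!}{\bar{G}}_0^{F_0}$ and $\leftidx{^o\!}{\bar{G}}_0^{F_0}$ respectively. Since $\rho(-1)=\pm\Id$, we have $\sqrt{\rho(-1)}=\pm\bs\gamma_\chi$ by the very definition of $\bs\gamma_\chi$.

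Next I claim that, under the same identification $g\sigma_0^s\leftrightarrow g\sigma_0^o$ of cosets, the two class functions $R^{L_0\sigma_0^s}_{\varphi}$ and $R^{L_0\sigma_0^o}_{\varphi}$ agree. Indeed, each ingredient entering the definition in \S \ref{h_1h_2} depends only on the conjugation action of $\sigma_0$ on $L_{\mathbf{w}}$, not on the value of $\sigma_0^2$ in the ambient non-connected group: the cuspidal function $\phi(h_1,h_2)$ on $\GL_m(q)\sigma_{00}$ is the same object in either setup (\S \ref{cuspGL_m}), the trivial extension $\tilde{1}$ and the extension $\tilde{\eta}$ give the same values on $T_{w_\pm}^{F_\pm}\sigma_\pm$ in both cases (this is exactly why $\eta$ is extensible in $\leftidx{^o\!}{\bar{G}}_0$, since $\eta(-1)=1$), and formula (\ref{phiw(ls)}) identifies $\phi_{\mathbf{w}}(l\sigma_0^s)=\phi_{\mathbf{w}}(l\sigma_0^o)=\phi(h_1,h_2)(g\sigma_{00})$ for $l=(1,1,g)$. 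Finally, the character formula for $R^{G\sigma}_{L\sigma}$ (Proposition \ref{2.10}) is expressed through two-variable Green functions on the \emph{connected} reductive centralisers, so its output on $g\sigma_0^s$ and on $g\sigma_0^o$ coincides.

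Combining these two observations with Theorem \ref{ExtL_0} applied to $\tilde{\chi}'$ yields, up to a sign,
\begin{equation*}
\bs\gamma_\chi\cdot\tilde{\chi}(g\sigma_0^s)=\tilde{\chi}'(g\sigma_0^o)=\pm R^{L_0\sigma_0^o}_{\varphi}(g\sigma_0^o)=\pm R^{L_0\sigma_0^s}_{\varphi}(g\sigma_0^s),
\end{equation*}
so $\tilde{\chi}(g\sigma_0^s)=\pm\bs\gamma_\chi^{-1}R^{L_0\sigma_0^s}_{\varphi}(g\sigma_0^s)$; since $\bs\gamma_\chi^{-1}\in\{\pm\bs\gamma_\chi\}$ in both cases $\bs\gamma_\chi=1$ and $\bs\gamma_\chi=\mathfrak{i}$, the stated equality (\ref{L0S0eq2}) follows. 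The main delicate point is the second step: one must keep careful track of the fact that in $\leftidx{^o\!}{\bar{G}}_0$ the element $\sigma_0$ is not an involution, so that the semi-direct product has to be replaced throughout by the group $L_{\mathbf{w}}^{F_0}\sqcup L_{\mathbf{w}}^{F_0}\sigma_0$, as already flagged in \S \ref{phi(rho)}; once one is comfortable that all the characters, cuspidal functions and Deligne–Lusztig inductions are insensitive to this change, the reduction to the orthogonal case is immediate.
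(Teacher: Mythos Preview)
Your proof is correct and follows essentially the same approach as the paper: compare the left-hand sides via the bijection of conjugacy classes in \S\ref{GsGo} (picking up the factor $\bs\gamma_\chi$), and observe that the right-hand sides coincide because the induction $R^{L_0\sigma_0}_{L_{\mathbf{w}}\sigma_0}\phi_{\mathbf{w}}$ depends only on data insensitive to whether one works in $\leftidx{^s\!}{\bar{G}}_0$ or $\leftidx{^o\!}{\bar{G}}_0$ (the paper condenses this second step into a single appeal to Proposition~\ref{2.10}, whereas you unpack it more explicitly).
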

\begin{proof}
We compare the two sides of (\ref{L0S0eq1}) and (\ref{L0S0eq2}) under the bijection of conjugacy classes described in \S \ref{GsGo}. According to \S \ref{GsGo}, the left hand sides of the two equations differ by $\bs{\gamma}_{\chi}$. As for the right hand sides, one sees from Proposition \ref{2.10} that the induction $R^{L_0\sigma_0}_{L_{\mathbf{w}}\sigma_0}\phi_{\mathbf{w}}$ does not depend on whether we work with $\leftidx{^s\!}{\bar{G}}_0$ or with $\leftidx{^o\!}{\bar{G}}_0$.
\end{proof}

\section{Computation of the Character Formula}\label{ComputeA}
In order to determine the character table of $\GL_n(q)\lb\sigma\rb$, we need to explicitly calculate the induced functions of the form $R^{G\sigma}_{M\sigma}\phi_M$ for some invariant function $\phi_M$ defined on some $\sigma$-stable and $F$-stable Levi factor $M$ of some $\sigma$-stable parabolic subgroup.
\subsection{Connected Groups}\label{splitconn}
\subsubsection{}\label{splitconn}
Let us recall how this is done for a split connected group $G$. Fix a split maximal torus $T_0$ and let $W_G$ denote the Weyl group defined by $T_0$. Let us  simplify the situation by assuming that $M=T_{\tau}$, $\tau\in W_G$, is a maximal torus. The character formula (\textit{cf.} \S \ref{formucarconn}) reads
\begin{equation}
R^{G}_{T_{\tau}}\theta(g)=|T_{\tau}^F|^{-1}|C_G(s)^{\circ F}|^{-1}\sum_{\{h\in G^F|s\in \leftidx{^h}{T_{\tau}}{} \}}Q^{C^{\circ}_G(s)}_{C_{\prescript{h}{}T_{\tau}}^{\circ}(s)}(u)\prescript{h}{}\theta(s),
\end{equation}
for the Jordan decomposition $g=su$. Assume that $s\in T^F_{\tau}$, and put
\begin{equation}
A(s,\tau):=\{h\in G\mid hsh^{-1}\in T_{\tau}\}.
\end{equation}
We will have to determine the set
\begin{equation}
A^F(s,\tau):=\{h\in A(s,\tau)\mid F(h)=h\}.
\end{equation}

Write $L=C_G(s)^{\circ}$. Define 
\begin{equation}
\begin{split}
B(s,\tau):=&\{\text{The $L^F$-conjugacy classes of the $F$-stable maximal tori of $L$}\\ &\text{that are $G^F$-conjugate to $T_{\tau}$.}\}
\end{split}
\end{equation}
We fix $s$ and $\tau$ and write $A$, $A^F$ and $B$ in what follows. There is a surjective map $A^F\rightarrow B$ which sends $h$ to the class of $h^{-1}T_{\tau}h$. It factors through $\iota:A^F/L^F\rightarrow B$. The Green function associated to $h$ in fact only depends on $\iota(h)$ while $\prescript{h}{}\theta(s)$ is constant on each right $L^F$-coset of $A^F$. The calculation is eventually reduced to evaluating $\prescript{h}{}\theta(s)$ on the fiber of $\iota$ over an element $\bar{\nu}\in B$. We may regard $\bar{\nu}$ as the $F$-conjugacy class of some $\nu\in W_L(T_{\tau})$.

\subsubsection{}
We have $A=N_G(T_{\tau}).L$, since for each $h$, there exists $l\in L$ such that $h^{-1}T_{\tau}h=lT_{\tau}l^{-1}$. We deduce from it an isomorphism
\begin{equation}
A^F/L^F\cong (A/L)^F\cong (N_G(T_{\tau})/N_L(T_{\tau}))^F\cong (W_G(T_{\tau})/W_L(T_{\tau}))^F,
\end{equation}
which sends $h=nl$ to the class of $n$. This does not depend on the choice of the $n$ and $l$ such that $h=nl$. We choose some $g\in G$  such that $T_{\tau}=gT_0g^{-1}$, and put $L_0=g^{-1}Lg$. We can further identify the above set to $(W_G(T_0)/W_{L_0}(T_0))^{\tau}$. Write $W_{L_0}=W_{L_0}(T_0)$. The conjugation by $\tau$ preserves $L_0$ since $L$ is $F$-stable. Now, a coset $wW_{L_0}$ is $\tau$-stable if and only if 
\begin{equation}
w^{-1}\tau w\tau^{-1}\in W_{L_0}
\end{equation}
and $\iota(wW_{L_0})\in\bar{\nu}$ if and only if 
\begin{equation}
w^{-1}\tau w\tau^{-1}\in\bar{\nu},
\end{equation}
regarding $\bar{\nu}$ as a $\tau$-conjugacy class of $W_{L_0}(T_0)$.
Indeed, if $hL^F$ corresponds to $wW_L$, then the $L^F$-conjugacy class of $h^{-1}T_{\tau}h=l^{-1}T_{\tau}l$ is represented by $lF(l)^{-1}=n^{-1}F(n)$, which is none other than $w^{-1}\tau w\tau^{-1}$ via the isomorphism $\ad g^{-1}$. The computation of the $w$'s is completely combinatorial. 

\subsubsection{}
To summarise, once the Green functions have been computed (see the introduction), the calculation of the character formula goes as follows.
\begin{itemize}
\item[(i)] Describe combinatorially the sets $A^F/L^F$ and $B$;
\item[(ii)] Specify a surjection $\iota:A^F/L^F\rightarrow B$ and calculate the fibres of $\iota$;
\item[(iii)] For each $\bar{\nu}\in B$ and each $hL^F\in\iota^{-1}(\bar{\nu})$, evaluate the character $\theta(hsh^{-1})$.
\end{itemize}
The summation in the character formula is decomposed into one summation over $B$ and then one summation over the fibre of $\iota$. Notice that the summation of the $\prescript{h}{}\theta(s)$'s over each fibre of $\iota$ is just permuting the "eigenvalues" of $s$.

\subsection{The Case of $\GL_n(q)\lb\sigma\rb$}
\subsubsection{}\label{NewNotation}
We will change our notations in what follows. Let $N$ and $m$ be some non negative integers such that $2N+m=n$. Write $G=\GL_n(k)$, and let $M_0$ be the $\sigma$-stable standard Levi subgroup of $G$ isomorphic to $\GL_m(k)\times (k^{\ast})^{2N}$ (with respect to upper triangular matrices and diagonal matrices), then $W^{\sigma}:=N_G(M_0\sigma)/M_0\cong\mathfrak{W}_N$ and $F$ acts trivially on $W^{\sigma}$. Let $Q_0$ be the standard parabolic subgroup which has $M_0$ as its Levi factor. For any $\tau\in W^{\sigma}$, let $M_{\tau}\cong\GL_m(k)\times (k^{\ast})^{2N}$ be an $F$-stable and $\sigma$-stable Levi factor of some $\sigma$-stable parabolic subgroup of $G$, such that the $G^F$-conjugacy class of $M_{\tau}\lb\sigma\rb$ is parametrised by the conjugacy class of $\tau$. We can assume that $M_{\tau}=g_{\tau}M_0g_{\tau}^{-1}$ for some $g_{\tau}\in (G^{\sigma})^{\circ}$ such that $g_{\tau}^{-1}F(g_{\tau})=\dot{\tau}\in N_G(M_0)$ is a representative of $\tau$. We will fix such a $\tau$ in what follows.

\subsubsection{}
Let $s\sigma\in G^{F}\sigma$ be a semi-simple element and let $(h_1,h_2)\in\mathbb{N}\times\mathbb{Z}$ be such that $h_2^2+h_1(h_1+1)=m$. Define 
\begin{equation*}
\begin{split}
A(s\sigma,\tau,h_1,h_2)=&\{h\in G\mid hs\sigma h^{-1}\in M_{\tau}\sigma \text{ is isolated with $C_{M_{\tau}}(hs\sigma h^{-1})$ isomorphic }\\&\text{ to the product of $\Sp_{h_1(h_1+1)}(k)\times\Ort_{h^2_2}(k)$ and a torus}\}.
\end{split}
\end{equation*}
Define $$A^F(s\sigma,\tau,h_1,h_2)=\{h\in A(s\sigma,\tau,h_1,h_2)|F(h)=h\}.$$ We will give a combinatorial description of this set.

Write $L'=C_G(s\sigma)^{\circ}$. If $K'\subset L'$ is an $F$-stable Levi subgroup, put $K=C_G(Z^{\circ}_{K'})$. By Proposition \ref{K'etK}, it is the smallest $F$-stable and $s\sigma$-stable Levi subgroup of $G$ such that $(K\cap L')^{\circ}=K'$, which is a Levi factor of an $s\sigma$-stable parabolic subgroup, say $Q$. So $N_{\bar{G}}(K)\cap N_{\bar{G}}(Q)=K\lb s\sigma\rb$, where $\bar{G}=G\sqcup G\sigma$.  Moreover, from Proposition \ref{L3S2.2}~(i), we deduce that $s\sigma$ is isolated in $K\lb s\sigma\rb$. 
\begin{Lem}\label{Mmu}
Suppose that $s\sigma$ is an $F$-stable semi-simple element with connected centraliser isomorphic to $\Sp_{n_s}(k)\times\SO_{n_o}(k)\times\prod_{i}\GL_{n_i}(k)$ for some non negative integers $n_s$, $n_o$ and $n_i$'s. If $h_1(h_1+1)\le n_s$ and $h_2^2\le n_o$, then there exists some $\mu\in W^{\sigma}$ such that $s\sigma$ is $G^F$-conjuate to an element of $M_{\mu}\sigma$ such that its connected centraliser in $M_{\mu}$ is isomorphic to the product of $\Sp_{h_1(h_1+1)}(k)\times\SO_{h_2^2}(k)$ and a torus.
\end{Lem}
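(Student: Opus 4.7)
The plan is to construct an $F$-stable and $\sigma$-stable Levi factor $K$ of a $\sigma$-stable parabolic subgroup of $G$, isomorphic as a Levi of $\GL_n$ to $\GL_m\times(k^{\ast})^{2N}$ with $m=h_1(h_1+1)+h_2^2$ and $2N=n-m$, such that $s\sigma\in K\sigma$ is isolated in $K\sigma$ with centraliser $\Sp_{h_1(h_1+1)}(k)\times\Ort_{h_2^2}(k)$ times a torus; Proposition \ref{1.40} will then identify $K$ up to $G^F$-conjugacy with some $M_\mu$, $\mu\in W^\sigma$.

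First I would exploit the eigenspace decomposition underlying the isomorphism $L':=C_G(s\sigma)^\circ\cong\Sp_{n_s}(k)\times\SO_{n_o}(k)\times\prod_i\GL_{n_i}(k)$ from \S\ref{centgeo}: the ambient space $V=k^n$ splits as $V=V_s\oplus V_o\oplus\bigoplus_i V_{\hat\alpha_i}$, with $V_s$ a $t\sigma$-stable subspace of dimension $n_s$ carrying a non-degenerate symplectic form on which $t\sigma$ acts as the defining involution of $\Sp_{n_s}$, $V_o$ analogous of dimension $n_o$ for $\SO_{n_o}$, and the $V_{\hat\alpha_i}$'s attached to the remaining $s\sigma$-eigenvalue orbits. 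Using the inequalities $h_1(h_1+1)\le n_s$ and $h_2^2\le n_o$ (with parities compatible in the context in which the lemma is applied), I would choose $F$-stable $t\sigma$-stable subspaces $W_1\subset V_s$, symplectically non-degenerate of dimension $h_1(h_1+1)$, and $W_2\subset V_o$, orthogonally non-degenerate of dimension $h_2^2$, together with hyperbolic splittings $V_s\ominus W_1=W_1^+\oplus W_1^-$ and $V_o\ominus W_2=W_2^+\oplus W_2^-$ into totally isotropic pieces paired by the forms, and $F$- and $\sigma$-equivariant line decompositions of each $V_{\hat\alpha_i}$. Setting $V_m:=W_1\oplus W_2$, $V':=W_1^+\oplus W_1^-\oplus W_2^+\oplus W_2^-\oplus\bigoplus_i V_{\hat\alpha_i}$, and $K:=\GL(V_m)\times T'$, where $T'\subset\GL(V')$ is the maximal torus stabilising the chosen lines of $V'$, produces an $F$-stable Levi of $G$ of the desired isomorphism type.

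Next I would check that $K$ is $\sigma$-stable and is a Levi factor of a $\sigma$-stable parabolic subgroup (the latter by choosing the parabolic determined by any $\sigma$-stable Borel subgroup adapted to the decomposition), and that $s\in K$ by construction, so that $s\sigma\in K\sigma$. By \S\ref{isolelts}, the restriction of $s\sigma$ to $V_m$ is an isolated quasi-semi-simple element of $\GL_m\sigma$ with connected centraliser $\Sp_{h_1(h_1+1)}(k)\times\Ort_{h_2^2}(k)$, while the restriction to $V'$ commutes with $\sigma$ up to an involution permuting paired lines and hence has $\sigma$-centraliser a subtorus of $T'$; combining these gives $C_K(s\sigma)^\circ\cong\Sp_{h_1(h_1+1)}(k)\times\Ort_{h_2^2}(k)\times T$ for a torus $T$. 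Proposition \ref{1.40} then yields $\mu\in W^\sigma$ and $h\in G^F$ with $hKh^{-1}=M_\mu$, so $hs\sigma h^{-1}\in M_\mu\sigma$ has the prescribed $M_\mu$-centraliser. The hard part will be guaranteeing the simultaneous $F$-rationality of all the subspace choices ($W_1$, $W_2$, the hyperbolic splittings, and the line decompositions); each choice-space is a connected homogeneous space for an $F$-stable algebraic group (an $F$-stable Levi subgroup of $\Sp_{n_s}$ or $\SO_{n_o}$, respectively), so $F$-rational points exist by Lang--Steinberg, and successive $F$-stable refinements can be arranged inside a common $\sigma$-equivariant frame.
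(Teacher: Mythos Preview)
Your approach is correct and arrives at the same endpoint, but the paper takes a more abstract ``inside-out'' route that avoids most of the explicit linear algebra. Instead of building $K$ from subspace choices, the paper first picks an $F$-stable Levi subgroup $K'\subset L'=C_G(s\sigma)^{\circ}$ isomorphic to $\Sp_{h_1(h_1+1)}(k)\times\SO_{h_2^2}(k)$ times a torus; such a Levi exists in $L'$ precisely because of the inequalities $h_1(h_1+1)\le n_s$ and $h_2^2\le n_o$, by the standard description of Levi subgroups of classical groups. Then it sets $K:=C_G(Z^{\circ}_{K'})$ as in Proposition~\ref{K'etK}. That proposition delivers, for free, that $K$ is an $s\sigma$-stable Levi factor of an $s\sigma$-stable parabolic $Q$ with $C_K(s\sigma)^{\circ}=K'$, and Proposition~\ref{L3S2.2} then gives that $s\sigma$ is isolated in $N_{\bar G}(K,Q)=K\langle s\sigma\rangle$. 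One checks $K\cong\GL_m(k)\times(\text{torus})$, so $N_{\bar G}(Q)$ meets $G\sigma$, and Proposition~\ref{1.40} finishes as in your argument.

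The payoff of the paper's route is exactly where you located the difficulty: all the $F$-rationality you would have to arrange by hand (the subspaces $W_1$, $W_2$, the hyperbolic splittings, the $\sigma$-equivariant line decompositions) is absorbed into the single standard fact that a connected reductive group over $\mathbb F_q$ has $F$-stable Levi subgroups of any prescribed type, after which $F$-stability of $K$ is automatic from its definition. Your construction is more concrete and makes the geometry visible, but it also forces you to justify that $s\in K$ (which in your setup really uses that, after a preliminary $G^F$-conjugation, one may take $s$ diagonal) and that the chosen decomposition is simultaneously $\sigma$-equivariant and $F$-rational. In effect the two constructions produce the same $K$; the paper's version is the coordinate-free one.
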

\begin{proof}
We use the notations preceding the lemma. By assumption, there exists an $F$-stable Levi subgroup $K'\subset L'$ that is isomorphic to the product of $\Sp_{h_1(h_1+1)}(k)\times\SO_{h^2_2}(k)$ and a torus. Then $K$ is isomorphic to the product of $\GL_m(k)$ and a torus, therefore $G$-conjugate to $M_0$. Since $N_{\bar{G}}(Q)$ meets $G\sigma$, there is $g\in G$ such that $gQg^{-1}=Q_0$. We may then assume that $gKg^{-1}=M_0$. Now $g(K\lb s\sigma\rb)g^{-1}=g(M_{0}\lb\sigma\rb)g^{-1}$. By Proposition \ref{1.40}, $K\lb s\sigma\rb$ is $G^F$-conjugate to $M_{\mu}\lb\sigma\rb$ for some $\mu\in W^{\sigma}$.
\end{proof}

Define 
\begin{equation*}
\begin{split}
B(s\sigma,\tau,h_1,h_2)=&\{\text{ The $L^{'F}$-conjugacy classes of the $F$-stable Levi subgroups $K'\subset L'$ }\\
&\text{ isomorphic to the product of $\Sp_{h_1(h_1+1)}(k)\times\SO_{h^2_2}(k)$ and a torus}\\
&\text{ such that $K\lb s\sigma\rb$ is $G^F$-conjugate to $M_{\tau}\lb\sigma\rb$.}\}
\end{split}
\end{equation*}
In what follows, we fix $s\sigma$, $\tau$, $h_1$ and $h_2$, then $A(s\sigma,\tau,h_1,h_2)$ and $B(s\sigma,\tau,h_1,h_2)$ will be denoted by $A$ and $B$.

\subsubsection{}
We will assume that $s\sigma$ satisfies the assumption in the previous lemma and that it lies in $M_{\mu}\sigma$ for some $\mu$, so in particular, it has the prescribed centraliser in $M_{\mu}$. Write $s_0\sigma=g_{\mu}^{-1}s\sigma g_{\mu}$, $L_0'=C_G(s_0\sigma)^{\circ}$ and $L'=C_G(s\sigma)^{\circ}$. Then $s_0\sigma\in M_0\sigma$ is isolated with connected centraliser isomorphic to the product of $\Sp_{h_1(h_1+1)}(k)\times\SO_{h_2^2}(k)$ and a torus. 

\begin{Lem}\label{A=NL}
There is a natural bijection: 
\begin{eqnarray}
A&\lisomLR &N_G(M_0\sigma).L_0'\\
h&\longleftrightarrow &g_{\tau}^{-1}hg_{\mu}.
\end{eqnarray}
\end{Lem}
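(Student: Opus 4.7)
The plan is to observe that $\Phi\colon h\mapsto g_\tau^{-1}hg_\mu$ is evidently a bijection on $G$, so the content of the lemma is the equality $\Phi(A)=N_G(M_0\sigma)\cdot L_0'$. I will verify the two inclusions separately after first establishing the preliminary fact that $N_G(M_0\sigma)\subseteq N_G(M_0)$: given $n$ with $nM_0\sigma n^{-1}=M_0\sigma$, write $n\sigma n^{-1}=m_0\sigma$ for some $m_0\in M_0$; then for any $m\in M_0$, the identity $nm\sigma n^{-1}=(nmn^{-1})m_0\sigma$ forces $nmn^{-1}\in M_0$. Consequently, conjugation by elements of $N_G(M_0\sigma)$ induces algebraic automorphisms of $M_0$, in particular preserving isomorphism types of centralisers.

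For the easier inclusion $N_G(M_0\sigma)\cdot L_0'\subseteq\Phi(A)$ I will take $h'=nl$ with $n\in N_G(M_0\sigma)$ and $l\in L_0'$, set $h:=g_\tau h'g_\mu^{-1}$, and compute
\[
hs\sigma h^{-1}=g_\tau(h's_0\sigma h'^{-1})g_\tau^{-1}=g_\tau(ns_0\sigma n^{-1})g_\tau^{-1}\in g_\tau M_0\sigma g_\tau^{-1}=M_\tau\sigma,
\]
together with $C_{M_\tau}(hs\sigma h^{-1})=g_\tau n\,C_{M_0}(s_0\sigma)\,n^{-1}g_\tau^{-1}$, whose isomorphism type is the prescribed $\Sp_{h_1(h_1+1)}(k)\times\Ort_{h_2^2}(k)$ times a torus.

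The opposite inclusion is the substantive part. Given $h\in A$ and $h':=\Phi(h)$, the element $s'\sigma:=h's_0\sigma h'^{-1}$ lies in $M_0\sigma$ and has connected centraliser in $M_0$ of the prescribed form (direct computation using $g_\mu^{-1}s\sigma g_\mu=s_0\sigma$ and $g_\tau M_0 g_\tau^{-1}=M_\tau$). The crucial claim is that $s_0\sigma$ and $s'\sigma$ are $N_G(M_0\sigma)$-conjugate. I will prove this by exploiting the decomposition $M_0\cong\GL_m(k)\times T'$ with $\sigma$ acting factor-wise: by \S\ref{isolelts}, the isolated elements of $\GL_m\sigma_0$ with connected centraliser $\Sp_{h_1(h_1+1)}(k)\times\Ort_{h_2^2}(k)$ form a single $\GL_m$-conjugacy class, so after an inner $M_0$-conjugation (which lies in $N_G(M_0\sigma)$) I may assume $s_0\sigma$ and $s'\sigma$ share the same $\GL_m$-component. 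The residual discrepancy in the torus component is then erased using the signed-permutation action of $N_G(M_0\sigma)/M_0\cong W_G^\sigma$ on $T'\sigma$: since the two elements belong to the same $G$-orbit, Proposition \ref{DM1.16} applied in $G\sigma$ identifies the two torus parts modulo this Weyl-group action, producing the required $n\in N_G(M_0\sigma)$ with $ns_0\sigma n^{-1}=s'\sigma$. Then $n^{-1}h'\in C_G(s_0\sigma)$, and I will upgrade this to membership in $L_0'=C_G(s_0\sigma)^\circ$ by arguing that every class in the component group $C_G(s_0\sigma)/L_0'$ admits a representative inside $N_G(M_0\sigma)\cap C_G(s_0\sigma)$, which can then be absorbed back into $n$.

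The main obstacle I anticipate lies in this transitivity step: combining the uniqueness of the $\GL_m$-factor from the classification of isolated quasi-semi-simple elements with the $W_G^\sigma$-orbit analysis on the torus factor, while simultaneously tracking how $C_G(s_0\sigma)/L_0'$ sits inside $N_G(M_0\sigma)$. The delicate matching between these two structures is precisely what the definition of $A$ is built to accommodate, but formalising it cleanly — especially the claim about the component group — will likely require a case-by-case inspection of the concrete centraliser $\Sp_{h_1(h_1+1)}(k)\times\Ort_{h_2^2}(k)\times T^{\sigma_{T'},\circ}$ and its embedding in $G$.
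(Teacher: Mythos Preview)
Your approach differs from the paper's and is considerably more elaborate than necessary. The paper never attempts to show that $s_0\sigma$ and $s'\sigma:=h's_0\sigma h'^{-1}$ are $N_G(M_0\sigma)$-conjugate directly. Instead, writing $x=g_\tau^{-1}hg_\mu$, it observes that $x^{-1}M_0x\cap L_0'=C_{x^{-1}M_0x}(s_0\sigma)^\circ$ is a Levi subgroup of $L_0'$ of the prescribed isomorphism type, hence $L_0'$-conjugate to $M_0\cap L_0'$. The isolated hypothesis then enters via Remark~\ref{K'etKisole}: since $s_0\sigma$ is isolated both in $M_0\langle\sigma\rangle$ and in $x^{-1}M_0\langle\sigma\rangle x$, the minimality characterisation of Proposition~\ref{K'etK} forces $x^{-1}M_0x=lM_0l^{-1}$ for some $l\in L_0'$. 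This immediately yields $x=nl^{-1}$ with $n\in N_G(M_0)$, and a one-line check shows $n\in N_G(M_0\sigma)$. No element-matching, no torus bookkeeping, no component-group absorption.

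Your route can be made to work, but two of your steps are looser than you acknowledge. First, your invocation of Proposition~\ref{DM1.16} for the torus discrepancy is not a direct application: that proposition controls $W_G^\sigma$-orbits in the full maximal torus $T$, whereas you need control by $N_G(M_0\sigma)/M_0\cong\mathfrak{W}_N$ acting only on the factor $T'$. The actual argument is a multiset-subtraction: once the $\GL_m$-components agree, the eigenvalue multisets of $t_0$ and $t_0'$ in $T'/[T',\sigma]$ coincide, and $\mathfrak{W}_N$ acts transitively on such tuples. This is elementary but is not what Proposition~\ref{DM1.16} says. Second, your component-group step (finding representatives of $C_G(s_0\sigma)/L_0'$ inside $N_G(M_0\sigma)$) is correct --- the non-identity component comes from $\Ort_{h_2^2}\setminus\SO_{h_2^2}\subset\GL_m\subset M_0$ --- but the paper avoids this entirely by landing directly in $L_0'$ rather than in $C_G(s_0\sigma)$. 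In short: your strategy succeeds with more effort, but the paper's Levi-theoretic argument is both shorter and more conceptual, and it is the reason Remark~\ref{K'etKisole} was recorded in the first place.
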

\begin{proof}
For $h\in A$, write $x=g_{\tau}^{-1}hg_{\mu}$. By definition, $xs_0\sigma x^{-1}$ lies in $M_0\sigma$ with the prescribed centraliser in $M_0$. There exists $l\in L'_0$ such that $$x^{-1}M_{0}x\cap L'_0=l(M_{0}\cap L_0')l^{-1}=lM_{0}l^{-1}\cap L_0',$$ since the left hand side is a Levi subgroup of $L_0'$ of a given isomorphism type. By assumption, $s_0\sigma$ is isolated in $x^{-1}M_{0}\lb\sigma\rb x$ and in $M_{0}\lb\sigma\rb$, and so Remark \ref{K'etKisole} implies that $x^{-1}M_{0} x=lM_{0}l^{-1}$. So there exists $n\in N_G(M_0)$ such that $x=nl^{-1}$. From the fact $xs_0\sigma x^{-1}\in M_0\sigma$ we see that $n\sigma(n)^{-1}\in M_0$. 

Let us determine the set $N_G(M_0\sigma)$. If $n$ normalises $M_0\sigma$, then it normalises $M_0\sigma.M_0\sigma$, so it normalises $M_0$. Then, $nM_0\sigma n^{-1}=nM_0n^{-1}n\sigma(n^{-1})\sigma\in M_0\sigma$, so $n\sigma(n^{-1})\in M_0$. We see that $N_G(M_0\sigma)$ consists of those components of $N_G(M_0)$ that are $\sigma$-stable. Finally, we note that $g_{\tau}nlg_{\mu}^{-1}$ belongs to $A$, for any $n\in N_G(M_0\sigma)$ and $l\in L_0'$. 
\end{proof}
Since $F(h)=F(g_{\tau}xg_{\mu}^{-1})=g_{\tau}\dot{\tau}F(x)\dot{\mu}^{-1}g_{\mu}^{-1}$, the Frobenius on $A$ is transferred to the map $F_{\tau,\mu}:x\mapsto\dot{\tau}F(x)\dot{\mu}^{-1}$ via the above bijection. The set $N_G(M_0\sigma).L_0'$ is mapped into itself by $F_{\tau,\mu}$.

Write $M'_0=C_{M_0}(s_0\sigma)^{\circ}$. It is a Levi subgroup of $L'_0$ isomorphic to the product of $\Sp_{h_1(h_1+1)}(k)\times\SO_{h^2_2}(k)$ and a torus. Similarly, for any $v\in W^{\sigma}$ such that $s\sigma\in M_v\sigma$, write $M'_v=C_{M_v}(s\sigma)^{\circ}$.

\begin{Cor}\label{NsigmaN''}
There is a natural bijection:
\begin{equation}\label{ALNsigmaN''}
A^F/L^{\prime F}\cong(N_G(M_0\sigma)/N_{L_0'}(M'_0))^{F_{\tau,\mu}}.
\end{equation}
\end{Cor}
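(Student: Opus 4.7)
The plan is to unwind the bijection of Lemma \ref{A=NL}, push the right action of $L'$ and the Frobenius $F$ through it, and then apply a Lang--Steinberg argument to pass from fixed points in the ambient space to fixed points in the quotient.

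First, I would observe that right multiplication on $A$ by $l' \in L'$ corresponds under Lemma \ref{A=NL} to right multiplication of $x = g_\tau^{-1} h g_\mu$ by $g_\mu^{-1} l' g_\mu$; since $L' = g_\mu L_0' g_\mu^{-1}$, this is right multiplication by an element of $L_0'$. Using $g_\mu^{-1} F(g_\mu) = \dot\mu$, a direct calculation shows that $L^{\prime F}$ corresponds to $L_0^{\prime F_\mu}$, where $F_\mu(x) := \dot\mu F(x) \dot\mu^{-1}$ is the (Frobenius) endomorphism of $L_0'$ obtained by twisting $F$ by $\dot\mu$. Thus the set $A^F/L^{\prime F}$ is identified with $(N_G(M_0\sigma).L_0')^{F_{\tau,\mu}} / L_0^{\prime F_\mu}$.

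Next I would identify $N_G(M_0\sigma).L_0'/L_0'$ with $N_G(M_0\sigma)/N_{L_0'}(M_0')$. One inclusion $N_G(M_0\sigma) \cap L_0' \subset N_{L_0'}(M_0)$ is immediate (and equal to $N_{L_0'}(M_0')$ by Proposition \ref{K'etK} and Remark \ref{L'Lnorm}, since $M_0 = C_G(Z^\circ_{M_0'})$). For the reverse inclusion, given $l' \in N_{L_0'}(M_0') \subset N_{L_0'}(M_0)$, I would use the relation $l' \cdot s_0\sigma = s_0\sigma \cdot l'$ (coming from $l' \in L_0'$) to deduce $\sigma(l') = s_0^{-1} l' s_0$, and hence $l'\sigma(l')^{-1} = (l' s_0^{-1} l'^{-1}) s_0 \in M_0$, so that $l' \in N_G(M_0\sigma)$.

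Then I would verify that $F_{\tau,\mu}$ preserves $N_G(M_0\sigma)$ (since $\dot\tau, \dot\mu \in N_G(M_0\sigma)$ by the discussion preceding the statement, and $F$ commutes with $\sigma$), that $F_\mu$ preserves $L_0'$ (from $F(L_0') = \dot\mu^{-1} L_0' \dot\mu$), and hence that $F_{\tau,\mu}$ descends to a well-defined endomorphism of the quotient $N_G(M_0\sigma)/N_{L_0'}(M_0')$; the compatibility with the natural projection $N_G(M_0\sigma).L_0' \twoheadrightarrow N_G(M_0\sigma)/N_{L_0'}(M_0')$ follows from the identity $F_{\tau,\mu}(nl) = F_{\tau,\mu}(n) F_\mu(l)$ for $n \in N_G(M_0\sigma)$, $l \in L_0'$.

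The final step is the Lang--Steinberg argument: given a class $\bar n$ in $(N_G(M_0\sigma)/N_{L_0'}(M_0'))^{F_{\tau,\mu}}$, I would write $F_{\tau,\mu}(n) = nk$ for some $k \in N_{L_0'}(M_0') \subset L_0'$, and look for $l \in L_0'$ such that $nl$ is $F_{\tau,\mu}$-fixed, i.e. $l^{-1} F_\mu(l) = k^{-1}$; existence follows from Lang--Steinberg applied to the connected group $L_0'$ with the Frobenius endomorphism $F_\mu$. Injectivity is immediate: if $x_1 = x_2 l$ are both $F_{\tau,\mu}$-fixed, then $F_\mu(l) = l$, so $l \in L_0^{\prime F_\mu}$. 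The main obstacle I anticipate is the bookkeeping in identifying $N_G(M_0\sigma) \cap L_0' = N_{L_0'}(M_0')$ and in confirming that the twisted map $F_\mu$ really is a Frobenius endomorphism of $L_0'$ so that Lang--Steinberg applies; both are elementary but require invoking the results of Section~2 carefully.
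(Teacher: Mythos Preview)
Your proposal is correct and follows essentially the same approach as the paper. The paper's own proof is extremely terse: it simply notes that since $L'_0$ (equivalently $L'$) is connected, the standard Lang--Steinberg argument gives $A^F/L^{\prime F}\cong (A/L')^F$, and then the bijection of Lemma \ref{A=NL} transports this to $(N_G(M_0\sigma).L_0'/L_0')^{F_{\tau,\mu}}=(N_G(M_0\sigma)/N_{L_0'}(M_0'))^{F_{\tau,\mu}}$; you have unpacked precisely these steps, including the identification $N_G(M_0\sigma)\cap L_0'=N_{L_0'}(M_0')$ that the paper records separately in the sentence following the corollary via Remark \ref{L'Lnorm}.
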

Remark \ref{L'Lnorm} implies that $N_{L_0'}(M'_0)$ is indeed a subgroup of $N_G(M_0\sigma)$.
\begin{proof}
Since $L'_0$ is connected, $A^F/L^{\prime F}\cong (A/L')^F$. The bijection is then induced from the bijection of Lemma \ref{A=NL}.
\end{proof}

\subsubsection{}
Let us point out that the identity component of $N_G(M_0\sigma)$ is $M_0$ whereas that of $N_{L_0'}(M'_0)$ is $M'_0$, so we cannot directly reduce the problem to a purely combinatorial one as in \S \ref{splitconn}.

Write $N'_0=N_{L'_0}(M'_0)$ and $\bar{N}'_0:=N'_0.M_0$. The latter is the union of the connected components of $N_G(M_0\sigma)$ that meet $L'_0$. 
\begin{Lem}
Each connected component of $\bar{N}'_0$ contains exactly one connected component of $N'_0$.
\end{Lem}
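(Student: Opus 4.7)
The plan is to reformulate the statement as an injectivity question on component groups and then reduce it to a straightforward calculation inside the centraliser of $s_0\sigma$.

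First I would identify the identity components of both sides. Since $\bar N'_0$ is by definition a union of connected components of $N_G(M_0\sigma)$ that contains $M_0$, and since $N_G(M_0\sigma)^{\circ} = M_0$ (the identity component of the normaliser of a Levi factor in a connected reductive group is the Levi itself), we obtain $(\bar N'_0)^{\circ} = M_0$ and hence $\pi_0(\bar N'_0) = \bar N'_0/M_0$. As for $N'_0 = N_{L'_0}(M'_0)$, I would apply Proposition \ref{PLsigma}(i) to the semi-simple (hence quasi-semi-simple) element $s_0\sigma$ and to the $\sigma$-stable Levi factor $M_0$ of the $\sigma$-stable parabolic $Q_0$: since $s_0\in M_0$, the pair $(M_0,Q_0)$ is also $s_0\sigma$-stable, and the proposition says that $M'_0 = C_{M_0}(s_0\sigma)^{\circ}$ is a Levi subgroup of $L'_0$. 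Consequently $N_{L'_0}(M'_0)^{\circ} = M'_0$ and $\pi_0(N'_0) = N'_0/M'_0$. The lemma then becomes the assertion that the natural map
\begin{equation*}
N'_0/M'_0 \longrightarrow \bar N'_0/M_0
\end{equation*}
is a bijection. Surjectivity is immediate from $\bar N'_0 = N'_0\cdot M_0$, so what remains is injectivity, namely the equality $N'_0 \cap M_0 = M'_0$.

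Second, I would reduce this equality to a statement about component groups of centralisers. The inclusion $M'_0\subseteq N'_0\cap M_0$ is trivial. Conversely, an element $x \in N'_0\cap M_0$ lies in $L'_0 = C_G(s_0\sigma)^{\circ}$, hence commutes with $s_0\sigma$, and also lies in $M_0$; therefore $x\in C_{M_0}(s_0\sigma)$. Since $M'_0$ is the identity component of $C_{M_0}(s_0\sigma)$, the inclusion $N'_0\cap M_0 \subseteq M'_0$ is equivalent to the injectivity of the natural homomorphism of finite groups
\begin{equation*}
C_{M_0}(s_0\sigma)/M'_0 \;\longrightarrow\; C_G(s_0\sigma)/C_G(s_0\sigma)^{\circ}.
\end{equation*}

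Finally, I would verify this injectivity using the explicit description of centralisers of isolated quasi-semi-simple elements from \S\ref{isolelts} and the structure results of \S\ref{cent-uni}. Since $s_0\sigma$ is isolated in $M_0\sigma$ with $M'_0 \cong \Sp_{h_1(h_1+1)}(k)\times\SO_{h_2^2}(k)\times T$ for a torus $T$, the full centraliser in $M_0$ is obtained by replacing $\SO_{h_2^2}$ with $\Ort_{h_2^2}$, so $C_{M_0}(s_0\sigma)/M'_0$ is $\mathbb{Z}/2\mathbb{Z}$ when $h_2\neq 0$ and trivial when $h_2=0$. The case $h_2=0$ is immediate, so assume $h_2\neq 0$: a generator of $C_{M_0}(s_0\sigma)/M'_0$ is represented by any reflection $r\in \Ort_{h_2^2}(k)\setminus\SO_{h_2^2}(k)$. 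Using the description of \S\ref{centgeo}, $C_G(s_0\sigma)^{\circ}$ is of the form $\Sp_{n_s}(k)\times\SO_{n_o}(k)\times\prod_i \GL_{n_i}(k)$ with $h_2^2\le n_o$, and the full centraliser $C_G(s_0\sigma)$ differs only by replacing $\SO_{n_o}$ with $\Ort_{n_o}$; in particular the only source of non-connectedness is the orthogonal factor. The embedding $M_0\hookrightarrow G$ identifies the $h_2^2$-dimensional subspace on which $\Ort_{h_2^2}$ acts with an orthogonal summand of the $n_o$-dimensional one on which $\Ort_{n_o}$ acts, and $r$ extends by the identity on the complement, so its determinant remains $-1$. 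Hence the image of $r$ lies in $\Ort_{n_o}\setminus\SO_{n_o}$ and is non-trivial in $\pi_0(C_G(s_0\sigma))$; this proves the required injectivity.

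The step I expect to be the main obstacle is the last one: pinning down that a reflection generating $C_{M_0}(s_0\sigma)/M'_0$ is not absorbed into $C_G(s_0\sigma)^{\circ}$ by the extra $\GL$-type factors of the larger centraliser. This ultimately rests on the fact, recalled in \S\ref{cent-uni} and \S\ref{centgeo}, that the $\GL$-factors contribute connectedly, so all non-connectedness of $C_G(s_0\sigma)$ is detected by the determinant of the orthogonal factor, which the determinant calculation above handles correctly.
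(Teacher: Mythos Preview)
Your proof is correct but takes a longer, more computational route than the paper's. The paper dispatches the lemma in three lines by arguing via the action on the connected centre $Z^{\circ}_{M'_0}$: since $s_0\sigma$ is isolated in $M_0\langle\sigma\rangle$, Remark~\ref{K'etKisole} (equivalently Proposition~\ref{L3S2.2}) gives $M_0 = C_G(Z^{\circ}_{M'_0})$, so every element of $(\bar N'_0)^{\circ} = M_0$ centralises $Z^{\circ}_{M'_0}$; on the other hand, since $M'_0$ is a Levi subgroup of $L'_0$ one has $C_{L'_0}(Z^{\circ}_{M'_0}) = M'_0$, so any element of $N'_0 \subset L'_0$ acting trivially on $Z^{\circ}_{M'_0}$ already lies in $M'_0$. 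This immediately yields $N'_0 \cap M_0 \subseteq M'_0$, which is exactly the injectivity you reduce to.

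What your argument buys is a concrete identification of the two component groups involved and an explicit tracking of the orthogonal reflection, which makes the mechanism very transparent in this $\GL_n$ setting. What the paper's argument buys is brevity and generality: it uses nothing about the explicit shape of the centralisers, only the relation $M_0 = C_G(Z^{\circ}_{M'_0})$ coming from the isolated hypothesis. In particular the paper's proof never has to check that the $\sigma$-fixed points in the torus factor $(k^{\ast})^{2N}$ of $M_0$ are connected (they are, but you implicitly used this when asserting $C_{M_0}(s_0\sigma)/M'_0 \cong \mathbb{Z}/2\mathbb{Z}$), nor does it need your final paragraph verifying that the $\SO_{h_2^2}$ really sits inside the $\SO_{n_o}$ factor of $L'_0$ and is not spread across the $\GL$ factors.
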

\begin{proof}
It suffices to consider the identity components of the two groups. Note that the connected component of an element of $N'_0$ is determined by its action on $Z_{M_0'}^{\circ}$. An element of the identity component of $\bar{N}'_0$ must induce trivial action on this torus because $M_0=C_G(Z_{M_0'}^{\circ})$.
\end{proof}
We deduce from this lemma an isomorphism $N'_0/M_0'\cong\bar{N}'_0/M_0$. We can then regard $W'_0:=N_{L_0'}(M_0')/M_0'$ as a subgroup of $W^{\sigma}$.

\begin{Lem}\label{1eq-for-h}
Let $n\in N_G(M_0\sigma)$ and let $w$ be the class of $n$ in $W^{\sigma}$. Then
\begin{itemize}
\item[(i)] The coset $nN'_0$ is $F_{\tau,\mu}$-stable if and only if 
\begin{equation}\label{eqn-for-n}
n^{-1}\dot{\tau}F(n)\dot{\mu}^{-1}\in N'_0;
\end{equation}
\item[(ii)] In the case of (i), we have
\begin{equation}\label{eqn-for-w}
w^{-1}\tau w\mu^{-1}\in W'_0.
\end{equation}
\end{itemize}
\end{Lem}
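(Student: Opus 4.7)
The proof is a straightforward unwinding of the definitions, once two normalisation properties are checked. The key preliminary is that $\dot\mu = g_\mu^{-1}F(g_\mu)$ centralises $s_0\sigma$: from $s\sigma = g_\mu s_0\sigma g_\mu^{-1}$ and the $F$-stability of $s\sigma$ one computes $\dot\mu\, s_0\sigma\, \dot\mu^{-1} = g_\mu^{-1}F(s\sigma)g_\mu = s_0\sigma$. Combined with $\dot\mu \in N_G(M_0)$, this shows that $\dot\mu$ normalises both $L'_0 = C_G(s_0\sigma)^\circ$ and $M'_0 = C_{M_0}(s_0\sigma)^\circ$, hence also $N'_0 = N_{L'_0}(M'_0)$. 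Moreover $N'_0$ is itself $F$-stable, being defined by the normaliser of an $F$-stable subgroup inside another $F$-stable group. As a by-product, $\dot\mu \in \bar{N}'_0$, so $\mu$ already lies in $W'_0 \subset W^\sigma$.

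For part (i), the map $F_{\tau,\mu}$ sends a left $N'_0$-coset to a left $N'_0$-coset: for any $x \in N'_0$,
\begin{equation*}
F_{\tau,\mu}(nx) \;=\; \dot\tau F(n) F(x)\dot\mu^{-1} \;=\; \bigl(\dot\tau F(n)\dot\mu^{-1}\bigr)\cdot\bigl(\dot\mu F(x)\dot\mu^{-1}\bigr),
\end{equation*}
and the factor $\dot\mu F(x)\dot\mu^{-1}$ runs through $N'_0$ as $x$ does, by the two preliminaries. Hence $F_{\tau,\mu}(nN'_0) = \bigl(\dot\tau F(n)\dot\mu^{-1}\bigr)N'_0$, and this equals $nN'_0$ if and only if $n^{-1}\dot\tau F(n)\dot\mu^{-1} \in N'_0$, which is exactly (\ref{eqn-for-n}).

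For part (ii), set $y := n^{-1}\dot\tau F(n)\dot\mu^{-1} \in N'_0$. Since $N'_0 \subset \bar{N}'_0 \subset N_G(M_0\sigma)$, the element $y$ has a well-defined class in $W^\sigma = N_G(M_0\sigma)/M_0$, and this class necessarily lies in the image $\bar{N}'_0/M_0$, which is $W'_0$ under the identification $W'_0 \cong \bar{N}'_0/M_0$ recalled just before the lemma. Computing this class factor by factor, and using that $F$ acts trivially on $W^\sigma$ (so the class of $F(n)$ is again $w$), we obtain $\bar y = w^{-1}\tau w\mu^{-1}$, which is (\ref{eqn-for-w}).

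The only delicate point, and the place where one has to be slightly careful, is verifying that $\dot\mu$ normalises $N'_0$: since $\dot\mu$ need not lie in the identity component $L'_0$ of $C_G(s_0\sigma)$, one cannot simply appeal to an inclusion, and the centralisation calculation above is essential. Everything else is purely formal manipulation of cosets.
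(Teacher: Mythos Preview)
Your overall strategy is right, but the key preliminary computation contains a slip that invalidates two of your intermediate claims. You write
\[
\dot\mu\, s_0\sigma\, \dot\mu^{-1} = g_\mu^{-1}F(s\sigma)g_\mu,
\]
but if you expand the right-hand side using $F(s\sigma)=F(g_\mu)F(s_0\sigma)F(g_\mu)^{-1}$ and $\dot\mu=g_\mu^{-1}F(g_\mu)$, you get $\dot\mu\,F(s_0\sigma)\,\dot\mu^{-1}$, not $\dot\mu\,s_0\sigma\,\dot\mu^{-1}$. The element $s_0\sigma=g_\mu^{-1}s\sigma g_\mu$ has no reason to be $F$-stable, so the two are genuinely different. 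Consequently neither ``$\dot\mu$ normalises $N'_0$'' nor ``$N'_0$ is $F$-stable'' is justified on its own, and your by-product ``$\mu\in W'_0$'' is unwarranted (and not asserted in the paper).

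What the computation actually shows is that the twisted Frobenius $F_\mu:=\ad(\dot\mu)\circ F$ fixes $s_0\sigma$. Since $F_\mu$ also stabilises $M_0$ (both $\ad(\dot\mu)$ and $F$ do), it stabilises $L'_0=C_G(s_0\sigma)^\circ$, $M'_0=C_{M_0}(s_0\sigma)^\circ$, and hence $N'_0=N_{L'_0}(M'_0)$. This single statement $\dot\mu F(N'_0)\dot\mu^{-1}=N'_0$ is exactly what your coset calculation in (i) needs, and with it your argument for (i) goes through verbatim. Part (ii) is fine as written. The paper records the proof as ``Obvious'', so your approach is the intended one once this correction is made.
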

\begin{proof}
Obvious.
\end{proof}

\begin{Lem}\label{w-implies-n}
Suppose that there exists some $w\in W^{\sigma}$ satisfying (\ref{eqn-for-w}). Then there exists some $n\in N_G(M_0\sigma)$ that represents $w$ and satisfies (\ref{eqn-for-n}).
\end{Lem}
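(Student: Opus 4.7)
The plan is to apply Lang--Steinberg to a suitably twisted Frobenius on $M_0$. I would start by choosing any lift $n_0\in N_G(M_0\sigma)$ of $w$ and setting $x_0:=n_0^{-1}\dot\tau F(n_0)\dot\mu^{-1}$. Since $F$ acts trivially on $W^{\sigma}$, the image of $x_0$ in $W^{\sigma}$ equals $w^{-1}\tau w\mu^{-1}\in W'_0$; under the identification $W'_0\cong \bar{N}'_0/M_0$ coming from the isomorphism stated just before Lemma \ref{1eq-for-h}, this forces $x_0\in \bar{N}'_0=N'_0\cdot M_0$, so one may write $x_0=n'_0 m_0$ with $n'_0\in N'_0$ and $m_0\in M_0$.

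Next I would search for the desired $n$ in the form $n=n_0 m$ with $m\in M_0$ (any such $n$ still represents $w$). A direct manipulation, using that both $n'_0$ and $\dot\mu$ lie in $N_G(M_0)$, yields
$$n^{-1}\dot\tau F(n)\dot\mu^{-1}=n'_0\cdot\alpha(m)^{-1}\,m_0\,\beta(m),$$
where $\beta(y):=\dot\mu F(y)\dot\mu^{-1}$ is a Frobenius endomorphism of $M_0$ and $\alpha(y):=n_0^{\prime -1}y\,n'_0$ is an algebraic automorphism of $M_0$. Since $M'_0\subseteq N'_0\cap M_0$, it suffices to produce $m\in M_0$ with $\alpha(m)^{-1}m_0\beta(m)=1$, for then $n^{-1}\dot\tau F(n)\dot\mu^{-1}=n'_0\in N'_0$.

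The substitution $\tilde m:=\alpha(m)$ (a bijection of $M_0$) and the definition $\tilde\beta:=\beta\circ\alpha^{-1}$ reduce the required equation to $\tilde m^{-1}\tilde\beta(\tilde m)=m_0^{-1}$. A short computation gives $\tilde\beta(y)=c\,F(y)\,c^{-1}$ with $c:=\dot\mu F(n'_0)$, so $\tilde\beta$ is the composition of $F$ with conjugation by $c$ on the connected group $M_0$, hence is itself a Frobenius endomorphism. Lang--Steinberg, applied to $(M_0,\tilde\beta)$, then delivers $\tilde m\in M_0$ with $\tilde m^{-1}\tilde\beta(\tilde m)=m_0^{-1}$, and the element $n:=n_0\,\alpha^{-1}(\tilde m)$ meets both requirements of the lemma. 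There is no serious obstacle here: the only non-routine point is to recognise that $\tilde\beta$ is genuinely a Frobenius, which follows at once from $\alpha$ being an algebraic automorphism and $\beta$ being a Frobenius.
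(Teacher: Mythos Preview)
Your approach is essentially the same as the paper's: modify a chosen lift of $w$ by an element of $M_0$ and invoke Lang--Steinberg for a twisted Frobenius on $M_0$. The paper streamlines the bookkeeping by picking an $F$-\emph{stable} lift $\dot w$ and writing $n=t\dot w$ with $t\in M_0$; then
\[
n^{-1}\dot\tau F(n)\dot\mu^{-1}=\bigl(\dot w^{-1}t^{-1}F_\tau(t)\dot w\bigr)\bigl(\dot w^{-1}\dot\tau\dot w\dot\mu^{-1}\bigr),
\]
and since the first factor ranges over all of $M_0$ (Lang for $F_\tau=\mathrm{Ad}(\dot\tau)\circ F$) while the second already lies in $\bar N'_0=M_0N'_0$, one is done. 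Your choice of an arbitrary lift and right multiplication $n=n_0m$ forces you to carry the extra conjugation $\alpha$ and the more elaborate Frobenius $\tilde\beta=\mathrm{Ad}(\dot\mu F(n'_0))\circ F$, but the mechanism is identical.

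One small slip: from $\tilde m^{-1}m_0\tilde\beta(\tilde m)=1$ you get $\tilde\beta(\tilde m)=m_0^{-1}\tilde m$, i.e.\ $\tilde\beta(\tilde m)\tilde m^{-1}=m_0^{-1}$, not $\tilde m^{-1}\tilde\beta(\tilde m)=m_0^{-1}$. This is harmless, since the ``left'' Lang map $y\mapsto \tilde\beta(y)y^{-1}$ is just as surjective as the usual one (replace $y$ by $y^{-1}$). Your justification that $\tilde\beta$ is a genuine Frobenius on $M_0$ is correct: $c=\dot\mu F(n'_0)\in N_G(M_0)$, so $\tilde\beta=\mathrm{Ad}(c)\circ F$ has zero differential and Lang--Steinberg applies.
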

\begin{proof}
Let $\dot{w}\in N_G(M_0\sigma)$ be an $F$-stable element representing $w$ then we want to find an element $t\in M_0$ such that $$\dot{w}^{-1}t^{-1}\dot{\tau}F(t)\dot{w}\dot{\mu}^{-1}\in N'_0.$$ Write $$\dot{w}^{-1}t^{-1}\dot{\tau}F(t)\dot{w}\dot{\mu}^{-1}=\big(\dot{w}^{-1}t^{-1}F_{\tau}(t)\dot{w}\big)\big(\dot{w}^{-1}\dot{\tau}\dot{w}\dot{\mu}^{-1}\big).$$ Since $t\mapsto t^{-1}F_{\tau}(t)$ is surjective onto $M_0$, the desired $t$ exists and $t\dot{w}$ is the sought-for $n$.
\end{proof}

\begin{Prop}
Let $s\sigma$ be an $F$-stable semi-simple element with connected centraliser isomorphic to $\Sp_{n_s}(k)\times\SO_{n_o}(k)\times\prod_{i}\GL_{n_i}(k)$ for some non negative integers $n_s$, $n_o$ and $n_i$'s. Then the set $A^F(s\sigma,\tau,h_1,h_2)$ is non-empty if and only if 
\begin{itemize}
\item[(i)] $h_1(h_1+1)\le n_s$, $h_2^2\le n_o$ and,
\item[(ii)] the solution set of equation (\ref{eqn-for-w}) is non-empty.
\end{itemize}
\end{Prop}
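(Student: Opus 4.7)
The plan is to establish the equivalence by two easy implications, both of which reduce to machinery already built in the earlier lemmas of this section. The whole forward direction rests on comparing reductive factors of centralisers, while the backward direction is essentially a Lang--Steinberg argument combined with the combinatorial lifting from equation (\ref{eqn-for-w}) to equation (\ref{eqn-for-n}).

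For the forward direction, suppose $h\in A^F(s\sigma,\tau,h_1,h_2)$. Then $hs\sigma h^{-1}\in M_\tau\sigma$ has connected centraliser in $M_\tau$ of the form $\Sp_{h_1(h_1+1)}(k)\times\Ort_{h_2^2}(k)$ times a torus. Because this centraliser embeds in $C_G(hs\sigma h^{-1})^\circ\cong C_G(s\sigma)^\circ\cong \Sp_{n_s}(k)\times\SO_{n_o}(k)\times\prod_i\GL_{n_i}(k)$, comparison of the semisimple factors of this reductive group forces $h_1(h_1+1)\le n_s$ and $h_2^2\le n_o$, which is (i). For (ii), once (i) is known, Lemma \ref{Mmu} supplies some $\mu\in W^\sigma$ so that (up to $G^F$-conjugacy) we may place $s\sigma$ in $M_\mu\sigma$ with the prescribed centraliser, and the setup of Corollary \ref{NsigmaN''} becomes available. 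The non-emptiness of $A^F$ then yields non-emptiness of $A^F/L'^F\cong(N_G(M_0\sigma)/N'_0)^{F_{\tau,\mu}}$, and Lemma \ref{1eq-for-h}(ii) translates the existence of an $F_{\tau,\mu}$-fixed coset into the existence of $w\in W^\sigma$ satisfying (\ref{eqn-for-w}).

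For the backward direction, assume (i) and (ii). Condition (i) again invokes Lemma \ref{Mmu} to provide $\mu$ making the framework of Corollary \ref{NsigmaN''} applicable; in particular $s_0\sigma:=g_\mu^{-1}s\sigma g_\mu$ lies in $M_0\sigma$ with the prescribed centraliser, so the groups $L'_0$, $M'_0$, $N'_0$, $W'_0$ are all defined. Take $w\in W^\sigma$ satisfying (\ref{eqn-for-w}). By Lemma \ref{w-implies-n}, $w$ lifts to an element $n\in N_G(M_0\sigma)$ satisfying (\ref{eqn-for-n}), i.e.\ $n^{-1}\dot\tau F(n)\dot\mu^{-1}\in N'_0$. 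By Lemma \ref{1eq-for-h}(i) the coset $nN'_0$ is then $F_{\tau,\mu}$-stable, which via the bijection of Corollary \ref{NsigmaN''} produces a class in $(A/L')^F$. Since $L'$ is connected, Lang--Steinberg gives $A^F/L'^F\isom(A/L')^F$, hence $A^F$ is non-empty.

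There is no real obstacle here; the content of the proposition is packaged in the preceding lemmas. The only point requiring some care is to notice that condition (i) is not just a constraint coming from $A^F$ being non-empty but is exactly what is needed, via Lemma \ref{Mmu}, to legitimately introduce the auxiliary element $\mu$ and the groups $L'_0,M'_0,N'_0,W'_0$ without which the statement (ii) would not even be formulable; so (i) must be verified before (ii) is invoked in both directions of the argument.
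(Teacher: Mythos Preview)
Your proof is correct and follows essentially the same approach as the paper, which simply cites Corollary \ref{NsigmaN''}, Lemma \ref{Mmu}, Lemma \ref{1eq-for-h} and Lemma \ref{w-implies-n} without further elaboration. You have spelled out explicitly how these lemmas combine in each direction, including the observation that condition (i) is both a consequence of $A^F\neq\varnothing$ (via the Levi embedding $C_{M_\tau}(hs\sigma h^{-1})^\circ\subset C_G(hs\sigma h^{-1})^\circ$) and the prerequisite for invoking Lemma \ref{Mmu} to set up the framework in which (ii) makes sense; the Lang--Steinberg step you mention is already contained in the proof of Corollary \ref{NsigmaN''}.
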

\begin{proof}
This is a combination of Corollary \ref{NsigmaN''}, Lemma \ref{Mmu}, Lemma \ref{1eq-for-h} and Lemma \ref{w-implies-n}.
\end{proof}
\begin{Rem}
Condition (ii) in the above Proposition is actually independent of the choice of $\mu$. Indeed, from the proof of Lemma \ref{Mmu}, one sees that the choice of $M_{\mu}$ comes from a choice of an $F$-stable Levi subgroup $K'\subset L'$ and a different choice of $K'$ results in multiplying $\mu$ on the left by an element of $W_0'$.
\end{Rem}

\subsubsection{}
We assume from now on that $A^F$ is non-empty. 
\begin{Lem}
The map
\begin{equation}
\begin{split}
\iota:A^F/L^{'F}&\longrightarrow B\\
hL^{'F}&\longmapsto \text{ the class of } C_{h^{-1}M_{\tau}h}(s\sigma)^{\circ}.
\end{split}
\end{equation}
is well defined and surjective.
\end{Lem}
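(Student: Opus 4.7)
The plan is to unwind the definitions of $A^F$ and $B$ and exhibit a direct correspondence via $h$. Write $K'_h := C_{h^{-1}M_\tau h}(s\sigma)^\circ$ and $m\sigma := hs\sigma h^{-1} \in M_\tau\sigma$. First I would observe that conjugation by $h$ sends the ambient ``$M$-like'' data at $s\sigma$ to the data at $m\sigma$: indeed $h K'_h h^{-1} = C_{M_\tau}(m\sigma)^\circ$. Since $M_\tau$ is a $\sigma$-stable Levi factor of a $\sigma$-stable parabolic subgroup, $h^{-1}M_\tau h$ is an $s\sigma$-stable Levi factor of an $s\sigma$-stable parabolic of $G$, so by Proposition \ref{PLsigma}(i), $K'_h$ is a Levi subgroup of $L'=C_G(s\sigma)^\circ$. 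Moreover, by the definition of $A$, $C_{M_\tau}(m\sigma)$ is isomorphic to $\Sp_{h_1(h_1+1)}(k)\times\Ort_{h_2^2}(k)\times\text{torus}$, whence its identity component $K'_h$ is isomorphic to $\Sp_{h_1(h_1+1)}(k)\times\SO_{h_2^2}(k)\times\text{torus}$. It is $F$-stable because $h$, $M_\tau$, and $s\sigma$ are.

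Next I would verify the membership condition on $K := C_G(Z^\circ_{K'_h})$ required by the definition of $B$. The identity component of the centre of $\Sp_{h_1(h_1+1)}(k)\times\SO_{h_2^2}(k)\times T$ is precisely the torus factor $T$. Since $K'_h$ is conjugate via $h$ to $C_{M_\tau}(m\sigma)^\circ$, whose central torus is the central torus of $M_\tau$ (this is exactly the content of Remark \ref{K'etKisole} applied inside $N_G(M_\tau)\lb\sigma\rb$, using that $m\sigma$ is isolated in $M_\tau\sigma$ by hypothesis), one obtains $Z^\circ_{K'_h}=h^{-1}Z^\circ_{M_\tau}h$. Consequently
\[
K = C_G\bigl(h^{-1}Z^\circ_{M_\tau}h\bigr) = h^{-1}C_G(Z^\circ_{M_\tau})h = h^{-1}M_\tau h,
\]
and hence $K\lb s\sigma\rb = h^{-1}(M_\tau\lb m\sigma\rb)h = h^{-1}(M_\tau\lb\sigma\rb)h$, which is $G^F$-conjugate via $h\in G^F$ to $M_\tau\lb\sigma\rb$. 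Thus the $L'^F$-class of $K'_h$ lies in $B$. For independence of the coset representative, if $h'=hl$ with $l\in L'^F$, then $(h')^{-1}M_\tau h' = l^{-1}(h^{-1}M_\tau h)l$, so $K'_{h'}=l^{-1}K'_h l$, giving the same class in $B$.

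For surjectivity, I take a representative $K'$ of a class in $B$ and choose $g\in G^F$ with $gK\lb s\sigma\rb g^{-1}=M_\tau\lb\sigma\rb$; in particular $gKg^{-1}=M_\tau$ and $gs\sigma g^{-1}\in M_\tau\sigma$ (since conjugation preserves connected components). The element $gs\sigma g^{-1}$ is isolated in $M_\tau\lb\sigma\rb$ because $s\sigma$ is isolated in $K\lb s\sigma\rb$ (this is the paragraph of Proposition \ref{L3S2.2} preceding Lemma \ref{Mmu}). Finally,
\[
C_{M_\tau}(gs\sigma g^{-1})^\circ = \bigl(gKg^{-1}\cap gL'g^{-1}\bigr)^\circ = gK'g^{-1},
\]
which has the prescribed isomorphism type. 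Hence $g\in A^F$, and $\iota(gL'^F)$ equals the class of $C_{g^{-1}M_\tau g}(s\sigma)^\circ = g^{-1}(gK'g^{-1})g = K'$. The main technical obstacle, and the only place that really uses the isolated hypothesis built into $A$, is the identification $K = h^{-1}M_\tau h$ in the central-torus computation; everything else is a routine bookkeeping exercise in transporting structure by the conjugation $h$.
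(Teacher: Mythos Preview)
Your proof is correct and follows essentially the same route as the paper: show $K'_h$ is an $F$-stable Levi of $L'$ of the right isomorphism type, then use the isolatedness hypothesis via Remark~\ref{K'etKisole} to identify $K=h^{-1}M_\tau h$, and reverse the construction for surjectivity. One small imprecision: your intermediate claim $Z^\circ_{K'_h}=h^{-1}Z^\circ_{M_\tau}h$ is not literally true (the ranks differ: $Z^\circ_{K'_h}$ is the torus factor of $K'_h$, of rank $N$, while $Z^\circ_{M_\tau}$ has rank $2N+1$). What Remark~\ref{K'etKisole}/Proposition~\ref{L3S2.2} actually gives from isolatedness is $Z^\circ_{K'_h}=(h^{-1}Z^\circ_{M_\tau}h\cap L')^\circ$, and then the conclusion $K=C_G(Z^\circ_{K'_h})=h^{-1}M_\tau h$ follows directly from that remark without the incorrect intermediate equality; so the logic stands once you replace that line.
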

\begin{proof}
If $h\in A^F$, then $hs\sigma h^{-1}$ normalises $M_{\tau}$ and a parabolic subgroup containing it, so $s\sigma$ normalises $h^{-1}M_{\tau}h$ and a parabolic subgroup containing it. It follows that $K':=C_{h^{-1}M_{\tau}h}^{\circ}(s\sigma)$ is an $F$-stable Levi subgroup of $L'$. We then obtain the Levi subgroup $K$ as above. From the fact that $C_{h^{-1}M_{\tau}h}(s\sigma)=h^{-1}C_{M_{\tau}}(hs\sigma h^{-1})h$ and from the assumption on $hs\sigma h^{-1}$, we deduce that $s\sigma\in K\lb s\sigma\rb$ is isolated with centraliser isomorphic to the product of $\Sp_{h_1(h_1+1)}(k)\times\Ort_{h^2_2}(k)$ and a torus. Since $s\sigma$ is also isolated in $h^{-1}M_{\tau}\lb\sigma\rb h$, by Remark \ref{K'etKisole}, we have $K=h^{-1}M_{\tau}h$, and so $K\lb s\sigma\rb=h^{-1}M_{\tau}\lb\sigma\rb h$. We see that the $L^{'F}$-class of $K'$ indeed belongs to $B$. Obviously this map factors through the quotient $A^F/L^{'F}$. Given $K'\in B$ with $h\in G^F$ such that $hK\lb s\sigma\rb h^{-1}=M_{\tau}\lb\sigma\rb$, the same argument shows that $h\in A^F$, whence surjectivity.
\end{proof}

\subsubsection{}
We can in fact further assume that $s\sigma\in M_{\tau}\sigma$, i.e. $\mu=\tau$ as long as $A^F$ is non-empty. The equation (\ref{eqn-for-w}) then becomes $$w^{-1}\tau w\tau^{-1}\in W'_0,$$ with $W'_0$ being stable under conjugation by $\tau$, and we will denote by $\mathbb{W}(s\sigma,\tau)$ the set of those $w\in W^{\sigma}$ satisfying this condition. Then $B$ is the set of $\tau$-conjugacy classes of $W'_0$ that meet $\{w^{-1}\tau w\tau^{-1}\mid w\in\mathbb{W}(s\sigma,\tau)\}$.
\begin{Rem}
With $\tau=\mu$, the condition (\ref{eqn-for-w}) simply means that the $\tau$-conjugacy class of $1$ meets $W'_0$. This is what one would guess without any computation. But it is not useful in determining whether $A^F$ is empty or not, because one would have to first find a $G$-conjugate $s_0\sigma$ of $s\sigma$ such that the associated $W'_0$ is $\tau$-stable, which is an equally difficult problem. In practice, it is easier to find some $\mu\in W^{\sigma}$ and some $s_0\sigma$ such that $W'_0$ is $\mu$-stable, then we can see if $A^F$ is empty using (\ref{eqn-for-w}).
\end{Rem}

Let $\bar{\nu}$ be a $\tau$-conjugacy class of $W_0'$. 
\begin{Lem}\label{11eq-for-h}
Let $n\in N_G(M_0\sigma)$ and let $w$ be the class of $n$ in $W^{\sigma}$. Suppose that $n$ satisfies (\ref{eqn-for-n}). Then under the bijection (\ref{ALNsigmaN''}) between the classes of $h$ and the classes of $n$, the $F_{\dot{\tau}}$-stable Levi subgroup $C_{h^{-1}M_{\tau}h}(s\sigma)^{\circ}\subset L'$ lies in the class $\bar{\nu}$ if and only if  
\begin{equation}\label{eqn-for-nu}
w^{-1}\tau w\tau^{-1}\in\bar{\nu}.
\end{equation}
\end{Lem}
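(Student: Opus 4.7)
The plan is to transport everything from $G$ back to $L_0'$ via conjugation by $g_\tau$, and then to read off the class in $B$ directly from the $F_\tau$-cocycle produced by $n$.

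First, I would unwind the bijection of Corollary~\ref{NsigmaN''}. Using the reduction $\mu=\tau$, write $h=g_\tau x g_\tau^{-1}$ with $x\in N_G(M_0\sigma)\cdot L_0'$, and decompose $x=nl^{-1}$ with $n\in N_G(M_0\sigma)$ and $l\in L_0'$ as in the proof of Lemma~\ref{A=NL}. The class of $h$ in $A^F/L'^F$ then corresponds via $xL_0'=nL_0'\mapsto nN_0'$ to the coset $nN_0'$; on $W^\sigma$ it is represented by the class $w$ of $n$. Since $F(s_0\sigma)=\dot{\tau}^{-1}s_0\sigma\dot{\tau}$, the groups $L_0'$, $M_0'$ and $N_0'$ are stable under the twisted Frobenius $F_\tau:=\ad(\dot{\tau})\circ F$, and the transferred Frobenius on $N_G(M_0\sigma)\cdot L_0'$ is exactly $F_{\tau,\tau}=F_\tau$. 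Moreover, $F$ acts trivially on $W^\sigma$, so $F_\tau$-conjugacy in $W_0'\subset W^\sigma$ coincides with the $\tau$-twisted conjugacy used to define $\bar{\nu}$.

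Next I would compute the Levi $K'=C_{h^{-1}M_\tau h}(s\sigma)^\circ$. Using $M_\tau=g_\tau M_0 g_\tau^{-1}$, $s\sigma=g_\tau(s_0\sigma)g_\tau^{-1}$ and $l\in L_0'=C_G(s_0\sigma)^\circ$, a direct calculation yields
\begin{equation*}
K'=g_\tau\,l\,C_{M_0}(s_0\sigma)^\circ\,l^{-1}g_\tau^{-1}=g_\tau\,lM_0'l^{-1}\,g_\tau^{-1}.
\end{equation*}
Conjugation by $g_\tau^{-1}$ is an isomorphism $L'\isom L_0'$ intertwining $F$ with $F_\tau$, so it identifies the $L'^F$-conjugacy class of $K'$ with the $L_0'^{F_\tau}$-conjugacy class of $lM_0'l^{-1}$. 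Since $M_0'$ is $F_\tau$-stable and $lM_0'l^{-1}$ is $F_\tau$-stable (because $K'$ is $F$-stable), this class is encoded by the $F_\tau$-conjugacy class in $W_0'=N_0'/M_0'$ of $l^{-1}F_\tau(l)\in N_0'$.

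The key step is to identify this cocycle with the element $n^{-1}\dot{\tau}F(n)\dot{\tau}^{-1}$. Using $x=nl^{-1}$ and $F_\tau(x)=x$ (which is exactly the condition $h\in A^F$), I compute
\begin{equation*}
l^{-1}F_\tau(l)=n^{-1}x\cdot F_\tau(x)^{-1}F_\tau(n)=n^{-1}F_\tau(n)=n^{-1}\dot{\tau}F(n)\dot{\tau}^{-1},
\end{equation*}
which by (\ref{eqn-for-n}) lies in $N_0'$. Projecting modulo $M_0'$ into $W^\sigma$ and using that $F$ acts trivially on $W^\sigma$ gives the class $w^{-1}\tau w\tau^{-1}\in W_0'$. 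Therefore $K'$ represents $\bar{\nu}\in B$ if and only if $w^{-1}\tau w\tau^{-1}\in\bar{\nu}$, which is the desired equivalence~(\ref{eqn-for-nu}).

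The only real subtlety is the compatibility between the twisted Frobenius $F_\tau$ and the various groups involved, in particular the check that $L_0'$, $M_0'$ and $N_0'$ are all $F_\tau$-stable so that $F_\tau$-conjugacy classes of tori in $L_0'$ make sense, and the identification of $F_\tau$-conjugacy on $W_0'$ with $\tau$-twisted conjugacy; both follow from $F(s_0\sigma)=\dot{\tau}^{-1}s_0\sigma\dot{\tau}$ and the triviality of the $F$-action on $W^\sigma$.
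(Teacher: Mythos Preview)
Your proof is correct and follows essentially the same route as the paper's: both decompose $h$ (via $g_\tau$) into a normaliser part and an $L'$-part, compute $C_{h^{-1}M_\tau h}(s\sigma)^\circ$ as a conjugate of $M_0'$ (or $M'_\tau$), and read off the twisted-Frobenius cocycle $n^{-1}\dot{\tau}F(n)\dot{\tau}^{-1}$, which projects to $w^{-1}\tau w\tau^{-1}$. The only difference is cosmetic: you transport everything to $L_0'$ at the outset and work with $F_\tau$ throughout, whereas the paper works in $L'$ with the ordinary $F$ and only conjugates back by $g_\tau^{-1}$ at the very end (introducing $n_0=g_\tau^{-1}ng_\tau$); your version is in fact notationally cleaner, since your $n$ agrees with the $n$ of the lemma statement.
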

\begin{proof}
Write $h=nl$ with $n\in N_G(M_{\tau}\sigma)$ and $l\in L'$. The $L^{'F}$-class of the Levi subgroup $C_{h^{-1}M_{\tau}h}(s\sigma)^{\circ}=l^{-1}M'_{\tau}l\subset L'$ is given by $lF(l)^{-1}=n^{-1}F(n)\in N_{L'}(M'_{\tau})$, or rather $n_0^{-1}\dot{\tau}F(n_0)\dot{\tau}^{-1}\in N_{L'_0}(M'_0)$, with $n_0=g_{\tau}^{-1}ng_{\tau}$. Then the class of $n_0$ is $w$.
\end{proof}

Denote by $\mathbb{W}_{\tau,\nu}$ the subset of $\mathbb{W}(s\sigma,\tau)$ consisting of elements $w$ satisfying (\ref{eqn-for-nu}).
\begin{Lem}\label{structure-W}
We have:
\begin{itemize}
\item[(i)] $\mathbb{W}_{\tau,\nu}$ is a union of $W'_0$-cosets;
\item[(ii)] $\mathbb{W}_{\tau,\nu}$ surjects onto $\bar{\nu}$ via $w\mapsto w^{-1}\tau w\tau^{-1}$;
\item[(iii)] The fibre in $\mathbb{W}_{\tau,\nu}$ over each element of $\bar{\nu}$ is in bijection with $C_{W^{\sigma}}(\tau)$.
\end{itemize}
\end{Lem}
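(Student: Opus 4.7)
My plan is to verify the three parts by direct computation in $W^{\sigma}$, using crucially the $\tau$-stability of $W_0'$ asserted just before the lemma. The key algebraic identity underlying all three parts is that for $w \in W^{\sigma}$ and $w_0 \in W_0'$,
\begin{equation*}
(ww_0)^{-1}\tau(ww_0)\tau^{-1} \;=\; w_0^{-1}\bigl(w^{-1}\tau w\tau^{-1}\bigr)(\tau w_0\tau^{-1}),
\end{equation*}
which exhibits the right-hand side as the $\tau$-conjugate of $\xi := w^{-1}\tau w\tau^{-1}$ by $w_0^{-1}$. Since $W_0'$ is $\tau$-stable, $\tau w_0\tau^{-1}\in W_0'$, and this $\tau$-conjugation preserves both $W_0'$ and every $\tau$-conjugacy class of $W_0'$.

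For (i) I would take $w\in\mathbb{W}_{\tau,\nu}$ and $w_0\in W_0'$; the identity above immediately gives $(ww_0)^{-1}\tau(ww_0)\tau^{-1}\in\bar{\nu}$, so right multiplication by $W_0'$ preserves $\mathbb{W}_{\tau,\nu}$, meaning $\mathbb{W}_{\tau,\nu}$ is a union of right cosets of $W_0'$. For (ii), the hypothesis $\bar{\nu}\in B$ ensures $\mathbb{W}_{\tau,\nu}$ is non-empty; fix $w_1\in\mathbb{W}_{\tau,\nu}$ with $\xi_1 := w_1^{-1}\tau w_1\tau^{-1}\in\bar{\nu}$. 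Any $\nu\in\bar{\nu}$ can be written as $w_0^{-1}\xi_1(\tau w_0\tau^{-1})$ for some $w_0\in W_0'$ (this is just the definition of the $\tau$-conjugacy class), and then $w_1w_0$ is a preimage of $\nu$ under $w\mapsto w^{-1}\tau w\tau^{-1}$.

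For (iii), fix $\nu\in\bar{\nu}$. If $w_1,w_2\in\mathbb{W}_{\tau,\nu}$ both map to $\nu$, then $w_1^{-1}\tau w_1 = w_2^{-1}\tau w_2$, so $w_2w_1^{-1}$ commutes with $\tau$, giving $w_2w_1^{-1}\in C_{W^{\sigma}}(\tau)$. Conversely, left multiplication of $w_1$ by any element of $C_{W^{\sigma}}(\tau)$ leaves $w_1^{-1}\tau w_1\tau^{-1}$ unchanged. Hence the fibre over $\nu$ equals $C_{W^{\sigma}}(\tau)\cdot w_1$, and $c\mapsto cw_1$ is the desired bijection with $C_{W^{\sigma}}(\tau)$.

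There is no real obstacle here; the whole lemma is a bookkeeping exercise once one has recorded the one identity above. The only mild subtlety is being consistent about whether ``$W_0'$-coset'' means left or right — the computation forces it to be a right coset in (i), while the fibre in (iii) is naturally a left coset of $C_{W^{\sigma}}(\tau)$, so these are different kinds of cosets and should not be conflated.
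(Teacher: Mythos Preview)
Your proof is correct and follows exactly the same approach as the paper: the paper's proof uses the identical key identity $(ww_0)^{-1}\tau ww_0\tau^{-1}=w_0^{-1}\nu(\tau w_0\tau^{-1})$ for (i) and (ii), and the same fibre computation $w_2\in C_{W^{\sigma}}(\tau)w_1$ for (iii). Your version simply spells out the details more explicitly (including the helpful remark on coset sidedness), but the argument is the same.
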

\begin{proof}
(i) and (ii): If $w^{-1}\tau w\tau^{-1}=\nu$ and $w_0\in W_0'$, then $(ww_0)^{-1}\tau ww_0\tau^{-1}=w_0^{-1}\nu(\tau w_0\tau^{-1})$. (iii): If $w_1$ and $w_2$ are mapped to the same element of $\bar{\nu}$ under the map of (ii), then $w_2\in C_{W^{\sigma}}(\tau)w_1$.
\end{proof}
The combinatorial aspect of $A^F$, i.e. the structure of $\mathbb{W}(s\sigma,\tau)$, is now completely understood.

\subsubsection{}
We now come to the last step in evaluating the induction $R_{M_{\tau}}^{G}\phi$. An element $w\in W^{\sigma}$ lies in $\mathbb{W}(s\sigma,\tau)$ if and only if it defines a $\tau$-stable coset in $W^{\sigma}/W'$. Below we consider a map $A^F\rightarrow(W^{\sigma}/W')^{\tau}$, and show that as long as $h$ lies in the fibre over a $W'$-coset, then $\phi(hs\sigma h^{-1})$ has constant value. This reduces the evaluation of the character formula to the computation of $\phi(hs\sigma h^{-1})$.
\begin{Prop}\label{last-computation}
Fix $s\sigma$, $h_1$, $h_2$, $\tau$ and $\nu$. Write $z_{\tau}=|C_{W^{\sigma}}(\tau)|$ and $z_{\nu}=|C_{W'}(\nu)|$, where $C_{W^{\sigma}}(\tau)$ is the usual centraliser and $C_{W'}(\nu)$ is the stabiliser of $\nu$ for the $\tau$-conjugation. Let $$\pi:A^F\longrightarrow (N_{G}(M_{\tau}\sigma).L'/L')^F\longrightarrow (W^{\sigma}/W')^{\tau}$$ be the map induced by (\ref{ALNsigmaN''}). Then,
\begin{itemize}
\item[(i)]
The map $\pi$ is surjective.
\item[(ii)]
The inverse image under $\pi$ of $\mathbb{W}_{\tau,\nu}$ is $N_{G^F}(M_{\tau}\sigma)hL^{\prime F}$, for some $h\in A^F$. Its cardinality is equal to $|M_{\tau}^F|\cdot|L^{\prime F}|\cdot|C_{M_{\tau}}(hs\sigma h^{-1})^{\circ F}|^{-1}z_{\tau}z_{\nu}^{-1}$.
\item[(iii)]
The inverse image under $\pi$  of a $W'$-coset in $\mathbb{W}_{\tau,\nu}$ is $M_{\tau}^FhL^{\prime F}$, for some $h\in A^F$. Its cardinality is equal to $|M_{\tau}^F|\cdot|L^{\prime F}|\cdot|C_{M_{\tau}}(hs\sigma h^{-1})^{\circ F}|^{-1}$.
\end{itemize}
\end{Prop}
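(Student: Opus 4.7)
The plan is to exploit the bijection (\ref{ALNsigmaN''}) of Corollary \ref{NsigmaN''} to translate everything into the combinatorial statements already established about $\mathbb{W}(s\sigma,\tau)$ in Lemma \ref{structure-W}. Throughout, recall that we are in the setting $\mu=\tau$ and $s\sigma\in M_\tau\sigma$, so that (\ref{eqn-for-w}) becomes $w^{-1}\tau w\tau^{-1}\in W'_0$ with $W'_0$ stable under $\ad\tau$.

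For part (i), given any $\tau$-stable coset $wW'_0\in(W^\sigma/W'_0)^\tau$, the element $w$ automatically satisfies (\ref{eqn-for-w}); Lemma \ref{w-implies-n} then yields a representative $n\in N_G(M_0\sigma)$ of $w$ satisfying (\ref{eqn-for-n}), and $h:=g_\tau n g_\tau^{-1}\in A^F$ maps to $wW'_0$ under $\pi$.

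For part (iii), fix $h\in A^F$ with $\pi(h)=wW'_0$ contained in $\mathbb{W}_{\tau,\nu}$. I would first show that $M_\tau^F h L'^F\subset\pi^{-1}(wW'_0)$: this is immediate since both $M_\tau^F$ on the left and $L'^F$ on the right preserve the composition $A^F\to A^F/L'^F\to(W^\sigma/W'_0)^\tau$ (left multiplication by $M_\tau^F\subset M_\tau$ does not change the image in $W^\sigma/W'_0$, and right multiplication by $L'^F$ is by definition killed before the second map). For the converse, if $h'\in A^F$ with $\pi(h')=wW'_0$, write $g_\tau^{-1}h'g_\tau=m'nl'$ and $g_\tau^{-1}hg_\tau=nl$ via Lemma \ref{A=NL}, with $m'\in M_0$ and $l,l'\in L'_0$; the condition $F(h')=h'$ together with $F(h)=h$ forces $m'$ to be $F_\tau$-stable modulo $M'_0$, so by Lang--Steinberg applied to the connected group $M_0$ we may replace the decomposition and conclude $h'\in M_\tau^F h L'^F$. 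The cardinality follows from the standard double-coset formula
\[
|M_\tau^F h L'^F|=\frac{|M_\tau^F||L'^F|}{|M_\tau^F\cap hL'^F h^{-1}|}=\frac{|M_\tau^F||L'^F|}{|C_{M_\tau}(hs\sigma h^{-1})^{\circ F}|},
\]
using that $hL'^F h^{-1}=C_G(hs\sigma h^{-1})^{\circ F}$ and that the intersection with $M_\tau^F$ is precisely the $F$-fixed points of the connected centraliser $C_{M_\tau}(hs\sigma h^{-1})^\circ$.

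For part (ii), I would exploit the action of $N_{G^F}(M_\tau\sigma)$ on $A^F$ by left multiplication, which descends to the action of $C_{W^\sigma}(\tau)\cong N_{G^F}(M_\tau\sigma)/M_\tau^F$ on $A^F/L'^F$. Under the bijection of Corollary \ref{NsigmaN''} this is the left-multiplication action of $C_{W^\sigma}(\tau)$ on $(W^\sigma/W'_0)^\tau$. A short computation shows that for $w_0\in C_{W^\sigma}(\tau)$, the image $(w_0w)^{-1}\tau(w_0w)\tau^{-1}=w^{-1}\tau w\tau^{-1}$ is unchanged, so this action preserves each fibre of $\mathbb{W}_{\tau,\nu}\twoheadrightarrow\bar\nu$ from Lemma \ref{structure-W}(ii); combined with Lemma \ref{structure-W}(iii), the action of $C_{W^\sigma}(\tau)$ on $\mathbb{W}_{\tau,\nu}/W'_0$ is transitive on the $z_\tau/z_\nu$ cosets. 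Consequently $\pi^{-1}(\mathbb{W}_{\tau,\nu})=N_{G^F}(M_\tau\sigma)\cdot (M_\tau^F h L'^F)=N_{G^F}(M_\tau\sigma)\,h\,L'^F$, and summing the cardinality from (iii) over the $z_\tau/z_\nu$ $W'_0$-cosets gives the claimed total. The main obstacle is the careful bookkeeping in identifying the fibres of $\pi$, which requires simultaneously handling the disconnected group $N_G(M_0\sigma)$ and the connected one $L'_0$; once the fibre-size computation at the level of the identity components is in place (which is where Lang--Steinberg on $M_0$ is needed), everything else is formal group theory.
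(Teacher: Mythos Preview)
Your argument is correct in outline but reverses the paper's order: you establish (iii) directly via Lang--Steinberg and then deduce (ii) from the transitive action of $C_{W^\sigma}(\tau)$ on $\mathbb{W}_{\tau,\nu}/W'_0$, whereas the paper proves (ii) first and obtains (iii) by a counting argument.

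For (ii), the paper argues geometrically: if $h_1,h_2\in A^F$ both land in $\mathbb{W}_{\tau,\nu}$, then by definition the Levi subgroups $C_{h_i^{-1}M_\tau h_i}(s\sigma)^\circ\subset L'$ are $L'^F$-conjugate; the argument of Lemma~\ref{A=NL} (via Remark~\ref{K'etKisole}) then forces $h_1^{-1}M_\tau h_1$ and $h_2^{-1}M_\tau h_2$ themselves to be $L'^F$-conjugate, whence $h_1\in N_{G^F}(M_\tau\sigma)\,h_2\,L'^F$. Your route via the $C_{W^\sigma}(\tau)$-action is combinatorially cleaner and avoids re-invoking the isolatedness argument.

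For (iii), the paper observes that each fibre $\pi^{-1}(\pi(h))=(M_\tau h L')^F$ contains $M_\tau^F h L'^F$, computes the size of the latter, and then notes that since there are $z_\tau z_\nu^{-1}$ cosets in $\mathbb{W}_{\tau,\nu}$ (Lemma~\ref{structure-W}) and the total size of $\pi^{-1}(\mathbb{W}_{\tau,\nu})$ is already known from (ii), the lower bound on each fibre must be attained. This avoids proving the equality $(M_\tau h L')^F=M_\tau^F h L'^F$ directly. Your Lang--Steinberg argument aims at this equality, but be careful with the phrasing: what you actually obtain is $m'^{-1}F_{\tau,\tau}(m')\in M_0\cap nL'_0 n^{-1}$, so the Lang map must be taken in this intersection (equivalently in $M_\tau\cap hL'h^{-1}$), not in all of $M_0$; its connectedness (i.e.\ $M_\tau\cap hL'h^{-1}=C_{M_\tau}(hs\sigma h^{-1})^\circ$) is what makes the step go through. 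The paper's cardinality computations in (ii) and (iii) also rely on this identification, so neither proof is entirely free of it---the counting argument just makes the dependence less visible.
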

\begin{Rem}
The surjection $A^F\rightarrow B$ factors through $\pi$.
\end{Rem}
\begin{proof}
(i). The surjectivity of the first map is obvious according to previous discussions. The surjectivity of the second map follows from Lemma \ref{w-implies-n}. (ii). Suppose that $h_1$, $h_2\in A^F$ are mapped to $\mathbb{W}_{\tau,\nu}$. By definition, $C_{h_1^{-1}M_{\tau}h_1}(s\sigma)^{\circ}$ and $C_{h_2^{-1}M_{\tau}h_2}(s\sigma)^{\circ}$ are $L^{\prime F}$-conjugate, i.e. there exists $l\in L^{\prime F}$ such that $$lC_{h_1^{-1}M_{\tau}h_1}(s\sigma)^{\circ}l^{-1}=C_{h_2^{-1}M_{\tau}h_2}(s\sigma)^{\circ}.$$Note that $lC_{h_1^{-1}M_{\tau}h_1}(s\sigma)^{\circ}l^{-1}=C_{lh_1^{-1}M_{\tau}h_1l^{-1}}(s\sigma)^{\circ}$. By the same argument as in the proof of Lemma \ref{A=NL}, we have $$lh_1^{-1}M_{\tau}h_1l^{-1}=h_2^{-1}M_{\tau}h_2.$$We deduce that $n:=h_1l^{-1}h_2^{-1}$ lies in $N_{G^F}(M_{\tau}\sigma)$. We conclude that $h_1\in N_{G^F}(M_{\tau}\sigma)h_2 L^{\prime F}$. The cardinality of $N_{G^F}(M_{\tau}\sigma)h_2 L^{\prime F}$ is equal to $$|N_{G^F}(M_{\tau}\sigma)|\cdot|L^{\prime F}|\cdot|N_{G^F}(M_{\tau}\sigma)\cap h_2 L^{\prime F}h_2^{-1}|^{-1}.$$Now $N_{G^F}(M_{\tau}\sigma)\cap h_2 L^{\prime F}h_2^{-1}=N_{h_2 L^{\prime F}h_2^{-1}}(M_{\tau}')$, where $M'_{\tau}=C_{M_{\tau}}(h_2s\sigma h_2^{-1})^{\circ}$. We have $|N_{G^F}(M_{\tau}\sigma)|=|M_{\tau}^F|z_{\tau}$ and $N_{h_2 L^{\prime F}h_2^{-1}}(M_{\tau}')=|M_{\tau}^{\prime F}|z_{\nu}$. Thus we get the cardinality $$N_{G^F}(M_{\tau}\sigma)h_2 L^{\prime F}=|M_{\tau}^F|\cdot|L^{\prime F}|\cdot|C_{M_{\tau}}(hs\sigma h^{-1})^{\circ F}|^{-1}z_{\tau}z_{\nu}^{-1}.$$ (iii). The number of $W'$-cosets in $\mathbb{W}_{\tau,\nu}$ is equal to $z_{\tau}z_{\nu}^{-1}$ according to Lemma \ref{structure-W}. We will argue by controlling the size of the fibres of $\pi$. Let $h\in A^F$, then by the definition of $\pi$, we have $\pi^{-1}(\pi(h))=(M_{\tau}hL')^F$. Clearly, $M_{\tau}^FhL^{\prime F}$ is contained in $(M_{\tau}hL')^F$. We have $|M_{\tau}^FhL^{\prime F}|=|M_{\tau}^F|\cdot|L^{\prime F}|\cdot|M_{\tau}^F\cap hL^{\prime F}h^{-1}|^{-1}$. Note that $M_{\tau}^F\cap hL^{\prime F}h^{-1}=(M_{\tau}\cap hL^{\prime F}h^{-1})^F=C_{M_{\tau}}(hs\sigma h^{-1})^{\circ F}$. Therefore the size of each fibre of $\pi$ is bounded below by $|M_{\tau}^F|\cdot|L^{\prime F}|\cdot|C_{M_{\tau}}(hs\sigma h^{-1})^{\circ F}|^{-1}$. It must not be strictly larger than this number since we know the size of $\pi^{-1}(\mathbb{W}_{\tau,\nu})$. We conclude that each fibre of $\pi$ is necessarily of the form $M_{\tau}^FhL^{\prime F}$.
\end{proof}

\begin{Rem}
If $M_{\tau}=T_{\tau}$ is a torus, then $
C_{M_{\tau}}(hs\sigma h^{-1})^{\circ F}\cong T_{\tau}^{\prime F}$ with $T'_{\tau}:=C_{T_{\tau}}(\sigma)^{\circ}$.
\end{Rem}

\section{The Formula}
\subsection{Decomposition into Deligne-Lusztig Inductions}
\subsubsection{}
By Proposition \ref{IrrLIstable} and Proposition \ref{IrrMstable}, every $\sigma$-stable irreducible character $\chi$ of $\GL_n(q)$ is of the form $R^G_{M_{I,w}}(\chi_{1}\boxtimes\chi_0)$, for an $F$-stable Levi subgroup $M_{I,w}$ isomorphic to the $\sigma$-stable standard Levi subgroup $L_I$ of the form (\ref{LI}) equipped with the Frobenius $F_w$ given by (\ref{Fn_0}), (\ref{Flineaire}) and (\ref{Funitaire}) and with the action of $\sigma$ given by (\ref{sigman_0}) and (\ref{sigmatransposition}). Decomposing $L_I$ into $L_1\times L_0$ following \S \ref{constIrr1}, then $\chi_1$ and $\chi_0$ are identified with some $\sigma$-stable characters of $L_1^{F_w}$ and $L_0^{F_0}$ respectively, where we also denote by the same letter the restriction of $F_w$ to $L_1$ and by $F_0$ its restriction to $L_0$. We decompose $\sigma$ into $(\sigma_1,\sigma_0)$ with respect to $L_1\times L_0$. Recall that $\chi_0$ is defined by a 2-partition $(\mu_+,\mu_-)$ and that $\chi_1$ is defined by $[\varphi_1]\in\Irr(W_{L_1})^{F_w}$ and $\theta_1\in\Irr_{reg}(L_1^{F_w})^{\sigma_1}$ satisfying the assumptions of \S \ref{hypothesethetavarphi}, where $\varphi_1\in\Irr(W^{\sigma_1}_{L_1})^{F_w}$.

By Lemma \ref{IndRes}, an extension of $\chi$ to $G^F\sigma$ is obtained by first extending $\smash{\chi_{1}\boxtimes\chi_0}$ to $\smash{M_{I,w}^F\sigma}$ and then taking the induction $\smash{R^{G\sigma}_{M_{I,w}\sigma}}$. One can equally extend $\smash{\chi_{1}\boxtimes\chi_0}$, regarded as a character of  $L_I^{F_w}$, to $\smash{L_I^{F_w}\sigma}$. The extension of $\chi_1$ to $L_1^{F_w}\sigma_1$ is given by Theorem \ref{ExtL_1} and the extension of $\chi_0$ to $L_0^{F_0}\sigma_0$ is given by Theorem \ref{ExtL_0}. Explicitly, we have
\begingroup
\allowdisplaybreaks
\begin{align*}
\tilde{\chi}_1|_{L_1^{F_w}\sigma_1}&=|W_{L_1}^{\sigma_1}|^{-1}\sum_{w_1\in W_{L_1}^{\sigma_1}}\tilde{\varphi}_1(w_1F_w)R_{T_{w_1}\sigma_1}^{L_1\sigma_1}\tilde{\theta}_1,
\\
\tilde{\chi}_0|_{L_0^{F_0}\sigma_0}&=\frac{1}{|\mathfrak{W}_+|}\frac{1}{|\mathfrak{W}_-|}\sum_{\substack{w_+\in \mathfrak{W}_+\\ w_-\in \mathfrak{W}_-}}\varphi_+(w_+)\varphi_-(w_-)R^{L_0\sigma_0}_{L_{\mathbf{w}}\sigma_0}\phi_{\mathbf{w}},
\end{align*}
\endgroup
and in the second equality there is an extra $\bs{\gamma}_{\chi_0}$ if $\sigma_0$ is of symplectic type. Now put $$\bs{\gamma}_{\chi}:=
\begin{cases}
\bs{\gamma}_{\chi_0} & \text{if $\sigma$ is of symplectic type},\\
1 & \text{otherwise}.
\end{cases}$$

Write $L_{w_1,\mathbf{w}}=T_{w_1}\times L_{\mathbf{w}}$. It is an  $F_w$-stable and $\sigma$-stable Levi factor of a $\sigma$-stable parabolic subgroup of $L_I$. Combining the above two formulas gives
\begin{equation*}
\tilde{\chi}_1\widetilde{\boxtimes}\tilde{\chi}_0|_{L_I^{F_w}\sigma}=\bs{\gamma}_{\chi}|W_{L_1}^{\sigma_1}\times\mathfrak{W}_+\times\mathfrak{W}_-|^{-1}\!\!\!\!\!\!\!\!\sum_{\substack{(w_1,w_+,w_-)\\\in W_{L_1}^{\sigma_1}\times\mathfrak{W}_+\times\mathfrak{W}_-}}\!\!\!\!\!\!\!\!\tilde{\varphi}_1(w_1F_w)\varphi_+(w_+)\varphi_-(w_-)R^{L_I\sigma}_{L_{w_1,\mathbf{w}}}(\tilde{\theta}_1\widetilde{\boxtimes}\phi_{\mathbf{w}}),
\end{equation*}
Suppose that under the isomorphism $L_I\lb\sigma\rb\cong M_{I,w}\lb\sigma\rb$, which is compatible with the action of $\sigma$, the subgroups $L_{w_1,\mathbf{w}}$ become $M_{w_1,\mathbf{w}}$, and that the decomposition $L_1\times L_0$ induces the decomposition $M_{I,w}\cong M_1\times M_0$. We deduce 
\begin{Thm}\label{decIrr}
The extension of $\chi$ is given by the following formula:
\begin{equation*}
\tilde{\chi}|_{G^F\sigma}=\bs{\gamma}_{\chi}|W_{M_1}^{\sigma_1}\times\mathfrak{W}_+\times\mathfrak{W}_-|^{-1}\!\!\!\!\!\!\!\!\sum_{\substack{(w_1,w_+,w_-)\\\in W_{M_1}^{\sigma_1}\times\mathfrak{W}_+\times\mathfrak{W}_-}}\!\!\!\!\!\!\!\!\tilde{\varphi}_1(w_1F)\varphi_+(w_+)\varphi_-(w_-)R^{G\sigma}_{M_{w_1,\mathbf{w}}}(\tilde{\theta}_1\widetilde{\boxtimes}\phi_{\mathbf{w}}).
\end{equation*}
\end{Thm}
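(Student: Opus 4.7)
The plan is to assemble the formula from the three building blocks already available: the parametrisation of $\sigma$-stable characters, the extension of the ``linear/unitary'' factor from Theorem \ref{ExtL_1}, and Waldspurger's extension of the quadratic-unipotent factor from Theorem \ref{ExtL_0}. First I would recall from Proposition \ref{IrrLIstable} and Proposition \ref{IrrMstable} that $\chi$ can be written as $R^G_{M_{I,w}}(\chi_1\boxtimes\chi_0)$ with $M_{I,w}$ an $F$-stable Levi factor of a $\sigma$-stable parabolic, equipped via the isomorphism $L_I\cong M_{I,w}$ with the action of $\sigma$ in the form $\sigma_1\times\sigma_0$. Because the normaliser $N_{\bar G}(M_{I,w}\subset P)$ meets $G\sigma$, Lemma \ref{IndRes} lets me extend $\chi$ by first extending $\chi_1\boxtimes\chi_0$ to $M_{I,w}^F\sigma$ and then applying the generalised Deligne–Lusztig induction $R^{G\sigma}_{M_{I,w}\sigma}$ to this extension. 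This is the key reduction.

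Next I would construct the extension of $\chi_1\boxtimes\chi_0$ on the Levi by extending the two tensor factors separately on $M_1^{F}\sigma_1$ and $M_0^{F_0}\sigma_0$ and then forming the twisted tensor product $\widetilde{\boxtimes}$ of Definition \ref{tildebox}. For the $M_1$-factor, Theorem \ref{ExtL_1} (with the choice of $\tilde{\varphi}_1$ specified there) yields
\[
\tilde{\chi}_1|_{M_1^{F}\sigma_1}=\pm|W^{\sigma_1}_{M_1}|^{-1}\sum_{w_1\in W^{\sigma_1}_{M_1}}\tilde{\varphi}_1(w_1F)R^{M_1\sigma_1}_{T_{w_1}\sigma_1}\tilde{\theta}_1,
\]
and for the $M_0$-factor, Waldspurger's Theorem \ref{ExtL_0} (with the parity conventions of \S\ref{h_1h_2}) gives the analogous expansion over $\mathfrak{W}_+\times\mathfrak{W}_-$ with the cuspidal functions $\phi_{\mathbf{w}}$; a possible extra scalar $\bs{\gamma}_{\chi_0}$ appears in the symplectic case. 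Since the twisted tensor product distributes over sums and since the Deligne–Lusztig character of a product Levi is the product of the Deligne–Lusztig characters on each factor (this is immediate from the definition of $R^{M\sigma}_{(T_{w_1}\times L_{\mathbf{w}})\sigma}$ in terms of cohomology of the associated Deligne–Lusztig variety, which is itself a product), I can collect everything into a single sum over $W^{\sigma_1}_{M_1}\times\mathfrak{W}_+\times\mathfrak{W}_-$, with summands of the form $\tilde{\varphi}_1(w_1F)\varphi_+(w_+)\varphi_-(w_-)R^{M_{I,w}\sigma}_{M_{w_1,\mathbf{w}}\sigma}(\tilde{\theta}_1\widetilde{\boxtimes}\phi_{\mathbf{w}})$.

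Finally I would apply $R^{G\sigma}_{M_{I,w}\sigma}$ to both sides and invoke transitivity of (generalised) Deligne–Lusztig induction to replace the two nested inductions by the single induction $R^{G\sigma}_{M_{w_1,\mathbf{w}}\sigma}$, yielding exactly the formula claimed, with the overall ambiguity absorbed into ``up to a sign'' and into the factor $\bs{\gamma}_\chi$ used in Theorem \ref{decIrr}. The genuinely delicate part is not this assembly but the two ingredients just cited: the construction of the correct extension on $M_0^{F_0}\sigma_0$ (Waldspurger's argument, via character sheaves and Shintani descent, including the careful comparison between his function $\phi'_{\mathbf{w}}$ and the $\phi_{\mathbf{w}}$ defined here, as in Propositions \ref{Choice-h2} and \ref{phi'weven}), and the extension on $M_1^F\sigma_1$ (Theorem \ref{ExtL_1}, which required the linear/unitary dichotomy and, in the unitary case, Shintani descent together with Digne's commutation Proposition \ref{Di1.1}). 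Once those are in hand the present statement is essentially a formal exterior-tensor/transitivity bookkeeping, which is the content of \S\ref{ComputeA} through \S\ref{SecShintani} made explicit; the only subtlety to check during bookkeeping is that the extension of $\chi_1\boxtimes\chi_0$ used in the transitivity step is the one obtained as $\tilde{\chi}_1\widetilde{\boxtimes}\tilde{\chi}_0$, so that the twisted tensor product $\tilde{\theta}_1\widetilde{\boxtimes}\phi_{\mathbf{w}}$ on $(T_{w_1}\times L_{\mathbf{w}})\sigma$ really is what appears under the final induction.
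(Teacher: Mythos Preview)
Your proposal is correct and follows essentially the same route as the paper: reduce via Propositions \ref{IrrLIstable}--\ref{IrrMstable} and Lemma \ref{IndRes} to extending $\chi_1\boxtimes\chi_0$ on the Levi, invoke Theorems \ref{ExtL_1} and \ref{ExtL_0} for the two factors, combine via $\widetilde{\boxtimes}$, and then apply $R^{G\sigma}_{M_{I,w}\sigma}$ with transitivity. One small inaccuracy: your parenthetical reference to ``\S\ref{ComputeA} through \S\ref{SecShintani}'' is misplaced, since \S\ref{ComputeA} concerns the later computation of the character formula rather than this assembly step; the relevant bookkeeping is exactly what the paper records in the paragraphs immediately preceding the theorem.
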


\subsubsection{}
Recall \S \ref{Tchi} that the $\sigma$-stable irreducible characters of $\GL_n(q)$ are parametrised by $\bar{\mathfrak{T}}_{\chi}$. Denote by $\bar{\mathfrak{T}}^0_{\chi}\subset\bar{\mathfrak{T}}_{\chi}$ the subsets of the elements $$\lambda_+\lambda_-(\lambda_i,\alpha_i)_{i\in\Lambda_1}(\lambda'_j,\alpha'_j)_{j\in\Lambda_2}$$ in which at most one of $|\lambda_+|$ and $|\lambda_-|$ is odd, and $\lambda_{\pm}$ is a partition with trivial 2-core or with 2-core $(1)$ according to the parity.  
\begin{Cor}\label{base-of-uniform}
The $\sigma$-stable irreducible characters of $\GL_n(q)$ that extends to uniform functions on $\GL_n(q)\sigma$ are in bijection with $\bar{\mathfrak{T}}^0_{\chi}$, and the extensions of these characters constitute a base (identifying two extensions of the same character) of the space of the uniform functions on $\GL_n(q)\sigma$.
\end{Cor}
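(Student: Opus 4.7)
The plan is to read the result directly off the decomposition in Theorem~\ref{decIrr}. There, $\tilde{\chi}|_{G^F\sigma}$ is written as a linear combination of Deligne-Lusztig inductions $R^{G\sigma}_{M_{w_1,\mathbf{w}}\sigma}(\tilde{\theta}_1\widetilde{\boxtimes}\phi_{\mathbf{w}})$ with $M_{w_1,\mathbf{w}}\cong T_{w_1}\times T_{w_+}\times T_{w_-}\times\GL_m(k)$ and $m=h_1(h_1+1)+h_2^2$. Every summand is therefore induced from a maximal torus precisely when $m\le 1$, i.e., when the $\GL_m(k)$-factor is itself a torus. Unpacking the parametrisation of \S\ref{h_1h_2}, the condition $m\le 1$ translates into $h_1=0$ and $|h_2|\le 1$, which is in turn equivalent to the 2-cores of $\lambda_+=\mu_+$ and $\lambda_-=\mu_-$ being trivial or equal to $(1)$ with at most one of $|\lambda_{\pm}|$ odd, i.e.~exactly to the condition defining $\bar{\mathfrak{T}}^0_{\chi}$.

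For the ``if'' direction, let $\chi$ correspond to an element of $\bar{\mathfrak{T}}^0_{\chi}$. Then every Levi $M_{w_1,\mathbf{w}}$ appearing in Theorem~\ref{decIrr} is an $F$-stable, $\sigma$-stable maximal torus, and the function $\tilde{\theta}_1\widetilde{\boxtimes}\phi_{\mathbf{w}}$ is the extension of a linear character of that torus (the $\GL_m$-factor of $\phi_{\mathbf{w}}$ reducing to a scalar since $m\le 1$). Hence $\tilde{\chi}|_{G^F\sigma}$ lies in the span of the functions $R^{G\sigma}_{T\sigma}\tilde{\theta}$, and so is uniform by the definition of \S\ref{nonconnunif}.

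For the converse, suppose $\chi\notin\bar{\mathfrak{T}}^0_{\chi}$, so that $m\ge 2$. The cuspidal function $\phi(h_1,h_2)$ from \S\ref{cuspGL_m} is then supported on a non-trivial unipotent class of $\GL_m(q)\sigma_{00}$. Combining Example~\ref{quad-uni-sheaf}, Theorem~\ref{16.14} and Theorem~\ref{decIrr}, one identifies $\tilde{\chi}|_{G^F\sigma}$, up to a nonzero scalar, with the characteristic function of a character sheaf on $G\sigma$ whose cuspidal datum is supported on the proper Levi $L_{\mathbf{w}}$. The support of such a function meets no quasi-semi-simple class of $G^F\sigma$, so by the result quoted in \S\ref{uniforme} (\cite[Proposition 6.4]{DM15}) it is orthogonal to the whole space of uniform functions; being nonzero, it cannot itself be uniform.

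Finally, to see that the functions $\{\tilde{\chi}|_{G^F\sigma}\mid\chi\in\bar{\mathfrak{T}}^0_{\chi}\}$ form a basis of the space of uniform functions, linear independence follows from the orthogonality of the irreducible characters $\tilde{\chi}$ on $\bar{G}^F=G^F\rtimes\lb\sigma\rb$ together with the formula $\langle\tilde{\chi},\tilde{\chi}'\rangle_{\bar{G}^F}=\tfrac{1}{2}(\langle\chi,\chi'\rangle_{G^F}+\langle\tilde{\chi}|_{G^F\sigma},\tilde{\chi}'|_{G^F\sigma}\rangle_{G^F\sigma})$, which forces the second inner product to vanish when $\chi\ne\chi'$. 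Spanning reduces to a cardinality check: the generators $R^{G\sigma}_{T_w\sigma}\tilde{\theta}$ of the uniform function space are parametrised, modulo natural $G^F$-equivalence, by a combinatorial set which, via the parametrisation of \S\ref{Tchi} restricted to the 2-core conditions, is in bijection with $\bar{\mathfrak{T}}^0_{\chi}$. The main obstacle in executing this plan lies in the converse direction: producing the identification of $\tilde{\chi}|_{G^F\sigma}$ with the characteristic function of a character sheaf with non-trivial cuspidal support requires carefully matching Theorem~\ref{decIrr} against the sheaf-theoretic decomposition of Example~\ref{quad-uni-sheaf}; once that is done, the remaining steps follow formally.
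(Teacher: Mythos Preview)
Your ``if'' direction and the translation of the condition $m\le 1$ into the defining conditions of $\bar{\mathfrak{T}}^0_{\chi}$ are correct and match the paper's reasoning exactly. The linear independence argument is also fine.

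The converse direction, however, contains a genuine gap. You invoke \cite[Proposition 6.4]{DM15} to deduce that a function vanishing on all quasi-semi-simple classes is orthogonal to the uniform subspace, but the cited result only gives the opposite implication: it says that the characteristic functions of quasi-semi-simple classes lie in the uniform subspace, so anything \emph{orthogonal to the uniform subspace} must vanish on those classes. The reverse implication would require that those characteristic functions \emph{span} the uniform subspace, which they do not (uniform functions have support on non-semi-simple classes as well). Moreover, the prior claim that the support of $\tilde\chi|_{G^F\sigma}$ misses all quasi-semi-simple classes is not justified either; in fact, in the paper this vanishing is stated only as a \emph{consequence} of the corollary (see the remark following it), so appealing to it here would be circular.

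The paper sidesteps this difficulty entirely. Rather than proving the converse separately, it observes that the formula of Theorem~\ref{decIrr} (specialised to $m\le 1$) provides an explicit transition matrix between the family $\{\tilde\chi_{\bar{\mathfrak t}}:\bar{\mathfrak t}\in\bar{\mathfrak{T}}^0_{\chi}\}$ and the generators $R^{G\sigma}_{T_w\sigma}\tilde\theta$ of the uniform space. This simultaneously shows that the $\tilde\chi_{\bar{\mathfrak t}}$ span the uniform space and, combined with their orthogonality, that they form a basis. The characterisation of which $\chi$ have uniform extensions then follows for free: any $\chi\notin\bar{\mathfrak{T}}^0_{\chi}$ gives a $\tilde\chi$ orthogonal to every member of this basis, hence orthogonal to the whole uniform space. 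Your separate ``converse'' step is therefore unnecessary once spanning is established correctly, and your sketchy cardinality argument for spanning should be replaced by the direct inversion of the decomposition formula, which is what the paper does.
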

\begin{proof}
We have seen in Theorem \ref{decIrr} that the extension of a general $\sigma$-stable irreducible character is decomposed into a linear combination of cuspidal functions induced from $$M^F_{w_1,\mathbf{w}}\cong T^F_{w_1}\times T^F_{w_+}\times T^F_{w_-}\times\GL_m(q),$$
for various $w_1$ and $\mathbf{w}$. Cuspidal functions induced from $M^F_{w_1,\mathbf{w}}$ with $m>1$ can not be uniform (see \S \ref{nonconnunif} for the definition of uniform functions). Now the condition $m\le 1$ is equivalent to the condition that the sum of the 2-cores of $\lambda_+$ and $\lambda_-$ is either empty or $(1)$. We see that $\lambda_+$ and $\lambda_-$ satisfy the assumption in the definition of $\bar{\mathfrak{T}}^0_{\chi}$, whence the first assertion.

For each $\bar{\mathfrak{t}}\in\bar{\mathfrak{T}}^0_{\chi}$, denote by $\chi_{\bar{\mathfrak{t}}}$ the corresponding character, and choose an extension $\tilde{\chi}_{\bar{\mathfrak{t}}}\in\mathcal{C}(\GL_n(q)\sigma)$. Then, $\{\tilde{\chi}_{\bar{\mathfrak{t}}}\mid \bar{\mathfrak{t}}\in\bar{\mathfrak{T}}^0_{\chi}\}$ is a set consisting of functions orthogonal to each other. Theorem \ref{decIrr} gives a transition matrix between the set of (generalised) Deligne-Lusztig characters and that of the $\tilde{\chi}_{\bar{\mathfrak{t}}}$'s.
\end{proof}
\begin{Rem}
The extension of an irreducible character is then either uniform, ot orthogonal to all uniform functions. Since the characteristic function of a quasi-semi-simple conjugacy class is uniform, the extension of a character corresponding to an element of $\bar{\mathfrak{T}}_{\chi}\setminus\bar{\mathfrak{T}}^0_{\chi}$ vanishes on every quasi-semi-simple element. 
\end{Rem}
\begin{Rem}
The two groups $\leftidx{^s\!}{\bar{G}}^F$ and $\leftidx{^o\!}{\bar{G}}^F$ have differences only in those characters satisfying $\chi(-1)=-\chi(1)$. From the above corollary, we deduce that if $\chi$ extends to a uniform function on $G^F\sigma$, then $\chi(-1)=\chi(1)$. This is due to the fact that for such a character, $\eta$ must have even "multiplicities" in the semi-simple part.
\end{Rem}

\section{Examples}
We give the character tables of $\GL_2\rtimes\lb\sigma\rb$ and $\GL_3\rtimes\lb\sigma\rb$. We assume that $q\equiv 1\mod 4$ and so $\sqrt{-1}\in\mathbb{F}_q$. The trivial character of a group will be denoted by $\tC$. The $\sigma$-stable irreducible characters are specified in terms of $(L,\varphi,\theta)$ as in Theorem \ref{LS}. Denote by $T$ the maximal torus consisting of the diagonal matrices and denote by $w$ the unique nontrivial element of $W_G(T)$, either for $\GL_2(k)$ or $\GL_3(k)$, that is fixed by $\sigma$. Denote by $T_w$ a $\sigma$-stable and $F$-stable maximal torus corresponding to the conjugacy class of $w$. In the following, we will freely use the formulas in \S \ref{formules}.

\subsection{$\sigma$-stable Irreducible Characters}
We specify the $\sigma$-stable irreducible characters of $\GL_2(q)$ and $\GL_3(q)$, and compute the numbers of these characters and of the quadratic-unipotent characters of $\GL_4(q)$ and $\GL_5(q)$.
\subsubsection{}
Suppose $G=\GL_2(k)$. There are $q+3$ $\sigma$-stable irreducible characters of $\GL_2(q)$, and 5 of them are quadratic-unipotent, among which one extends into a non-uniform function.

The quadratic-unipotent characters induced from $L=G$ are the following.
\begin{center}
\begin{tabular}{cccc}
$\tC$ & $\eta\tC$ & $\St$ & $\eta\St$\\
\hline
\end{tabular}
\end{center}

The only one quadratic-unipotent character induced from $L=T$ is the following.
\begin{center}
\begin{tabular}{c}
$R^G_T(\tC,\eta)$\\
\hline
\end{tabular}
\end{center}
It is the unique $\sigma$-stable irreducible character with non-uniform extension.

Other $\sigma$-stable irreducible characters are either of the form,
\begin{center}
\begin{tabular}{cc}
$R^G_T(\alpha,\alpha^{-1})$ \\
\hline
\end{tabular}
\end{center}
with $\alpha\in\Irr(\mathbb{F}_q^{\ast})$ satisfying $\alpha^q=\alpha$, $\alpha\ne\tC$ or $\eta$. There are $(q-3)/2$ of them; or of the form,
\begin{center}
\begin{tabular}{cc}
$R^G_{T_w}(\omega)$ \\
\hline
\end{tabular}
\end{center}
with $\omega\in\Irr(\mathbb{F}_{q^2}^{\ast})$ satisfying $\omega^q=\omega^{-1}$, $\omega\ne\tC$ or $\eta$. There are $(q-1)/2$ of them.

\subsubsection{}
Suppose $G=\GL_3(k)$. There are $2q+6$ $\sigma$-stable irreducible characters of $\GL_3(q)$, and 10 of them are quadratic-unipotent, among which two have non-uniform extensions.

The quadratic-unipotent characters induced from $L=G$ are the following.
\begin{center}
\begin{tabular}{cccccc}
$\tC$ & $\eta\tC$ & $\chi_2$ & $\eta\chi_2$ $\chi_3$ & $\eta\chi_3$\\
\hline
\end{tabular}
\end{center}
The characters $\chi_2$ and $\eta\chi_2$ are associated to the sign character of $\mathfrak{S}_3$. The characters $\chi_3$ and $\eta\chi_3$ are associated to the degree 2 character of $\mathfrak{S}_3$, and these two characters have non-uniform extensions.

The quadratic-unipotent characters induced from $L\cong \GL_2(k)\times k^{\ast}$ are the following.
\begin{center}
\begin{tabular}{cccc}
$R^G_L(\tC,\eta)$ & $R^G_L(\eta\tC,\Id)$ & $R^G_L(\St,\eta)$ & $R^G_L(\eta\St,\Id)$\\
\hline
\end{tabular}
\end{center}

Other $\sigma$-stable irreducible characters are either of the form,
\begin{center}
\begin{tabular}{cc}
$R^G_T(\alpha,\tC,\alpha^{-1})$ & $R^G_T(\alpha,\eta,\alpha^{-1})$ \\
\hline
\end{tabular}
\end{center}
with $\alpha\in\Irr(\mathbb{F}_q^{\ast})$ satisfying $\alpha^q=\alpha$, $\alpha\ne\tC$ or $\eta$. There are $q-3$ of them; or of the form,
\begin{center}
\begin{tabular}{cc}
$R^G_{T_w}(\omega,\tC,\omega^{-1})$ & $R^G_{T_w}(\omega,\eta,\omega^{-1})$\\
\hline
\end{tabular}
\end{center}
with $\omega\in\Irr(\mathbb{F}_{q^2}^{\ast})$ satisfying $\omega^q=\omega^{-1}$, $\omega\ne\tC$ or $\eta$. There are $q-1$ of them.

\subsubsection{}
Suppose $G=\GL_4(k)$. There are $$\frac{1}{2}(q-2)(q-3)+7(q-2)+20$$ $\sigma$-stable irreducible characters of $\GL_4(k)$, and 20 of them are quadratic-unipotent.

The Levi subgroups $L=\GL_4(k)$, $L=\GL_3(k)\times k^{\ast}$ and $L=\GL_2(k)\times\GL_2(k)$ give rise to $(5+3+2)\times 2$ quadratic-unipotent characters, knowing that $|\Irr(\mathfrak{S}_4)|=5$.

The Levi subgroup $L\cong \GL_2(k)\times(k^{\ast}\times k^{\ast})$ gives $$|\text{Quad. Unip. of } \GL_2|\times (q-2)=5(q-2)$$ noticing that $q-2=(q-3)/2+(q-1)/2$ as in the case of $G=\GL_2(k)$.

The Levi subgroup $L\cong\GL_2(k)\times\GL_2(k)$ gives $(q-2)\times 2$ with $2=|\Irr(\mathfrak{S}_2)|$.

The maximal torus $(k^{\ast})^2\times(k^{\ast})^2$ gives $\frac{1}{2}(q-2)(q-3)$.

\subsubsection{}
Suppose $G=\GL_5(k)$. There are $$(q-2)(q-3)+14(q-2)+36$$ $\sigma$-stable irreducible characters, and 36 of them are quadratic-unipotent.

The Levi subgroups $L=\GL_5(k)$, $L\cong \GL_4(k)\times k^{\ast}$ and $L\cong \GL_3(k)\times\GL_2(k)$ give $(7+5+3\times 2)\times 2$ quadratic-unipotent characters, knowing that $|\Irr(\mathfrak{S}_5)|=7$.

The Levi subgroup $L\cong\GL_3(k)\times(k^{\ast})^2$ gives $$|\text{Quad. Unip. of }\GL_3|\times (q-2)=10(q-2).$$

The Levi subgroup $L\cong k^{\ast}\times(\GL_2(k)\times\GL_2(k))$ gives $2\times 2\times (q-2)$, with one factor $2=|\Irr(\mathfrak{S}_2)|$ and the other factor $2=|\{1,\eta\}|$.

The maximal torus $L\cong k^{\ast}\times(k^{\ast})^2\times(k^{\ast})^2$ gives $2\times\frac{1}{2}(q-2)(q-3)$.

\subsection{Conjugacy Classes}
We present the conjugacy classes of $\GL_2(q)\sigma$ and of $\GL_3(q)\sigma$, and count the isolated classes of $\GL_4(q)\sigma$ and of $\GL_5(q)\sigma$. Denote by $su$ the Jordan decomposition of an element of the conjugacy class concerned. Note that $\Ort_1(k)\cong \bs\mu_2$ and $\Ort_2(k)\cong k^{\ast}\rtimes\lb\tau\rb$ with $\tau(x)=x^{-1}$.

\subsubsection{}
Suppose $G=\GL_2(k)$. There are $q+3$ conjugacy classes, and 5 of them are isolated. 

\begin{itemize}
\item[-] $s=(1,1)\sigma$, $C_G(s)=\SL_2(k)$.\\
The unipotent parts are given by the partitions defined by Jordan blocks. Then the centralisers and the $G^F$-classes are specified accordingly as below,
\begin{center}
\begin{tabular}{ccc}
$(1^2)$ & \multicolumn{2}{c}{$(2)$}\\
\hline
$\SL_2(k)$ & \multicolumn{2}{c}{$\Ort_1(k)V$}\\
\hline
$C_1$ & $C_2$ & $C_3$
\end{tabular}
\end{center}
where $V\cong\mathbb{A}^1$ is the unipotent radical. If we use the unit element of a root subgroup of $\SL_2(k)$ to represent $(2)$, then $C_2$ corresponds to the identity component of the centraliser. The two components of $\Ort_1(k)V$ have as representatives the scalars $\pm \Id$.

\item[-] $s=(\mathfrak{i},-\mathfrak{i})\sigma$, $C_G(s)=\Ort_2(k)$. \\
Denote by $C_4$ the $G(q)$-class corresponding to the identity component, and $C_5$ the other class. The two components of $\Ort_2(k)$ have as representatives 
$$
\left(
\begin{array}{cc}
1 & 0\\
0 & 1
\end{array}
\right)
\text{ and }
\left(
\begin{array}{cc}
0 & 1\\
1 & 0
\end{array}
\right)
$$ respectively, and so induce the Frobenius $x\mapsto x^{q}$ and $x\mapsto x^{-q}$ respectively. In other words, the centralisers of $C_4$ and $C_5$ are $\Ort^+_2(q)$ and $\Ort^-_2(q)$ respectively.

\item[-] $s=(a,a^{-1})\sigma$, $C_G(s)=k^{\ast}$.\\
For any value of $a$, the corresponding $G$-class contains a unique $G(q)$-class. The classes are as follows.
\begin{center}
\begin{tabular}{cccc}
$a^{q-1}=1$ & $a^{q+1}=1$ & $a^{q-1}=-1$ & $a^{q+1}=-1$\\
\hline
$C_6(a)$ & $C_7(a)$ & $C_8(a)$ & $C_9(a)$\\
\end{tabular}
\end{center}
The Frobenius on $C_G(s)\cong k^{\ast}$ with $s\in C_6$ or $C_8$ is $x\mapsto x^q$, while the Frobenius on $C_G(s)\cong k^{\ast}$ with $s\in C_7$ or $C_9$ is $x\mapsto x^{-q}$.
\end{itemize}

We have
\begin{itemize}
\item[-] $|C_1|=|G(q)|/|\SL_2(q)|=q-1$; 
\item[-] $|C_2|=|C_3|=|G(q)|/2|V(q)|=\frac{1}{2}(q-1)^2(q+1)$;
\item[-] $|C_4|=|G(q)|/|\Ort_2^+(q)|=\frac{1}{2}q(q+1)(q-1)$; $|C_5|=|G(q)|/|\Ort_2^-(q)|=\frac{1}{2}q(q-1)^2$;
\item[-] $|C_6|=|C_8|=|G(q)|/(q-1)=q(q+1)(q-1)$;
\item[-] $|C_7|=|C_9|=|G(q)|/(q+1)=q(q-1)^2$.
\end{itemize}

\subsubsection{}
Suppose $G=\GL_3(k)$. There are $2q+6$ conjugacy classes, and 10 of them are isolated. Now each semi-simple $G$-conjugacy class contains two $G(q)$-conjugacy classes, distinguished by the sign $\eta$ (\textit{cf.} (\ref{epsilon})). Depending on the value of $\eta$, we will write $C^+$ or $C^-$ to represent the corresponding conjugacy class contained in a given $G$-conjugacy class.
\begin{Nota}
In what follows, we write $\epsilon$ instead of $\eta$ to avoid clashing with the character of $\mathbb{F}_q^{\ast}$.
\end{Nota}

\begin{itemize}
\item[-] $s=(1,1,1)\sigma$, $C_G(s)=\Ort_3(k)$.\\
The unipotent parts are given by the partitions defined by Jordan blocks. Then the centralisers and the $G^F$-classes are specified accordingly as below,
\begin{center}
\begin{tabular}{cccc}
\multicolumn{2}{c}{$(1^3)$} & \multicolumn{2}{c}{$(3)$}\\
\hline
\multicolumn{2}{c}{$\Ort_3(k)$} & \multicolumn{2}{c}{$\Ort_1(k).V$}\\
\hline
$C_1^+$ & $C_1^-$ & $C_2^+$ & $C_2^-$
\end{tabular}
\end{center}
where $V\cong\mathbb{A}^1$ is the unipotent radical.

\item[-] $s=(\mathfrak{i},1,-\mathfrak{i})\sigma$, $C_G(s)\cong \SL_2(k)\times\Ort_1(k)$.\\
The unipotent parts are given by the partitions defined by Jordan blocks. Then the centralisers and the $G^F$-classes are specified accordingly as below,
\begin{center}
\begin{tabular}{cccccc}
\multicolumn{2}{c}{$(1^2)$} & \multicolumn{4}{c}{$(2)$}\\
\hline
\multicolumn{2}{c}{$\SL_2(k)\times\Ort_1(k)$} & \multicolumn{4}{c}{$\Ort_1(k)\times\Ort_1(k).V$}\\
\hline
$C_3^+$ & $C_3^-$ & $C_4^+$ & $C_5^+$ & $C_4^-$ & $C_5^-$ 
\end{tabular}
\end{center}
where $V\cong\mathbb{A}^1$ is the unipotent radical. If we use the unit element of a root subgroup of $\SL_2(k)$ to represent $(2)$, then the correspondence between the classes $C_4^+$, $C_5^+$, $C_4^-$, $C_5^-$ and the connected components of $C_G(s)$ is given as follows. 
\begin{center}
\begin{tabular}{cccc}
$C_4^+$ & $C_5^+$ & $C_4^-$ & $C_5^-$ \\
\hline
$\left(
\begin{array}{ccc}
1 & 0 & 0\\
0 & 1 & 0\\
0 & 0 & 1
\end{array}
\right)$ & 
$\left(
\begin{array}{ccc}
-1 & 0 & 0\\
0 & 1 & 0\\
0 & 0 & -1
\end{array}
\right)$ &
$\left(
\begin{array}{ccc}
1 & 0 & 0\\
0 & -1 & 0\\
0 & 0 & 1
\end{array}
\right)$ &
$\left(
\begin{array}{ccc}
-1 & 0 & 0\\
0 & -1 & 0\\
0 & 0 & -1
\end{array}
\right)$
\end{tabular}
\end{center}

\item[-] $s=(a,1,a^{-1})\sigma$, $C_G(s)=k^{\ast}\times\bs\mu_2$, identified with $\{\diag(x,\pm 1, x^{-1}); x\in k^{\ast}\}$.\\
The conjugacy classes are as follows.
\begin{center}
\begin{tabular}{cccccccc}
\multicolumn{2}{c}{$a^{q-1}=1$} & \multicolumn{2}{c}{$a^{q+1}=1$} & 
\multicolumn{2}{c}{$a^{q-1}=-1$} & \multicolumn{2}{c}{$a^{q+1}=-1$}\\
\hline
$C_{6}^+(a)$ & $C_{6}^-(a)$ & $C_{7}^+(a)$ & $C_{7}^-(a)$
& $C_{8}^+(a)$ & $C_{8}^-(a)$ & $C_{9}^+(a)$ & $C_{9}^-(a)$\\\end{tabular}
\end{center}
The Frobenius on $C_G(s)\cong k^{\ast}$ with $s\in C_{6}^{\pm}$ or $C_{8}^{\pm}$ is $x\mapsto x^q$, while the Frobenius on $C_G(s)\cong k^{\ast}$ with $s\in C_{7}^{\pm}$ or $C_{9}^{\pm}$ is $x\mapsto x^{-q}$.
\end{itemize}

We have
\begin{itemize}
\item[-] $|C_1^+|=|C_1^-|=|G(q)|/|\Ort_3(q)|$;
\item[-] $|C_2^+|=|C_2^-|=|G(q)|/2|V(q)|$;
\item[-] $|C_3^+|=|C_3^-|=|G(q)|/2|\SL_2(q)|$;
\item[-] $|C_4^+|=|C_4^-|=|C_5^+|=|C_5^-|=|G(q)|/4|V(q)|$;
\item[-] $|C_{6}^{\pm}|=|C_{8}^{\pm}|=|G(q)|/2(q-1)$;
\item[-] $|C_{7}^{\pm}|=|C_{9}^{\pm}|=|G(q)|/2(q+1)$.
\end{itemize}

\subsubsection{}
Suppose  $G=\GL_4(k)$. There are 20 isolated conjugacy classes.

\begin{itemize}
\item[-] $s=(1,1,1,1)\sigma$, $C_G(s)=\Sp_4(k)$.\\
The unipotent parts are given by the partitions defined by Jordan blocks. Then the reductive parts of the centralisers are specified accordingly as below,
\begin{center}
\begin{tabular}{cccc}
$(1^4)$ & $(1^22)$ & $(2^2)$ & $(4)$\\
\hline
$\Sp_4(k)$ & $\SL_2(k)\times\Ort_1(k)$ & $\Ort_2(k)$ & $\Ort_1(k)$
\end{tabular}
\end{center}
This gives 7 classes.

\item[-] $s=(\mathfrak{i},1,1,-\mathfrak{i})\sigma$, $C_G(s)=\SL_2(k)\times\Ort_2(k)$.\\
The unipotent parts are given by the partitions defined by Jordan blocks. Then the reductive parts of the centralisers are specified accordingly as below,
\begin{center}
\begin{tabular}{cc}
$(1^2)$ & $(2)$\\
\hline
$\SL_2(k)\times\Ort_2(k)$ & $\Ort_1(k)\times\Ort_2(k)$ 
\end{tabular}
\end{center}
This gives 6 classes.

\item[-] $s=(\mathfrak{i},\mathfrak{i},-\mathfrak{i},-\mathfrak{i})\sigma$, $C_G(s)=\Ort_4(k)$.\\
The unipotent parts are given by the partitions defined by Jordan blocks. Then the reductive parts of the centralisers are specified accordingly as below,
\begin{center}
\begin{tabular}{ccc}
$(1^4)$ & $(13)$ & $(2^2)$\\
\hline
$\Ort_4(k)$ & $\Ort_1(k)\times\Ort_1(k)$ & $\SL_2(k)$
\end{tabular}
\end{center}
This gives 7 classes.
\end{itemize}

\subsubsection{}
Suppose  $G=\GL_5(k)$. There are 36 isolated conjugacy classes.

\begin{itemize}
\item[-] $s=(1,1,1,1,1)\sigma$, $C_G(s)=\Ort_5(k)$.\\
The unipotent parts are given by the partitions defined by Jordan blocks. Then the reductive parts of the centralisers are specified accordingly as below,
\begin{center}
\begin{tabular}{cccc}
$(1^5)$ & $(1^23)$ & $(12^2)$ & $(5)$\\
\hline
$\Ort_5(k)$ & $\Ort_2(k)\times\Ort_1(k)$ & $\Ort_1(k)\times\SL_2(k)$ & $\Ort_1(k)$
\end{tabular}
\end{center}
This gives 10 classes.

\item[-] $s=(\mathfrak{i},1,1,1,-\mathfrak{i})\sigma$, $C_G(s)=\Ort_3(k)\times\SL_2(k)$.\\
The unipotent parts are given by the partitions defined by Jordan blocks. Then the reductive parts of the centralisers are specified accordingly as below,
\begin{center}
\begin{tabular}{cc}
$(1^3)$ & $(3)$ \\
\hline
$\Ort_3(k)$ & $\Ort_1(k)$
\end{tabular}
$\times$
\begin{tabular}{cc}
$(1^2)$ & $(2)$ \\
\hline
$\SL_2(k)$ & $\Ort_1(k)$
\end{tabular}

\end{center}
This gives $(2+2)\times (1+2)=12$ classes.

\item[-] $s=(\mathfrak{i},\mathfrak{i},1,-\mathfrak{i},-\mathfrak{i})\sigma$, $C_G(s)=\Sp_4(k)\times\Ort_1(k)$.\\
The unipotent parts are given by the partitions defined by Jordan blocks. Then the reductive parts of the centralisers are specified accordingly as below,
\begin{center}
$\Ort_1(k)\times$
\begin{tabular}{cccc}
$(1^4)$ & $(1^22)$ & $(2^2)$ & $(4)$\\
\hline
$\Sp_4(k)$ & $\SL_2(k)\times\Ort_1(k)$ & $\Ort_2(k)$ & $\Ort_1(k)$
\end{tabular}
\end{center}
This gives $2\times 7=14$ classes.
\end{itemize}

\subsection{The Tables}
The calculation of the values of the uniform characters is reduced to the determination of the sets $$A=A(s\sigma, T_w)=\{h\in G^F\mid hs\sigma h^{-1}\in T_w\sigma\}$$ for various $G^F$-conjugacy classes of $F$-stable and $\sigma$-stable maximal tori $T_w$ contained in some $\sigma$-stable Borel subgroups, and semi-simple $G^F$-conjugacy classes of elements $s\sigma$.

\subsubsection{}
The procedure (\textit{cf.} \ref{ComputeA}) for computing $A$ can be summarised as follows.

Suppose $s\sigma$ is an $F$-fixed element contained in $T\sigma$, and $T_w$ can be written as $gTg^{\circ}$ for some $g\in C_G(\sigma)^{\circ}$. If $h\in G^F$ conjugates $s\sigma$ into $T_w\sigma$, then there exists some $l\in C_G(s\sigma)^{\circ}$ such that $n:=g^{-1}hl$ lies in $N_G(T\sigma)$. Recall that $N_G(T\sigma)\subset N_G(T)$ consists of the connected components that are stable under $\sigma$. Then $g^{-1}hs\sigma h^{-1}g=ns\sigma n^{-1}$ is an $F_w$-fixed element of $T\sigma$. If $s\sigma$ is an $F$-fixed element contained in $T_w\sigma$, then similar arguments show that $g^{-1}hs\sigma h^{-1}g=ng^{-1}s\sigma gn^{-1}$ with $gng^{-1}=hl$. The conjugation by $n$ can be separated into a permutation of the "eigenvalues" and a conjugation by an element of $T$. For each $s\sigma$ and $T_w$, we will first find some $t\in T$ such that $ts\sigma t^{-1}$ (or $tg^{-1}s\sigma gt^{-1}$ if we start with some $s\sigma\in T_w\sigma$) is fixed by $F_w$, then evaluate any character of $T_w^F\lb\sigma\rb$ under the isomorphism $T_w^F\lb\sigma\rb\cong T^{F_w}\lb\sigma\rb$. The value does not depend on the choice of $t$, and the permutation of the "eigenvalues" is simple.

We will use the following observation.
Let $\smash{\omega\in\Irr(\mathbb{F}_{q^2}^{\ast})}$ be such that $\omega^q=\omega^{-1}$, and let $a\in\mathbb{F}_{q^2}^{\ast}$ be such that $a^q=a$. Then $a=b^{q+1}$ for some $\smash{b\in\mathbb{F}_{q^2}^{\ast}}$, so $$\omega(a)=\omega(b^{q+1})=\omega^{q+1}(b)=1.$$

\subsubsection{}
Suppose $G=\GL_2(k)$. Consider the characters $R^G_T(\tC,\eta)$, $R^G_T(\alpha,\alpha^{-1})$ and $R^G_{T_w}(\omega)$.

The calculation of the extensions of $R^G_T(\tC,\eta)$ is a direct application of the theorem of Waldspurger. Following the notations of (\ref{h_1h_2}), we have $(\mu_+,\mu_-)=((1),(1))$. The 2-cores are $(1)$ and $(1)$, and so $m_+=m_-=1$. We deduce that $h_1=1$ and $h_2=0$. So the cuspidal function is supported on the class of $su$ with $C_G(s)^{\circ}\cong\SL_2(k)$ and $u$ corresponding to the partition $(2)$. We find $\delta(h_1,h_2)=1$. So the values of this character on its support are $\pm\sqrt{q}$ and vanish on all other classes.

If $s\sigma=\sigma$ and so $C_G(s\sigma)=\SL_2(k)$, then $h\sigma h^{-1}=\det (h)\sigma$ (regarding $\det(h)$ as a scalar matrix), which belongs to $T\sigma$ or $T_w\sigma$ for any $h$. So $A=G^F$ and $\tilde{\theta}(hs\sigma h^{-1})=\theta(\sigma)=1$ for any $h$ as $\theta$ has trivial value on the scalars.

If $s\sigma=(\mathfrak{i},-\mathfrak{i})\sigma$ and so $C_G(s\sigma)=\Ort_2(k)$, then the elements of $A$ are exactly those $h\in G^F$ such that $(h^{-1}Th\cap C_G(s\sigma))^{\circ}$ is a maximal torus of $C_G(s\sigma)^{\circ}=\SO_2(k)$ which itself is a torus whose centraliser in $G$ is $T$ or $T_w$ according to whether $s\sigma\in C_4$ or $s\sigma\in C_5$. Consequently, $A(C_5,T)=A(C_4,T_w)=\varnothing$, while $A(C_4,T)$ and $A(C_5,T_w)$ are the normalisers of $T$ and $T_w$ respectively. It is easy to check that $$\tilde{\theta}(hs\sigma h^{-1})=\alpha(\mathfrak{i})\alpha^{-1}(-\mathfrak{i})=\alpha(-1)$$ if $s\sigma\in C_4$. If $s\sigma\in C_5$, then we use the method at the beginning of this section. It suffices to find some $t\in T$ such that $ts\sigma t^{-1}$ is fixed by $F_w$. Indeed, we can take $t=\diag(\lambda,1)$ with $\lambda^q=-\lambda$ so that $(\mathfrak{i}\lambda)^q=-\mathfrak{i}\lambda$. We get $\theta(s\sigma)=\omega(\mathfrak{i}\lambda)$. The value is independent of the choice of $\lambda$. We can also do it directly and explicitly, and obtain the same result. The elements of $\Ort_2(k)\setminus\SO_2(k)$ are of the form 
$$
\left(
\begin{array}{cc}
0 & x\\
x^{-1} & 0
\end{array}
\right),$$ so they do not belong to $\SL_2(k)$. Let us describe  $T_s$ explicitly by choosing $g_2\in\SL_2(k)$ such that $$g_2^{-1}F(g_2)=
\left(
\begin{array}{cc}
0 & 1\\
-1 & 0
\end{array}
\right)$$ and putting $T_w=g_2Tg_2^{-1}$. We choose $\lambda\in k^{\ast}$ such that $\lambda^q=-\lambda$. Put $$g_1=g_2
\left(
\begin{array}{cc}
\lambda & 0\\
0 & 1
\end{array}
\right),$$ such that $g_1^{-1}F(g_1)\in\Ort_2(k)$. Then the representative $s\sigma\in C_5$ is given by $$g_1\left(
\begin{array}{cc}
\mathfrak{i} & 0\\
0 & -\mathfrak{i}
\end{array}
\right)\sigma g^{-1}_1=g_2
\left(
\begin{array}{cc}
\mathfrak{i}\lambda & 0\\
0 & -\mathfrak{i}\lambda
\end{array}
\right)g_2^{-1}\sigma\in T_w\sigma,$$ and so $\tilde{\theta}(s\sigma)=\omega(\mathfrak{i}\lambda)$. If $\omega=\eta$, then taking the norm gives $\lambda^2$ and evaluating $\eta$ gives $-1$.

If $s\sigma=(a,a^{-1})\sigma$, and so $C_G(s\sigma)=T$ or $T_w$ according to whether $a^q=\pm a$ or $a^q=\pm a^{-1}$, then $A$ is equal to the normaliser of $T$ or $T_w$ or empty according to the  $G^F$-class of $C_G(s\sigma)$. If $s\sigma\in C_8$, the $F$-stable conjugate of $s\sigma$ in $T\sigma$ is given by $\diag(a\lambda,a^{-1}\lambda)\sigma$ with $\lambda^q=-\lambda$. If $\theta=\eta\circ\det|_{T}$, then $\tilde{\theta}((a\lambda,a^{-1}\lambda)\sigma)=\eta(\lambda^{2})=-1$. If $s\sigma\in C_9$, the representative of $C_9(a)$ is given by $$g
\left(
\begin{array}{cc}
a\lambda & 0\\
0 & a^{-1}\lambda
\end{array}
\right)g^{-1}\sigma\in T_s^F\sigma.$$ Again, if $\theta=\eta\circ\det|_{T_s}$, then $\tilde{\theta}((a\lambda,a^{-1}\lambda)\sigma)=\eta(\lambda^{2})=-1$.                                      

\subsubsection{}
Suppose $G=\GL_3(k)$. Consider the characters $\chi_3$, $R^G_T(\alpha,\tC,\alpha^{-1})$, $R^G_T(\omega,\tC,\omega^{-1})$, $R^G_T(\eta\chi_3)$, $R^G_T(\alpha,\eta,\alpha^{-1})$ and $R^G_T(\omega,\eta,\omega^{-1})$.

For $\chi_3$, we use the theorem of Waldspurger. We have $(\mu_+,\mu_-)=((1^3),\varnothing)$. The 2-cores are $(2,1)$ and $\varnothing$, and so $m_+=2$ and $m_-=0$. We deduce that $h_1=1$ and $h_2=1$. So the cuspidal function is supported on the class of $su$ with $C_G(s)^{\circ}\cong\SL_2(k)\times \Ort_1(k)$ and $u$ corresponding to the partition $(2)$. We find $\delta(h_1,h_2)=1$ and so the values of this character are $\pm\sqrt{q}$.

If $s\sigma=(1,1,1)\sigma$ and so $C_G(s\sigma)=\Ort_3(k)$, then one has to understand the set $A^F=(N_G(T\sigma).L')^F$, with the notations of Lemma \ref{A=NL}. If $h=nl\in A^F$, then the $L^{'F}$-conjugacy class of $h^{-1}Th\cap L'$ corresponds to the $F$-class of $n^{-1}F(n)\in N_{L'}(T\cap L')$. But $N_G(T\sigma)\cong W_G(T)^{\sigma}\cong\mathfrak{S}_2$, so $n^{-1}F(n)$ necessarily belongs to $T\cap L'=(T^{\sigma})^{\circ}$, i.e. $h^{-1}Th\cap L'$ is always $L^{'F}$-conjugate to $T\cap L'$ and the only Green function that appears in the formula of $R_T^G\theta(s\sigma u)$ is $Q^{\SO_3(k)}_{(T^{\sigma})^{\circ}}(u)$. We also have a similar result for $T_w$. Expressing the elements $h=nl$ as some explicit matrices, we find that $\tilde{\theta}(hs\sigma h^{-1})$ does not depend on $h$. It remains to calculate
\begin{equation}
\begin{split}
|N_{G^F}(T\sigma)L^{'F}|=&|L^{'F}||N_{G^F}(T\sigma)||N_{L^{'F}}(C_{T}(\sigma )^{\circ})|^{-1}\\
=&|\SO_3(q)|\cdot 2(q-1)^3\cdot 2(q-1),
\end{split}
\end{equation}
For $T_w$, we have
\begin{equation}
|N_{G^F}(T_w\sigma)L^{'F}|=|\SO_3(q)|\cdot 2(q-1)(q^2-1)\cdot 2(q+1).
\end{equation}
The other $G(q)$-class contained in the $G$-class of $\sigma$ has as representative $(1,\lambda^2,1)\sigma$ with $\lambda^q=-\lambda$, so for example the value of $R^G_T(\alpha,\eta,\alpha^{-1})(C_2)$ differs from $R^G_T(\alpha,1,\alpha^{-1})(C_2)$ by a sign. 

The main difference between $\GL_3(q)$ and $\GL_2(q)$ is the class $(\mathfrak{i},1,-\mathfrak{i})\sigma$ (as opposed to $(\mathfrak{i},-\mathfrak{i})\sigma$ for $\GL_2(k)$). We have $C_G(s\sigma)\cong \SL_2(k)\times\Ort_1(k)$ so in particular it contains representatives of each element of $W_G(T)^{\sigma}$. Therefore, the sets $A^F$ are not empty either for $T$ or for $T_w$. Suppose that $s\sigma$ represents $C_3^{\epsilon}$ and we want to evaluate $R^G_T(\alpha,\eta,\alpha^{-1})$ at $s\sigma$. Let $t\in T$ be such that $ts\sigma t^{-1}$ is fixed by $F$. Then $ts\sigma t^{-1}$ can be written as $\diag(\mathfrak{i}x,y,-\mathfrak{i}x)\sigma$. It is necessary that $x^q=x$ and $y^q=y$. So $\alpha(\mathfrak{i})\alpha^{-1}(-\mathfrak{i})=\alpha(-1)$. Applying $\epsilon$ gives $\eta(y)=\epsilon(C_3^{\epsilon})$. Therefore $(\alpha,\eta,\alpha^{-1})(hs\sigma h^{-1})$ evaluates $\epsilon\alpha(-1)$. Now we evaluate $R^G_T(\omega,\eta,\omega^{-1})$ at $s\sigma$. Again we write $ts\sigma t^{-1}$ as $\diag(\mathfrak{i}x,y,-\mathfrak{i}x)\sigma$, but which is $F_w$-stable. It is necessary that $x^q=-x$ and $y^q=y$. Applying $\epsilon$ gives $\eta(y)=-\epsilon(C_3^{\epsilon})$ since $x^2\notin (\mathbb{F}_q^{\ast})^2$. Therefore $(\omega,\eta,\omega^{-1})(hs\sigma h^{-1})$ evaluates $-\epsilon\omega(\mathfrak{i}\lambda)$. If $s\sigma\in C_8^{\pm}$, then an $F$-stable element $\diag(ax,y,a^{-1}x)\sigma=ts\sigma t^{-1}$ satisfies $x^q=-x$. and $y^q=y$. Therefore $(\alpha,\eta,\alpha^{-1})(hs\sigma h^{-1})$ evaluates $-\epsilon\alpha(a^{\pm 2})$, where the $\pm 2$ power is due to permutation of "eigenvalues". For $(\omega,\eta,\omega^{-1})$ at $C_9^{\pm}$ the calculation is similar.

\newpage
\renewcommand{\arraystretch}{3.5}
\scriptsize

\rotatebox{270}{
\vbox{
\captionof{table}{The Character Table of $\GL_2(q)\lb\sigma\rb$}
\begin{tabular}{ |p{1.5cm}|p{1cm}p{1cm}p{1cm}|p{1cm}p{1cm}|p{2.5cm}p{2.5cm}p{2.5cm}p{2.5cm}|  }
\hline
 & & & & & & & & & \\[3.5\dimexpr -\normalbaselineskip]
 & \multicolumn{3}{c}{$(1,1)\sigma$} &  \multicolumn{2}{c}{$(\mathfrak{i},-\mathfrak{i})\sigma$} &  \multicolumn{4}{c}{$(a,a^{-1})\sigma$} \\
\hline
 & \multicolumn{1}{c|}{$(1^2)$} & \multicolumn{2}{|c|}{$(2)$} & \multicolumn{2}{c|}{$1$} & \multicolumn{1}{c|}{$a^{q-1}=1$} & \multicolumn{1}{c|}{$a^{q+1}=1$} & \multicolumn{1}{c|}{$a^{q-1}=-1$} & \multicolumn{1}{c|}{$a^{q+1}=-1$}\\
\hline
 &\multicolumn{1}{c|}{$C_1$} &\multicolumn{1}{c|}{$C_2$} &\multicolumn{1}{c|}{$C_3$} &\multicolumn{1}{c|}{$C_4$} &\multicolumn{1}{c|}{$C_5$} &\multicolumn{1}{c|}{$C_6(a)$} &\multicolumn{1}{c|}{$C_7(a)$} &\multicolumn{1}{c|}{$C_8(a)$} &\multicolumn{1}{c|}{$C_9(a)$}\\
 \hline
 \multicolumn{1}{|c|}{$R^G_T(\tC,\eta)$} &\multicolumn{1}{c|}{$0$} &\multicolumn{1}{c|}{$-\sqrt{q}$} &\multicolumn{1}{c|}{$\sqrt{q}$} &\multicolumn{1}{c|}{$0$} &\multicolumn{1}{c|}{$0$} &\multicolumn{1}{c|}{$0$} &\multicolumn{1}{c|}{$0$} &\multicolumn{1}{c|}{$0$} &\multicolumn{1}{c|}{$0$}\\
 \hline
\multicolumn{1}{|c|}{$\tC$} &\multicolumn{1}{c|}{$1$} &\multicolumn{1}{c|}{$1$} &\multicolumn{1}{c|}{$1$} &\multicolumn{1}{c|}{$1$} &\multicolumn{1}{c|}{$1$} &\multicolumn{1}{c|}{$1$} &\multicolumn{1}{c|}{$1$} &\multicolumn{1}{c|}{$1$} &\multicolumn{1}{c|}{$1$}\\
 \hline
\multicolumn{1}{|c|}{$\eta\tC$} &\multicolumn{1}{c|}{$1$} &\multicolumn{1}{c|}{$1$} &\multicolumn{1}{c|}{$1$} &\multicolumn{1}{c|}{$1$} &\multicolumn{1}{c|}{$-1$} &\multicolumn{1}{c|}{$1$} &\multicolumn{1}{c|}{$1$} &\multicolumn{1}{c|}{$-1$} &\multicolumn{1}{c|}{$-1$}\\
 \hline
\multicolumn{1}{|c|}{$\St$} &\multicolumn{1}{c|}{$q$} &\multicolumn{1}{c|}{$0$} &\multicolumn{1}{c|}{$0$} &\multicolumn{1}{c|}{$1$} &\multicolumn{1}{c|}{$-1$} &\multicolumn{1}{c|}{$1$} &\multicolumn{1}{c|}{$-1$} &\multicolumn{1}{c|}{$1$} &\multicolumn{1}{c|}{$-1$}\\
 \hline
\multicolumn{1}{|c|}{$\eta\St$} &\multicolumn{1}{c|}{$q$} &\multicolumn{1}{c|}{$0$} &\multicolumn{1}{c|}{$0$} &\multicolumn{1}{c|}{$1$} &\multicolumn{1}{c|}{$1$} &\multicolumn{1}{c|}{$1$} &\multicolumn{1}{c|}{$-1$} &\multicolumn{1}{c|}{$-1$} &\multicolumn{1}{c|}{$1$}\\
 \hline
\multicolumn{1}{|c|}{$R^G_T(\alpha,\alpha^{-1})$} &\multicolumn{1}{c|}{$q+1$} &\multicolumn{1}{c|}{$1$} &\multicolumn{1}{c|}{$1$} &\multicolumn{1}{c|}{$2\alpha(-1)$} &\multicolumn{1}{c|}{$0$} &\multicolumn{1}{c|}{$\alpha(a^2)+\alpha(a^{-2})$} &\multicolumn{1}{c|}{$0$} &\multicolumn{1}{c|}{$\alpha(a^2)+\alpha(a^{-2})$} &\multicolumn{1}{c|}{$0$}\\
 \hline
\multicolumn{1}{|c|}{$R^G_{T_w}(\omega)$} &\multicolumn{1}{c|}{$1-q$} &\multicolumn{1}{c|}{$1$} &\multicolumn{1}{c|}{$1$} &\multicolumn{1}{c|}{$0$} &\multicolumn{1}{c|}{$2\omega(\mathfrak{i}\lambda)$} &\multicolumn{1}{c|}{$0$} &\multicolumn{1}{c|}{$\omega(a)+\omega(a^{-1})$} &\multicolumn{1}{c|}{$0$} &\multicolumn{1}{c|}{$\omega(a\lambda)+\omega(a^{-1}\lambda)$}\\
 \hline
\end{tabular}
}}

\newpage
\rotatebox{270}{
\vbox{
\captionof{table}{The Character Table of $\GL_3(q)\lb\sigma\rb$, (i)}
\begin{tabular}{ |p{2cm}|p{1.4cm}p{1.4cm}p{1.4cm}p{1.4cm}|p{1.4cm}p{1.4cm}p{1.4cm}p{1.4cm}p{1.4cm}p{1.4cm}|  }
\hline
 & & & & & & & & & & \\[3.5\dimexpr -\normalbaselineskip]
 
 & \multicolumn{4}{c}{$(1,1,1)\sigma$} &  \multicolumn{6}{c|}{$(\mathfrak{i},1,-\mathfrak{i})\sigma$} \\
\hline
 & \multicolumn{2}{c|}{$(1^3)$} & \multicolumn{2}{c|}{$(3)$} & \multicolumn{2}{c|}{$(1^2)$} & \multicolumn{4}{c|}{$(2)$} \\
\hline
 &\multicolumn{1}{c|}{$C_1^+$} &\multicolumn{1}{c|}{$C_1^-$} &\multicolumn{1}{c|}{$C_2^+$} &\multicolumn{1}{c|}{$C_2^-$} &\multicolumn{1}{c|}{$C_3^+$} &\multicolumn{1}{c|}{$C_3^-$} &\multicolumn{1}{c|}{$C_4^+$} &\multicolumn{1}{c|}{$C_5^+$} &\multicolumn{1}{c|}{$C_4^-$} & \multicolumn{1}{c|}{$C_5^-$}\\
\hline
&  \multicolumn{8}{|c|}{$(a,1,a^{-1})\sigma$} & &\\
\hline
& \multicolumn{2}{c|}{$a^{q-1}=1$} & \multicolumn{2}{c|}{$a^{q+1}=1$} & \multicolumn{2}{c|}{$a^{q-1}=-1$} & \multicolumn{2}{c|}{$a^{q+1}=-1$} & &\\
\hline
&\multicolumn{1}{c|}{$C_{6}^+$} & \multicolumn{1}{c|}{$C_{6}^-$} 
&\multicolumn{1}{c|}{$C_{7}^+$} & \multicolumn{1}{c|}{$C_{7}^-$} 
&\multicolumn{1}{c|}{$C_{8}^+$} & \multicolumn{1}{c|}{$C_{8}^-$} 
&\multicolumn{1}{c|}{$C_{9}^+$} & \multicolumn{1}{c|}{$C_{9}^-$} & &\\
\hline
\multicolumn{1}{|c|}{$\chi_3$} & \multicolumn{1}{c|}{$0$} &\multicolumn{1}{c|}{$0$} &\multicolumn{1}{c|}{$0$} &\multicolumn{1}{c|}{$0$} &\multicolumn{1}{c|}{$0$} &\multicolumn{1}{c|}{$0$} &\multicolumn{1}{c|}{$\sqrt{q}$} &\multicolumn{1}{c|}{$-\sqrt{q}$} &\multicolumn{1}{c|}{$-\sqrt{q}$} & \multicolumn{1}{c|}{$\sqrt{q}$} \\
\cline{2-11}
& \multicolumn{2}{c|}{$0$} & \multicolumn{2}{c|}{$0$} & \multicolumn{2}{c|}{$0$} & \multicolumn{2}{c|}{$0$} & &\\
\hline
\multicolumn{1}{|c|}{$\eta\chi_3$} & \multicolumn{1}{c|}{$0$} &\multicolumn{1}{c|}{$0$} &\multicolumn{1}{c|}{$0$} &\multicolumn{1}{c|}{$0$} &\multicolumn{1}{c|}{$0$} &\multicolumn{1}{c|}{$0$} &\multicolumn{1}{c|}{$\sqrt{q}$} &\multicolumn{1}{c|}{$-\sqrt{q}$} &\multicolumn{1}{c|}{$\sqrt{q}$} & \multicolumn{1}{c|}{$-\sqrt{q}$} \\
\cline{2-11}
& \multicolumn{2}{c|}{$0$} & \multicolumn{2}{c|}{$0$} & \multicolumn{2}{c|}{$0$} & \multicolumn{2}{c|}{$0$} & &\\
\hline
\end{tabular}
}}

\newpage
\rotatebox{270}{
\vbox{
\captionof{table}{The Character Table of $\GL_3(q)\lb\sigma\rb$, (ii)}
\begin{tabular}{ |p{2cm}|p{1.4cm}p{1.4cm}p{1.4cm}p{1.4cm}|p{1.4cm}p{1.4cm}p{1.4cm}p{1.4cm}p{1.4cm}p{1.4cm}|  }
\hline
 & & & & & & & & & & \\[3.5\dimexpr -\normalbaselineskip]
 
 &\multicolumn{1}{c|}{$C_1^+$} &\multicolumn{1}{c|}{$C_1^-$} &\multicolumn{1}{c|}{$C_2^+$} &\multicolumn{1}{c|}{$C_2^-$} &\multicolumn{1}{c|}{$C_3^+$} &\multicolumn{1}{c|}{$C_3^-$} &\multicolumn{1}{c|}{$C_4^+$} &\multicolumn{1}{c|}{$C_5^+$} &\multicolumn{1}{c|}{$C_4^-$} & \multicolumn{1}{c|}{$C_5^-$}\\
\hline
&\multicolumn{1}{c|}{$C_{6}^+$} & \multicolumn{1}{c|}{$C_{6}^-$} 
&\multicolumn{1}{c|}{$C_{7}^+$} & \multicolumn{1}{c|}{$C_{7}^-$} 
&\multicolumn{1}{c|}{$C_{8}^+$} & \multicolumn{1}{c|}{$C_{8}^-$} 
&\multicolumn{1}{c|}{$C_{9}^+$} & \multicolumn{1}{c|}{$C_{9}^-$} & &\\
\hline
\multicolumn{1}{|c|}{$\tC$} & \multicolumn{1}{c|}{$1$} &\multicolumn{1}{c|}{$1$} &\multicolumn{1}{c|}{$1$} &\multicolumn{1}{c|}{$1$} &\multicolumn{1}{c|}{$1$} &\multicolumn{1}{c|}{$1$} &\multicolumn{1}{c|}{$1$} &\multicolumn{1}{c|}{$1$} &\multicolumn{1}{c|}{$1$} & \multicolumn{1}{c|}{$1$} \\
\cline{2-11}
& \multicolumn{2}{c|}{$1$} & \multicolumn{2}{c|}{$1$} & \multicolumn{2}{c|}{$1$} & \multicolumn{2}{c|}{$1$} & &\\
\hline
\multicolumn{1}{|c|}{$\eta\tC$} 
& \multicolumn{1}{c|}{$1$} &\multicolumn{1}{c|}{$-1$} &\multicolumn{1}{c|}{$1$} &\multicolumn{1}{c|}{$-1$} &\multicolumn{1}{c|}{$1$} &\multicolumn{1}{c|}{$-1$} &\multicolumn{1}{c|}{$1$} &\multicolumn{1}{c|}{$1$} &\multicolumn{1}{c|}{$-1$} & \multicolumn{1}{c|}{$-1$} \\
\cline{2-11}
& \multicolumn{2}{c|}{$\epsilon$} & \multicolumn{2}{c|}{$\epsilon$} & \multicolumn{2}{c|}{$\epsilon$} & \multicolumn{2}{c|}{$\epsilon$} & &\\
\hline
\multicolumn{1}{|c|}{$\chi_2$} 
& \multicolumn{1}{c|}{$q$} &\multicolumn{1}{c|}{$q$} &\multicolumn{1}{c|}{$0$} &\multicolumn{1}{c|}{$0$} &\multicolumn{1}{c|}{$q$} &\multicolumn{1}{c|}{$q$} &\multicolumn{1}{c|}{$0$} &\multicolumn{1}{c|}{$0$} &\multicolumn{1}{c|}{$0$} & \multicolumn{1}{c|}{$0$} \\
\cline{2-11}
& \multicolumn{2}{c|}{$1$} & \multicolumn{2}{c|}{$-1$} & \multicolumn{2}{c|}{$1$} & \multicolumn{2}{c|}{$-1$} & &\\
\hline
\multicolumn{1}{|c|}{$\eta\chi_2$} 
& \multicolumn{1}{c|}{$q$} &\multicolumn{1}{c|}{$-q$} &\multicolumn{1}{c|}{$0$} &\multicolumn{1}{c|}{$0$} &\multicolumn{1}{c|}{$q$} &\multicolumn{1}{c|}{$-q$} &\multicolumn{1}{c|}{$0$} &\multicolumn{1}{c|}{$0$} &\multicolumn{1}{c|}{$0$} & \multicolumn{1}{c|}{$0$} \\
\cline{2-11}
& \multicolumn{2}{c|}{$\epsilon$} & \multicolumn{2}{c|}{$-\epsilon$} & \multicolumn{2}{c|}{$\epsilon$} & \multicolumn{2}{c|}{$-\epsilon$} & &\\
\hline
\end{tabular}
}}

\newpage
\rotatebox{270}{
\vbox{
\captionof{table}{The Character Table of $\GL_3(q)\lb\sigma\rb$, (iii)}
\begin{tabular}{ |p{2cm}|p{1.4cm}p{1.4cm}p{1.4cm}p{1.4cm}|p{1.4cm}p{1.4cm}p{1.4cm}p{1.4cm}p{1.4cm}p{1.4cm}|  }
\hline
 & & & & & & & & & & \\[3.5\dimexpr -\normalbaselineskip]
 
 &\multicolumn{1}{c|}{$C_1^+$} &\multicolumn{1}{c|}{$C_1^-$} &\multicolumn{1}{c|}{$C_2^+$} &\multicolumn{1}{c|}{$C_2^-$} &\multicolumn{1}{c|}{$C_3^+$} &\multicolumn{1}{c|}{$C_3^-$} &\multicolumn{1}{c|}{$C_4^+$} &\multicolumn{1}{c|}{$C_5^+$} &\multicolumn{1}{c|}{$C_4^-$} & \multicolumn{1}{c|}{$C_5^-$}\\
\hline
&\multicolumn{1}{c|}{$C_{6}^+$} & \multicolumn{1}{c|}{$C_{6}^-$} 
&\multicolumn{1}{c|}{$C_{7}^+$} & \multicolumn{1}{c|}{$C_{7}^-$} 
&\multicolumn{1}{c|}{$C_{8}^+$} & \multicolumn{1}{c|}{$C_{8}^-$} 
&\multicolumn{1}{c|}{$C_{9}^+$} & \multicolumn{1}{c|}{$C_{9}^-$} & &\\
\hline
\multicolumn{1}{|c|}{$R^G_L(\tC_2,\eta)$} & \multicolumn{1}{c|}{$1$} &\multicolumn{1}{c|}{$-1$} &\multicolumn{1}{c|}{$1$} &\multicolumn{1}{c|}{$-1$} &\multicolumn{1}{c|}{$q$} &\multicolumn{1}{c|}{$-q$} &\multicolumn{1}{c|}{$0$} &\multicolumn{1}{c|}{$0$} &\multicolumn{1}{c|}{$0$} & \multicolumn{1}{c|}{$0$} \\
\cline{2-11}
& \multicolumn{2}{c|}{$\epsilon$} & \multicolumn{2}{c|}{$\epsilon$} & \multicolumn{2}{c|}{$-\epsilon$} & \multicolumn{2}{c|}{$-\epsilon$} & &\\
\hline
\multicolumn{1}{|c|}{$R^G_L(\eta\tC_2,\tC)$} 
& \multicolumn{1}{c|}{$1$} &\multicolumn{1}{c|}{$1$} &\multicolumn{1}{c|}{$1$} &\multicolumn{1}{c|}{$1$} &\multicolumn{1}{c|}{$q$} &\multicolumn{1}{c|}{$q$} &\multicolumn{1}{c|}{$0$} &\multicolumn{1}{c|}{$0$} &\multicolumn{1}{c|}{$0$} & \multicolumn{1}{c|}{$0$} \\
\cline{2-11}
& \multicolumn{2}{c|}{$1$} & \multicolumn{2}{c|}{$1$} & \multicolumn{2}{c|}{$-1$} & \multicolumn{2}{c|}{$-1$} & &\\
\hline
\multicolumn{1}{|c|}{$R^G_L(\St,\eta)$} 
& \multicolumn{1}{c|}{$q$} &\multicolumn{1}{c|}{$-q$} &\multicolumn{1}{c|}{$0$} &\multicolumn{1}{c|}{$0$} &\multicolumn{1}{c|}{$1$} &\multicolumn{1}{c|}{$-1$} &\multicolumn{1}{c|}{$1$} &\multicolumn{1}{c|}{$1$} &\multicolumn{1}{c|}{$-1$} & \multicolumn{1}{c|}{$-1$} \\
\cline{2-11}
& \multicolumn{2}{c|}{$\epsilon$} & \multicolumn{2}{c|}{$-\epsilon$} & \multicolumn{2}{c|}{$-\epsilon$} & \multicolumn{2}{c|}{$\epsilon$} & &\\
\hline
\multicolumn{1}{|c|}{$R^G_L(\eta\St,\tC)$} 
& \multicolumn{1}{c|}{$q$} &\multicolumn{1}{c|}{$q$} &\multicolumn{1}{c|}{$0$} &\multicolumn{1}{c|}{$0$} &\multicolumn{1}{c|}{$1$} &\multicolumn{1}{c|}{$1$} &\multicolumn{1}{c|}{$1$} &\multicolumn{1}{c|}{$1$} &\multicolumn{1}{c|}{$1$} & \multicolumn{1}{c|}{$1$} \\
\cline{2-11}
& \multicolumn{2}{c|}{$1$} & \multicolumn{2}{c|}{$-1$} & \multicolumn{2}{c|}{$-1$} & \multicolumn{2}{c|}{$1$} & &\\
\hline
\end{tabular}
}}

\newpage
\rotatebox{270}{
\vbox{
\captionof{table}{The Character Table of $\GL_3(q)\lb\sigma\rb$, (iv)}
\begin{tabular}{ |p{2cm}|p{1.2cm}p{1.2cm}p{1.2cm}p{1.2cm}|p{1.5cm}p{1.5cm}p{1.2cm}p{1.2cm}p{1.2cm}p{1.2cm}|  }
\hline
 & & & & & & & & & & \\[3.5\dimexpr -\normalbaselineskip]
 
 &\multicolumn{1}{c|}{$C_1^+$} &\multicolumn{1}{c|}{$C_1^-$} &\multicolumn{1}{c|}{$C_2^+$} &\multicolumn{1}{c|}{$C_2^-$} &\multicolumn{1}{c|}{$C_3^+$} &\multicolumn{1}{c|}{$C_3^-$} &\multicolumn{1}{c|}{$C_4^+$} &\multicolumn{1}{c|}{$C_5^+$} &\multicolumn{1}{c|}{$C_4^-$} & \multicolumn{1}{c|}{$C_5^-$}\\
\hline
&\multicolumn{1}{c|}{$C_{6}^+$} & \multicolumn{1}{c|}{$C_{6}^-$} 
&\multicolumn{1}{c|}{$C_{7}^+$} & \multicolumn{1}{c|}{$C_{7}^-$} 
&\multicolumn{1}{c|}{$C_{8}^+$} & \multicolumn{1}{c|}{$C_{8}^-$} 
&\multicolumn{1}{c|}{$C_{9}^+$} & \multicolumn{1}{c|}{$C_{9}^-$} & &\\
\hline
\multicolumn{1}{|c|}{$(\alpha,\tC,\alpha^{-1})$} 
& \multicolumn{1}{c|}{$q+1$} &\multicolumn{1}{c|}{$q+1$} &\multicolumn{1}{c|}{$1$} &\multicolumn{1}{c|}{$1$} &\multicolumn{1}{c|}{$(q+1)\alpha(-1)$} &\multicolumn{1}{c|}{$(q+1)\alpha(-1)$} &\multicolumn{1}{c|}{$\alpha(-1)$} &\multicolumn{1}{c|}{$\alpha(-1)$} &\multicolumn{1}{c|}{$\alpha(-1)$} & \multicolumn{1}{c|}{$\alpha(-1)$} \\
\cline{2-11}
& \multicolumn{2}{c|}{$\alpha(a^2)+\alpha(a^{-2})$} & \multicolumn{2}{c|}{$0$} & \multicolumn{2}{c|}{$\alpha(a^2)+\alpha(a^{-2})$} & \multicolumn{2}{c|}{$0$} & &\\
\hline
\multicolumn{1}{|c|}{$(\alpha,\eta,\alpha^{-1})$} 
& \multicolumn{1}{c|}{$q+1$} &\multicolumn{1}{c|}{$-q-1$} &\multicolumn{1}{c|}{$1$} &\multicolumn{1}{c|}{$-1$} &\multicolumn{1}{c|}{$(q+1)\alpha(-1)$} &\multicolumn{1}{c|}{$-(q+1)\alpha(-1)$} &\multicolumn{1}{c|}{$\alpha(-1)$} &\multicolumn{1}{c|}{$\alpha(-1)$} &\multicolumn{1}{c|}{$-\alpha(-1)$} & \multicolumn{1}{c|}{$-\alpha(-1)$} \\
\cline{2-11}
& \multicolumn{2}{c|}{$\epsilon(\alpha(a^2)+\alpha(a^{-2}))$} & \multicolumn{2}{c|}{$0$} & \multicolumn{2}{c|}{$-\epsilon(\alpha(a^2)+\alpha(a^{-2}))$} & \multicolumn{2}{c|}{$0$} & &\\
\hline
\multicolumn{1}{|c|}{$(\omega,\tC,\omega^{-1})$} 
& \multicolumn{1}{c|}{$-q+1$} &\multicolumn{1}{c|}{$-q+1$} &\multicolumn{1}{c|}{$1$} &\multicolumn{1}{c|}{$1$} &\multicolumn{1}{c|}{$(-q+1)\omega(\mathfrak{i}\lambda)$} &\multicolumn{1}{c|}{$(-q+1)\omega(\mathfrak{i}\lambda)$} &\multicolumn{1}{c|}{$\omega(\mathfrak{i}\lambda)$} &\multicolumn{1}{c|}{$\omega(\mathfrak{i}\lambda)$} &\multicolumn{1}{c|}{$\omega(\mathfrak{i}\lambda)$} & \multicolumn{1}{c|}{$\omega(\mathfrak{i}\lambda)$} \\
\cline{2-11}
& \multicolumn{2}{c|}{$0$} & \multicolumn{2}{c|}{$\omega(a)+\omega(a^{-1})$} & \multicolumn{2}{c|}{$0$} & \multicolumn{2}{c|}{$\omega(a\lambda)+\omega(a^{-1}\lambda)$} & &\\
\hline
\multicolumn{1}{|c|}{$(\omega,\eta,\omega^{-1})$} 
& \multicolumn{1}{c|}{$-q+1$} &\multicolumn{1}{c|}{$q-1$} &\multicolumn{1}{c|}{$1$} &\multicolumn{1}{c|}{$-1$} &\multicolumn{1}{c|}{$(q-1)\omega(\mathfrak{i}\lambda)$} &\multicolumn{1}{c|}{$(-q+1)\omega(\mathfrak{i}\lambda)$} &\multicolumn{1}{c|}{$-\omega(\mathfrak{i}\lambda)$} &\multicolumn{1}{c|}{$-\omega(\mathfrak{i}\lambda)$} &\multicolumn{1}{c|}{$\omega(\mathfrak{i}\lambda)$} & \multicolumn{1}{c|}{$\omega(\mathfrak{i}\lambda)$} \\
\cline{2-11}
& \multicolumn{2}{c|}{$0$} & \multicolumn{2}{c|}{$\epsilon(\omega(a)+\omega(a^{-1}))$} & \multicolumn{2}{c|}{$0$} & \multicolumn{2}{c|}{$-\epsilon(\omega(a\lambda)+\omega(a^{-1}\lambda))$} & &\\
\hline
\end{tabular}
}}

\normalsize

\newpage
\addtocontents{toc}{\protect\setcounter{tocdepth}{-1}}
\bibliographystyle{alpha}
\bibliography{BIB}
\end{document}